\newtheorem{theorem}{Theorem}[section]
\newtheorem{definition}[theorem]{Definition}
\newtheorem{lemma}[theorem]{Lemma}
\newtheorem{proposition}[theorem]{Proposition}
\newtheorem{corollary}[theorem]{Corollary}
\newtheorem{remark}[theorem]{Remark}
\newcommand\bp[1]{\noindent {\em Proof{#1}.} 
}
\def\ep{\hfill $\Box$}
\newcommand{\dE}{\mathbb{E}}
\newcommand{\dN}{\mathbb{N}}
\newcommand{\dP}{\mathbb{P}}
\newcommand{\dR}{\mathbb{R}}
\newcommand{\dZ}{\mathbb{Z}}
 \newcommand{\bbN}{{\mathbb N}}
\newcommand{\cA}{\mathcal{A}}\newcommand{\cB}{\mathcal{B}}
\newcommand{\cC}{\mathcal{C}}\newcommand{\cD}{\mathcal{D}}
\newcommand{\cE}{\mathcal{E}}\newcommand{\cF}{\mathcal{F}}
\newcommand{\cG}{\mathcal{G}}\newcommand{\cH}{\mathcal{H}}
\newcommand{\cK}{\mathcal{K}}\newcommand{\cL}{\mathcal{L}}
\newcommand{\cM}{\mathcal{M}}
\newcommand{\cP}{\mathcal{P}}
\newcommand{\cR}{\mathcal{R}}
\newcommand{\cS}{\mathcal{S}}\newcommand{\cT}{\mathcal{T}}
\newcommand{\cX}{\mathcal{X}}
\newcommand{\bD}{\mathbf{D}}
\newcommand{\bM}{\mathbf{M}}
\newcommand{\bg}{\mathbf{g}}
\newcommand{\ABS}[1]{{{\left| #1 \right|}}} 
\newcommand{\BRA}[1]{{{\left\{#1\right\}}}} 
\newcommand{\PAR}[1]{{{\left(#1\right)}}} 
\newcommand{\SBRA}[1]{{{\left[#1\right]}}} 
\newcommand{\so}{o}
\newcommand{\LIMSUP}{\limsup}
\newcommand{\LIMINF}{\liminf}
\renewcommand{\a}{\alpha} 
\renewcommand{\b}{\beta}  
\renewcommand{\c}{\chi} 
\renewcommand{\d}{\delta}  
\newcommand{\veps}{\varepsilon}
\newcommand{\g}{\gamma} 
\newcommand{\h}{\eta}    
\renewcommand{\l}{\lambda}
\renewcommand{\o}{\omega}
\renewcommand{\r}{\rho}  
\renewcommand{\t}{\tau}
\newcommand{\D}{\Delta} 
\newcommand{\G}{\Gamma}
\newcommand{\Si}{\Sigma}
\newcommand{\si}{\sigma}
\renewcommand{\leq}{\leqslant}             
\renewcommand{\geq}{\geqslant}             
\newcommand{\wt}{\widetilde}
\newcommand{\IND}{\mathbf{1}}
\newcommand{\bd}{\mathbf d}
\newcommand{\bn}{\mathbf n}
\newcommand{\supp}{\mathrm{supp}}
\newcommand{\dist}{\mathrm{dist}}
\newcommand{\ibf}{\mathbf i}
\newcommand{\jbf}{\mathbf j}
\renewcommand{\deg}{\mathrm{deg}}
\newcommand{\POI}{\mathrm{Poi}}
\newcommand{\BIN}{\mathrm{Bin}}
\newcommand{\UGW}{\mathrm{UGW}}
\newcommand{\wP}{\widehat{P}}
\newcommand{\wQ}{\widehat{Q}}
\newcommand{\wdP}{\widehat{\dP}}
\newcommand{\BINOM}[2]{\mathrm{Bin}{(#1,#2)}}
\newcommand{\weak}{\rightsquigarrow}
\newcommand{\CM}{\mathrm{CM}}
\newcommand{\wcG}{\widehat{\mathcal{G}}}
\newcommand{\exc}{\mathrm{exc}}
\def\lp{(\hspace{-2pt}(}
\def\rp{)\hspace{-2pt})}
\begin{document}

\title[Large deviations of sparse random graphs]{Large deviations of empirical neighborhood distribution in sparse random graphs}

\author[C. Bordenave]{Charles Bordenave}
\address[Ch.~Bordenave]{IMT UMR 5219 CNRS and Universit\'e Paul-Sabatier Toulouse III, France}\email{charles.bordenave(at)math.univ-toulouse.fr}
\urladdr{http://www.math.univ-toulouse.fr/~bordenave}
 \author[P. Caputo]{Pietro Caputo}
 \address[P.~Caputo]{Dipartimento di Matematica,
  Universit\`a Roma Tre, Italy}
\email{caputo(at)mat.uniroma3.it}
\urladdr{http://www.mat.uniroma3.it/users/caputo}

\begin{abstract} 
Consider the Erd\H{o}s-Renyi random graph on $n$ vertices where each edge is present independently with probability $\l/n$, with $\l>0$ fixed. For large $n$, a typical random graph locally behaves like a Galton-Watson tree with Poisson offspring distribution with mean $\l$.  Here, we study  large deviations from this typical behavior within the framework of the local weak convergence of finite graph sequences. The associated rate function is expressed in terms of an entropy functional on unimodular measures and takes finite values only at measures supported on trees. 
We also establish large deviations for other commonly studied random graph ensembles such as the uniform random graph with given number of edges growing linearly with the number of vertices, or the uniform random graph with given degree sequence. To prove our results, we 
introduce a new configuration model which allows one to sample uniform random graphs with a given neighborhood distribution, provided the latter is supported on trees. We also introduce a new class of unimodular random trees, which generalizes the usual Galton Watson tree with given degree distribution to the case of neighborhoods of arbitrary finite depth. These generalized Galton Watson trees turn out to be useful in the analysis of unimodular random trees and may be considered to be of interest in their own right. 
\end{abstract}

\keywords{Random graphs; Random trees; Local convergence; Large deviations; Configuration model; Unimodular measure; Entropy}

\maketitle
\thispagestyle{empty}
\section{Introduction and main results}
Consider the Erd\H{o}s-Renyi ensemble $\cG(n,p)$, where a random graph is obtained from the vertex set $[n]=\{1,\dots,n\}$ by adding each edge independently with probability $p$. In the sparse regime with $p=\l/n$, for a fixed $\l>0$, it is well known that, for large $n$, a typical graph from $\cG(n,p)$ locally looks like a Galton-Watson tree with Poisson offspring distribution with mean $\l$.  In this work we study  large deviations from this typical behavior. The problem is intimately related to the question: conditioned on having a certain neighborhood distribution, what does a typical element of $\cG(n,p)$ locally look like ? The same questions can be asked for other commonly studied random graph ensembles such as the uniform random graphs with fixed number of edges growing linearly with the number of vertices, or with given degree sequence. 
  We formulate the problem within the theory of {\em local weak convergence} of graph sequences that was recently  introduced by Benjamini and Schramm \cite{bensch} and Aldous and Steele \cite{aldoussteele}. The associated local weak topology has now become a common tool for studying sparse graphs, see Aldous and Lyons \cite{aldouslyons} and Bollob\`as and Riordan \cite{BR2011}. A surprising large variety of graph functionals are continuous for this topology. 
In Section  \ref{sec:LWC} below, we will give more details on local weak convergence. In order to present our result, here we first introduce the main terminology.

 \subsection{Local weak convergence}
\label{subsec:LWC}

A graph $G = (V,E)$, with $V$ a countable set of vertices, is said to be locally finite if, for all $v \in V$, the degree of $v$ in $G$ is finite. 
A {\em rooted graph} $(G,o)$ is a locally finite and connected graph $G = (V,E)$ 
with a distinguished vertex $o \in V$, called the root. For $t \geq 0$, we denote by $(G,o)_t$ the induced rooted graph with vertex set $\{u\in V: \,D(o,u)\leq t\}$, with $D(\cdot,\cdot)$ the natural graph distance. Two rooted graphs $(G_i,o_i) =  ( V_i , E_i , o_i )$, 
$i \in \{1,2\}$, are \emph{isomorphic} if there exists a bijection $\si: V_{1} \to V_{2}$ such that $\si ( o_1) = o_2$ and $\si ( G_1) = G_2$, where $\si$ acts on $E_1$ through $\si (  \{ u , v \}  ) = \{ \si ( u) , \si (v) \}$. We will denote this equivalence relation by $(G_1,o_1) \simeq (G_2, o_2)$. An equivalence class of rooted graphs is often simply referred to as {\em unlabeled rooted graph}. We denote by $\cG^*$ the set of (locally finite, connected) unlabeled rooted graphs. $\cT^*$ will be the set of unlabeled rooted trees. 
To each unlabeled rooted graph $g\in\cG^*$, we may associate  a labeled rooted graph $(G,o)$ with vertex set  $V\subset \dZ_+$, rooted at $0$, in a canonical way; see e.g.\ \cite{aldouslyons}. 
For ease of notation, one sometimes identifies $g \in \cG^*$  with its canonical rooted graph $(G,o)$.

\begin{figure}
\begin{tikzpicture}[-,>=stealth',shorten >=1pt,auto,node distance=1.5cm,
                    thick,main node/.style={circle,draw,font=\sffamily
                    \bfseries}]

  \node[main node] at (0,1.3) (1) {1};
  \node[main node] (2) [right of=1] {2};
  \node[main node] (3) [below of=1] {3};
  \node[main node] (4) [below of=2] {4};
   \node[main node] (6) [below of=4] {6};
   \node[main node] (5) [below of=3] {5};



\draw [fill] (5,1.5) circle (0.17cm);
\draw   (4.4,0.5) circle (0.17cm);
\draw  (5.6,0.5) circle  (0.17cm);
\draw  (5,-0.5) circle (0.17cm);
\draw  (5,-1.5) circle (0.17cm);

\draw (4.44,0.65) -- (5,1.5);
\draw (5.56,0.65) -- (5,1.5);
\draw (4.44,0.34) -- (4.94,-0.36);
\draw (5.56,0.34) -- (5.06,-0.36);
\draw (5,1.5) -- (5,-0.36);
\draw (5,-0.66) -- (5,-1.36);

\draw [fill] (7,1.5) circle (0.17cm);
\draw   (6.4,0.5) circle (0.17cm);
\draw  (7.6,0.5) circle  (0.17cm);
\draw  (7,-0.5) circle (0.17cm);
\draw  (6.4,-1.5) circle (0.17cm);

\draw (6.44,0.65) -- (7,1.5);
\draw (7.56,0.65) -- (7,1.5);
\draw (6.44,0.34) -- (6.94,-0.36);
\draw (7.56,0.34) -- (7.06,-0.36);
\draw (6.56,0.5) -- (7.47,0.5);
\draw (6.4,0.34) -- (6.4,-1.36);
\draw [fill] (9,1.5) circle (0.17cm);
\draw   (8.4,0.5) circle (0.17cm);
\draw  (9.6,0.5) circle  (0.17cm);
\draw  (9,-0.5) circle (0.17cm);
\draw  (8,1.5) circle (0.17cm);

\draw (8.44,0.65) -- (9,1.5);
\draw (9.56,0.65) -- (9,1.5);
\draw (8.44,0.34) -- (8.94,-0.36);
\draw (9.56,0.34) -- (9.06,-0.36);
\draw (9,1.5) -- (9,-0.36);
\draw (8.15,1.5) -- (8.85,1.5);

%
%

\draw  (11,0.5) circle (0.17cm);
\draw   (10.4,-0.5) circle (0.17cm);
\draw  (11.6,-0.5) circle  (0.17cm);
\draw  (11,-1.5) circle (0.17cm);

\draw [fill] (11,1.5) circle (0.17cm);

\draw (10.44,-0.35) -- (10.94,0.36);
\draw (11.56,-0.35) -- (11.06,0.36);
\draw (10.44,-0.66) -- (10.94,-1.36);
\draw (11.56,-0.66) -- (11.06,-1.36);
\draw (11,0.34) -- (11,-1.36);
\draw (11,0.66) -- (11,1.36);

%
%
%
%
%
%
%
%

\draw [fill] (13,1.5) circle (0.17cm);

\node at (5,-2.3) {$\a$};
\node at (7,-2.3) {$\b$};
\node at (9,-2.3) {$\c$};
\node at (11,-2.3) {$\g$};
\node at (13,-2.3) {$\veps$};

%
  \path[every node/.style={font=\sffamily\small}]
    (1) 
        edge node {} (2)
     edge node {} (3)
        edge node {} (4)
    (2) 
    edge node {} (4)
    (3)            
    edge node {} (4)
    (4) 
           edge node {} (5)
     ;
\end{tikzpicture}
%
\caption{Example of a graph $G$ and its empirical neighborhood distribution. Here $U(G)=\frac16(\d_{\a}+2\d_{\b}+\d_\c+\d_\g + \d_{\veps})$, where $\a,\b,\c,\g, \veps \in\cG^*$ are the unlabeled rooted graphs depicted above (the black vertex is the root), with $[G,1]=\a$, $[G,2]=[G,3]=\b$, $[G,4]=\c$, $[G,5]=\g$, $[G,6] = \veps$.}
\label{ugsample}
\end{figure}

For $\g\in\cG^*$ and $h \in \dN$, we write $\g_h$ for the truncation at $h$ of the graph $\g$, namely the unlabeled rooted graph obtained by removing all vertices (together with the edges incident to them) that are at distance larger than $h$ from the root.  
The {\em local topology} is the smallest topology such that for any $\g \in \cG_*$ and $h \in \dN$, the $\cG^* \to \{0,1\}$ function $
f(g) = \IND ( g_h = \g_h ) $
is continuous. Equivalently, a sequence $g_n\in\cG^*$ converges locally to $g\in\cG^*$ iff for all $h\in\dN$ there exists $n_0(h)$ such that $(g_n)_h=g_h$ whenever  $n\geq n_0(h)$. This topology is metrizable and the space $\cG^*$ is separable and complete \cite{aldouslyons}. The space   of probability measures on $\cG^*$, denoted $\cP(\cG^*)$, is equipped with the topology of weak convergence. We often write $\r_n\weak \r$ to indicate that a sequence $\r_n\in\cP(\cG^*)$ converges weakly to $\r\in\cP(\cG^*)$. 

For a finite graph $G = (V,E)$ and $v\in V$, one writes $G(v)$ for the connected component of $G$ at $v$. The {\em empirical neighborhood distribution} $U(G)$ of $G$ is the law of the equivalence class of the rooted graph 
$(G(o),o)$ where the root $o$ is sampled uniformly at random from $V$, i.e.\ $U(G)\in\cP(\cG^*)$ is defined by
\begin{equation}\label{eq:defUG}
U(G)=\frac 1{|V|}\sum_{v\in V} \d_{[G,v]},
\end{equation}
where $[G,v]\in\cG^*$ stands for the equivalence class of $(G(v),v)$ and $\d_g$ is the Dirac mass at $g\in\cG^*$; see Figure \ref{ugsample} for an example. If $\{G_n\}$ is a sequence of finite graphs, we shall say that $G_n$ has {\em local weak limit}  $\rho\in\cP(\cG^*)$ if $U(G_n)$ converges to  $\rho$ in $\cP(\cG^*)$ as $n\to\infty$. A measure $\r\in\cP(\cG^*)$ is called {\em sofic} if  there exists a sequence of finite graphs $\{G_n\}$ whose local weak limit is $\rho$. In other words, the set of sofic measures is the closure of the set $\{ U (G_n) : G_n \hbox{ finite graph}\}$.  
An example is the Dirac mass at the infinite regular tree with degree $d\in\dN$, which is almost surely the local weak limit of a sequence of uniformly sampled random $d$-regular graphs on $n$ vertices \cite{wormald}. Another example is the law of the Galton-Watson tree with Poisson offspring distribution with mean $\l>0$, which is almost surely the local weak limit of a sequence of random graphs sampled from $\cG(n,p)$ when $p=\l/n$. 
Sofic measures form a closed subset 
of $\cP(\cG^*)$. 
 
 Sofic measures share a 
 stationarity property called {\em unimodularity} \cite{aldouslyons}.
To define the latter, consider the set $\cG^{**}$ of unlabeled graphs with two distinguished roots, obtained as the set of equivalence classes of 
locally finite connected graphs with two distinguished vertices $(G,u,v)$. 
The notion of local topology extends naturally to $\cG^{**}$. 
A function $f$ on $\cG^{**}$ can be extended to a function on connected graphs with two distinguished roots $(G,u,v)$ through the isomorphism classes. Then, a measure $\r\in\cP(\cG^*)$ is called {\em unimodular} if for any Borel measurable function $f: \cG^{**} \to \dR_+$, we have 
\begin{equation}\label{eq:defunimod}
\dE_\r \sum_{ v \in V} f ( G , o , v) = \dE_\r \sum_{ v \in V} f ( G , v , o),
\end{equation}
where  $(G,o)$ is the canonical rooted graph whose equivalence class $g\in\cG^*$ has law $\rho$. It is not hard to check that if $G$ is a finite graph then its neighborhood distribution $U(G)$ is unimodular. 
In particular, all sofic measures are unimodular. The converse is open; see \cite{aldouslyons}. We denote by $\cP_{u} (\cG^*)$ the set of unimodular probability measures. Similarly, we write $\cP_{u} (\cT^*)$ for unimodular probability measures supported by trees.

\subsection{Unimodular Galton-Watson trees with given neighborhood}
We now introduce a family of unimodular measures that will play a key role in what follows.  As we will see, this is the natural generalization of the usual Galton-Watson trees with given degree distribution to the case of neighborhoods of arbitrary depth $h\in \dN$. These measures will be shown to be sofic, and this fact can be used to give an alternative proof of the Bowen-Elek theorem \cite{Bowen,elek2010,BLS} asserting  that all $\r\in\cP_{u} (\cT^*)$ are sofic, see Corollary \ref{cor:entropy1} below. 

Fix $h \in \dN$, and recall that $g_h$ denotes the truncation at depth $h$ of $g\in\cG^*$. Call $\cG^*_h$ the set of unlabeled rooted graphs with depth $h$, i.e.\ the set of $g\in\cG^*$ such that $g_h=g$. Similarly, call $\cT^ *_h$ the set of unlabeled rooted trees $t\in\cT^*$ such that $t_h=t$.  
Given $\r\in\cP(\cG^*)$, we write $\r_h\in\cP(\cG^*_h)$ for the $h$-neighborhood marginal of $\r$, i.e.\ the law of $g_h$ when $g$ has law $\r$. Notice that if $t\in\cT^*$ and $h=1$, then $t_h$ is simply the number of children of the root. In particular, $\cT^*_1$ can be identified with $\dZ_+$. When $h=0$, it is understood that  
$\cG^*_0$ contains only the trivial graph consisting of a single isolated vertex (the root), so that $|\cG^*_0|=1$.


If $G = (V,E)$ is a graph and  $\{ u , v\} \in E$ then define $G(u,v)$ as the rooted graph $(G'(v),v)$,  where $G'=(V, E\backslash \{u,v\})$, i.e.\ $G(u,v)$  is the rooted graph obtained from $G$ by removing the edge $\{u,v\}$ and taking the connected component at the root $v$. 
Next, given a rooted graph $(G,o)$, and $g,g'\in \cG_{h-1}^*$, define 
\begin{equation}\label{eq:defE}
E_h ( g , g') =\big| \big\{ v \stackrel{G}{\sim} o :\, G(o,v)_{h-1} \simeq g , \,G(v,o)_{h-1} \simeq g' \big\}\big|. 
\end{equation}
The notation $v \stackrel{G}{\sim} u$ indicates that the vertex $v$ is a neighbor of $u$ in $G$.  
Thus, $E_h ( g , g')$ is the number of neighbors of the root in $(G,o)$ which have the given patterns $G(o,v)_{h-1}\simeq g$ and $G(v,o)_{h-1} \simeq g'$. Notice that if $h=1$,  then necessarily $g,g'=o$ and $E_1(o,o)=\deg_G(o)$ is simply the degree of the root.   

As an example, consider the the rooted graph $\a$ from Figure \ref{ugsample}. Fix $h=2$, and call $g_1,g_2$ the elements of $\cG^*_{h-1}$ consisting respectively of  a rooted single edge and a rooted triangle. Then one has $E_h(g_1,g_2)=2$ and $E_h(g_2,g_1)=0$. Similarly, if the reference graph is $\b$  from Figure \ref{ugsample}, then $E_h(g_1,g_2)=0$ while $E_h(g_2,g_1)=1$.

We call a measure $P \in \cP( \cG^*_h)$  {\em admissible} if $\dE_P \deg_G (o) < \infty$ and for all 
$g, g' \in \cG^ *_{h-1}$, 
\begin{equation*}
e_{P} (g,g') = e_P (g',g),
\end{equation*}
where \begin{equation}\label{eq:defe}
e_P (g,g'):= \dE_P E_h(g,g').
\end{equation}
Here it is understood that $(G,o)$ represents the canonical rooted graph whose equivalence class
in $\cG_h^*$ has law $P$. By applying the definition of unimodularity \eqref{eq:defunimod} to the function 
$$f (G, u, v) = \IND \big( v \stackrel{G}{\sim} u \big) \IND \big( G(u,v)_{h-1} \simeq g  ; G(v,u)_{h-1} \simeq g'  \big),$$ 
it is not hard to check that if $\r$ is unimodular and $\dE_\rho \deg_G (o) < \infty$ then $\rho_h\in\cP(\cG_h^*)$ is admissible. In particular, for any finite graph $G$, the neighborhood distribution $U(G)_h$ truncated at depth $h$ is admissible.
Remark that, when $h=1$, since $|\cG^*_0|=1$, all $P \in \cP( \cT_1^*) = \cP( \dZ_+)$ with finite mean are admissible.

We now define the measures $\UGW_h(P) \in \cP (\cT^*)$; see also Section \ref{sec:UGW} below for more details. Fix $P\in\cP (\cT_h^*)$ admissible.
The probability $\UGW_h(P) \in \cP (\cT^*)$ is the law of the equivalence class of the random rooted tree $(T,o)$ 
defined below. 
%
%

For $t, t' \in \cT^*_{h-1}$ such that $e_P(t,t') \ne 0$ define,  for all $\tau \in \cT^*_{h}$, 
\begin{equation}\label{hatP}
\wP_{t,t'} ( \tau ) =   P( \tau \cup t'_+) \PAR{ 1 +   \big| \big\{  v \stackrel{\tau}{\sim} o : \tau(o,v) \simeq t' \big\}\big| }  \frac{\IND( \tau_{h-1} = t ) }{ e_P (t,t')} , 
\end{equation}
where $\tau\cup t'_+$ denotes  the tree obtained from $\tau$ by adding a new neighbor of the root whose rooted subtree is $t'$; see Figure \ref{sfig2} for an example. The subtree $\t(o,v)$ is defined before Eq.\ \eqref{eq:defE} with the graph $G$ replaced by $\t$.

\begin{figure}[h]

\begin{tikzpicture}[-,>=stealth',shorten >=1pt,auto,node distance=1.5cm,
                    thick,main node/.style={circle,draw,font=\sffamily
                    \bfseries}]

\draw [fill] (7,1.5) circle (0.17cm);
\draw   (6.4,0.5) circle (0.17cm);
\draw  (7.6,0.5) circle  (0.17cm);
\draw  (7,-0.5) circle (0.17cm);
\draw  (6.4,-.5) circle (0.17cm);
\draw  (5.8,-.5) circle (0.17cm);

\draw (6.44,0.65) -- (7,1.5);
\draw (7.56,0.65) -- (7,1.5);
\draw (6.44,0.34) -- (6.94,-0.36);
\draw (6.36,0.34) -- (5.86,-0.36);
\draw (6.4,0.34) -- (6.4,-0.36);

\draw [fill] (9.4,1.5) circle (0.17cm);
\draw  (10,0.5) circle (0.17cm);
\draw  (9.4,.5) circle (0.17cm);
\draw  (8.8,.5) circle (0.17cm);

\draw (9.44,1.34) -- (9.94,0.64);
\draw (9.36,1.34) -- (8.86,0.64);
\draw (9.4,1.34) -- (9.4,0.64);

\draw [fill] (12,1.5) circle (0.17cm);
\draw   (11.4,0.5) circle (0.17cm);
\draw  (12.6,0.5) circle  (0.17cm);
\draw  (13.65,0.5) circle  (0.17cm);
\draw  (12,-0.5) circle (0.17cm);
\draw  (11.4,-.5) circle (0.17cm);
\draw  (10.8,-.5) circle (0.17cm);

\draw (11.44,0.65) -- (12,1.5);
\draw (12.56,0.65) -- (12,1.5);
\draw (11.44,0.34) -- (11.94,-0.36);
\draw (11.36,0.34) -- (10.86,-0.36);
\draw (11.4,0.34) -- (11.4,-0.36);
\draw (13.56,0.65) -- (12,1.5);

\draw (13.7,0.34) -- (14.2,-0.36);
\draw (13.62,0.34) -- (13.12,-0.36);
\draw (13.66,0.34) -- (13.66,-0.36);

\draw  (13.66,-0.5) circle (0.17cm);
\draw  (14.2,-.5) circle (0.17cm);
\draw  (13.12,-.5) circle (0.17cm);

\node at (7,2.3) {$\t$};
\node at (9.4,2.3) {$t'$};
\node at (12.3,2.3) {$\t\cup t'_+$};

\end{tikzpicture}

\caption{In this example one has $h=2$ and $1+\big| \big\{  v \stackrel{\tau}{\sim} o : \tau(o,v) \simeq t' \big\}\big|= 2$. Notice that if $t=\t_1$ denotes the rooted tree with two leaves, then in the rooted tree $\t\cup t'_+$ one has  $E_h(t',t)=2$ according to the definition \eqref{eq:defE}.}
\label{sfig2}
\end{figure}

It can be checked that $\wP_{t,t'}$ is a probability, i.e.\ $\wP_{t,t'}\in \cP(\cT^*_h)$; see Section \ref{sec:UGW}. 
We may now define the random rooted tree $(T,o)$. First, $(T,o)_{h}$ is sampled according to $P$. 
Next, for each vertex $v$ in the first generation of $(T,o)_{h}$, consider the subtree $t=T(o,v)_{h-1}$ with depth $h-1$ rooted at $v$ obtained by removing the edge $\{o,v\}$ and retaining the connected component up to distance $h-1$ from $v$. We add a layer to $t$ by replacing $t$ with a new tree $\t$ with depth $h$  that  coincides with $t$ in the first $h-1$ generations. The new tree $\t$ is sampled according to $\wP_{t, t'}$ where $t$ is as above while $t'$ denotes the subtree $T ( v,o)_{h-1}$ rooted at $o$ obtained from $(T,o)_{h}$ by removing the edge $\{o,v\}$ and retaining the connected component up to distance $h-1$ from $o$.  This operation is repeated for each $v$ in the first generation independently. After this step,  we have overall added one layer to $(T,o)_{h}$, and thus we have sampled $(T,o)_{h+1}$. 

We now proceed recursively, layer by layer, to obtain a sample of the full tree $(T,o)$. Formally, this construction can be stated as follows.   
If $u$ is the parent of $v$, we say that $v$ has {\em type} $(t,t')$, where $t,t'\in\cT^*_{h-1}$, if $T(u,v)_{h-1}\simeq t$ and $T ( v,u)_{h-1}\simeq t'$. The subtrees $T(u,v)$, and $T(v,u)$ are defined before Eq.\ \eqref{eq:defE} with $G$ replaced by $T$.
Denote by $1, \cdots, d$, with $d = \deg_T(o)$ the neighbors of the root in the canonical representation of the random variable with law $P$. Given $(T,o)_{h}$, the subtrees $T(o,v), 1 \leq v \leq d,$ are independent random variables and, given that $v$ has type $(t,t')$, then $T(o,v)_{h}$ has distribution $\wP_{t, t'}$. Once $T(o,v)_{h}$ is sampled, the type of a child $v'$ of $v$ 
is determined using only  $T(o,v)_{h}$ and $T(v,o)_{h-2}$. For each child $v'$ of $v$ we sample the subtree $T(v,v')_h$ independently according to $\wP_{t, t'}$ where $(t,t')$ is the type of $v'$ and so on, 
recursively. This defines our random rooted tree $(T,o)$. 

If $h =1 $, then there is only one type possible and $\UGW_1( P)$ is the  unimodular 
Galton-Watson tree with degree distribution $P \in \cP(\dZ_+)$, where 
the number $d$ of children of the root is sampled according to $P$, and conditionally on $d$, the subtrees of the children of the root are independent Galton-Watson trees with offspring distribution 
given by the size-biased law $\wP$:
\begin{equation}\label{eq:defwP}
\wP ( k ) =   \frac{ (k+1) P(k+1) }{ \sum_{\ell=1}^\infty \ell P (\ell)}. 
\end{equation}
If $P=\POI(\l)$ is the Poisson distribution with mean $\l$, then  $\wP=P$ and $\UGW_1( P)$ is the standard Galton-Watson tree with mean degree $\l$. 

The following proposition summarizes the main properties of the measures $\UGW_h(P)$ for generic $h\in\dN$ and $P\in\cP (\cT_h^*)$ admissible.
\begin{proposition}\label{prop:UGW}             
Fix $h\in\dN$ and $P\in\cP (\cT_h^*)$ admissible. The measure $\UGW_h(P)$ is unimodular. 
Moreover, the following consistency relation is satisfied:
 for any $k \geq h$, $(\UGW_h(P))_k\in\cP (\cT_k^*)$ is admissible and 
$$
\UGW_h (P) = \UGW_k( (\UGW_h(P))_k ). 
$$
\end{proposition}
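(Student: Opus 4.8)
The plan is to prove unimodularity and the consistency relation essentially simultaneously, by exhibiting the tree $(T,o)$ under $\UGW_h(P)$ as a Markov chain on types along the edges, indexed by the vertices, and then recognizing that this description is invariant under increasing the truncation depth. First I would set up notation: a vertex $v\neq o$ at depth $k$ has a well-defined type $(t_v,t'_v)\in\cT^*_{h-1}\times\cT^*_{h-1}$ where $t_v=T(u,v)_{h-1}$ (the view back toward the root's side) and $t'_v=T(v,u)_{h-1}$ (the forward subtree view), with $u$ the parent of $v$. The construction says: the type of $v$ is read off from the already-sampled $(T(u',u))_h$ and $(T(u,o'))_{h-2}$ pieces, and given the type $(t,t')$ of $v$, the enlarged neighborhood $(T(u,v))_h$ is an independent draw from $\wP_{t,t'}$. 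So the only probabilistic inputs are: (i) the law $P$ of $(T,o)_h$; (ii) for each admissible pair $(t,t')$ with $e_P(t,t')\neq 0$, the kernel $\wP_{t,t'}$.

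Second, I would establish the key local identity that makes unimodularity work. The heart of the matter is the ``mass-transport'' balance between a vertex and its neighbor. Consider an edge $\{o,v\}$ of $(T,o)$. Under $\UGW_h(P)$, the joint law of the pair $\big((T(v,o))_{h-1},(T(o,v))_{h-1}\big)=:(a,b)$ decomposes as: with probability proportional to $e_P(b,a)$ we have this pair, because the number of such neighbors of the root has $P$-expectation $e_P(b,a)=e_P(a,b)$ by admissibility. The precise claim I want is the \emph{reversibility} statement: for all $g,g'\in\cT^*_{h-1}$ and all measurable $F$ on $\cG^{**}$ that only depends on the two $h$-neighborhoods $(T,o)_h$ and $(T,v)_h$ across an edge $\{o,v\}$,
\begin{equation*}
\dE_{\UGW_h(P)}\sum_{v\stackrel{T}{\sim}o}F\big((T(o,v))_h,(T(v,o))_h\big)=\dE_{\UGW_h(P)}\sum_{v\stackrel{T}{\sim}o}F\big((T(v,o))_h,(T(o,v))_h\big).
\end{equation*}
The left side equals $\sum_{t,t'}e_P(t,t')\,\dE_{\wP_{t,t'}}\otimes\dE_{\wP_{t',t}}[\,\cdots\,]$ once one unfolds the definition of $\wP$; the symmetry $e_P(t,t')=e_P(t',t)$ plus the observation that $\wP_{t,t'}$ encodes exactly the conditional law of the $h$-ball seen from $v$ given the type, yields the identity. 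Verifying that the weight $P(\tau\cup t'_+)(1+|\{v:\tau(o,v)\simeq t'\}|)/e_P(t,t')$ is the correct Radon-Nikodym factor — i.e.\ that $\wP_{t,t'}$ is the size-biased/rerooted conditional law — is the computational core; it generalizes the classical size-biasing \eqref{eq:defwP} and is proved by a direct counting argument using the definition of $E_h$ and unimodularity of $P$ itself (which holds since $P$ is admissible, by the remark after \eqref{eq:defe} — though strictly we only need the balance $e_P(t,t')=e_P(t',t)$, not full unimodularity of $P$).

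Third, I would promote this one-edge reversibility to full unimodularity \eqref{eq:defunimod} by the standard argument: any $f:\cG^{**}\to\dR_+$ can, on a tree, be reduced to transport along the unique geodesic between the two marked vertices, and a telescoping/Markov argument along that path — using the branching-Markov structure of $(T,o)$ established in step one — reduces the global balance to the edge balance of step two. Concretely, one conditions on the geodesic from $o$ to $v$, writes the contribution as a product of transition kernels $\wP_{t,t'}$ along the path, and uses reversibility of each step; this is exactly the method by which unimodular Galton-Watson trees (the $h=1$ case) are shown unimodular, and the $h\ge 2$ case is formally identical once the type-chain picture is in place.

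Finally, for the consistency relation: fix $k\ge h$. That $(\UGW_h(P))_k$ is admissible follows from the first part (it is unimodular with finite expected degree, so by the remark after \eqref{eq:defe} its $k$-truncation is admissible). For the identity $\UGW_h(P)=\UGW_k((\UGW_h(P))_k)$, I would argue that the recursive sampling procedure is ``depth-shift invariant'': running the $\UGW_h$ recipe produces a tree whose law, restricted to any finite depth, depends only on the kernels $\wP_{t,t'}$, and these in turn are determined by $P_h$; re-deriving the analogous kernels $\wQ_{s,s'}$ (for $s,s'\in\cT^*_{k-1}$) from $Q:=(\UGW_h(P))_k$ and checking that the $\UGW_k(Q)$ recipe reconstructs the same branching-Markov chain is a matter of matching two descriptions of the same object. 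The main obstacle I anticipate is precisely this bookkeeping step — ensuring that the ``types seen at depth $k$'' under $\UGW_k(Q)$ evolve consistently with the ``types seen at depth $h$, propagated forward'' under $\UGW_h(P)$, i.e.\ that no information is lost or double-counted when the window size changes from $h$ to $k$. This is conceptually a compatibility/projective-limit statement, and I would handle it by induction on depth, showing that the conditional law of the $(\ell+1)$-generation given the $\ell$-generation agrees under both prescriptions for every $\ell$, which pins down the two laws on $\cT^*$ by Kolmogorov extension together with the local topology's determination of measures by finite-depth marginals.
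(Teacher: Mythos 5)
Your proposal follows essentially the same route as the paper. For unimodularity the paper likewise reduces to involution invariance (balance over adjacent pairs, a standard reduction quoted from Aldous--Lyons), decomposes the edge mass-transport according to the type $(t,t')$, and uses $e_P(t,t')=e_P(t',t)$ together with the fact that the two $h$-ball extensions across an edge of type $(t,t')$ are independent draws from $\wP_{t,t'}$ and $\wP_{t',t}$ --- exactly your step-2 identity. The one place where your write-up is weaker than it needs to be: you state the key identity only for functionals $F$ of the two $h$-neighborhoods across the edge, while involution invariance must hold for arbitrary Borel $f$ on $\cG^{**}$ supported on adjacent pairs, i.e.\ $f$ may depend on the entire doubly-rooted tree. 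Your ``promotion'' step addresses a different reduction (non-adjacent pairs to adjacent pairs, which is the cited standard fact), and does not by itself upgrade depth-$h$ functionals to general ones. What closes this is the observation --- implicit in your ``branching-Markov structure'' remark, but it must be stated --- that conditionally on the edge type the two full half-trees across the edge are independent and each is generated by the same type-indexed kernels; the paper formalizes this by the auxiliary doubly-rooted tree $H_{t,t'}$ (an edge $\{o,o'\}$ with $t$ attached at $o$ and $t'$ at $o'$, then independent $\wP_{t,t'}$- and $\wP_{t',t}$-continuations), whose construction is symmetric in $o,o'$, so that $\dE_{t,t'}[f(H,o',o)]=\dE_{t',t}[f(H,o,o')]$ and the balance for general $f$ follows from $e_P(t,t')=e_P(t',t)$.

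For the consistency relation your plan coincides with the paper's: reduce to $k=h+1$ by recursion, compute the kernels $\wQ_{t,t'}$ associated with $Q=(\UGW_h(P))_{h+1}$, and check they agree with the conditional law, under $\UGW_h(P)$, of the $(h+1)$-ball of a neighbor's subtree given its $h$-ball; conditional independence of the subtrees then identifies the two measures depth by depth. The ``bookkeeping'' you defer is precisely where the work lies: the paper's explicit formula for the $(h+1)$-neighborhood law of $\UGW_h(P)$ (a multinomial expression in the $\wP_{s,s'}$'s) and size-biasing identities such as $e_Q(t,t')=(n+1)\,P(t\cup s'_+)\,\wP_{s',s}(t')$, both of which use unimodularity of $\UGW_h(P)$ established in the first part. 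The admissibility of $(\UGW_h(P))_k$ is handled as you say, from unimodularity plus finite mean degree. So the proposal is a correct outline of the paper's argument, with the two computational cores (the symmetric two-sided description across an edge, and the kernel-matching at depth $h+1$) left to be filled in.
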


 \subsection{Entropy of a measure $\r\in\cP(\cG^*)$}
 It is convenient to work with uniformly distributed random graphs with a given number of edges.
 For any $n,m\in\dN$, let $\cG_{n,m}$ be the set of graphs on $V=[n]$ with $|E|=m$ edges. 
Fix $d >0$, and a sequence $m = m(n)$ such that $m / n \to d/2$, as $n\to\infty$. 
Since
$$
\ABS{\cG_{n,m} } = \binom{
{ n (n -1)/2}}{ m },
$$
an application of Stirling's formula shows that 
\begin{equation}\label{eq:Gnmlog}
\log \ABS{  \cG_{n,m} } =   m \log n + s(d)\,n + o(n),\qquad 
s(d):=\frac{d}{2}  - \frac{ d } 2 \log d.
\end{equation}
 If $\r\in\cP(\cG^*)$, define
$$
\cG_{n,m} ( \rho , \veps) = \BRA{ G \in \cG_{n,m} : \;U(G) \in B ( \rho, \veps) }, 
$$
where $B ( \rho, \veps)$ denotes  the open ball with radius $\veps$ around $\r$ with respect to the L\'evy metric on $\cP(\cG^*)$.
 For $\veps >0$, define
$$
\overline \Sigma(\rho,\veps)  =  
 \limsup_{n \to \infty} \frac{ \log  \ABS{ \cG_{n,m} ( \rho , \veps)  }  -  m  \log n  }{ n }.
$$
Since $\veps \mapsto \overline\Sigma(\rho,\veps)$ is non-decreasing, one defines
$$
\overline \Sigma (\rho ) = \lim_{\veps \to 0} \downarrow \overline\Sigma(\rho,\veps).
$$
The extended real numbers $\underline \Sigma( \rho,\veps)$ and  $\underline \Sigma( \rho)$ are defined as above, with $\limsup$ replaced by $\liminf$. If $\rho$ is such that $\underline \Sigma(\rho) = \overline \Sigma(\rho)$, we set $\Sigma(\rho) := \overline \Sigma (\rho ) = \underline \Sigma (\rho )$. The number $\Sigma(\rho)$ can be interpreted, up to an overall constant, as a microcanonical entropy associated to the state $\rho$. 
From \eqref{eq:Gnmlog}, one has that $\Si(\r)\in[-\infty,s(d)]$, whenever it is well defined. 
\begin{theorem}\label{th:entropy1}
Fix $d > 0$ and  choose a sequence $ m = m(n)$ such that $m / n \to d / 2$. For any $\rho \in \cP( \cG^*)$, the entropy $\Sigma(\rho)  \in \; \left[-\infty,  s(d)\right]$ is well defined, it is upper semi-continuous, and it does not depend on the choice of the sequence $m(n)$. Moreover, $\Sigma(\rho) = - \infty$ if at least one of the following is satisfied:
\begin{enumerate}[i)]
\item $\r$ is not unimodular 
\item $\r$ is not supported on rooted trees.
\item $\dE_{\rho} \deg_G (o) \ne d$. 
\end{enumerate}
\end{theorem}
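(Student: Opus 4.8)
The plan is to dispose of the soft structural assertions first, then reduce the three obstruction cases to combinatorial counting; the matching $\underline\Sigma(\rho)=\overline\Sigma(\rho)$ for the remaining non-degenerate $\rho$ (and the identification of $\Sigma$ with an entropy functional) relies on the configuration-model construction developed later and I take it for granted here. The inequality $\Sigma(\rho)\le s(d)$ is immediate from $\mathcal G_{n,m}(\rho,\veps)\subseteq\mathcal G_{n,m}$ together with \eqref{eq:Gnmlog}. Upper semicontinuity is purely formal: if $\rho'\in B(\rho,t/2)$ then $B(\rho',t/2)\subseteq B(\rho,t)$, hence $\overline\Sigma(\rho')\le\overline\Sigma(\rho',t/2)\le\overline\Sigma(\rho,t)$; taking the supremum over such $\rho'$ and letting $t\downarrow 0$ gives $\limsup_{\rho'\to\rho}\overline\Sigma(\rho')\le\overline\Sigma(\rho)$, and the same for $\underline\Sigma$, so once $\Sigma$ is known to be everywhere well defined it is upper semicontinuous.

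For independence of the sequence $m(n)$, let $m_1,m_2$ be admissible, so $k:=|m_1(n)-m_2(n)|=o(n)$, and suppose $m_2\ge m_1$. Each $G\in\mathcal G_{n,m_1}(\rho,\veps)$ can be completed in $\binom{N-m_1}{k}$ ways, $N=n(n-1)/2$, to an $m_2$-edge graph, and adding $k=o(n)$ edges moves $U(G)$ by $o(1)$ in $\cP(\cG^*)$ — the only delicate point is the effect on the $h$-neighborhoods of vertices close to a high-degree vertex, handled by first discarding the $o(n)$ of $\rho$-mass on $\{\deg_G(o)>K\}$, which is legitimate since $\dE_\rho\deg_G(o)=d<\infty$ in the non-degenerate case by (iii) — so for any fixed $\veps'>\veps$ at least a positive fraction of these completions lie in $\mathcal G_{n,m_2}(\rho,\veps')$. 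Since each $m_2$-edge graph arises from at most $\binom{m_2}{k}$ graphs in $\mathcal G_{n,m_1}(\rho,\veps)$ and $\binom{N-m_1}{k}/\binom{m_2}{k}=(N/m_2)^k e^{o(n)}=(n/d)^k e^{o(n)}$ with $k\log d=o(n)$, we get $\log|\mathcal G_{n,m_2}(\rho,\veps')|-m_2\log n\ge\log|\mathcal G_{n,m_1}(\rho,\veps)|-m_1\log n-o(n)$; symmetrising and letting $\veps'\downarrow\veps\downarrow 0$ removes the dependence on $m(n)$ from $\overline\Sigma$ and, identically, from $\underline\Sigma$.

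Now the obstructions. For (i): $U(G)$ is unimodular for every finite $G$ and $\cP_u(\cG^*)$ is weakly closed \cite{aldouslyons}, so a non-unimodular $\rho$ admits $\veps_0>0$ with $B(\rho,\veps_0)\cap\cP_u(\cG^*)=\eset$, whence $\mathcal G_{n,m}(\rho,\veps_0)=\eset$ for all $n$ and $\overline\Sigma(\rho)=-\infty$. For (iii): every $G\in\mathcal G_{n,m}$ has $\dE_{U(G)}\deg_G(o)=2m/n\to d$. If $\dE_\rho\deg_G(o)>d$ (possibly $\infty$), lower semicontinuity of $\rho'\mapsto\dE_{\rho'}\deg_G(o)$ (expectation of a nonnegative continuous function) forces $2m/n>d$ for $\veps$ small and $n$ large — impossible — so again the ball is eventually empty. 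If $\dE_\rho\deg_G(o)=d'<d$, fix $\eta>0$, choose $K$ with $\dE_\rho[\deg_G(o)\wedge K]>d'-\eta$, and observe that for $\veps$ small and $n$ large any $G\in\mathcal G_{n,m}(\rho,\veps)$ satisfies $\sum_v(\deg_G(v)\wedge K)<(d'+\eta)n$ while $\sum_v\deg_G(v)=2m>(d-\eta)n$, so at least $c_1 n$ edges, $c_1=(d-d'-2\eta)/2$, are incident to $H=\{v:\deg_G(v)>K\}$, which has $|H|\le 2m/K\le\eta_Kn$ with $\eta_K=(d+1)/K$. Counting such graphs — $e^{O(n)}$ choices of $H$, $\binom{\eta_Kn^2}{c_1n}$ placements of the forced edges, $\binom{N}{m-c_1n}$ for the rest — gives $\log|\mathcal G_{n,m}(\rho,\veps)|-m\log n\le c_1n\log(\eta_K/c_1)+O(n)$; since $c_1\to(d-d')/2>0$ and $\eta_K\to 0$ as $\eta\downarrow 0$, letting $\eta\downarrow 0$ and then $\veps\downarrow 0$ forces $\overline\Sigma(\rho)=-\infty$.

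Finally (ii), which carries the main difficulty. By (iii) we may assume $\dE_\rho\deg_G(o)=d<\infty$. A mass-transport argument shows a unimodular $\rho$ not supported on $\cT^*$ has $\dP_\rho(o\text{ on a cycle})>0$: otherwise, applying \eqref{eq:defunimod} to $f(G,u,v)=\IND(v\text{ on a cycle})$ gives $\dE_\rho\sum_v\IND(v\text{ on a cycle})=\dE_\rho[|V|\,\IND(o\text{ on a cycle})]=0$, so $\rho$-a.e.\ component is a forest. The events $\{g:g_h\text{ has a cycle through the root}\}$ are clopen and increase to $\{o\text{ on a cycle}\}$, so some such configuration has $\rho$-mass $>\delta>0$ for some $h$; then for $\veps$ small any $G\in\mathcal G_{n,m}(\rho,\veps)$ has $\ge\tfrac\delta2 n$ vertices on cycles of length $\le 2h$, hence $\ge c_2 n$ cycles of length $\le 2h$ with $c_2=\delta/(4h)$. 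It remains to show that the number of $G\in\mathcal G_{n,m}$ with $\ge c_2 n$ cycles of a fixed bounded length is at most $\exp(m\log n-\Omega(n\log n))$; I would prove this by conditioning on the empirical degree sequence (within $o(1)$ of $\rho_1$, so fixing it costs only $e^{O(n)}$), using that a uniform graph with given degree sequence is the configuration model conditioned on being simple, and that in the latter the number of cycles of bounded length is $o(n)$ with probability $1-e^{-\Omega(n\log n)}$ — by a first-moment/switching estimate, where one again exploits uniform integrability of the degrees (from $\dE_{U(G)}\deg_G(o)\to\dE_\rho\deg_G(o)<\infty$) to dispose of the $o(n)$ vertices of very high degree. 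Sending $\veps\downarrow 0$ then yields $\overline\Sigma(\rho)=-\infty$. The genuinely hard step is this last estimate: one must show that a linear number of short cycles — or, in (iii), a linear amount of degree concentrated on a sublinear set of vertices — costs entropy of order $n\log n$ and not merely of order $n$, and turning the configuration-model heuristic into an upper bound that survives the sum over degree sequences and the absence of a uniform bound on $\sum_i d_i^2$ is where the real work lies.
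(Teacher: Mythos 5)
Your treatment of (i), of the case $\dE_\rho\deg_G(o)\neq d$, and of the soft facts (the bound by $s(d)$, upper semicontinuity) matches the paper; your $<d$ argument is a direct-counting version of the paper's Chernoff estimate and works. Two points diverge. First, the paper does not prove independence of $m(n)$ by edge addition: it obtains it, together with well-definedness, from the identity $\Sigma(\rho)=\lim_h\overline J_h(\rho_h)$ of Theorem \ref{th:entropy2}, whose right-hand side manifestly does not involve $m(n)$. Your edge-addition route has exactly the soft spot you flag: adding one edge alters $[G,w]_h$ for every $w$ within distance $h$ of an endpoint, and $\frac1n\sum_w|B(w,h)|$ is not controlled by weak convergence of $U(G)$ alone (it can diverge even when the mean degree converges to $d$), so "a positive fraction of completions stay in the ball" needs a real argument; truncating the degree at the root does not suffice.

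The genuine gap is in (ii). From "$\tfrac{\delta}{2}n$ vertices on cycles of length $\le 2h$" you pass to "$c_2 n$ cycles of length $\le 2h$", but these cycles may overlap heavily (e.g.\ $\Theta(n)$ short cycles all through two hub vertices), and you then defer the entire entropy cost to an unproved configuration-model tail estimate that you yourself call "the real work" — an estimate which is moreover delicate precisely because degree sequences compatible with $B(\rho,\veps)$ can have $\sum_i d_i^2$ superlinear, making the expected number of short cycles linear. The paper avoids the configuration model here entirely via one extra idea: by local finiteness there is a $k$ with $\dP_\rho\bigl(o\text{ lies on an }\ell\text{-cycle all of whose vertices have degree}\le k\bigr)>0$, and the mass-transport is applied to $f(G,u,v)=\IND(\dist(u,v)\le t,\ v\text{ on a }(k,\ell)\text{-cycle})$. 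The degree bound $k$ is what buys \emph{vertex-disjointness}: a positive density of vertices on $(k,\ell)$-cycles yields $\lceil\delta n\rceil$ mutually disjoint $\ell$-cycles by greedy extraction. Then the elementary bound $|\cG_{n,m}(\rho,\veps)|\le C_{n,\ell}\,|\cG_{n,m-\ell\lceil\delta n\rceil}|$ with $C_{n,\ell}\le n^{\ell\lceil\delta n\rceil}/\lceil\delta n\rceil!$ gives $\log|\cG_{n,m}(\rho,\veps)|\le(\tfrac d2-\delta)\,n\log n+o(n\log n)$; the $1/\lceil\delta n\rceil!$ supplies the decisive $-\delta n\log n$. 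Without disjointness the map "delete the cycles and record their placement" is unavailable, and your route forces you to actually carry out the uniform configuration-model estimate over all near-$\rho_1$ degree sequences — which is precisely what the $(k,\ell)$-device renders unnecessary.
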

Notice that the definition of $\Sigma(\rho)$ depends on 
the parameter $d$. For simplicity, we do not write explicitly this dependence. 
In view of Theorem \ref{th:entropy1}(iii), to avoid trivialities, unless otherwise stated, $\Sigma(\rho)$ will refer to the value at $d = \dE_\rho \deg_G ( o )$ (provided that the latter is finite). 
The next theorem computes the actual value of $\Sigma(\rho)$ for unimodular Galton-Watson trees and gives an expression for $\Sigma(\rho)$ for all 
$\r\in \cP_u (\cT^*)$. Moreover, it shows that unimodular Galton-Watson trees maximize entropy under a $h$-neighborhood marginal constraint. 

Let us introduce some additional notation. For any $P \in \cP (\cT^*_h)$, define the Shannon entropy 
$$H(P)=-\sum_{t\in\cT^*_h}P(t)\log P(t).$$
For $h \in \dN$, call $\cP_h$ the set of all $P \in \cP (\cT^*_h)$, with $P$ admissible such that $H(P)<\infty$ and $\dE_P \SBRA{  \deg_T(o) \log \PAR { \deg_T (o)} } < \infty$\footnote{We shall actually see with Lemma \ref{le:phgap2} below that $P \in \cP_h$ is equivalent to $P$ admissible and $\dE_P \SBRA{  \deg_T(o) \log \PAR { \deg_T (o)} } < \infty$.}. 
For $P\in\cP_h$, let $\pi_P$ denote the probability on $\cT^*_{h-1}\times \cT^*_{h-1}$ defined by 
$$
\pi_P(s,s') = \frac1{d}\,e_P(s,s')\,,\;\qquad (s,s') \in\cT^*_{h-1}\times \cT^*_{h-1}, 
$$
where $ d = \dE_{P} \deg_G (o)$, $e_P (s,s')= \dE_P E_h(s,s')$, and $E_h(s,s')$ is defined in \eqref{eq:defE}.
We write $H(\pi_P)$ for the Shannon entropy of $\pi_P$:
$$H(\pi_P)=-\!\!\!\!\!\!\!\sum_{(t,t')\in\cT^*_{h-1}\times\cT^*_{h-1}}\pi_P(t,t')\log \pi_P(t,t').$$
\begin{theorem}\label{th:entropy2}
Fix $h \in \dN$. 
The expression
\begin{equation}\label{enh0}
J_h(P) =  - s(d)+
H (P) -\frac{d}2\,H(\pi_P) \;-\!\!\!\!\!\! \sum_{(s,s')\in\cT^*_{h-1}\times \cT^*_{h-1}}   \dE_P \log (E_h(s,s')!),
\end{equation}
defines a function 
$J_h:\cP_h\mapsto [-\infty,s(d)]$, satisfying 
$$\Sigma(\UGW_h(P)) = J_h(P),$$ for all $P\in\cP_h$.
Define $\overline J_h:   \cP (\cT^*_h)\mapsto [-\infty,s(d)]$ by $\overline J_h (P) = J_h(P)$ if $P\in\cP_h$,
and $\overline J_h(P)=-\infty$ if $P\notin\cP_h$.
If $\rho \in \cP_{u}(\cT^*)$, then for all $h\in\dN$,
\begin{equation}\label{upperenh0}
\Sigma(\rho) \leq \overline J_h(\r_h),
\end{equation} 
and, if $\rho_1$ has finite support, the inequality is strict unless
$\r= \UGW_h(\r_h)$.
Finally, for any $\rho \in \cP_{u}(\cT^*)$, $\overline J_h(\r_h)$ is non-increasing in $h\in\dN$, and 
\begin{equation}\label{sigmalim}
\Sigma( \rho) =  \lim_{h\to\infty}   \downarrow \overline J_h( \rho_h). 
\end{equation}
\end{theorem}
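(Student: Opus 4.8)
The plan is to establish the four assertions in the order listed, with the computation of $\Sigma(\UGW_h(P))$ being the analytic heart of the argument and everything else following from it by soft arguments. First, I would prove the identity $\Sigma(\UGW_h(P)) = J_h(P)$ for $P\in\cP_h$. The natural route is a combinatorial counting of graphs in $\cG_{n,m}(\UGW_h(P),\veps)$: a graph $G$ with $U(G)_h$ close to $P$ is, to leading exponential order in $n$, a union of a forest together with $o(n)$ extra edges (one uses that $\UGW_h(P)$ is tree-supported, so the constraint $U(G)\in B(\UGW_h(P),\veps)$ forces all but $o(n)$ of the local neighborhoods to be trees; this is where Theorem \ref{th:entropy1}(ii) type reasoning enters). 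One then counts the number of ways to build such a forest realizing the prescribed $h$-neighborhood statistics $P$ and the prescribed edge-type statistics $\pi_P$. This is exactly what the new configuration model of the paper is designed to do: the number of such labeled forests is, to exponential order, $n!\,$ times the probability that the configuration model produces a simple forest with the right profile, and computing that probability produces the three correction terms $H(P)$, $-\tfrac d2 H(\pi_P)$, and $-\sum_{s,s'}\dE_P\log(E_h(s,s')!)$, the last coming from the multiplicities of neighbors of a given type at each vertex (a half-edge pairing overcounts by $\prod E_h(s,s')!$). Combined with $\log|\cG_{n,m}| = m\log n + s(d)n + o(n)$ from \eqref{eq:Gnmlog}, one gets $J_h(P) = -s(d) + H(P) - \tfrac d2 H(\pi_P) - \sum\dE_P\log(E_h(s,s')!)$ after the $m\log n$ terms cancel. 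The matching lower and upper bounds both require care: the upper bound needs a covering/union-bound argument over admissible profiles close to $P$ together with continuity of $J_h$, and the lower bound needs a second-moment or concentration estimate showing the configuration model output is simple and forest-like with non-negligible probability, which is where the assumption $\dE_P[\deg_T(o)\log\deg_T(o)]<\infty$ is used (to control the number of multi-edges).

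Second, the upper bound \eqref{upperenh0} for general $\rho\in\cP_u(\cT^*)$: here I would argue that if $U(G)\in B(\rho,\veps)$ then in particular $U(G)_h$ is close to $\rho_h$, so $\cG_{n,m}(\rho,\veps)\subseteq \cG_{n,m}(\rho_h,\veps')$ in an appropriate sense, giving $\Sigma(\rho)\le \overline\Sigma(\rho_h)$; and then one identifies $\overline\Sigma$ restricted to depth-$h$ measures. The point is that among all $\rho'\in\cP_u(\cT^*)$ with $\rho'_h = \rho_h$, the entropy is maximized by $\UGW_h(\rho_h)$, because $\UGW_h$ is precisely the maximal-entropy extension of a depth-$h$ profile — conditionally on the depth-$h$ neighborhood, it makes the subtrees as independent as the unimodularity constraint allows. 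Hence $\Sigma(\rho)\le \Sigma(\UGW_h(\rho_h)) = J_h(\rho_h) = \overline J_h(\rho_h)$ when $\rho_h\in\cP_h$, and the inequality is trivial (the right side being $-\infty$ or $s(d)$) otherwise. For the strictness when $\rho_1$ has finite support and $\rho\ne\UGW_h(\rho_h)$: the entropy-maximization is strict because Shannon entropy is strictly concave, so equality in the subtree-independence step forces the conditional law of each subtree given its type to be exactly $\wP_{t,t'}$, i.e.\ $\rho=\UGW_h(\rho_h)$; the finite-support hypothesis on $\rho_1$ is what makes the relevant entropies finite and the strict-concavity argument rigorous.

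Third, monotonicity of $h\mapsto \overline J_h(\rho_h)$: by Proposition \ref{prop:UGW}, $\UGW_h(\rho_h) = \UGW_k((\UGW_h(\rho_h))_k)$ for $k\ge h$, and applying \eqref{upperenh0} to the unimodular measure $\UGW_h(\rho_h)$ at depth $k$ gives $J_h(\rho_h) = \Sigma(\UGW_h(\rho_h)) \le \overline J_k((\UGW_h(\rho_h))_k)$; one then checks $(\UGW_h(\rho_h))_k$ and $\rho_k$ have the same relation under $\overline J_k$ — actually the cleaner statement is $\overline J_k(\rho_k)\le \overline J_h(\rho_h)$ directly because $\rho_k$ refines $\rho_h$ and $J_k$ of a refinement is bounded by $J_h$; I would prove this by the entropy-maximization characterization ($\UGW_k(\rho_k)$ has $h$-marginal $\rho_h$, so its entropy is at most that of the maximizer $\UGW_h(\rho_h)$, giving $\overline J_k(\rho_k)\le \overline J_h(\rho_h)$). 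Finally, for \eqref{sigmalim}: the inequality $\Sigma(\rho)\le \lim_h\downarrow \overline J_h(\rho_h)$ is immediate from \eqref{upperenh0}. For the reverse, one uses that $\UGW_h(\rho_h)\weak\rho$ as $h\to\infty$ (the depth-$h$ marginals agree, so the local weak limit of the approximations is $\rho$), together with upper semi-continuity of $\Sigma$ from Theorem \ref{th:entropy1} and the identity $\Sigma(\UGW_h(\rho_h)) = \overline J_h(\rho_h)$: this gives $\limsup_h \overline J_h(\rho_h) = \limsup_h \Sigma(\UGW_h(\rho_h)) \le \Sigma(\rho)$, and since the sequence is non-increasing the limit equals $\Sigma(\rho)$.

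The main obstacle I expect is the first step — the precise exponential-order count of forests with a prescribed depth-$h$ neighborhood profile via the configuration model, specifically proving the \emph{lower} bound that the configuration-model construction succeeds (produces a simple tree-like graph with the target statistics) with probability $e^{o(n)}$. This requires (a) showing the probability of simplicity does not decay exponentially, which needs the $\dE_P[\deg_T(o)\log\deg_T(o)]<\infty$ control on high-degree vertices and a careful treatment of the fact that we are matching half-edges carrying depth-$h$ type labels rather than bare stubs; and (b) showing that the resulting graph is genuinely a forest (no macroscopic cycles) and has the right local profile with high probability, which is a concentration statement about the configuration model. The bookkeeping of the $E_h(s,s')!$ overcounting factors — distinguishing the $E_h$ neighbors of a given type at a vertex when they are in fact unordered — is the delicate point that produces the last term in $J_h$, and getting its sign and multiplicity exactly right is where I would be most careful.
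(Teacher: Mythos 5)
Your overall architecture matches the paper's: the identity $\Sigma(\UGW_h(P))=J_h(P)$ via the generalized configuration model, the upper bound \eqref{upperenh0} by directly counting all graphs whose $h$-neighborhood profile is close to $\rho_h$, monotonicity from the consistency of the $\UGW_k$, and \eqref{sigmalim} from upper semi-continuity of $\underline\Sigma$ together with $\UGW_h(\rho_h)\weak\rho$. However, your argument for strictness of \eqref{upperenh0} when $\rho\ne\UGW_h(\rho_h)$ is a genuine gap. What must be shown is that $\overline J_k(\rho_k)<\overline J_h(\rho_h)$ for some $k>h$ whenever $\rho_k\ne[\UGW_h(\rho_h)]_k$. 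The increment $J_h(\rho_h)-J_{h+1}(\rho_{h+1})$ is not a single relative entropy but the difference $H(Q\,|\,Q^*)-\tfrac d2\, H(\pi_Q\,|\,\pi_{Q^*})$ with $Q=\rho_{h+1}$ and $Q^*=[\UGW_h(\rho_h)]_{h+1}$ (Remark \ref{altrem}); strict concavity of Shannon entropy makes each relative entropy strictly positive when $Q\ne Q^*$ but says nothing about their difference, and the inequality $\tfrac d2 H(\pi_Q\,|\,\pi_{Q^*})\le H(Q\,|\,Q^*)$ is itself nontrivial --- indeed the paper obtains it only as a \emph{consequence} of the monotonicity, so it cannot be used as an input. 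The paper proves strictness probabilistically: by Corollary \ref{cor:treelike}, $J_k(\rho_k)-J_h(\rho_h)$ equals the exponential rate of $\dP\big(U(\widehat G_n)_k=U(\G_n)_k\big)$ for $\widehat G_n$ uniform on $\cG(\bD,2h+1)$, and the Azuma-type concentration inequality \eqref{eq:concunifCM} together with Proposition \ref{prop:convlocCM} shows this probability decays exponentially whenever $\rho_k\ne[\UGW_h(\rho_h)]_k$. Without a substitute for this concentration step your strictness claim is unsupported.

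Two smaller points. You repeatedly describe the graphs being counted as ``a forest plus $o(n)$ extra edges''; for $d>2$ this is impossible (a forest on $n$ vertices has fewer than $n$ edges while $m\sim dn/2$), and the correct objects are $h$-tree-like graphs, i.e.\ graphs with no cycles of length at most $2h+1$ --- these contain many long cycles, and the configuration-model computation (Theorem \ref{th:Poilim}) conditions only on the absence of short ones. Also, the identity $\Sigma(\UGW_h(P))=J_h(P)$ for $P$ with infinite support requires a separate approximation argument (truncation of high degrees, mixing with a regular tree to restore the mean degree, and semicontinuity estimates for $H(\pi_P)$ and $\sum\dE_P\log E_h!$), which your proposal does not address; this is where the hypothesis $\dE_P[\deg_T(o)\log\deg_T(o)]<\infty$ actually enters, rather than in controlling multi-edges in the configuration model, which the paper runs under bounded degrees.
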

In Remark \ref{altrem} below we provide an alternative expression for $J_h(P)$ in terms of relative entropies.
Specializing  to the case $h=1$,  we obtain the following corollary of Theorem \ref{th:entropy2}. 

\begin{corollary}\label{cor:entropy}
If $P\in \cP( \dZ_+)$ has mean $d$, then
$$
\Sigma(\UGW_1(P)) =  s(d)
- H ( P \,|\, \POI(d) ),
$$
where $\POI(d)$ stands for Poisson distribution with mean $d$, and $H(\cdot\,|\,\cdot)$ is the relative entropy.
\end{corollary}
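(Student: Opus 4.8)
The plan is to read off $\Sigma(\UGW_1(P))$ directly from Theorem~\ref{th:entropy2} by specializing \eqref{enh0} to $h=1$, and then to recognize the resulting expression as $s(d)-H(P\,|\,\POI(d))$ through an elementary manipulation of the relative entropy. When $h=1$ we have $\cT^*_0=\{o\}$, the single rooted tree reduced to its root, so the index set $\cT^*_{h-1}\times\cT^*_{h-1}$ appearing in \eqref{enh0} is the singleton $\{(o,o)\}$, and $E_1(o,o)=\deg_T(o)$. Consequently $\pi_P$ is the Dirac mass at $(o,o)$, so that $H(\pi_P)=0$. Identifying $\cT^*_1$ with $\dZ_+$ via the number of children of the root, $H(P)$ is just the Shannon entropy of $P$ viewed as a law on $\dZ_+$, the last sum in \eqref{enh0} reduces to $\dE_P\log(\deg_T(o)!)=\sum_{k\geq0}P(k)\log(k!)$, and $d=\dE_P\deg_T(o)$ equals the mean of $P$. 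Therefore, for $P\in\cP_1$,
\begin{equation*}
\Sigma(\UGW_1(P))=J_1(P)=-s(d)+H(P)-\sum_{k\geq0}P(k)\log(k!).
\end{equation*}

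Next I would expand the right-hand side of the claimed identity. Since $\POI(d)(k)=e^{-d}d^k/k!$ has full support on $\dZ_+$, the relative entropy $H(P\,|\,\POI(d))$ is well defined in $[0,\infty]$, and
\begin{equation*}
H(P\,|\,\POI(d))=\sum_{k\geq0}P(k)\Big(\log P(k)+d-k\log d+\log(k!)\Big)=-H(P)+d-d\log d+\sum_{k\geq0}P(k)\log(k!),
\end{equation*}
using $\dE_P\deg_T(o)=d$. Since $d-d\log d=2s(d)$, comparing the two displays gives $s(d)-H(P\,|\,\POI(d))=-s(d)+H(P)-\sum_{k}P(k)\log(k!)=\Sigma(\UGW_1(P))$, which is the asserted formula whenever $P\in\cP_1$.

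It then remains to treat the boundary case $P\notin\cP_1$ and to check that the finiteness conventions match up. Here I would note that $H(P)<\infty$ always holds, since the geometric law is the maximum-entropy distribution on $\dZ_+$ with a prescribed finite mean (equivalently, $0\leq H(P)\leq -\sum_k P(k)\log Q(k)<\infty$ with $Q$ geometric of mean $d$); and that, by Stirling's formula $\log k!=k\log k-k+\tfrac12\log(2\pi k)+O(1/k)$ together with Jensen's inequality $\sum_k P(k)\log k\leq\log d$, the series $\sum_k P(k)\log(k!)$ and $\dE_P[\deg_T(o)\log\deg_T(o)]$ are finite or infinite together. Hence, by the footnote following the definition of $\cP_h$, the condition $P\in\cP_1$ is equivalent to $\sum_k P(k)\log(k!)<\infty$, which by the computation of the previous paragraph is equivalent to $H(P\,|\,\POI(d))<\infty$. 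When $P\notin\cP_1$ the left-hand side equals $\Sigma(\UGW_1(P))=-\infty$: indeed $\UGW_1(P)$ is unimodular and supported on trees by Proposition~\ref{prop:UGW}, so $\Sigma(\UGW_1(P))\leq\overline J_1\big((\UGW_1(P))_1\big)=\overline J_1(P)=-\infty$ by \eqref{upperenh0} and $(\UGW_1(P))_1=P$; and the right-hand side is $s(d)-H(P\,|\,\POI(d))=-\infty$ as well. This completes the argument. The only mildly delicate point is this last bookkeeping at the boundary, and it is entirely routine; the substance of the corollary is the algebraic identity between $J_1(P)$ and $s(d)-H(P\,|\,\POI(d))$.
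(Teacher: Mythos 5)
Your proposal is correct and follows essentially the same route as the paper's proof: specialize the formula for $J_1(P)$ (Proposition \ref{pro:enh} / Theorem \ref{th:entropy2}) to $h=1$, use $|\cT^*_0|=1$ to get $H(\pi_P)=0$ and $E_1(o,o)=\deg_T(o)$, and then rewrite $-s(d)+H(P)-\sum_k P(k)\log(k!)$ as $s(d)-H(P\,|\,\POI(d))$. Your explicit bookkeeping for the boundary case $P\notin\cP_1$ (via Lemma \ref{le:phgap}/inequality \eqref{upperenh0} and the equivalence with $H(P\,|\,\POI(d))=\infty$) is a small addition that the paper leaves implicit, and it is handled correctly.
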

In particular,  the standard Galton-Watson tree $\rho = \UGW_1( \POI(d))$ 
maximizes the entropy $\Sigma(\r)$ among all measures $\r$ with mean degree $d$.   

As a byproduct of our analysis, 
we will also obtain an alternative proof of the Bowen-Elek Theorem \cite{Bowen,elek2010,BLS}. 

\begin{corollary}\label{cor:entropy1}
If $\rho \in \cP_{u} (\cT^*)$, then $\rho$ is sofic. 
\end{corollary}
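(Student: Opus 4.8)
The goal is to show that every $\rho\in\cP_u(\cT^*)$ is sofic, and the natural route here is to exploit the machinery already built around the measures $\UGW_h(P)$ together with Theorem~\ref{th:entropy2}. The plan is the following. First I would observe that by Theorem~\ref{th:entropy2}, for any $\rho\in\cP_u(\cT^*)$ with finite expected degree we have the limit identity $\Sigma(\rho)=\lim_{h\to\infty}\downarrow\overline J_h(\rho_h)$, and in particular $\Sigma(\rho)>-\infty$ as soon as $\rho_h\in\cP_h$ for all $h$, i.e.\ as soon as $\rho$ satisfies the mild integrability condition $\dE_\rho[\deg_T(o)\log\deg_T(o)]<\infty$. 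When $\Sigma(\rho)>-\infty$, the very definition of $\Sigma$ forces $\cG_{n,m}(\rho,\veps)$ to be non-empty for all $\veps>0$ and all large $n$: indeed if it were empty along a subsequence then $\underline\Sigma(\rho,\veps)=-\infty$, contradicting $\underline\Sigma(\rho)=\Sigma(\rho)>-\infty$ (using that $\overline\Sigma(\rho)=\underline\Sigma(\rho)$, which is part of Theorem~\ref{th:entropy1}). Picking for each $k$ an $\veps_k\downarrow 0$ and a graph $G_{n_k}\in\cG_{n_k,m(n_k)}(\rho,\veps_k)$ with $U(G_{n_k})\in B(\rho,\veps_k)$ then produces a sequence of finite graphs with $U(G_{n_k})\weak\rho$, which is exactly the statement that $\rho$ is sofic.

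Next I would reduce the general case to this integrable case. Given an arbitrary $\rho\in\cP_u(\cT^*)$, one can truncate the degrees: for $L\in\dN$ let $\rho^{(L)}$ be obtained from $\rho$ by some unimodularity-preserving surgery that caps the degree of every vertex at $L$ — for instance, deleting every edge at least one of whose endpoints has degree exceeding $L$ in $(G,o)$, and re-rooting on the component of the root (this operation is manifestly unimodular since it is defined by a symmetric local rule on edges, and it is a standard fact that it maps $\cP_u(\cT^*)$ into itself). Each $\rho^{(L)}$ has uniformly bounded degrees, hence trivially lies in $\cP_h$ for all $h$, so by the previous paragraph $\rho^{(L)}$ is sofic. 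Since $\rho^{(L)}\weak\rho$ as $L\to\infty$ (the surgery only affects the event $\{\deg\text{ of some vertex within distance }h\text{ of }o\text{ exceeds }L\}$, whose $\rho$-probability tends to $0$ for each fixed $h$) and since \emph{sofic measures form a closed subset of $\cP(\cG^*)$} — a fact recalled in the excerpt — we conclude that $\rho$ itself is sofic.

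A cleaner variant, if one wishes to avoid the truncation surgery, is to note that $\cP_u(\cT^*)\cap\{\dE_\rho[\deg_T(o)\log\deg_T(o)]<\infty\}$ is dense in $\cP_u(\cT^*)$ for the weak topology and then invoke closedness of the sofic set; the degree-truncation construction is simply one concrete way to exhibit such an approximating sequence while staying inside $\cP_u(\cT^*)$, and it also keeps the expected degree finite as required by Theorem~\ref{th:entropy1}(iii). Either way, the logical skeleton is: (a) integrability $\Rightarrow\Sigma(\rho)>-\infty\Rightarrow\cG_{n,m}(\rho,\veps)\neq\eset\Rightarrow$ sofic; (b) approximate a general unimodular tree measure from within $\cP_u(\cT^*)$ by integrable ones; (c) use closedness of sofic measures.

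The main obstacle I anticipate is step~(b): one must produce an approximating sequence that is simultaneously (i) unimodular, (ii) supported on trees, (iii) of finite expected degree (so that $\Sigma$ is not automatically $-\infty$ by Theorem~\ref{th:entropy1}(iii)), and (iv) weakly convergent to $\rho$, and one must verify that the degree-truncation operation genuinely preserves unimodularity and the tree property. This verification is a routine application of the mass-transport principle \eqref{eq:defunimod} to the local edge-deletion rule, but it is the one place where care is needed; everything else follows formally from Theorems~\ref{th:entropy1} and~\ref{th:entropy2} together with the already-quoted closedness of the sofic set. Note also that the hypothesis of Corollary~\ref{cor:entropy1} places no moment assumption on $\rho$, so the truncation argument is not optional — it is what bridges the gap between the integrability needed to run the entropy argument and the generality claimed in the statement.
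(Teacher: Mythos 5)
Your overall skeleton (approximate $\rho$ from within a tractable class, then use closedness of the sofic set; handle unbounded degrees by a unimodularity-preserving degree truncation) is the same as the paper's, and your truncation step is essentially identical to the forest construction $F_n$ used there. But there is a genuine gap in your step (a): you assert that $\Sigma(\rho)>-\infty$ ``as soon as $\rho_h\in\cP_h$ for all $h$'' (in particular for bounded-degree $\rho$), and your entire route to non-emptiness of $\cG_{n,m}(\rho,\veps)$ rests on this. Theorem \ref{th:entropy2} does not give it: it only provides the \emph{decreasing} limit $\Sigma(\rho)=\lim_h\downarrow \overline J_h(\rho_h)$ together with the upper bounds $\Sigma(\rho)\leq \overline J_h(\rho_h)$; finiteness of each $\overline J_h(\rho_h)$ in no way prevents the limit from being $-\infty$ (by Remark \ref{altrem}, $\Sigma(\rho)=s(d)-\sum_k\Delta_k(\rho)$ with every $\Delta_k(\rho)\geq 0$ and strictly positive unless $\rho_k=[\UGW_{k-1}(\rho_{k-1})]_k$, so the series can diverge). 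Soficity is a much weaker property than $\Sigma(\rho)>-\infty$ — the latter demands $e^{m\log n+cn}$ microstates, the former only one approximating sequence — so you are trying to deduce the corollary from an unproven (and, for general unimodular tree measures, implausible) strictly stronger statement.

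The repair stays within your framework but shifts where the entropy bound is applied: do not argue that $\Sigma(\rho)>-\infty$; instead apply your step (a) to $\rho^{h}:=\UGW_h(\rho_h)$ with $\rho_h$ finitely supported, for which Proposition \ref{pro:enh} gives $\Sigma(\rho^h)=J_h(\rho_h)>-\infty$ (all terms in \eqref{enh0} are finite sums), hence $\cG_{n,m}(\rho^h,\veps)\neq\eset$ and $\rho^h$ is sofic. Then use $\UGW_h(\rho_h)\weak\rho$ as $h\to\infty$ together with your truncation and the closedness of the sofic set. This is, modulo packaging, the paper's proof — except that the paper obtains soficity of $\UGW_h(P)$ without any entropy at all, directly from the generalized configuration model (Lemma \ref{le:existenceG} produces graphs with $U(\G_n)_h\weak P$, and Theorem \ref{th:convlocCM} upgrades this to $U(G_n)\weak\UGW_h(P)$); indeed the lower bound in Proposition \ref{pro:enh} is itself proved via that construction, so the configuration-model route is both the shorter and the logically prior one.
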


We observe finally that, from its definition,  the map $\Sigma: \rho \mapsto \Sigma(\rho)$ is easily seen to be upper semi-continuous for the local weak topology (see Lemma \ref{le:lscSi}). In 
 Proposition \ref{prop:discSi} below, we will however prove that $\Si$ fails to be continuous at any $\rho = \UGW_1 (P)$ whenever $P \in \cP(\dZ_+)$ has finite support and satisfies $P(0) = P(1) = 0$, $P(2) < 1$.

\subsection{Large deviations 
of uniform graphs with given degrees}
Given a 
 vector $\bd  \in \dZ_+^n$, let $\cG(\bd)$ denote the set of 
 graphs $G = ([n],E)$ such that $\bd$ is the degree sequence of $G$, i.e.\ if $\bd=(d(1),\dots,d(n))$, then for all $v \in [n]$, $\deg_G(v) = d(v)$. 
  Consider a sequence $\bd^{(n)}$, $n\in\dN$, of degree vectors $(d^{(n)}(1),\dots,d^{(n)}(n))$ such that, for some fixed $\theta\in\dN$, and $P\in\cP(\dZ_+)$: 
   \begin{enumerate}[(C1)]
\item $\sum_{v=1}^nd^{(n)}(v)$ is even,
\item $\max_{1\leq v \leq n} d^{(n)}(v) \leq \theta$,  
\item $\frac 1 n \sum_{v\in [n]} \delta_{d^{(n)}(v)} \weak P$,  
\end{enumerate}
where $ \weak$ denotes weak convergence in $\cP(\dZ_+)$. A consequence of Erd\H{o}s and Gallai \cite{Erdos1960} is that if (C1)-(C3) above are satisfied, then $\cG(\bd^{(n)})$ is not empty for all $n$ large enough.
We  shall consider a random graph $G_n$ sampled uniformly from $\cG(\bd^{(n)})$. Models of this type are well known in the random graph literature; see e.g.\  Molloy and Reed \cite{MR1370952}. In particular, it is a folklore fact that 
almost surely the neighborhood distribution $U(G_n)$ defined in \eqref{eq:defUG} is weakly convergent to $\UGW_1(P)$; see also Theorem \ref{th:convlocCM} below for a more general statement. One of our main results concerns the large deviations of $U(G_n)$.
Here and below whenever we say that $U(G_n)$ satisfies the large deviation principle (LDP) in $\cP(\cG^*)$ with speed $n$ and good rate function $I$, we mean that 
 the function $I:\cP(\cG^*)\mapsto[0,\infty]$ is lower semi-continuous with compact level sets, and for every Borel set $B\subset\cP(\cG^*)$
 \begin{equation}\label{eq:defLDP}
-\inf_{\r \in B^\circ }I(\r )\leq \liminf _{n\rightarrow
\infty}\frac{1}{n}
\log \dP\left(  U(G_n) \in   B \right) \leq\limsup
_{n\rightarrow\infty}\frac{1}{n}\log \dP\left(  U(G_n) \in B  \right) \leq -\inf _{ \r  \in \overline{B}}I(\r ),
\end{equation}
where $B^\circ$ denotes the interior of $B$ and
$\overline{B}$ denotes the closure of $B$.

\begin{theorem}\label{th:LDPCM}
Let $\bd^{(n)}$ be a sequence satisfying conditions $(C1)-(C3)$ above. 
Let $G_n$ be uniformly distributed on $\cG(\bd^{(n)})$. Then $U(G_n)$ satisfies the LDP in $\cP(\cG^*)$ with speed $n$ and good rate function 
$$
I ( \rho ) = \left\{ \begin{array}{ll}
\Sigma(\UGW_1(P)) - \Sigma(\rho) & \hbox{ if $\rho_1 = P$}, \\
\infty & \hbox{ otherwise.}
\end{array}\right.
$$
\end{theorem}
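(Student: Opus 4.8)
\textbf{Proof proposal for Theorem \ref{th:LDPCM}.}

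The plan is to transfer the large deviation principle from the counting estimates encoded in the entropy functional $\Sigma$ to the uniform measure on $\cG(\bd^{(n)})$. The starting point is the observation that, for $G_n$ uniform on $\cG(\bd^{(n)})$ and any $\rho\in\cP(\cG^*)$ and $\veps>0$,
\begin{equation*}
\dP\big(U(G_n)\in B(\rho,\veps)\big)=\frac{\big|\{G\in\cG(\bd^{(n)}):\,U(G)\in B(\rho,\veps)\}\big|}{|\cG(\bd^{(n)})|}.
\end{equation*}
So the first step is to count $|\cG(\bd^{(n)})|$ on the exponential scale: under (C1)--(C3), with $m=m(n)=\tfrac12\sum_v d^{(n)}(v)$ so that $m/n\to d/2$ with $d=\dE_P\deg$, a standard configuration-model/permanent estimate (or a direct application of the results behind Theorem \ref{th:entropy1} and Theorem \ref{th:entropy2}, since almost every $G\in\cG(\bd^{(n)})$ has $U(G)$ near $\UGW_1(P)$) gives
\begin{equation*}
\frac{\log|\cG(\bd^{(n)})|-m\log n}{n}\longrightarrow \Sigma(\UGW_1(P)).
\end{equation*}
Here I would use that $\Sigma(\UGW_1(P))=J_1(P)=s(d)-H(P\,|\,\POI(d))$ by Corollary \ref{cor:entropy}, which is exactly the classical asymptotic count of labelled graphs with a given bounded degree sequence; (C2) guarantees all the relevant entropy quantities are finite and the $\log(E_h!)$ correction terms are controlled.

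The second step is the numerator. By definition of $\cG_{n,m}(\rho,\veps)$ and the fact that any $G\in\cG(\bd^{(n)})$ automatically has $m(n)$ edges, we have $\{G\in\cG(\bd^{(n)}):\,U(G)\in B(\rho,\veps)\}\subseteq\cG_{n,m}(\rho,\veps)$, and conversely every $G\in\cG_{n,m}(\rho,\veps)$ whose degree statistic $\tfrac1n\sum_v\delta_{\deg_G(v)}$ is close to $P$ is "almost" in $\cG(\bd^{(n)})$. The clean way to handle this is to note that if $U(G)\in B(\rho,\veps)$ then $\rho_1$ is within $O(\veps)$ of the degree distribution $G$; so if $\rho_1\neq P$ the numerator is empty for small $\veps$ (giving $I(\rho)=\infty$ off $\{\rho_1=P\}$, after also checking that forcing the exact degree sequence rather than its empirical law costs only $o(n)$ on the log scale — this is where (C2)--(C3) and the Erdős–Gallai realizability input are used), while if $\rho_1=P$ then for small $\veps$,
\begin{equation*}
\log\big|\{G\in\cG(\bd^{(n)}):\,U(G)\in B(\rho,\veps)\}\big| = \log|\cG_{n,m}(\rho,\veps)| + o(n),
\end{equation*}
because restricting from graphs-with-degree-distribution-near-$P$ to graphs-with-exact-degree-sequence-$\bd^{(n)}$ is a sub-exponential reweighting (the number of degree sequences consistent with a given near-$P$ empirical law and a given bounded support is only $e^{o(n)}$, by (C2)). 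Combining the two steps,
\begin{equation*}
\frac{1}{n}\log\dP\big(U(G_n)\in B(\rho,\veps)\big) = \frac{\log|\cG_{n,m}(\rho,\veps)|-m\log n}{n} - \Sigma(\UGW_1(P)) + o(1),
\end{equation*}
and letting $n\to\infty$ then $\veps\to0$ identifies the limit as $\overline\Sigma(\rho)-\Sigma(\UGW_1(P))$ from above and $\underline\Sigma(\rho)-\Sigma(\UGW_1(P))$ from below. Since Theorem \ref{th:entropy1} asserts $\Sigma(\rho)$ is well defined (i.e. $\overline\Sigma=\underline\Sigma$), these agree and equal $\Sigma(\UGW_1(P))-\Sigma(\rho)$ when $\rho_1=P$, which is $-I(\rho)$.

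The third step upgrades these ball estimates to the full LDP \eqref{eq:defLDP}. The lower bound is immediate: any open $B$ contains a ball $B(\rho,\veps)$ around any $\rho\in B^\circ$, so $\liminf\tfrac1n\log\dP(U(G_n)\in B)\geq -I(\rho)$, and one optimizes over $\rho\in B^\circ$. For the upper bound one needs exponential tightness of $\{U(G_n)\}$ in $\cP(\cG^*)$: this follows because (C2) bounds all degrees by $\theta$, so $U(G_n)$ is supported on the compact set of measures concentrated on graphs of degree $\le\theta$, hence the sequence is automatically tight (indeed lives in a fixed compact set), and the standard covering argument (cover a closed $B$ by finitely many small balls) turns the ball upper bounds into $\limsup\tfrac1n\log\dP(U(G_n)\in B)\le-\inf_{\rho\in\overline B}I(\rho)$. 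Good rate function: $I$ is lower semi-continuous because $\Sigma$ is upper semi-continuous (Lemma \ref{le:lscSi}) and $\{\rho_1=P\}$ is closed; its level sets are compact by the same degree-boundedness argument. Finally one should double-check that $I(\UGW_1(P))=0$, which is immediate, and that $I$ is finite somewhere relevant, which follows from Theorem \ref{th:entropy2} applied to $\rho=\UGW_h(\rho_h)$.

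\textbf{Main obstacle.} The delicate point is the second step — showing that imposing the \emph{exact} bounded degree sequence $\bd^{(n)}$, rather than merely an empirical degree law close to $P$, changes the exponential count by only $o(n)$, both for $|\cG(\bd^{(n)})|$ itself and for the restricted numerator. Equivalently, one must show $\Sigma$ restricted to $\{\rho:\rho_1=P\}$ is genuinely the right functional and that the passage $\cG_{n,m}(\rho,\veps)\leadsto\cG(\bd^{(n)})$-subset is exponentially neutral; this is exactly where condition (C2) (bounded degrees) is essential, since it both makes all the relevant entropies finite and guarantees that the number of admissible degree sequences is only polynomial in $n$. Without (C2) one would have to control heavy-tailed degree fluctuations, which would break the $o(n)$ reduction.
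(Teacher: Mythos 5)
Your overall strategy is the same as the paper's: write $\dP(U(G_n)\in B(\rho,\veps))$ as a ratio of graph counts, identify both counts through the entropy functional $\Sigma$, and use the degree bound $\theta$ to keep everything inside a fixed compact subset of $\cP_u(\cG^*)$ so that no separate exponential tightness argument is needed. The final rate function you arrive at is correct. However, two of your key displayed claims are false as stated, and they rest on a false combinatorial assertion.

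The problem is the claim that ``the number of degree sequences consistent with a given near-$P$ empirical law and a given bounded support is only $e^{o(n)}$.'' This confuses degree \emph{sequences} (labeled vectors $(d(1),\dots,d(n))$) with empirical degree \emph{distributions} (types). The number of types is indeed polynomial in $n$ under (C2), but the number of labeled sequences realizing a fixed type near $P$ is the multinomial coefficient $n(\bd)=e^{nH(P)+o(n)}$, which is exponentially large unless $P$ is a point mass. Consequently your denominator asymptotics is off: since $\Sigma$ is defined by counting \emph{all} graphs in $\cG_{n,m}$ with prescribed neighborhood statistics (no assignment of degrees to vertices), one has $J_1(P)=-s(d)+H(P)-\dE_P\log(D!)$, whereas the Bender--Canfield/configuration-model count gives $\tfrac1n(\log|\cG(\bd^{(n)})|-m\log n)\to -s(d)-\dE_P\log(D!)=\Sigma(\UGW_1(P))-H(P)$. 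The same factor $e^{nH(P)}$ is missing from your numerator comparison $\log|\{G\in\cG(\bd^{(n)}):U(G)\in B(\rho,\veps)\}|=\log|\cG_{n,m}(\rho,\veps)|+o(n)$ (and there the $\geq$ direction is additionally problematic, since a graph in $\cG_{n,m}(\rho,\veps)$ need not have degree sequence equal to any permutation of $\bd^{(n)}$, so no injection into $\cup_\pi\cG(\bd^{\pi})$ is available). The two errors of $nH(P)$ cancel in the ratio, which is why you land on the right answer, but as written the proof asserts two incorrect estimates.

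The paper's fix is exactly the bookkeeping you skipped: it replaces $\cG(\bd^{(n)})$ by the set $\cG_{P_n}=\{G\in\cG_{n,m}:U(G)_1=P_n\}$, observes that $|\cG_{P_n}|=n(\bd)\,|\cG(\bd^{(n)})|$ while the ratio defining the probability is unchanged ($U$ is isomorphism-invariant), and then the $H(P)$ term in $J_1(P)$ is precisely the relabeling entropy $\tfrac1n\log n(\bd)$, so the denominator genuinely converges to $\Sigma(\UGW_1(P))$. For the lower bound on the numerator the paper does not compare to $\cG_{n,m}(\rho,\veps)$ at all; it produces the required $e^{m\log n+nJ_h(\rho_h)}$ graphs directly, by counting graphs with a prescribed $h$-neighborhood vector via the generalized configuration model (Lemma \ref{le:existenceG}, Corollary \ref{cor:treelike}, Proposition \ref{pro:enh}), and then lets $h\to\infty$ using $\Sigma(\rho)=\lim_h J_h(\rho_h)$ from Theorem \ref{th:entropy2}. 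You should adopt one of these two repairs; with the current justification the argument does not stand.
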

It follows from Theorem \ref{th:entropy2} that for any integer $h \geq 1$, and $Q \in \cP_h $ with $Q_1 = P$, then
$$
\min \{ I ( \rho) : \;\rho_h = Q \} =  J_1(P)-J_h ( Q) , 
$$
and the minimum is uniquely attained for $\rho = \UGW_h(Q)$. 
This allows one to compute large deviations of neighborhood measures $U(G_n)_h$ explicitly in terms of the function $J_h$. 

On the other hand, consider the special case of $d$-regular graphs, where $\bd^{(n)}$ is the constant vector $(d,\dots,d)$, and $P=\d_d$, for some fixed $d\in\dN$. To have $\Si(\r)>-\infty$, $\r$ must be supported on trees by Theorem \ref{th:entropy1}, and because of the constant degree constraint we find that the only $\r\in \cP(\cG^*)$ such that $I(\r)<\infty$ is the 
Dirac mass at the infinite rooted $d$-regular tree, which coincides with $\UGW_1(P)$, where the rate function is zero. 
Thus, for the $d$-regular random graph 
$I(\r)$ is either zero or infinite, and one should look at faster speed than $n$ here for non trivial large deviations.

We note finally Theorem \ref{th:LDPCM} establishes a large deviations principle with speed $n$. Other interesting large deviation events occur at higher speed. For example, for the proportion of vertices in a triangle in $G_n$, the speed would be $n \log n$. 

\subsection{Large deviations of Erd\H{o}s-R\'enyi graphs}
Next, we describe our main results for sparse Erd\H{o}s-R\'enyi  graphs such as the uniform random graph
from $\cG_{n,m}$, with $m\sim nd/2$ and the $\cG(n,p)$ where each edge is independently present with probability $p=d/n$. 
It is well known that, in both cases, with probability one, $U(G_n)$ converges weakly to the standard Galton-Watson tree with mean degree $d$, i.e.\ $\r=\UGW_1( \POI(d))$, which by Corollary \ref{cor:entropy} satisfies $\Si(\r)=s(d)$. 
 \begin{theorem}\label{th:LDPER1}
Fix $d > 0$ and a sequence $ m = m(n)$ such that $m / n \to d / 2$, as $n\to\infty$. Let $G_n$ be uniformly distributed in $\cG_{n,m}$. 
Then $U(G_n)$  satisfies the LDP in $\cP(\cG^*)$ with speed $n$ and good rate function
\begin{equation}\label{LDPER11}
I(\r)= \left\{ \begin{array}{ll}
s(d)- \Sigma(\rho) & \hbox{ if $\dE_\rho \deg_G(o) = d$}, \\
 \infty & \hbox{ otherwise.}
\end{array}\right.
\end{equation}
\end{theorem}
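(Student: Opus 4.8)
The plan is to read the large deviation principle off the definition of the microcanonical entropy $\Sigma$ together with the Stirling estimate \eqref{eq:Gnmlog}, obtaining first a weak LDP and then upgrading it by proving exponential tightness. Since $G_n$ is uniform on $\cG_{n,m}$, for every Borel $B\subseteq\cP(\cG^*)$ we have $\dP(U(G_n)\in B)=|\{G\in\cG_{n,m}:U(G)\in B\}|/|\cG_{n,m}|$, and by \eqref{eq:Gnmlog} $\tfrac1n\log|\cG_{n,m}|=\tfrac mn\log n+s(d)+o(1)$. Hence, taking $B=B(\rho,\veps)$, the very definitions of $\overline\Sigma(\rho,\veps)$ and $\underline\Sigma(\rho,\veps)$ give $\limsup_n\tfrac1n\log\dP(U(G_n)\in B(\rho,\veps))=\overline\Sigma(\rho,\veps)-s(d)$ and $\liminf_n\tfrac1n\log\dP(U(G_n)\in B(\rho,\veps))=\underline\Sigma(\rho,\veps)-s(d)$. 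By Theorem \ref{th:entropy1}, $\Sigma(\rho)=\overline\Sigma(\rho)=\underline\Sigma(\rho)$ is well defined for every $\rho$, and by monotonicity in $\veps$ we have $\overline\Sigma(\rho,\veps)\downarrow\Sigma(\rho)$ and $\underline\Sigma(\rho,\veps)\ge\Sigma(\rho)$ as $\veps\downarrow0$. Put $I(\rho):=s(d)-\Sigma(\rho)\in[0,\infty]$; by Theorem \ref{th:entropy1}(iii) this equals $+\infty$ whenever $\dE_\rho\deg_G(o)\ne d$, so $I$ is exactly the function in the statement, and it is lower semi-continuous since $\Sigma$ is upper semi-continuous.

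The weak LDP now follows by standard reasoning. Lower bound: for open $B$ and $\rho\in B$, choose $\veps$ with $B(\rho,\veps)\subseteq B$; then $\liminf_n\tfrac1n\log\dP(U(G_n)\in B)\ge\underline\Sigma(\rho,\veps)-s(d)\ge\Sigma(\rho)-s(d)=-I(\rho)$, and optimizing over $\rho\in B$ gives the lower bound $-\inf_B I$. Upper bound over compacts: given a compact $K$, fix $\gamma<\inf_K I$; for each $\rho\in K$ we have $\Sigma(\rho)=s(d)-I(\rho)<s(d)-\gamma$ (with $\Sigma(\rho)=-\infty$ if $I(\rho)=\infty$), so since $\overline\Sigma(\rho,\veps)\downarrow\Sigma(\rho)$ there is $\veps_\rho$ with $\overline\Sigma(\rho,\veps_\rho)<s(d)-\gamma$; extracting a finite subcover $K\subseteq\bigcup_{i\le N}B(\rho_i,\veps_{\rho_i})$ and using subadditivity of $\dP$ yields $\limsup_n\tfrac1n\log\dP(U(G_n)\in K)\le\max_i(\overline\Sigma(\rho_i,\veps_{\rho_i})-s(d))<-\gamma$, and letting $\gamma\uparrow\inf_K I$ concludes.

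The main point — and the step I expect to require genuine work — is exponential tightness: for each $L>0$ a compact $K_L\subseteq\cP(\cG^*)$ with $\limsup_n\tfrac1n\log\dP(U(G_n)\notin K_L)\le-L$. Granting it, the upper bound extends to all closed sets by the usual argument, and the level sets of $I$ are automatically compact, so $I$ is a good rate function. I would obtain $K_L$ by controlling, at every depth $h\in\dN$, the law of the ball size $|V(g_h)|$ under $U(G_n)$. The set
\[
K_L=\big\{\rho\in\cP(\cG^*):\ \rho\big(\{g:\,|V(g_h)|>b_{h,j}^{L}\}\big)\le 2^{-j}\ \text{ for all }h,j\in\dN\big\}
\]
is closed, because each $g\mapsto\IND(|V(g_h)|>b)$ is a bounded continuous (in fact clopen) local function, so $\rho\mapsto\rho(\{|V(g_h)|>b\})$ is weakly continuous; and it is tight, because given $\delta>0$ with $2^{-j_0}<\delta$ the compact set $\{g:\,|V(g_h)|\le b_{h,j_0+h}^{L}\ \forall h\}$ carries $\rho$-mass $\ge1-\delta$ for every $\rho\in K_L$. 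Hence $K_L$ is compact. It then suffices to prove a counting estimate in the spirit of \eqref{eq:Gnmlog}: for fixed $h\in\dN$ and $a>0$,
\[
\big|\{G\in\cG_{n,m}:\,U(G)(\{g:\,|V(g_h)|>b\})>a\}\big|\le\exp\!\big(m\log n+(s(d)-c_h(a,b))\,n+o(n)\big),
\]
with $c_h(a,b)\to\infty$ as $b\to\infty$. For $h=1$ this holds deterministically, as $\sum_v\deg_G(v)=2m$ forces $U(G)(\{\deg_G(o)>b\})\le2m/(bn)$; for general $h$ it should follow from a Stirling-type count of the ways in which a fraction $>a$ of the vertices can carry a depth-$h$ ball of more than $b$ vertices while the total number of edges remains $m$ (one may also try to deduce it from the entropy bound \eqref{upperenh0} at depth $h$). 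Choosing the thresholds $b_{h,j}^{L}$ so large that $c_h(2^{-j},b_{h,j}^{L})>L+h+j$ and summing the resulting estimates over $(h,j)$ gives $\limsup_n\tfrac1n\log\dP(U(G_n)\notin K_L)\le-L$, which completes the argument.
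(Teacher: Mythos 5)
Your derivation of the weak LDP is correct and is essentially the paper's argument: the local upper and lower bounds at each $\rho$ are read off directly from the definitions of $\overline\Sigma(\rho,\veps)$, $\underline\Sigma(\rho,\veps)$ and the Stirling count \eqref{eq:Gnmlog}, and Theorem \ref{th:entropy1} supplies well-definedness of $\Sigma$, upper semi-continuity (hence lower semi-continuity of $I$), and the identification of $I$ with the stated rate function via part (iii). You are also right that exponential tightness is the only step requiring real work. That is precisely where your proposal has a genuine gap.

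Your compact set $K_L$ is fine (closed and tight, hence compact), but the estimate you need to place $U(G_n)$ in it with probability $1-e^{-Ln}$ — namely that the number of graphs in $\cG_{n,m}$ for which a fraction $>a$ of vertices carry a depth-$h$ ball with more than $b$ vertices is at most $\exp(m\log n+(s(d)-c_h(a,b))n+o(n))$ with $c_h(a,b)\to\infty$ as $b\to\infty$ — is asserted, not proved, for $h\ge 2$. It is not a routine Stirling count: with $m\sim dn/2$ edges one can make a constant fraction of vertices have $2$-balls of size $b$ for any $b$ up to order $n$ (e.g.\ a disjoint union of stars with $b$ leaves), so the set whose cardinality you must bound is nonempty for essentially all $b$, and nothing in \eqref{eq:Gnmlog} or \eqref{upperenh0} (which concerns measures supported on trees and requires admissibility hypotheses) delivers the bound. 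The paper takes a different and more economical route: Lemma \ref{le:tightLWC} shows that a \emph{single} uniform-integrability condition on the degrees, $\deg_G(S)\le|V|\,\delta(|S|/|V|)$ for all $S\subset V$, already forces $U(G)$ into a fixed compact subset of $\cP_u(\wcG^*)$ — the point being that unimodularity of $U(G)$ propagates degree control to ball-size control at every depth, via the iteration $|B(S,t)|\le\deg(B(S,t-1))\le n\,\delta(|B(S,t-1)|/n)$. The probabilistic input then reduces to bounding $\dP\bigl(\exists S\subset[n]:\deg_{G_n}(S)>n\,\delta_y(|S|/n)\bigr)$ by a union bound over $|S|=\veps n$ together with stochastic domination of $\deg_{G_n}(S)$ by twice a $\BINOM{\veps n^2}{2d/n}$ variable and a Chernoff bound (Lemma \ref{le:exptight}). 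To repair your argument you would either have to prove your counting estimate for general $h$ — which in effect amounts to rediscovering the degree-to-ball-size propagation — or replace your $K_L$ by the compact sets furnished by Lemma \ref{le:tightLWC} and run the degree-based estimate instead.
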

 
\begin{theorem}\label{th:LDPER2}
Fix $\l > 0$ and  take $G_n$ with law $\cG(n,\l/n)$. Then $U(G_n)$ satisfies the LDP in $\cP(\cG^*)$ with speed $n$ and good rate function
\begin{equation}\label{LDPER21}
I(\r)= \frac \l2 - \frac d 2 \log \l - \Sigma (\rho),
\end{equation}
where $d:=\dE_\r\deg_G(o)$, with the convention that if $d=0$ then $\Sigma (\r)=s(0)=0$.
\end{theorem}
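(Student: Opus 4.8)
The plan is to transfer the LDP of Theorem \ref{th:LDPER1} for the microcanonical ensemble $\cG_{n,m}$ to the canonical ensemble $\cG(n,\lambda/n)$ by conditioning on the number of edges. Write $G_n\sim\cG(n,\lambda/n)$ and let $M_n=|E(G_n)|$ be its number of edges, which is $\BINOM{\binom n2}{\lambda/n}$-distributed, hence concentrated near $\lambda n/2$ with Gaussian fluctuations of order $\sqrt n$; in particular $\dP(M_n=m)=e^{o(n)}$ uniformly over $m$ in any window $|m-\lambda n/2|\le Cn$, and the full mass lies in such a window up to superexponentially small probability. Conditionally on $\{M_n=m\}$, the graph $G_n$ is uniform on $\cG_{n,m}$. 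First I would make this decomposition precise: for a Borel set $B\subset\cP(\cG^*)$,
\begin{equation*}
\dP(U(G_n)\in B)=\sum_{m\ge 0}\dP(M_n=m)\,\dP(U(G_n)\in B\mid M_n=m),
\end{equation*}
and $\dP(U(G_n)\in B\mid M_n=m)$ is exactly the probability that a uniform element of $\cG_{n,m}$ has empirical neighborhood distribution in $B$.

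The key analytic input is a \emph{uniform} version of Theorem \ref{th:LDPER1}: for each fixed ratio $d=2m/n$ the uniform graph on $\cG_{n,m}$ satisfies the LDP with rate $I_d(\rho)=s(d)-\Sigma_d(\rho)$ (writing $\Sigma_d$ to emphasize the $d$-dependence), and the bounds hold uniformly as $d$ ranges over a compact neighborhood of $\lambda$. So the second step is to combine, on the exponential scale, the contribution $\frac1n\log\dP(M_n=m)$ with the conditional contribution. Using $\log|\cG_{n,m}|=m\log n+s(d)n+o(n)$ from \eqref{eq:Gnmlog}, together with the exact binomial formula $\dP(M_n=m)=\binom{\binom n2}{m}(\lambda/n)^m(1-\lambda/n)^{\binom n2-m}$, one computes
\begin{equation*}
\frac1n\log\dP(M_n=m)=\frac1n\log|\cG_{n,m}|+\frac mn\log\frac\lambda n+\frac{\binom n2-m}n\log\Big(1-\frac\lambda n\Big)+o(1),
\end{equation*}
and since $\binom n2\log(1-\lambda/n)=-\lambda n/2+o(n)$ while $m\log(n)/n$ cancels the $m\log n$ term, this simplifies to $\frac1n\log\dP(M_n=m)=s(d)+\frac d2\log\lambda-\frac\lambda2+o(1)$ uniformly for $d$ near $\lambda$. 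Hence, on the event $M_n=m$ with $2m/n\to d$, the combined exponential rate for $\{U(G_n)\in B\}$ is
\begin{equation*}
s(d)+\tfrac d2\log\lambda-\tfrac\lambda2-\inf_{\rho}\{s(d)-\Sigma_d(\rho)\}=\tfrac d2\log\lambda-\tfrac\lambda2+\sup_\rho\Sigma_d(\rho),
\end{equation*}
the supremum being over $\rho$ in the relevant set with $\dE_\rho\deg_G(o)=d$, and by Theorem \ref{th:entropy1}(iii) this degree constraint is automatically enforced (outside it $\Sigma_d=-\infty$). Therefore the overall rate function is obtained by optimizing over $d$, which leads to $I(\rho)=\frac\lambda2-\frac d2\log\lambda-\Sigma(\rho)$ with $d=\dE_\rho\deg_G(o)$ read off from $\rho$ itself.

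The remaining step is the standard two-sided argument. For the upper bound over a closed set $\overline B$: split the sum over $m$ into the window $|m-\lambda n/2|\le Cn$ (finitely many ``dyadic'' blocks of ratios $d$, on each of which the uniform-in-$d$ LDP upper bound of Theorem \ref{th:LDPER1} applies, and the number of blocks is subexponential) and the tail $|m-\lambda n/2|>Cn$ (whose total probability is $e^{-cn}$ with $c=c(C)\to\infty$, negligible once $C$ is large compared to the finite rate values on $\overline B$), then take $C\to\infty$. For the lower bound over an open set $B^\circ$: fix $\rho\in B^\circ$ with $\Sigma(\rho)>-\infty$, set $d=\dE_\rho\deg_G(o)$, choose $m=m(n)$ with $2m/n\to d$, restrict the sum to this single term, and apply the lower bound of Theorem \ref{th:LDPER1} at parameter $d$. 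Goodness of $I$ (compact level sets, lower semicontinuity) is inherited: $\Sigma$ is upper semicontinuous by Lemma \ref{le:lscSi}, the map $\rho\mapsto\dE_\rho\deg_G(o)$ is lower semicontinuous, and compactness of sublevel sets follows because a bound $I(\rho)\le a$ forces a bound on $d$ and hence on $\dE_\rho\deg_G(o)$, giving tightness of the family of neighborhood measures. The case $d=0$ is handled separately by the stated convention, consistent with $s(0)=0$ and $\rho=\delta_{\bullet}$.

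\textbf{Main obstacle.} The delicate point is the \emph{uniformity in $d$} of the LDP from Theorem \ref{th:LDPER1}: one needs the constants in the large deviation upper and lower bounds for $\cG_{n,m}$ to be controlled locally uniformly in the ratio $2m/n$, so that summing over the $O(n)$ values of $m$ in the concentration window costs only a subexponential factor and the $\sup_d$ can be taken cleanly. Establishing this requires revisiting the proof of Theorem \ref{th:LDPER1} (via the $o(n)$-error estimates in the counting/configuration-model arguments) and checking that those error terms are uniform over $d$ in compact sets; once that is in hand, the transfer is a routine Laplace-type argument. A secondary, purely bookkeeping, difficulty is matching the two different forms of the rate function — Theorem \ref{th:LDPER1} fixes $d$ exogenously, whereas here $d=\dE_\rho\deg_G(o)$ is determined by $\rho$ — but this is resolved precisely by the degree-constraint clause of Theorem \ref{th:entropy1}(iii), which makes the exogenous and intrinsic viewpoints coincide on the set where the rate is finite.
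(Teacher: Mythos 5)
Your proposal is correct and follows essentially the same route as the paper: decompose $\cG(n,\l/n)$ as a mixture over the binomial edge count $M_n$, use the LDP for $2M_n/n$ (your computation of $\frac1n\log\dP(M_n=m)=s(d)+\frac d2\log\l-\frac\l2+o(1)$ matches the paper's rate $j$) together with Theorem \ref{th:LDPER1} and exponential tightness. The only difference is that the paper outsources the combination step — including the uniformity-in-$d$ issue you correctly flag as the main obstacle — to a general theorem on large deviations of mixtures (Biggins), whereas you carry out the Laplace-type summation by hand.
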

In the special case of $1$-neighborhoods, Theorem \ref{th:LDPER1}, 
Theorem \ref{th:LDPER2}
and Corollary \ref{cor:entropy} allow us to prove the following results.
Let $u(G_n)\in \cP(\dZ_+)$ denote the empirical distribution of the degree: $u(G_n)  = \frac1n\sum_{i=1}^n \d_{\deg_{G_n}(i)}$.  

\begin{corollary}\label{cor:LDPER1}
Fix $d > 0$, a sequence $ m = m(n)$ such that $m / n \to d / 2$, and let $G_n$ be uniformly distributed in $\cG_{n,m}$. 
Then $u(G_n)$ 
satisfies the  LDP in $\cP(\dZ_+)$ with good rate function 
$$
K(P) =  \left\{ \begin{array}{ll}
 H ( P \,|\, \POI(d) )  & \hbox{ if \;$\sum_k k P(k) = d$}, \\
 \infty & \hbox{ otherwise.}
\end{array}\right.
$$
\end{corollary}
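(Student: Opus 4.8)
\emph{Proof proposal.} The plan is to deduce the corollary from the LDP of Theorem \ref{th:LDPER1} by the contraction principle, and then to evaluate the contracted rate function using Theorem \ref{th:entropy2} and Corollary \ref{cor:entropy}. First I would observe that the root-degree map $\deg:\cG^*\to\dZ_+$, sending a rooted graph to the degree of its root, is locally constant: if $g_n\to g$ in the local topology then eventually $(g_n)_1=g_1$, and the root degree is a function of the depth-$1$ truncation alone. Hence $\deg$ is continuous, and so is the map $\Phi:\cP(\cG^*)\to\cP(\dZ_+)$ sending $\rho$ to the law of $\deg_G(o)$ under $\rho$. By \eqref{eq:defUG}, $\Phi(U(G_n))=\frac1n\sum_{i=1}^n\d_{\deg_{G_n}(i)}=u(G_n)$. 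Since the rate function $I$ of Theorem \ref{th:LDPER1} is good and $\Phi$ is continuous, the contraction principle gives that $u(G_n)$ satisfies the LDP in $\cP(\dZ_+)$ with speed $n$ and good rate function
$$
K'(P)=\inf\big\{\,I(\rho):\ \Phi(\rho)=P\,\big\},\qquad P\in\cP(\dZ_+),
$$
so it remains to check $K'=K$.

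Next I would compute $K'$. If $\Phi(\rho)=P$ then $\dE_\rho\deg_G(o)=\sum_k kP(k)$, so when $\sum_k kP(k)\ne d$ every such $\rho$ has $I(\rho)=\infty$ by \eqref{LDPER11}, whence $K'(P)=\infty=K(P)$. Assume now $\sum_k kP(k)=d$; then $I(\rho)=s(d)-\Sigma(\rho)$ for every $\rho$ with $\Phi(\rho)=P$, so $K'(P)=s(d)-\sup\{\Sigma(\rho):\Phi(\rho)=P\}$. Only measures with $\Sigma(\rho)>-\infty$ contribute to this supremum, and for these Theorem \ref{th:entropy1} forces $\rho$ to be unimodular and supported on trees; for such $\rho$ the constraint $\Phi(\rho)=P$ reads $\rho_1=P$ (identifying $\cT^*_1$ with $\dZ_+$), and \eqref{upperenh0} with $h=1$ yields $\Sigma(\rho)\le\overline J_1(P)$. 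Conversely $\UGW_1(P)$ is unimodular and tree-supported by Proposition \ref{prop:UGW}, has $\Phi(\UGW_1(P))=P$, and by Corollary \ref{cor:entropy} satisfies $\Sigma(\UGW_1(P))=s(d)-H(P\,|\,\POI(d))$; combined with the identity $\overline J_1(P)=J_1(P)=\Sigma(\UGW_1(P))$ valid for $P\in\cP_1$ by Theorem \ref{th:entropy2}, this gives $\sup\{\Sigma(\rho):\Phi(\rho)=P\}=s(d)-H(P\,|\,\POI(d))$, hence $K'(P)=H(P\,|\,\POI(d))=K(P)$.

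It remains to treat the degenerate case $P\notin\cP_1$, equivalently (since $P$ is admissible here) $\dE_P[\deg_G(o)\log\deg_G(o)]=\infty$; see the footnote to Theorem \ref{th:entropy2}. There $\overline J_1(P)=-\infty$, so the argument above gives $K'(P)=\infty$, and consistency with the stated formula requires $H(P\,|\,\POI(d))=\infty$, which follows from $s(d)-H(P\,|\,\POI(d))=\Sigma(\UGW_1(P))\le\overline J_1(P)=-\infty$ (again by Corollary \ref{cor:entropy} and \eqref{upperenh0}); alternatively one checks directly that $\dE_P[\deg\log\deg]=\infty$ forces the relative entropy to diverge, using $-\log\POI(d)(k)\ge\tfrac12 k\log k$ for $k$ large. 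Since the contraction principle supplies a good rate function automatically, the identification $K'=K$ completes the proof. \textbf{The only point requiring care} is not an isolated deep step but the bookkeeping in this variational computation: reconciling the constraint $\dE_\rho\deg_G(o)=d$ inside $I$ with $\sum_k kP(k)=d$ inside $K$, and verifying that the identity $\overline J_1(P)=s(d)-H(P\,|\,\POI(d))$ — which expresses that $\UGW_1(P)$ is the entropy maximizer among measures with degree marginal $P$ — remains valid, interpreted in $[-\infty,s(d)]$, including at the boundary where the relative entropy is infinite.
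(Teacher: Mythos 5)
Your proposal is correct and follows essentially the same route as the paper: apply the contraction principle to the LDP of Theorem \ref{th:LDPER1} via the (continuous) degree-marginal map, then identify the contracted rate $\inf\{s(d)-\Sigma(\rho):\rho_1=P\}$ with $s(d)-J_1(P)=H(P\,|\,\POI(d))$ using Theorem \ref{th:entropy2} and Corollary \ref{cor:entropy}. Your extra care with the constraint $\sum_k kP(k)=d$ and with the degenerate case $\dE_P[\deg\log\deg]=\infty$ only spells out details the paper leaves implicit.
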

\begin{corollary}\label{cor:LDPER2}
Fix $\l > 0$ and take $G_n$ with law $\cG(n,\l/n)$.  
Then $u(G_n)$ 
satisfies the LDP in $\cP(\dZ_+)$ with speed $n$ and good rate function 
$$
K(P) =  \left\{ \begin{array}{ll}
\frac {\l - d} 2 - \frac{ d } 2 \log \frac \l d  + H ( P \,|\, \POI(d) )  & \hbox{ if \;$d:=\sum_k k P(k)  < \infty$} \\
 \infty & \hbox{ otherwise.}
\end{array}\right.
$$
\end{corollary}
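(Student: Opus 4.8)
The plan is to deduce Corollary~\ref{cor:LDPER2} from the process-level large deviation principle of Theorem~\ref{th:LDPER2} by the contraction principle, so that the only real work is the identification of the rate function, which is then read off from Corollary~\ref{cor:entropy}. First I would check that the map $\Phi:\cP(\cG^*)\to\cP(\dZ_+)$ sending $\r$ to the law of $\deg_G(o)$ under $\r$ is continuous: the function $g\mapsto\deg_G(o)$ on $\cG^*$ depends only on $g_1$, so for each $k\in\dZ_+$ the set $\{g:\deg_G(o)=k\}$ is a union of local-topology subbasic sets $\{g:g_1=\g\}$, hence open; therefore $g\mapsto\deg_G(o)$ is continuous and so is $\Phi$. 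Since $u(G_n)=\Phi(U(G_n))$, the contraction principle applied to Theorem~\ref{th:LDPER2} gives that $u(G_n)$ obeys the LDP in $\cP(\dZ_+)$ with speed $n$ and good rate function
\[
K(P)=\inf\Bigl\{\tfrac\l2-\tfrac{d}2\log\l-\Sigma(\r)\ :\ \r\in\cP(\cG^*),\ \Phi(\r)=P,\ d=\dE_\r\deg_G(o)\Bigr\}.
\]

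Next I would evaluate this infimum. Since $\Phi(\r)=P$ forces $\dE_\r\deg_G(o)=\sum_k kP(k)=:d$, a number depending only on $P$, two cases arise. If $d=\infty$, then no $\r$ with $\Phi(\r)=P$ has finite mean degree, and $I(\r)=+\infty$ for all of them (in view of $\Sigma(\r)\le s(\dE_\r\deg_G(o))$ and $s(d)\to-\infty$ as $d\to\infty$), so $K(P)=\infty$, as asserted. If $d<\infty$, then
\[
K(P)=\tfrac\l2-\tfrac{d}2\log\l-\sup\{\Sigma(\r):\Phi(\r)=P\},
\]
and it remains to show $\sup\{\Sigma(\r):\Phi(\r)=P\}=\Sigma(\UGW_1(P))$. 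By Theorem~\ref{th:entropy1}, $\Sigma(\r)=-\infty$ unless $\r\in\cP_u(\cT^*)$; and when $\r$ is supported on trees its depth-$1$ marginal $\r_1\in\cP(\cT^*_1)\cong\cP(\dZ_+)$ is just the root-degree law, so $\r_1=\Phi(\r)=P$. Hence, by \eqref{upperenh0} with $h=1$, every such $\r$ satisfies $\Sigma(\r)\le\overline J_1(\r_1)=\overline J_1(P)$, and since $(\UGW_1(P))_1=P$ and $\Sigma(\UGW_1(P))=\overline J_1(P)$ (this equals $J_1(P)$ if $P\in\cP_1$ and $-\infty=\overline J_1(P)$ otherwise — consistent with Corollary~\ref{cor:entropy}, since $P\notin\cP_1$ forces $H(P\,|\,\POI(d))=\infty$), the supremum is attained at $\UGW_1(P)$. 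Plugging in Corollary~\ref{cor:entropy}, $\Sigma(\UGW_1(P))=s(d)-H(P\,|\,\POI(d))$, so, using $s(d)=\tfrac{d}2-\tfrac{d}2\log d$,
\[
K(P)=\tfrac\l2-\tfrac{d}2\log\l-s(d)+H(P\,|\,\POI(d))=\tfrac{\l-d}2-\tfrac{d}2\log\tfrac\l d+H(P\,|\,\POI(d)),
\]
which, together with the case $d=\infty$, is exactly the claimed rate function; goodness of $K$ is automatic from the contraction principle.

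The continuity of $\Phi$ and the closing algebra are routine, and the substantive ingredients — that $\Sigma$ is maximised, among $\r$ with prescribed root-degree law $P$, by the unimodular Galton--Watson tree $\UGW_1(P)$, and that $\Sigma(\UGW_1(P))=s(d)-H(P\,|\,\POI(d))$ — are supplied by Theorems~\ref{th:entropy1}--\ref{th:entropy2} and Corollary~\ref{cor:entropy}. The one step requiring a little attention is the bookkeeping that matches the degenerate cases $d=\infty$ and $P\notin\cP_1$ (where $K(P)=\infty$) with the closed-form expression; here one uses the convention fixing the parameter of $\Sigma$ at $\dE_\r\deg_G(o)$ and the characterisation $P\in\cP_1\iff\dE_P[\deg_T(o)\log\deg_T(o)]<\infty$. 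The argument is the exact analogue of the proof of Corollary~\ref{cor:LDPER1}, with Theorem~\ref{th:LDPER1} replaced by Theorem~\ref{th:LDPER2}.
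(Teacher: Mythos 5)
Your proof is correct and follows essentially the same route as the paper's: contraction of the LDP of Theorem \ref{th:LDPER2} under the (continuous) degree-marginal map, followed by identification of the resulting infimum with $\phi(\l,d)-J_1(P)$ via the bound $\Sigma(\r)\leq \overline J_1(\r_1)$ and Corollary \ref{cor:entropy}. The paper's own proof is just a terser version of this, and your extra care with the degenerate cases $d=\infty$ and $P\notin\cP_1$ is consistent with it.
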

%

\subsection{Plan and methods}
The proof of the main results discussed above is organized as follows. 
In Section \ref{sec:LWC} we review some basic facts about local weak convergence in the context of multi-graphs. We also establish a compactness criterion which parallels recent results of Benjamini, Lyons and Schramm \cite{BLS}. In Section \ref{sec:UGW} we introduce the unimodular Galton Watson trees with given $h$-neighborhood distribution and prove the properties stated in Proposition \ref{prop:UGW} . 
 In Section \ref{EN} we prove our main results concerning the entropy $\Si(\r)$, cf.\ Theorem \ref{th:entropy1} and   Theorem \ref{th:entropy2}. These are  
 crucially
based on the possibility of counting asymptotically the number of graphs in 
$\cG_{n,m}$ which have a certain $h$-neighborhood distribution. 
To compute such things, we introduce what we call a {\em generalized configuration model}. 
   The standard configuration model, introduced in Bollobas \cite{Bollobas1980}, allows one to compute asymptotically the number of
graphs with a given degree sequence. Since here we want to uncover the $h$-neighborhood of a vertex and not only its degree,
we need to generalize the usual construction. To keep track of the $h$-neighborhood structure, we introduce directed multigraphs with colored edges and analyze the associated configuration model;  see Section \ref{CM}. This will allow us to sample a random graph with a given sequence of $h$-neighborhoods, as long as these neighborhoods are rooted trees.  
As an application, we prove Corollary \ref{cor:entropy1} at the end of Section \ref{CM}. It seems to us that this new configuration model may turn out to be a natural tool in other applications as well.
Finally, Section \ref{LD} is devoted to the proof of  large deviation principles in the classical random graphs ensembles.
We stress that our methods allow in principle a much greater generality, since one could establish
large deviation estimates for random graphs that are uniformly sampled from the class of all graphs with 
a given $h$-neighborhood distribution and not only with given degree sequences; see Remark \ref{re:LDPCM}.

\subsection{Related work}
Large deviations in random graphs is a rapidly growing topic. 
For dense graphs, e.g.\ $\cG(n,p)$ with fixed $p\in(0,1)$, a thorough treatment has been given recently by Chatterjee and Varadhan \cite{MR2825532}, in the framework of the cut topology introduced by Lov{\'a}sz and Szegedy \cite{LoSz}, see also Borgs, Chayes, Lov{\'a}sz, S{\'o}s and Vesztergombi \cite{MR2455626,MR2925382}. In the sparse regime, only a few partial results are known. 
O'Connell \cite{MR1616567}, Biskup, Chayes and Smith \cite{MR2352285} and Puhalskii \cite{MR2118868} have proven large deviation asymptotics for the connectivity and for the size of the connected components.
Large deviations for degree sequences of Erd\H{o}s-R\'enyi graphs has been studied in Doku-Amponsah and M{\"o}rters \cite{MR2759726} and Boucheron, Gamboa and L\'eonard \cite[Theorem 7.1]{BGL2002}.  Closer to our approach, large deviations in the local weak topology were obtained for critical multi-type Galton-Watson trees by Dembo, M{\"o}rters and Sheffield \cite{DMS2005}.
Finally, large deviations for other models of statistical physics on Erd\H{o}s-R\'enyi graphs have been considered in Rivoire \cite{MR2099724} and Engel, Monasson, and Hartmann \cite{MR2099723}. 

As far as we know, this is the first time that large deviations of the neighborhood distribution are addressed in a systematic way. While our approach does not cover results on connectivity and the size of connected components such as \cite{MR1616567}, it does yield a simplification of some of the existing arguments concerning the large deviations for degree sequences. We point out that our Corollary \ref{cor:LDPER2}
gives a corrected version of \cite[Corollary 2.2]{MR2759726}. 
Under a stronger sparsity assumption, large deviations of neighborhood distributions for random networks have been used in \cite{BordenaveCaputo2012} to study the large deviations of the spectral measure of certain random matrices.

\section{Local weak convergence}
\label{sec:LWC}

In this section, we first recall the basic notions of local weak convergence in the more general context of rooted multi-graphs; see  \cite{bensch}, \cite{aldoussteele}, and \cite{aldouslyons}. Then, we give a general tightness lemma.

\subsection{Local convergence of rooted multi-graphs}

Let $V$  be a countable set, a {\em multi-graph} $G = (V, \omega)$ is a vertex set $V$ together with a map  
$\omega$ from $V^2$ to $\dZ_+$ such that for all $(u,v) \in V^2$, $\omega ( u, u)$ is even and 
$
\omega (u,v) =  \omega (v,u).
$
For ease of notation, we sometimes set $\omega (v) = \omega (v,v)$ for the weight of the loop at $v$. If $e = \{u,v\}$ is an unordered pair ($u\ne v$), we may also write $\omega(e)$ in place of $\omega(u,v)$. 
The {\em edge set} $E$ of $G$ is the set of unordered pairs $e = \{ u,v\}$ such that $\omega( e) \geq 1$, $\omega(e)$ being the {\em multiplicity} of the edge $e \in E$. Similarly, $\omega(v) /2$ is the number of {\em loops} attached to $v$. A multi-graph with no loop, and with no edge with multiplicity greater than $1$ is a graph.

The {\em degree} of $v$ in $G$ is defined by
$$\deg (v)  =  \sum_{u \in V } \omega (v,u).$$
The multi-graph $G$ is  {\em locally finite}  if for any vertex $v$, $\deg(v)< \infty$.

We denote by $\wcG$ the set of all locally finite multi-graphs. For a multi-graph $G\in\wcG$, to avoid possible confusion, we will often denote by $V_G$, $\omega_G$, $\deg_G$  the corresponding vertex set, weight and degree functions.

Recall that a path $\pi$ from $u$ to $v$ of length $k$ is a sequence $\pi = (u_0, \cdots, u_k)$ with $u_0 = u$, $u_k = v$ and, for $0 \leq i \leq k-1$, $\{ u_i, u_{i+1}\} \in E$. If such $\pi:u\to v$ exists, the distance $D(u,v)$ in $G$ between $u$ and $v$ is defined as the minimal length of all paths from $u$ to $v$. If there is no path $\pi:u\to v$, then the distance $D(u,v)$ is set to be infinite. A multi-graph is {\em connected} if $D(u,v)<\infty$ for any $u\neq v \in V$.

Below, a {\em rooted multi-graph} $(G,o) = ( V, \omega, o)$ is a locally finite and connected multi-graph $(V, \omega)$ 
with a distinguished vertex $o \in V$, the root. For $t \geq 0$, we denote by $(G,o)_t$ the induced rooted multi-graph with vertex set $\{u\in V: \,D(o,u)\leq t\}$. 
Two rooted multi-graphs $(G_i,o_i) =  ( V_i , \omega_i , o_i )$, 
$i \in \{1,2\}$, are \emph{isomorphic} if there exists a bijection $\si : V_{1} \to V_{2}$ such that $\si ( o_1) = o_2$ and $\si ( G_1) = G_2$, where $\si$ acts on $G_1$ through $\si (  u , v   ) = (\si ( u) , \si (v) )$ and $\si ( \omega ) = \omega \circ \si$. We will denote this equivalence relation by $(G_1,o_1) \simeq (G_2, o_2)$. The associated equivalence classes can be seen as unlabeled rooted multi-graphs. 
We call $\wcG^*$ the set of all such equivalence classes.

We define the semi-distance $d$ between two rooted multi-graphs $(G_1,o_1)$ and $(G_2,o_2)$ as 
$$d ((G_1,o_1),( G_2,o_2)) = \frac1{1 + T}\,,$$ where $T$ is the supremum of those $t > 0$ such that $(G_1,o_1)_t$ and $(G_2,o_2)_t$ are isomorphic. 
%
On the space $\wcG^*$, $d$ is a  distance. The associated
topology will be referred to as the \emph{local topology}. The space $(\wcG^*,d)$ is Polish (i.e.\ separable and complete) \cite{aldouslyons}. 

Explicit compact subsets of  $\wcG^*$ can be constructed as follows. If $g \in \wcG^*$, we define 
$$
| g | = \sum_{v \in V} \deg ( v ),
$$
i.e.\ twice the total number of edges in $g$.
For $g \in \wcG^*$, $t \in\dN$, the truncation at distance $t$, $g_t$, is defined as the equivalence class of $(G,o)_t$ where the equivalence class of $(G,o)$ is $g$. 
\begin{lemma}\label{le:compactloc}
Let $t_0 \geq 0$ and $\varphi : \dN \to \dR_+ $ be a non-negative function. Then 
$$K = \big\{ g \in \wcG^* : \forall t \geq t_0 ,  \, | g_ t | \leq \varphi (t) \big\},$$ is a compact subset of 
$\,\wcG^*$ for the local topology. 
\end{lemma}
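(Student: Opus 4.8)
The plan is to show $K$ is sequentially compact by a diagonal extraction argument, since $(\wcG^*, d)$ is a Polish space and hence compactness is equivalent to sequential compactness.

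First I would observe that for any fixed $t \geq t_0$, the constraint $|g_t| \leq \varphi(t)$ bounds the total number of edges (with multiplicity) within distance $t$ of the root. Since each vertex in $g_t$ has degree at most $\varphi(t)$ in $g_t$ — indeed the degree of any single vertex is at most $|g_t|$ — and since $g_t$ is connected with root $o$, the number of vertices of $g_t$ is also bounded: a connected multi-graph with edge-count (with multiplicity) at most $\varphi(t)/2$ has at most $\varphi(t)/2 + 1$ vertices. Hence, for each $t \geq t_0$, there are only finitely many isomorphism classes in $\wcG^*$ that can arise as $g_t$ for some $g \in K$: there are finitely many choices for the vertex set size, and finitely many multi-graph structures on a bounded vertex set with bounded total edge multiplicity, up to rooted isomorphism.

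Next, given a sequence $(g_n)$ in $K$, I would extract a subsequence along which $(g_n)_{t}$ stabilizes for every integer $t \geq t_0$. Concretely: since $\{(g_n)_{t_0}\}$ takes finitely many values, pass to a subsequence on which it is constant; then since $\{(g_n)_{t_0+1}\}$ along that subsequence still takes finitely many values, refine further; iterate and take the diagonal subsequence $(g_{n_k})$. Along this diagonal subsequence, for every $t \geq t_0$ the truncation $(g_{n_k})_t$ is eventually equal to some fixed $\tau_t \in \wcG^*$. The $\tau_t$ are consistent in the sense that $(\tau_{t+1})_t = \tau_t$, so they determine a well-defined limit object $g_\infty \in \wcG^*$ — its truncation at distance $t$ is $\tau_t$ — which is locally finite (each $\tau_t$ is finite) and connected by construction. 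One checks $g_\infty \in K$ because $|(g_\infty)_t| = |\tau_t| = \lim_k |(g_{n_k})_t| \leq \varphi(t)$ for all $t \geq t_0$. Finally, $d(g_{n_k}, g_\infty) \to 0$ since the two agree on balls of radius $t$ for all large $k$, giving $d(g_{n_k}, g_\infty) \leq 1/(1+t)$ eventually, for every $t$.

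The only mild subtlety — and the step I would be most careful about — is the finiteness claim: that for fixed $t$ the set $\{g_t : g \in K\}$ is finite as a subset of $\wcG^*$. This rests on the fact that bounding $|g_t|$ simultaneously bounds both the number of vertices and the total edge multiplicity (including loops) of the connected rooted multi-graph $g_t$, after which finiteness up to rooted isomorphism is a routine counting statement. Everything else is the standard Cantor diagonal / Arzelà–Ascoli-type packaging, and closedness of $K$ is immediate since $g \mapsto |g_t|$ is locally constant (hence continuous) on $\wcG^*$ for each fixed $t$.
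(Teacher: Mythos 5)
Your proof is correct and rests on the same key observation as the paper's: for each fixed $t$, the bound $|g_t|\leq\varphi(t)$ leaves only finitely many possible isomorphism classes for the truncation $g_t$, since it controls both the total edge multiplicity and (by connectedness) the number of vertices. The paper packages this as total boundedness --- a finite cover of $K$ by balls of radius $1/(1+t)$ for each $t\geq t_0$ --- and relies on completeness of $\wcG^*$, whereas you run the equivalent sequential-compactness/diagonal-extraction argument and verify closedness and membership of the limit in $K$ explicitly; the two are interchangeable.
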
 
\begin{proof}
For each $t \geq t_0$, there is a finite number of elements in $\wcG^*$, say $f_{t,1}, \cdots, f_{t,n_t}$, such that $| g | \leq \varphi (t)$ and for any vertex the distance to the root is at most $t$. Therefore, the collection $A_{t,1}, \cdots, A_{t,n_t}$ where $A_{t,k} = \{g \in \wcG^* :  g _t = f_{t,k} \}$ is a finite covering of $K$ of radius $1/(1+t)$. 
\end{proof}

The notions of local weak convergence introduced in \S \ref{subsec:LWC} are immediately extended to the present setting of multi-graphs. 
%
The definitions of $U(G)$ in \eqref{eq:defUG} and unimodularity \eqref{eq:defunimod} easily carry over to $\cP ( \wcG^*)$. The next simple lemma is proved in \cite{bensch}.

\begin{lemma}\label{le:closeunimod}
The set $\cP_{u} (\wcG^*)$ is closed in the local weak topology. 
\end{lemma}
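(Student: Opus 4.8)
\emph{Proof proposal.} The plan is to reduce unimodularity to a mass--transport identity tested against a countable family of functions whose total--mass functionals are bounded and continuous on $\wcG^*$, and then pass to the weak limit.

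\emph{Step 1: a generating class of test functions.} Recall that $\r\in\cP(\wcG^*)$ is unimodular iff $\dE_\r\sum_{v}f(G,o,v)=\dE_\r\sum_{v}f(G,v,o)$ for every Borel $f:\wcG^{**}\to\dR_+$. Both sides are additive and pass to increasing limits, so it suffices to verify the identity for $f$ in a class $\mathcal C$ that generates all nonnegative Borel functions by linear combinations and monotone limits. The $\sigma$--algebra on $\wcG^{**}$ is generated by the cylinder events $\{(G,o,v)_t\simeq H\}$ for $t\in\dN$ and $H$ a finite two--rooted multi-graph, so it is enough to take for $\mathcal C$ the functions
$$f_{H,t,d}(G,o,v)=\IND\big[(G,o,v)_t\simeq H\big]\,\IND\big[\deg_G(w)\le d\ \ \forall w:\ D(w,\{o,v\})\le 2t\big],$$
over all $H,t,d$. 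Indeed, given any $\IND[(G,o,v)_t\simeq H]$ (a bounded continuous function supported on $\{D(o,v)\le 2t\}$, say), multiplying by the indicated degree cutoff gives $f_{H,t,d}\uparrow \IND[(G,o,v)_t\simeq H]$ as $d\to\infty$; since the degree cutoff and the support constraints are symmetric in the two roots, this construction commutes with swapping $o$ and $v$, so the mass--transport identity for all $f_{H,t,d}$ implies it for $\IND[(G,o,v)_t\simeq H]$ by monotone convergence, and hence for all Borel $f\ge 0$. Thus the mass--transport identity holds on $\mathcal C$ if and only if $\r$ is unimodular.

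\emph{Step 2: boundedness and continuity of the mass functionals.} Fix $f=f_{H,t,d}\in\mathcal C$ and put $\Phi_f(g)=\sum_{v\in V}f(G,o,v)$ for a representative $(G,o)$ of $g$. If a vertex $w$ with $D(w,\{o\})\le 2t$ has $\deg_G(w)>d$, then every summand vanishes; otherwise all vertices within distance $2t$ of $o$ have degree $\le d$, so $|B_t(o)|\le 1+d+\cdots+d^{t}=:C(t,d)$, and therefore $0\le\Phi_f\le C(t,d)$ on all of $\wcG^*$. Moreover $\Phi_f$ is continuous: if $g_n\to g$ locally then for $n$ large the rooted graphs $B_t^{G_n}(o)$ and $B_t^{G}(o)$ coincide, and $(G_n,o,v)\to(G,o,v)$ in $\wcG^{**}$ for each of the finitely many $v\in B_t^{G}(o)$, so $\Phi_f(g_n)$ is a fixed finite sum converging termwise to $\Phi_f(g)$ by continuity of $f$. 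The same holds for $\check\Phi_f:=\Phi_{\check f}$ with $\check f(G,o,v):=f(G,v,o)$, which again belongs to $\mathcal C$ (with the roles of the two roots exchanged).

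\emph{Step 3: passage to the limit.} Let $\r_n\to\r$ weakly in $\cP(\wcG^*)$ with each $\r_n\in\cP_u(\wcG^*)$, and fix $f\in\mathcal C$. Unimodularity of $\r_n$ gives $\dE_{\r_n}\Phi_f=\dE_{\r_n}\check\Phi_f$ for every $n$. Since $\Phi_f$ and $\check\Phi_f$ are bounded and continuous on $\wcG^*$ by Step 2, weak convergence yields $\dE_{\r_n}\Phi_f\to\dE_\r\Phi_f$ and $\dE_{\r_n}\check\Phi_f\to\dE_\r\check\Phi_f$, hence $\dE_\r\Phi_f=\dE_\r\check\Phi_f$, i.e.\ the mass--transport identity for $f$. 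As $f\in\mathcal C$ was arbitrary, Step 1 shows $\r$ is unimodular, so $\cP_u(\wcG^*)$ is closed.

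\emph{Main obstacle.} The functionals $g\mapsto\sum_v f(G,o,v)$ are in general unbounded and only lower semicontinuous, so naive weak convergence gives only one inequality. The whole point is to isolate, via the degree truncation in Step 1, a generating family of test functions for which these functionals become genuinely bounded and continuous, thereby permitting a clean two--sided passage to the limit; the bookkeeping that the truncation is compatible with swapping the two roots (so that it preserves the class $\mathcal C$) is the only slightly delicate point.
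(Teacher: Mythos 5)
The paper does not prove this lemma itself; it simply cites \cite{bensch}. Your argument is the standard proof of that cited fact (truncate by degree so that the mass functionals $\Phi_f$ become bounded continuous, then pass to the weak limit), and it is essentially correct.

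Two points deserve slightly more care, though neither is a real obstruction. First, in Step~1 the passage from ``the identity holds for all $f_{H,t,d}$'' to ``the identity holds for all Borel $f\geq 0$'' is not just linear combinations and monotone limits: you are identifying two measures on $\wcG^{**}$ (the left and right intensity measures $A\mapsto \dE_\r\sum_v\IND_A(G,o,v)$ and $A\mapsto \dE_\r\sum_v\IND_A(G,v,o)$), and you need a monotone-class/uniqueness-of-measures argument together with $\sigma$-finiteness. The degree-truncated events are exactly what furnishes a countable cover of $\wcG^{**}$ by sets of finite intensity measure, so the argument closes, but this should be said. (A shortcut that avoids the issue entirely: by Aldous--Lyons, unimodularity is equivalent to involution invariance, i.e.\ the identity for $f$ supported on $\{v\stackrel{G}{\sim}o\}$ --- the paper itself uses this reduction in the proof of Lemma~\ref{le:UGWunimod}; then after a degree cutoff the sum has at most $d$ terms and the whole generating-class discussion becomes trivial.) Second, there is minor radius bookkeeping: the candidate $v$'s lie in $B(o,2t)$, not $B(o,t)$, and $\Phi_{f_{H,t,d}}$ depends on the ball of radius roughly $4t+1$ around $o$; this only changes the constant $C(t,d)$ and the threshold in the continuity argument, not the conclusion.
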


\subsection{Compactness lemma for the local weak topology} 

Let $G_n$ be a sequence of finite multi-graphs. We now give a condition which guarantees that the sequence $U(G_n)$ is tight for the local weak topology. If $G = (V, \o)$ is a multi-graph, we define the degree of a subset $S \subset V$ as 
\begin{equation}\label{eq:defdegS}
\deg_G(S) = \sum_{ v \in S} \deg_G (v). 
\end{equation}

The next lemma is a sufficient condition for tightness in $\cP_{u} (\wcG^*)$. A similar  result appears in  Benjamini, Lyons and Schramm \cite[Theorem 3.1]{BLS}. We give an independent proof. 

\begin{lemma}\label{le:tightLWC}
Let $\delta : [0,1] \to \dR_+$ be a continuous increasing function such that $\delta(0) = 0$. There exists a compact set $\Pi = \Pi (\delta) \subset \cP_{u} (\wcG^*)$ such that if a finite multi-graph $G = (V, \o)$
satisfies 
\begin{equation}\label{eq:tightLWC}
\deg_G(S) \leq |V| \delta \PAR{\frac{  |S| } { |V | } }
\end{equation}
for all $S \subset V$, 
then  $U(G) \in \Pi$.
\end{lemma}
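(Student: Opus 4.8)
The statement to prove is Lemma~\ref{le:tightLWC}: a degree-growth bound of the form $\deg_G(S)\leq |V|\,\d(|S|/|V|)$ forces the neighborhood measure $U(G)$ into a fixed compact subset of $\cP_{u}(\wcG^*)$. The natural strategy is to exhibit an explicit compact set $\Pi$ via Prokhorov's theorem: since $\cP_u(\wcG^*)$ is closed (Lemma~\ref{le:closeunimod}), it suffices to find, for each $\veps>0$, a compact $K_\veps\subset\wcG^*$ — produced by Lemma~\ref{le:compactloc}, i.e.\ of the form $\{g:|g_t|\leq\varphi(t)\ \forall t\geq t_0\}$ for a suitable $\varphi=\varphi_\veps$ — such that $U(G)(K_\veps)\geq 1-\veps$ uniformly over all finite multi-graphs $G$ satisfying \eqref{eq:tightLWC}. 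The family $\Pi=\cP_u(\wcG^*)\cap\{\rho:\rho(K_{1/k})\geq 1-1/k\ \forall k\}$ is then compact and contains every such $U(G)$.

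The core estimate is a uniform (in $G$) tail bound on the size of the $t$-ball $|(G(o),o)_t|$ when the root $o$ is uniform. The plan is to iterate the hypothesis \eqref{eq:tightLWC}. Write $B_t(o)$ for the vertex set of the ball of radius $t$ around $o$, and $N_t = \sum_{v\in V}\deg_G(B_t(v))$; note $N_t = \sum_{S}\deg_G(v)\cdot\#\{w: v\in B_t(w)\}$, and a double-counting/Markov argument should let me control $\frac1{|V|}\#\{o: |(G(o),o)_t|>M\}$ in terms of $N_t/M$. To bound $N_t$ itself, I want to show $N_t\leq |V|\,a_t$ for a sequence $a_t$ depending only on $\d$: the ball at radius $t+1$ is obtained from the ball at radius $t$ by adding neighbors, and $|B_{t+1}(o)|\leq \deg_G(B_t(o))$, so averaging over $o$ and applying \eqref{eq:tightLWC} with $S = B_t(o)$ after a convexity/Jensen step (using monotonicity and, if needed, concavity of $\d$, or at worst crude monotone bounds) gives a recursion $a_{t+1}\leq \d(\text{something involving }a_t)$ — the subadditivity $\frac1{|V|}\sum_o |B_t(o)|$ being a fraction $\leq 1$ helps keep this controlled. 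Choosing $M=M_\veps(t)$ growing fast enough in $t$ and summing the geometric-type tails over $t$ then yields a single $\varphi_\veps$ with $U(G)(K_\veps)\geq 1-\veps$.

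A technical point to handle cleanly: $\deg_G(S)$ counts edge-endpoints, so $|B_{t+1}(o)|\leq 1+\deg_G(B_t(o))$ is the honest bound (a vertex may have many edges to already-visited vertices), and loops/multiplicities only help since they inflate $\deg_G$ without adding new vertices — so $|g_t|=|(G(o),o)_t|\leq \deg_G(B_{t-1}(o))$ still controls the truncation size that Lemma~\ref{le:compactloc} needs. I should also record that each $U(G)$ is unimodular (it is, being the empirical measure of a finite multi-graph, as noted after \eqref{eq:defunimod}), so $U(G)\in\cP_u(\wcG^*)$, which is what lets me intersect with the closed set $\cP_u(\wcG^*)$ and conclude compactness of $\Pi$.

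The main obstacle I anticipate is making the recursion for $a_t$ genuinely uniform over $G$ while keeping the bound summable: $\d$ is only assumed continuous and increasing with $\d(0)=0$, not concave, so Jensen is not directly available and I may need to replace $\d$ by its (continuous, increasing) concave majorant on $[0,1]$, or instead avoid averaging altogether and run the iteration pathwise on $B_t(o)$ combined with a layered Markov inequality at each scale $t$. Getting the right growth rate of the cutoff $M_\veps(t)$ so that $\sum_t \veps 2^{-t}$-type tails close up — and verifying the resulting $\varphi_\veps$ indeed defines a compact set via Lemma~\ref{le:compactloc} — is where the real bookkeeping lies, but it is routine once the one-step estimate is in place.
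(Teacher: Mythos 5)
Your overall framework is the right one and is exactly the paper's: Prohorov's theorem reduces the lemma to exhibiting, for each $\veps>0$, a compact $K_\veps\subset\wcG^*$ of the form given by Lemma \ref{le:compactloc} with $U(G)(K_\veps^c)\leq\veps$ uniformly over all $G$ satisfying \eqref{eq:tightLWC}, and then $\Pi$ is the closure of the corresponding set of measures inside the closed set $\cP_{u}(\wcG^*)$. The gap is in the one piece of real content, the uniform tail bound on $\deg_G(B(o,t))$. Your primary route — bound the first moment $N_t=\sum_v\deg_G(B(v,t))$ by a recursion $a_{t+1}\leq\d(\cdots a_t)$ and apply Markov — cannot work, and not merely because $\d$ is not concave: the quantity $N_t/|V|$ is genuinely not bounded uniformly over graphs satisfying \eqref{eq:tightLWC}. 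Indeed $N_t/|V|=\frac1{|V|}\sum_u\deg(u)\,|B(u,t)|\geq\frac1{|V|}\sum_u\deg(u)^2$ already for $t=1$ (simple graphs), and the hypothesis \eqref{eq:tightLWC} constrains only the sorted partial sums of the degree sequence, $\sum_{i\leq k}d_i\leq n\,\d(k/n)$. Taking $\d(x)=\sqrt x$ and $d_k\approx\frac12\sqrt{n/k}$ satisfies all these constraints, yet $\frac1n\sum_k d_k^2\approx\frac14\log n\to\infty$. So any Markov bound based on $N_t$ forces a cutoff $M$ growing with $|V|$, which destroys the required uniformity of $\varphi_\veps$. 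The same computation shows that the Jensen/concave-majorant step produces the bound $|V|\,\wt\d(\bar b_t/|V|)$, which diverges with $|V|$ whenever $\d$ is not Lipschitz at $0$; so the recursion never closes.

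The fallback you gesture at ("avoid averaging, layered Markov at each scale") is in fact the correct argument, but it is the whole proof and needs to be carried out. Concretely, the paper proceeds as follows: Markov's inequality is applied only to the \emph{degrees} (where the first moment \emph{is} controlled, $\deg(V)\leq c|V|$ with $c=\d(1)$), giving that the set $S_t$ of vertices of degree at least $c/h_\veps(t)$ has size at most $h_\veps(t)|V|$, where $h_\veps(t)$ is a $t$-fold composition of $f=\d^{-1}$ evaluated at $\veps 2^{-t}$. Then \eqref{eq:tightLWC} is iterated starting from the \emph{set} $S_t$ (using $|B(S,1)|\leq\deg(S)$) to show that the set $U_t$ of vertices within distance $t$ of $S_t$ has size at most $\veps 2^{-t}|V|$; and for any root $o\notin U_t$ every vertex of $B(o,t)$ has degree below the cutoff $c/h_\veps(t)$, so $\deg(B(o,t))$ is bounded by the geometric quantity $\varphi_\veps(t)$. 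A union bound over $t$ finishes. The essential idea you are missing is that the iteration of \eqref{eq:tightLWC} must be run on the exceptional set of high-degree vertices (a set whose size, not whose degree sum, is what you control at the next step), rather than on the balls around a typical root or on an averaged quantity.
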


Considering a sequence $U(G_n), n \geq 1$, condition \eqref{eq:tightLWC} amounts to a uniform integrability of the degree sequences of the multi-graphs $(G_n), n \geq 1$. It may seem quite paradoxical that a sole condition on the degrees implies the tightness of the whole graph sequence. However, 
the unimodularity of $U(G)$ yields  enough uniformity for  this result to hold.  

\begin{proof}[Proof of Lemma \ref{le:tightLWC}]
Since $\wcG^*$ is a Polish space, from Prohorov's theorem, a set $\Pi \subset \cP(\wcG^*)$ is relatively compact if and only if for any $\veps > 0$, there exists a compact $K\subset\wcG^*$ such that for all $\mu \in \Pi$, $\mu ( K^c ) \leq \veps$. 

Set $c = \delta (1)$. Without loss of generality, we may assume $c > 1$. We consider the increasing function $[0 , c] \mapsto [0,1]$  
$$f = \delta^{-1}.$$
Now, for each $\veps>0$, 
and integer $t \geq 1$, we set 
$$
h_\veps ( t) = (f \circ \cdots \circ f) ( \veps 2^{-t} ) \quad  \hbox{ and } \quad \varphi_\veps ( t) =  \frac{  ( c  / h_\veps (t)  ) ^ { t} - 1 } { 1 -  h_\veps(t) / c },
$$
where the composition holds $t$ times. We now define $\Pi$ as being the closure of the set of measures $\mu$ in $\cP_{u} (\wcG^*)$ such that for any $\veps > 0$, $\mu ( K_\veps^c ) \leq \veps$ where 
$$
K_\veps = \BRA{ g \in \wcG^* : \forall t \geq 1, \, | g_ t | \leq \varphi_\veps (t) }.
$$ 
By Lemma \ref{le:compactloc}, $K_\veps$ is a compact set of $\wcG^*$. Hence,  Prohorov's theorem asserts that $\Pi$ is a compact set of $\cP_{u} (\wcG^*)$.

We now check that $\rho = U(G) \in \Pi$. This will conclude the proof of our lemma. It is sufficient to prove that $\rho (  K_\veps ) \geq 1 -  \veps$ for all $\veps>0$. 
Let $t \geq 0$ be an integer, for $S \subset V$, $B(S,t)$ denote the set of vertices at distance at most $t$ from a vertex in $S$. In particular, if $v\in V$ and $g$ is the equivalence class of $(G(v),v)$ we have $$\deg ( B( v, t ) ) = | g_t |.$$
Notice  also that $| B (S, 1) | \leq \deg (S)$. Set $|V| = n$. By iteration on \eqref{eq:tightLWC}, it follows that if $S \subset V$ is such that $|S| \leq h_\veps ( t) n $ then 
$$
 |B ( S, t) |  \leq \deg ( B (S,t-1) ) \leq( f\circ\cdots \circ f) ( |S|/n)\leq 2^{-t} \veps n. 
$$ 
Moreover, from \eqref{eq:tightLWC}, we have
$$
\deg( V )  = \sum_{v \in V} \deg (v) \leq c  n
$$
Hence, using Markov inequality, we deduce that the set 
$$
S_t = \BRA{ v \in V : \deg(v) \geq c  / h_\veps (t) } 
$$
has cardinality at most $  h_\veps ( t) n $. From what precedes, the set 
$$
U_t = \BRA{ v \in V : \exists u \in B (v,t), \,  \deg_{G_n}(u) \geq c / h_\veps (t) } 
$$
has cardinality at most $2^{-t} \veps n$. Note that, if $ v \notin U_t$, then $\deg( B (v , t) )$ is bounded by 
$$
 \frac{  ( c  / h_\veps (t)  ) ^ { t} - 1 } {c / h_\veps(t) - 1 }=\varphi_\veps ( t) . 
$$
This implies that the set 
$$
V_t = \BRA{ v \in V : \deg ( B ( v, t ) ) \geq \varphi_\veps ( t) } 
$$
has cardinality at most $2^{-t} \veps n$. So finally, from the union bound, the set 
$$
W = \BRA{ v \in V : \forall t \geq 1, \, \deg ( B ( v, t ) ) \leq \varphi_\veps ( t) } 
$$
has cardinality at least $(1 -  \veps)n$. We have thus checked that $\rho (  K_\veps ) \geq 1 - \veps$. 
\end{proof}

\section{Unimodular Galton-Watson trees with given neighborhood}
\label{sec:UGW}

The aim of this section is to prove Proposition \ref{prop:UGW}. We thus fix $h \in \dN$ and $P \in \cP(\cT^*_h)$  admissible. We start with some simple observations which ensure that $\UGW_h(P)$ is indeed well defined. 

First observe that if $\tau \in \cT^* _{h}$, $t' \in \cT^* _{h-1}$ and $S =  \tau \cup t'_+$, then 
(recall the definition of $ \tau \cup t'_+$ and Figure \ref{sfig2})
\begin{equation}\label{eq:qqqqq}
 1 +   \big| \big\{  v \stackrel{\tau}{\sim} o : \tau(o,v) \simeq t' \big\}\big|  =  \big| \big\{  v \stackrel{S}{\sim} o : S(v,o) \simeq \tau , S(o,v) \simeq t' \big\}\big|.  
\end{equation}
Therefore, for any $t,t' \in \cT^* _{h-1}$,
\begin{align*}
& \sum_{\tau \in \cT^* _h} P( \tau\cup t'_+) \PAR{ 1 +   \big| \big\{  v \stackrel{\tau}{\sim} o : \tau(o,v) \simeq t' \big\}\big| }  \IND( \tau_{h-1} = t ) \\
& \quad =  \sum_{\tau \in \cT^* _h}\sum_{S \in \cT^*_ h}  P( S ) \big| \big\{  v \stackrel{S}{\sim} o : S(v,o) \simeq \tau , S(o,v) \simeq t' \big\}\big|  \IND ( S = \tau \cup t'_+)   \IND( \tau_{h-1} = t ) \\
& \quad =   \sum_{S \in \cT^* _h} P( S ) \big| \big\{  v \stackrel{S}{\sim} o : S(v,o)_{h-1} \simeq t , S(o,v) \simeq t' \big\}\big| 
 =  e_P (t,t'),
\end{align*}
where $e_P$ was defined by 
\eqref{eq:defe}. We thus have checked that $\wP_{t,t'}$ defined by  \eqref{hatP} is indeed a probability measure on $\cT^*_h$.  Consequently, the probability measure $\UGW_h(P)$ is well defined.

\subsection{Unimodularity} 
The next lemma is a direct argument for the unimodularity of $\UGW_h(P)$, which establishes the first part of Proposition \ref{prop:UGW}. We remark however that this fact could be derived indirectly from Theorem \ref{th:convlocCM} and Lemma \ref{le:treelike} below, which ensure in particular that  $\UGW_h(P)$ is sofic (and hence unimodular). 
\begin{lemma} \label{le:UGWunimod}Fix $h\in\dN$ and $P\in\cP (\cT_h^*)$ admissible. The measure $\UGW_h(P)$ is unimodular. 
\end{lemma}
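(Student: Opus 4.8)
The plan is to verify the mass-transport principle directly. Since the degree of the root under $\UGW_h(P)$ is a function of $(T,o)_h$, one has $\dE_{\UGW_h(P)}\deg_T(o)=\dE_P\deg_G(o)<\infty$, so by a standard reduction (see e.g.\ \cite{aldouslyons}) it is enough to prove that
$$
\dE_{\UGW_h(P)}\sum_{v\sim o}f(T,o,v)=\dE_{\UGW_h(P)}\sum_{v\sim o}f(T,v,o)
$$
for every Borel $f:\cG^{**}\to\dR_+$ that vanishes unless the two roots are adjacent. For such $f$, a doubly-rooted tree with adjacent roots $(T,o,v)$ is encoded by the ordered pair $\big(T(o,v),\,T(v,o)\big)\in\cT^*\times\cT^*$ of the components of $T$ obtained by deleting the edge $\{o,v\}$, rooted at $v$ and at $o$ respectively, and $(T,v,o)$ corresponds to the pair with its coordinates exchanged. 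Hence it suffices to show that the finite measure
$$
M:=\dE_{\UGW_h(P)}\sum_{v\sim o}\delta_{(T(o,v),\,T(v,o))} \ \text{ on }\cT^*\times\cT^*, \quad \text{of total mass } \dE_{\UGW_h(P)}\deg_T(o),
$$
is invariant under the exchange of coordinates, which I denote $M\mapsto\widetilde M$.

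Next I would split $M=\sum_{(t,t')\in\cT^*_{h-1}\times\cT^*_{h-1}}M_{t,t'}$ according to the type $(t,t')=\big((T(o,v))_{h-1},(T(v,o))_{h-1}\big)$ of the neighbour $v$; since $M_{t,t'}$ is carried by pairs $(A,B)$ with $A_{h-1}=t$ and $B_{h-1}=t'$, exchange-invariance of $M$ is equivalent, block by block, to $\widetilde{M_{t,t'}}=M_{t',t}$ for all $(t,t')$. Conditioning on $(T,o)_h=\tau$ (of law $P$), the construction of $\UGW_h(P)$ gives: the subtrees $\{T(o,v)\}_{v\sim o}$ are conditionally independent given $\tau$; the conditional law of $T(o,v)$ depends on $\tau$ only through the type of $v$ and equals $\cL_{t,t'}$, the law of a rooted tree whose $h$-truncation has law $\wP_{t,t'}$ and which is then completed by the $\UGW_h(P)$-recursion; and $T(v,o)$ is a function of $\tau$ and of $\{T(o,w)\}_{w\neq v}$, hence conditionally independent of $T(o,v)$ given $\tau$. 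As $\cL_{t,t'}$ does not depend on $\tau$, this yields
$$
M_{t,t'}=\cL_{t,t'}\otimes\nu_{t,t'},\qquad\text{where}\qquad \nu_{t,t'}:=\dE_{\UGW_h(P)}\sum_{v\sim o:\ \mathrm{type}(v)=(t,t')}\delta_{T(v,o)},
$$
and $\nu_{t,t'}$ has total mass $\dE_{\UGW_h(P)}E_h(t,t')=\dE_P E_h(t,t')=e_P(t,t')$, since $E_h$ in \eqref{eq:defE} is a function of $(T,o)_h$.

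The crux is the identification $\nu_{t,t'}=e_P(t,t')\,\cL_{t',t}$. Deleting from $\tau=(T,o)_h$ the branch rooted at a neighbour $v$ of type $(t,t')$ leaves a tree whose $h$-neighbourhood of $o$ is some $\sigma\in\cT^*_h$ with $\sigma_{h-1}=t'$ and $\tau=\sigma\cup t_+$; applying the counting identity already used in this section to show that $\wP$ is a probability, but with the roles of $t$ and $t'$ interchanged, shows that under the normalised measure $\nu_{t,t'}$ the $h$-neighbourhood of the root of $T(v,o)$ has law $\wP_{t',t}$ — the multiplicity factor $1+|\{w\sim o:\sigma(o,w)\simeq t\}|$ in \eqref{hatP} accounting precisely for the number of type-$(t,t')$ neighbours of the root of $\tau$ that reduce to the same $\sigma$. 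Moreover, by self-similarity of the construction, once this $h$-neighbourhood is fixed the remaining growth of $T(v,o)$ coincides with that of $\cL_{t',t}$; the one point to verify is that the type of every surviving neighbour $w$ of $o$, as it enters the recursion inside $\tau$, equals the type of $w$ computed from $\sigma$ together with the extra neighbour of weight $t$ left behind by $v$, which holds because $\tau=\sigma\cup t_+$, so both inspect the same $(h{-}1)$-truncated neighbourhood around $w$. Granting this, $M_{t,t'}=e_P(t,t')\,\cL_{t,t'}\otimes\cL_{t',t}$, hence
$$
\widetilde{M_{t,t'}}=e_P(t,t')\,\cL_{t',t}\otimes\cL_{t,t'}=e_P(t',t)\,\cL_{t',t}\otimes\cL_{t,t'}=M_{t',t},
$$
the middle equality being exactly the admissibility of $P$. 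Summing over $(t,t')$ gives $\widetilde M=M$, which proves the lemma.

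The main obstacle I anticipate is the bookkeeping behind $\nu_{t,t'}=e_P(t,t')\,\cL_{t',t}$: one must match not only the law of the root-neighbourhood of $T(v,o)$ after the marked branch is removed — where the counting identity and the precise size-bias in \eqref{hatP} are what make everything fit — but also the conditional law of the completed tree, checking that the deleted branch (of weight $t$) drives the recursion for the remaining neighbours of $o$ exactly as prescribed by $\cL_{t',t}$. Everything else reduces to the definitions of $\UGW_h(P)$ and of admissibility, together with the standard reduction of the mass-transport principle to functions of adjacent roots.
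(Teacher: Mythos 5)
Your proof is correct and is essentially the paper's own argument: the paper likewise reduces to involution invariance, conditions on the type $(t,t')$ of the edge at the root, and shows that the two sides of such an edge are conditionally independent with the laws you call $\cL_{t,t'}$ and $\cL_{t',t}$ and total mass $e_P(t,t')$ --- its auxiliary doubly-rooted tree $H_{t,t'}$ is exactly your product measure $\cL_{t,t'}\otimes\cL_{t',t}$ glued along an edge, and admissibility is used in the same final step. The identification you single out as the crux, $\nu_{t,t'}=e_P(t,t')\,\cL_{t',t}$, is carried out there via the same size-biasing computation, namely \eqref{hatP} combined with \eqref{eq:qqqqq}.
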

\begin{proof}
It is sufficient to check the so-called involution invariance, i.e.\ that \eqref{eq:defunimod} holds with $f$ restricted to functions $f : \cG^{**} \to \dR_+$ such that $f ( G, u, v) = 0$ unless $\{ u , v\} \in E_G$; see \cite{aldouslyons}. Recall that we may extend $f : \cG^{**} \to \dR_+$ to all connected graphs with two distinguished roots  $(G,u,v)$ through the isomorphism class. 

 Let $(T ,o)$ be the random rooted tree defined in the introduction whose equivalence class has law $\UGW_h (P)$. Recall that the neighbors of the root $o$ are indexed by $1, \cdots, \deg_T(o)$ and that the vector of subtrees $(T(o,1),\cdots,T(o,\deg_T(o)))$ is exchangeable. We write
\begin{align*}
\dE \sum_{ v \stackrel{T}{\sim} o } f ( T , o  , v ) & =   \sum_{g\in\cT^*_h}  P(g) \sum_{ v \stackrel{g}{\sim} o } \dE [ f ( T , o  ,v  ) \,|\, (T,o)_h \simeq g  ] \\
& =   \sum_{ \tau\in\cT^*_h, \,t' \in\cT^*_{h-1}}  P(S ) \big| \big\{   v \stackrel{S}{\sim} o : S(o,v) \simeq t' ,S(v,o) \simeq \tau \big\}\big| \\
& \quad \quad \quad\quad\quad  \times  \; \dE [ f ( T , o  ,1  )\,|\, T(o,1)_{h-1}  \simeq t' , T(1,o)_{h}  \simeq  \tau  ],
\end{align*}
where, in the summand, $ S = \tau \cup t'_+$. 
Now,  \eqref{hatP} and \eqref{eq:qqqqq} imply 
\begin{align*}
\dE \sum_{ v \stackrel{T}{\sim} o } f ( T , o  , v )  =   \sum_{t , t'} e_P(t,t') \sum_{\tau : \,\,\tau_{h-1} = t}   \wP_{t,t'} (\tau ) \dE [ f ( T , o  ,1  ) \,|\,T(o,1)_{h-1}  \simeq t' , T(1,o)_{h}  \simeq \tau ].
\end{align*}

For $(t,t') \in \cT^* _{h-1}$, we introduce a new random tree $H=H_{t,t'}$ defined as follows. Start with two vertices $o$ and $o'$ which are connected by an edge. Attach the tree $t$ to $o$ and the tree $t'$ to $o'$, so that the type of $o$ is $(t,t')$ and the type of $o'$ is $(t',t)$.
Sample independently $H(o',o)_{h}$
according to $\wP_{t,t'}$ and $H(o,o')_{h}$ according to $\wP_{t',t}$. 
The subtrees  $H(o',o)_{h}$ and $H(o,o')_{h}$ define the types of the children of $o$ and $o'$. Next, sample independently their rooted subtrees, according to their types, i.e.\ $H(o,v)_h$ (resp. $H(o',v)_h$) is sampled according to $\wP_{a,b}$ if $v\sim o$ (resp.\ $v\sim o'$) has type $(a,b)$.  Repeating  recursively for all children defines the random  tree $H$. From the definition of $\UGW_h (P)$, one has 
\begin{align*}
 \sum_{\tau : \,\,\tau_{h-1} = t}   \wP_{t,t'} (\tau ) \dE [ f ( T , o  ,1  ) |  T(o,1)_{h-1}  \simeq t' , T(1,o)_{h}  \simeq \tau  ]  =  \dE_{t,t'} [ f ( H , o  ,o'  ) ],
\end{align*}
where we use $\dE_{t,t'}$ for expectation over the random $H=H_{t,t'}$ defined above.
It follows that
$$
\dE \sum_{ v \stackrel{T}{\sim} o } f ( T , o  , v ) = \sum_{t , t'} e_P(t,t')  \dE_{t,t'} [ f ( H , o  ,o'  ) ].
$$
Similarly,
$$
\dE \sum_{ v \stackrel{T}{\sim} o } f ( T , v , o ) =  \sum_{t , t'} e_P(t,t') \dE_{t,t'} [ f ( H , o'  ,o  ) ]  = \sum_{t , t'} e_P(t,t') \dE_{t',t} [ f ( H , o  ,o'  ) ],
$$
where the second identity follows from the symmetry in $o,o'$ in the definition in $H$, which implies that $\dE_{t,t'} [ f ( H , o'  ,o  ) ]=\dE_{t',t} [ f ( H , o  ,o'  ) ]$.

Finally, the assumption $e_P(t,t') = e_P(t',t)$ yields 
$$
\dE \sum_{ v \stackrel{T}{\sim} o } f ( T , v  , o )  =  \sum_{t , t'} e_P(t',t)  \dE_{t',t} [ f ( H , o  ,o'  ) ] 
 =  \dE \sum_{ v \stackrel{T}{\sim} o } f ( T , o  , v ). 
$$
\end{proof}

\subsection{Consistency lemma}
We turn to the second part of Proposition \ref{prop:UGW}. The following lemma computes the law of the $(h+1)$-neighborhood of a Galton-Watson tree with a given $h$-neighborhood.

\begin{lemma}\label{le:consistency0}
Fix $h\in\dN$, $P \in \cP( \cT^*_h)$ admissible and 
set $\rho = \UGW_h ( P)$. For any $\tau \in \cT^{*}_{h+1}$ with $\deg_\tau(o) = d$, we have 
$$
 \dP_{\rho} \PAR{ (T,o)_{h+1} = \tau   }  =  P\PAR{\tau_h } \prod_{a \in \cA}  { n_a \choose  (k_{a,b})_{b \in \cB_a}  } \prod_{b \in \cB_a} \wP_{s^a,s^{-a}} (t^{a,b})^{k_{a,b}}, 
$$
where 
\begin{enumerate}[\SMALL{$\bullet$}]
\item  $(t^i \in \cT^*_{h}, 1 \leq i \leq d)$ are the subtrees of $\tau$ attached to the offspring of the root, and for $1 \leq i \leq d$, $s^i = (t^i)_{h-1}$ ; 
\item $\{s^a\}_{a \in \cA}$ is set of distinct elements of $( s^i , 1\leq i \leq d )$, and, for each $a \in \cA$,  $\{t^{a,b}\}_{b \in \cB_a}$ is the set the distinct elements of $(t^i, 1\leq i \leq d)$, such that $( t^{a,b} )_{h-1} = s^a $ ;  
\item $n_a$ is the cardinality of $s^i$'s equal to $s^a$ and $k_{a,b}$ is the cardinality of $t^i$'s equal to $t^{a,b}$ ; 
\item  $s^{-a}  = (t^{-a})_{h-1}$ and $t^{-a} \in \cT^*_{h}$ is the tree obtained from $\tau_{h}$ by removing one offsping with subtree equal to $s^{a}$. 
\end{enumerate} 
\end{lemma}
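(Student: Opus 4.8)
The plan is to compute the law of $(T,o)_{h+1}$ directly from the recursive construction of $\UGW_h(P)$, bookkeeping the combinatorial multiplicities that arise because the subtrees attached to the offsprings of the root are i.i.d.\ conditionally on their types, but we only observe the \emph{unlabeled} tree $\tau$. First I would recall that, by construction, $(T,o)_h$ has law $P$, so the event $\{(T,o)_{h+1}=\tau\}$ forces $(T,o)_h = \tau_h$, which happens with probability $P(\tau_h)$. Conditionally on $(T,o)_h=\tau_h$, the neighbors $1,\dots,d$ of the root carry types $(s^i, s^{-i})$ where $s^i=(t^i)_{h-1}$ is read off from $\tau_h$ and $s^{-i}=(t^{-i})_{h-1}$ with $t^{-i}$ obtained from $\tau_h$ by deleting the offspring $i$; then the subtrees $(T(o,i))_h$, $1\le i\le d$, are independent with $(T(o,i))_h\sim\wP_{s^i,s^{-i}}$.

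The main point is then to pass from the labeled description to the unlabeled tree $\tau$. Conditionally on $\tau_h$, the labeled configuration of completed subtrees $((T(o,i))_h)_{1\le i\le d}$ that are \emph{compatible} with the unlabeled $\tau$ is exactly an assignment, to each offspring $i$, of a tree $t^{a,b}$ (with $(t^{a,b})_{h-1}=s^a=s^i$) such that the resulting multiset of subtrees equals the multiset of subtrees of $\tau$. Since among the offsprings the $n_a$ many that have $(t^i)_{h-1}=s^a$ must be filled with the $t^{a,b}$, $b\in\cB_a$, with exactly $k_{a,b}$ of each, the number of labeled assignments producing $\tau$ is the product of multinomial coefficients $\prod_{a\in\cA}\binom{n_a}{(k_{a,b})_{b\in\cB_a}}$. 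A subtle point to spell out: I must check that permuting offsprings \emph{within a fixed type class $a$} does not change the type of any offspring (the type of $i$ depends only on $s^i$ and on $\tau_h$ minus offspring $i$, hence only on the type class, by symmetry), so every such assignment has the same conditional probability, namely $\prod_{a\in\cA}\prod_{b\in\cB_a}\wP_{s^a,s^{-a}}(t^{a,b})^{k_{a,b}}$ where $s^{-a}$ is the type-partner determined by $a$. Multiplying the multinomial count by this common probability and by $P(\tau_h)$ gives the claimed formula.

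The step I expect to be the main obstacle — or at least the one requiring the most care — is the identification of $s^{-i}$ with $s^{-a}$ for all $i$ in class $a$, i.e.\ that the relevant ``co-type'' only depends on the $(h-1)$-truncation of the offspring subtree and not on the particular offspring. Here I would argue that $t^{-i}=(T(o,o))_{h}$ viewed from the root after deleting edge $\{o,i\}$ equals (as an unlabeled tree) $t^{-a}$ as defined in the statement, because deleting any offspring whose subtree has $(h-1)$-truncation $s^a$ from $\tau_h$ yields the same unlabeled rooted tree; this is where one uses that $\tau_h$ determines all the $s^i$ and that $\tau$ only remembers multiplicities. Once this is settled, the remaining bookkeeping (that the $t^{a,b}$ exhaust, with the stated multiplicities, all offspring subtrees of $\tau$, and that the multinomial coefficients count precisely the labeled preimages of $\tau$ under the quotient by isomorphism) is routine. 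Finally I would note that $\deg_\tau(o)=d$ enters only through $\sum_{a}n_a=d$ and $\sum_{b\in\cB_a}k_{a,b}=n_a$, which makes the multinomials well defined, completing the proof.
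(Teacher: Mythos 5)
Your proposal is correct and follows essentially the same route as the paper: condition on $(T,o)_h=\tau_h$ (contributing $P(\tau_h)$), observe that given $\tau_h$ every offspring in class $a$ has type $(s^a,s^{-a})$ — the co-type depending only on the class because deleting any offspring with subtree $s^a$ from $\tau_h$ gives the same unlabeled tree — and then use the conditional independence of the offspring subtrees to reduce the event to independent per-class multinomial counts, yielding $\prod_a\binom{n_a}{(k_{a,b})_b}\prod_b\wP_{s^a,s^{-a}}(t^{a,b})^{k_{a,b}}$. Your phrasing via counting labeled assignments times a common probability is just the multinomial computation the paper invokes, so there is no substantive difference.
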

\begin{proof}
Using $\rho_h = P$, for a fixed $\t\in \cT^{*}_{h+1}$, the above definitions allow us to write
\begin{align*}
 \dP_{\rho} &\PAR{ (T,o)_{h+1} = \tau   }=    P\PAR{ \t_h } \dP_{\rho} \PAR{  (T,o)_{h+1} = \tau  | (T,o)_{h} = \t_h  }  \\
&\quad =   \; P\PAR{  \t_h  } \dP_{\rho} \PAR{  \forall a \in \cA, b \in \cB_a  : \big| \big\{ v \stackrel{T}{\sim } o : T(o,v)_{h} = t^{a,b} \big\}\big| = k_{a,b} \,  \bigm| \,  (T,o)_{h} = \t_h   }.
\end{align*}
Observe that  $T(o,v)_{h} = t^{a,b}$ implies that $T(o,v)_{h-1} = s^a$. Moreover, given $(T,o)_{h} = t$,  $T(o,v)_{h-1} = s^a$ implies that $T(v,o)_{h-1} = s^{-a}$, i.e.\ the type of vertex $v$ is $(s^a, s^{-a})$. The lemma is then a consequence of the conditional independence of the subtrees attached to the offspring of the root given $(T,o)_h$. 
\end{proof}


\begin{lemma}\label{le:consistency}
Fix integers $ k>h\geq 1$, $P \in \cP( \cT^*_h)$ admissible and 
set $\rho = \UGW_h ( P)$. Then 
$$
\rho = \UGW_k ( \r_k). 
$$
\end{lemma}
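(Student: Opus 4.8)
The plan is to prove the consistency relation $\rho = \UGW_k(\rho_k)$ for $\rho = \UGW_h(P)$ by induction on $k \geq h$, the base case $k = h$ being trivial. For the inductive step it suffices to show that if $Q = \rho_k$ is admissible (which we must verify) and $\rho = \UGW_k(Q)$, then also $\rho = \UGW_{k+1}(\rho_{k+1})$; equivalently, by the definition of $\UGW_{k+1}$, I need to check that the conditional law, under $\rho$, of $(T,o)_{k+1}$ given $(T,o)_k$ factorizes in the prescribed Galton--Watson way, with the offspring subtrees conditionally independent and each having the size-biased law $\wP_{s,s'}$ associated to $Q' = \rho_{k+1}$ restricted appropriately. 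So the real content is: (i) admissibility of $\rho_j$ for all $j \geq h$, and (ii) the branching/Markov-type property of $\rho$ at every depth, not just depth $h$.

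The key computational input is Lemma~\ref{le:consistency0}, which gives an explicit product formula for $\dP_\rho((T,o)_{h+1} = \tau)$ in terms of $P(\tau_h)$ and the size-biased kernels $\wP_{s^a,s^{-a}}$. First I would use this to identify $\rho_{h+1}$ explicitly and to verify directly that $\rho_{h+1}$ is admissible, i.e.\ that $e_{\rho_{h+1}}(g,g') = e_{\rho_{h+1}}(g',g)$ for $g,g' \in \cT^*_h$ and that $\dE_\rho \deg_T(o) < \infty$; the latter is immediate since $\deg_T(o)$ has law $P$ at the root. The symmetry of $e_{\rho_{h+1}}$ should follow from the involution-invariance (unimodularity) of $\UGW_h(P)$ established in Lemma~\ref{le:UGWunimod}, applied to the indicator test function picking out the pair of patterns $(g,g')$ across an edge; alternatively it can be read off from the explicit formula via the symmetry built into $\wP_{t,t'}$ and $e_P$. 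Iterating, $\rho_j$ is admissible for all $j \geq h$.

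Next, the heart of the matter: I want to show $\dP_\rho((T,o)_{k+1} = \tau) = Q'(\tau_k) \prod_{a}\binom{n_a}{(k_{a,b})_b}\prod_b \wQ'_{s^a,s^{-a}}(t^{a,b})^{k_{a,b}}$, where $Q' = \rho_{k+1}$ and the notation mirrors Lemma~\ref{le:consistency0} but with $h$ replaced by $k$. The approach is to compare two ways of generating $(T,o)_{k+1}$ under $\rho = \UGW_h(P)$: on one hand, the recursive construction peels off one layer at a time using the kernels $\wP_{\cdot,\cdot}$; on the other, Lemma~\ref{le:consistency0} (suitably iterated, or its $k$-analogue proven the same way) shows that the conditional law of $(T,o)_{k+1}$ given $(T,o)_k$ is exactly a product over offspring of the root of independent draws from $\wP$-type kernels. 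The point is that the construction of $\UGW_h(P)$ is itself layer-by-layer and Markovian in the depth parameter: once $(T,o)_k$ is revealed, the subtrees hanging below depth $k$ evolve independently given their types, and the type of a vertex at depth $k$ together with the shape of $(T,o)_k$ determines the relevant conditional kernel. Reconciling the ``one-step'' kernel $\wQ'_{s,s'}$ built from $Q' = \rho_{k+1}$ with the composition of $\wP$-kernels is the computation to carry out; it should reduce to the identity already verified in Section~\ref{sec:UGW} that $\wP_{t,t'}$ is a probability measure, combined with the tower property for conditional expectations and the exchangeability of the offspring subtrees.

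The main obstacle I anticipate is bookkeeping, not conceptual: keeping straight the combinatorial factors $\binom{n_a}{(k_{a,b})_b}$ and the types $(s^a, s^{-a})$ when one passes from depth $h$ to depth $k$, and checking that the size-biasing is ``consistent'' in the sense that size-biasing $P$ and then truncating gives the same kernel as truncating $P$ to depth $k+1$ and then size-biasing. Concretely, the delicate step is showing $(\wP_{t,t'})$ composed down to depth $k$, integrated against the depth-$k$ structure, equals $\wQ'_{s,s'}$ for $Q' = \rho_{k+1}$ and $s = t_{k-1}$, $s' = t'_{k-1}$ — i.e.\ that the Radon--Nikodym/size-bias weight $1 + |\{v \stackrel{\tau}{\sim} o : \tau(o,v) \simeq t'\}|$ telescopes correctly under projection. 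Once that identity is in hand, matching the two product formulas is a routine term-by-term comparison, and the induction closes.
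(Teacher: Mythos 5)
Your plan follows the paper's own route: reduce by recursion to a single step (the paper does $k=h+1$; your induction $k\to k+1$ is the same thing applied to $Q=\rho_k$), obtain admissibility of the marginals from unimodularity (Lemma \ref{le:UGWunimod}) plus finite mean degree, and then match one-step kernels using Lemma \ref{le:consistency0}. However, as written the argument has a genuine gap, in two places. First, your ``equivalently'' reformulation is not equivalent: checking that the conditional law under $\rho$ of $(T,o)_{k+1}$ given $(T,o)_k$ has the prescribed product form proves nothing, because $\UGW_{k+1}(\rho_{k+1})$ has $(k+1)$-marginal exactly $\rho_{k+1}$ by construction, so the two measures agree up to depth $k+1$ automatically; your displayed identity is just Lemma \ref{le:consistency0} applied to $\UGW_k(\rho_k)=\rho$, i.e.\ the induction hypothesis. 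The actual content lives at depth $k+2$ and beyond: one must show that the deepening kernel $\wQ_{t,t'}$ built from $Q=\rho_{k+1}$ (types $t,t'$ one level deeper) coincides with the conditional law, under $\rho$, of the $(k+1)$-truncation of the subtree $T(o,v)$ at a \emph{child} $v$, given its $k$-truncation and its type --- this is the identity \eqref{eq:PwhP}, phrased for the subtree law $\wdP_{s'}$ rather than for the root neighborhood --- and then invoke the conditional independence present in both constructions to propagate equality of marginals to every depth.

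Second, that kernel-matching identity, which you correctly flag as the delicate step, is only asserted; it does not reduce to the normalization of $\wP_{t,t'}$ plus the tower property and exchangeability. The computation that actually closes it uses unimodularity of $\rho$ a second time, via \eqref{eq:qq2}, both to evaluate $e_Q(t,t')=(n+1)\,P(t\cup s'_+)\,\wP_{s',s}(t')$ (the paper's \eqref{eqQQ1}) and to rewrite the $\wdP_{s'}$-probabilities as edge-counts at the root; and it uses the explicit multinomial form \eqref{eq:nk1} of Lemma \ref{le:consistency0}, in which the size-biasing multiplicities (the factor $\frac{n+1}{k+1}\wP_{s',s}(t')$ there) must cancel exactly against the multiplicity of neighbors carrying the pattern $t'$, after summing over $t'$ with fixed $(h-1)$-truncation using $\sum_{t'}\wP_{s',s}(t')=1$. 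That exact cancellation is the theorem; it is not a consequence of $\wP_{t,t'}$ being a probability measure, and your proposal stops precisely where this bookkeeping begins. So the approach is right and the same as the paper's, but the proof is not complete until the reduction is restated at the level of subtree kernels and the identity \eqref{eq:PwhP} is actually derived.
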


\begin{proof}
By recursion, it suffices to prove the statement for $k = h+1$. For $s \in \cT^*_{h-1}$ such that $e_P(s,s') >0$ for some $s' \in \cT^* _{h-1}$, we may define the probability measure 
\begin{equation*}
\wdP_s ( \cdot ) = \frac { \dE_\rho \big| \big\{ v \stackrel{T}{\sim} o : T(o,v) \in \cdot \; , \;T(v,o)_{h-1} = s \big\}\big| } { \dE_\rho \big| \big\{ v \stackrel{T}{\sim} o : T(v,o)_{h-1} =  s\big\}\big| }. 
\end{equation*}
In words, $\wdP_s\in \cP(\cT^*)$ is the law of the whole subtree $T(o,v)$  of a neighbor $v$ of the root given that $T(v,o)_{h-1} = s$, where $(T,o)$ has law $\rho$. Next, we show that, for $s, s' \in \cT^*_{h-1}$, $t \in \cT^*_h$ such that $t_{h-1} = s$ and $e_P(s,s') >0$, one has
\begin{equation}\label{eq:QwhP}
\wP_{s,s'} ( t) =  \frac { \wdP_{s'} (  (T,o)_{h} = t )} {\wdP_{s'}(  (T,o)_{h-1} = s ) },
\end{equation}
where $(T,o)$ is now the random variable with law $\wdP_{s'}$.
Since $P=\r_h$ one has $e_P(s,s') = \dE_{\r} \big| \big\{ v \stackrel{T}{\sim} o : T(o,v)_{h-1}  =   s \; , T(v,o)_{h-1} = s' \big\}\big|$, and therefore
$$
 \frac { \wdP_{s'} (  (T,o)_{h} = t )} {\wdP_{s'}(  (T,o)_{h-1} = s ) } = \frac{ \dE_{\rho} \big| \big\{ v \stackrel{T}{\sim} o : T(o,v)_{h} =  t \; , T(v,o)_{h-1} =  s' \big\}\big|}{ e_P(s,s') }. 
$$
However, with $
n =  \big| \big\{  v \stackrel{t}{\sim} o : t(o,v) = s' \big\} \big| ,
$
we deduce from the unimodularity of $\rho$ and \eqref{eq:qqqqq} that 
\begin{align}
\dE_{\rho} \big| \big\{ v \stackrel{T}{\sim} o : T(o,v)_{h} =   t \; , T(v,o)_{h-1} =  s'\big\}\big| &  =  \dE_{\rho} \big| \big\{ v \stackrel{T}{\sim} o : T(v,o)_{h} =   t \; , T(o,v)_{h-1} =  s' \big\}\big| \nonumber \\
& =  (n+1) P ( t \cup s'_+ ). \label{eq:qq2}
\end{align}
This proves \eqref{eq:QwhP}. 

Now we set  $Q=\r_{h+1}$ and $\rho' =  \UGW_{h+1} ( Q)$. Our aim is to prove that $\rho'= \rho$.  
It is sufficient to prove that for any $t, t' \in \cT^*_{h}$ and $\tau \in \cT^*_{h+1}$ such that $\tau_{h} = t$ and $e_Q (t,t') >0$, 
\begin{equation}\label{eq:PwhP}
\wQ_{t,t'} (\tau) = \frac { \wdP_{s'} (  (T,o)_{h+1} = \tau )} {\wdP_{s'}(  (T,o)_{h} = t ) },
\end{equation}
where 
$s'=t'_{h-1}$. Indeed, since $\rho'$ and $\rho$ have the same $h+1$ neighborhood, this would prove that they have in fact the same $h+2$ neighborhood and, by conditional independence, we would deduce that $\rho = \rho'$. 

Let us prove \eqref{eq:PwhP}. Set $$  k = \big|\big\{ v \stackrel{\tau}{\sim} o : \tau (o,v) = t' \big\} \big| \leq n=\big| \big\{  v \stackrel{t}{\sim} o : t(o,v) = s' \big\} \big|,$$ where, as above, $t=\t_h$ and $s'=t'_{h-1}$. Since $(\tau \cup t'_+)_h = t \cup s'_+$,  and $\rho'_{h+1}=\r_{h+1}$, we have 
\begin{align*}
Q ( \tau \cup t'_+ ) & =  P (  t \cup s'_+ ) \dP_{\rho'} ( (T,o)_{h+1} = \tau \cup t'_+   | (T,o)_h = t \cup s'_+ ) \\
& =  P ( t \cup s'_+ ) ) \dP_{\rho} ( (T,o)_{h+1} = \tau \cup t'_+   | (T,o)_h = t \cup s'_+).
\end{align*}
As in Lemma \ref{le:consistency0}, let $t^i \in \cT^*_{h}, 1 \leq i \leq d$ be the subtrees of $\tau$ attached to the offspring of the root and call $s^i$ their restriction to $\cT^*_{h-1}$. By construction, $k$ elements of the $t^i$'s are equal to $t'$ and $n$ elements of the $s^i$'s  are equal to $s'$. Let $(s^a)_a$ be the set of distinct elements of the set $\{s'\}\cup\{s^i , 1\leq i \leq d \}$, and, for each $a$, let $(t^{a,b})_b$ denote the distinct elements of $\{t'\}\cup\{t^i, 1\leq i \leq d \}$, such that $t^{a,b}$ restricted to $\cT^*_{h-1}$ is $s^a$. We denote by $n_a$ the cardinality of $s^i$'s equal to $s^a$ and $k_{a,b}$ the cardinality of $t^i$'s equal to $t^{a,b}$. We set $n'_a = n_a + \IND ( s_a = s')$ and $k'_{a,b} = k_{a,b} + \IND ( t_{a,b} = t')$. Then, Lemma \ref{le:consistency0} yields 
\begin{align}
 \dP_{\rho} &\left( (T,o)_{h+1} = \tau \cup t'_+  | (T,o)_h = t \cup s'_+ \right)  = \prod_a  { n'_a \choose  (k'_{a,b})_{b}  } \prod_{b} \wP_{s^a,s^{-a}} (t^{a,b})^{k'_{a,b}} \nonumber\\
 & \qquad\qquad \qquad\qquad= \frac{n +1}{k+1} \wP_{s',s} (t')  \prod_a  { n_a \choose  (k_{a,b})_{b}  } \prod_{b} \wP_{s^a,s^{-a}} (t^{a,b})^{k_{a,b}} , 
\label{eq:nk1}
\end{align}
where,  $s^{-a} = [(t\cup s'_+)^{-a}]_{h-1}$ and $(t\cup s'_+)^{-a} \in \cT^*_{h}$ is the tree obtained from $t \cup s'_+$ by removing one of the offspring with subtree equal to $s^{a}$. 
Thus, we find
\begin{equation}
\label{eqQQ}
Q ( \tau \cup t'_+ )  =  P (  t \cup s'_+ ) \frac{n +1}{k+1} \wP_{s',s} (t')  \prod_a  { n_a \choose  (k_{a,b})_{b}  } \prod_{b} \wP_{s^a,s^{-a}} (t^{a,b})^{k_{a,b}} .
\end{equation}
Since $\r_{h+1}=\r'_{h+1}$, one has 
$$
e_Q(t,t')  =  e_Q( t',t)  =   \dE_{\rho} \big| \big\{ v \stackrel{T}{\sim} o : T(o,v)_{h} =  t' \; , T(v,o)_{h} =  t \big\}\big|.
$$
By sampling the $h$-neighborhood $(T,o)_h$ first, and using the number $n$ as above, one has
\begin{equation}
\label{eqQQ1}
e_Q(t,t')  =  (n+1) P ( t \cup s'_+ )  \wP_{s',s} (t').
\end{equation}
From \eqref{eqQQ} and \eqref{eqQQ1} we find 
\begin{equation}
\label{eqQQ10}
\wQ_{t,t'} (\tau)  =\frac{(k+1)Q ( \tau \cup t'_+ ) }{e_Q(t,t')  } =\prod_a  { n_a \choose  (k_{a,b})_{b}  } \prod_{b} \wP_{s^a,s^{-a}} (t^{a,b})^{k_{a,b}}. 
\end{equation}
Next, we show that the right hand side in 
\eqref{eq:PwhP} equals the above expression. 
We have
\begin{align*}
\frac { \wdP_{s'} (  (T,o)_{h+1} = \tau )} {\wdP_{s'}(  (T,o)_{h} = t ) } & =  \frac{ \dE_{\rho} \big| \big\{ v \stackrel{T}{\sim} o : T(o,v)_{h+1} =  \tau \; , T(v,o)_{h-1} =  s' \big\}\big|}{  \dE_{\rho} \big| \big\{ v \stackrel{T}{\sim} o : T(o,v)_{h} =  t\; , T(v,o)_{h-1} =  s' \big\}\big|} \\
& =  \frac{\dE_{\rho} \big| \big\{ v \stackrel{T}{\sim} o : T(v,o)_{h+1} =  \tau \; , T(o,v)_{h-1} =  s' \big\}\big|} {  (n+1) P( t \cup s'_+ )},
\end{align*}
where we have used unimodularity and \eqref{eq:qq2}. Now, by sampling first the $h$-neighborhood $(T,o)_h$, one finds that 
\begin{align*}
\dE_{\rho} &\big| \big\{ v \stackrel{T}{\sim} o : T(v,o)_{h+1} =  \tau \; , T(o,v)_{h-1} =  s' \big\}\big|\\&
\qquad =P (  t \cup s'_+  ) 
\sum_{t' : t'_{h-1} = s'}\dE_{\rho} \Big[\sum_{ v \stackrel{T}{\sim} o} \IND( T(v,o)_{h+1} =  \tau,\, T(o,v)_{h} =  t') \,\big| \,(T,o)_h = t \cup s'_+\Big]
\\&\qquad =
P (  t \cup s'_+  )\sum_{t' : t'_{h-1} = s'}  (k+1)\dP_{\rho} \left( (T,o)_{h+1} =  \tau \cup t'_+ \, |\, (T,o)_h = t \cup s'_+ \right),
\end{align*}
where, as before, $k=k(t')$ stands for the number of $v \stackrel{\tau}{\sim} o$ such that  $\tau (o,v) = t'$. 
Using 
Lemma \ref{le:consistency0} in the form \eqref{eq:nk1}, and the fact that $\sum_{t' : t'_{h-1} = s'}\wP_{s',s} (t')=1$, we find
\begin{align*}
\dE_{\rho} &\big| \big\{ v \stackrel{T}{\sim} o : T(v,o)_{h+1} =  \tau \; , T(o,v)_{h-1} =  s' \big\}\big|\\&
\qquad =(n+1) P (  t \cup s'_+  ) \prod_a  { n_a \choose  (k_{a,b})_{b}  } \prod_{b} \wP_{s^a,s^{-a}} (t^{a,b})^{k_{a,b}}.
\end{align*}
Hence,
\begin{equation}
\label{eqQQ11}
\frac { \wdP_{s'} (  (T,o)_{h+1} = \tau )} {\wdP_{s'}(  (T,o)_{h} = t ) } = \prod_a  { n_a \choose  (k_{a,b})_{b}  } \prod_{b} \wP_{s^a,s^{-a}} (t^{a,b})^{k_{a,b}}.
\end{equation}
The identity \eqref{eq:PwhP} follows from \eqref{eqQQ11} and \eqref{eqQQ10}.
\end{proof}

\begin{remark}\label{boh}
From 
\eqref{eqQQ1}
one deduces the identity 
$$
e_Q(t,t')=e_P(s,s')\wP_{s,s'}(t)\wP_{s',s}(t'),
$$
for any $t,t'
\in\cT^*_{h}$, with $s=t_{h-1},s'=t'_{h-1}$, for any $P\in\cP(\cT^*_h)$ admissible, with $Q=[\UGW_{h}(P)]_{h+1}$. 

\end{remark}

\section{Configuration model for directed graphs with colored edges} \label{CM}
This section introduces a generalized configuration model, to be used later on to count the number of graphs with a given tree-like neighborhood distribution. 

\subsection{Directed multi-graphs with colors}
\label{subsec:defCM}
We are now going to define a family of directed multi-graphs with colored edges. Let $L$ be a fixed integer. Each pair $(i,j)$ with $ 1 \leq i , j \leq L$ is interpreted as a color.  
Define 
the sets of  colors 
$$\cC = \BRA{ (i,j) : 1 \leq i , j \leq L  }$$
$$\cC_{<} = \BRA{ (i,j) : 1 \leq i  < j \leq L  } \,,\qquad
\cC_{=} = \BRA{ (i,i) : 1 \leq i  \leq L  }.$$
Also, define  $\cC_\leq= \cC_<\cup\cC_=$, $\cC_>=\cC\setminus \cC_\leq$ and $\cC_{\neq}= \cC\setminus \cC_=$.
If $c = (i,j) \in \cC$, then set $\bar c = (j,i)$ for the conjugate color. 

We consider the class $\wcG(\cC)$ of  directed  
multi-graphs with $\cC$-colored edges defined as follows. We say that a directed  
multi-graph $G$ is an element of $\wcG(\cC)$ if $G = (V, \omega)$ where $V=[n]$ for some $n\in\bbN$, $\omega=\{\omega_c\}_{c\in\cC}$ and for each $c\in\cC$, $\omega_c$ is a map $\omega_c : V^2 \to \dZ_+$ with the following properties:
if $c\in \cC_=$, then $\omega_c (u,u)$ is even for all $u\in V$, and $\omega_c (u,v) = \omega_{ c} (v,u)$ for all $u,v\in V$; if $c\in \cC_{\neq}$, then  $\omega_c (u,v) = \omega_{ \bar c} (v,u)$ for all $u,v\in V$.
The interpretation is that, for any $c\in\cC$, if $u\neq v$ then $\omega_c (u,v)$ is the number of directed edges of color $c$ from $u$ to $v$; if $u=v$ and $c\in\cC_=$, then $\frac12 \omega_c (u,u)$ is the number of loops of color $c$ at $u$, while if  $u=v$ and $c\in\cC_{<}$ then $\omega_c (u,u)=\omega_{\bar c} (u,u)$ is  the number of loops of color $c$ at $u$; we adopt the convention that there are no loops of color $c\in\cC_>$ at any vertex. We call $\cG(\cC)$ the subset of $ \wcG(\cC)$ consisting of  graphs, i.e.\ $G=(V,\omega)$ such that $\omega_c(u,v)\in\{0,1\}$ for all $c\in\cC$ and $u,v\in V$ (no multiple edges) and $\omega_c(u,u)=0$ for all $c\in\cC$ and $u\in V$ (no loop).
See Figure \ref{ocmg} for an example of an element of $\wcG(\cC)$.

\begin{figure}
\begin{center}
\begin{tikzpicture}[->,>=stealth',shorten >=1pt,auto,node distance=3cm,
                    thick,main node/.style={circle,draw,font=\sffamily
                    \bfseries}]

  \node[main node] (1) {1};
  \node[main node] (2) [left of=1] {2};
  \node[main node] (3) [below  of=2] {3};
  \node[main node] (4) [right of=3] {4};
   \node[main node] (5) [right of=1] {5};
   
  \path[every node/.style={font=\sffamily\small}]
    (1) 
        edge [bend right] 
        node[above] {(3,2)} (2)
        edge [loop above] node {(2,2)} (1)
        edge [bend right] node[left] {(1,2)} (4)
        edge node [below] {(2,1)} (5)
    (2) edge node [below] {(2,3)} (1)
        edge [loop above] node {(1,2)} (2)
        edge [bend right] node[left] {(2,3)} (3)
        
    (3) 
        edge node[right] {(3,2)} (2)
    (4) edge node [right] {(2,1)} (1)
        edge node[ right] {(3,3)} (5)
        (5) 
        edge  [bend left] node [below right]{(3,3)} (4)
        edge [bend right] 
        node[above] {(1,2)} (1)
        edge [loop above] node {(1,1)} (5)
        edge [loop right] node {(1,1)} (5)
    ;
\end{tikzpicture}
\end{center}
\caption{An example of directed colored multi-graph in $\wcG(\cC)$. Here $n=5$, $L=3$, with: 
$\omega_{(2,1)}(4,1)=
\omega_{(1,2)}(1,4)=\omega_{(2,1)}(1,5)=\omega_{(1,2)}(5,1)=1$; $\omega_{(2,3)}(2,3)=
\omega_{(3,2)}(3,2)=\omega_{(2,3)}(2,1)=\omega_{(3,2)}(1,2)=1$; $\omega_{(3,3)}(4,5)=
\omega_{(3,3)}(5,4)=1$; $\omega_{(2,2)}(1,1)=2$; $\omega_{(1,1)}(5,5)=4$; $\omega_{(2,2)}(1,1)=2$;
$\omega_{(1,2)}(2,2)=\omega_{(2,1)}(2,2)=1$; all other entries of $\omega$ are zero.}
\label{ocmg}
\end{figure}

 If $G\in\wcG(\cC)$, one can define the colorblind 
 multi-graph 
 $\bar G=(V,\bar\omega)$, by setting
 \begin{equation}\label{colorblind}
 \bar\omega(u,v)=\sum_{c\in\cC}\omega_c(u,v).
\end{equation}  
The multi-graph 
 $\bar G=(V,\bar\omega)$ can be identified with an undirected multi-graph, in that by construction $\bar\omega(u,v)=\bar\omega(v,u)$ for all $u,v\in V$. We say that $G$ is a simple graph if $\bar G$ has no loops  and no multiple edges. 
 Clearly, if $L=1$ then there is only one color, so that any  multi-graph 
 $G\in\wcG(\cC)$ coincides with its own $\bar G$.
 
 If $G\in\wcG(\cC)$, $c\in\cC$ and $u\in V$, set
\begin{equation}\label{dcu}
D_c (u) = \sum_v \omega_c(u,v),
\end{equation}  
and write $D(u)=\{D_c(u),c\in\cC\}$.  Note that  $D(u)$ is an element of $\cM_L $, defined as the set of $L\times L$ matrices  with nonnegative integer valued entries. The vector $\bD = \{D(u), u\in V\}$ of such matrices will be called the {\em degree sequence} of $G$.

 \subsection{
 Directed colored multi-graphs with given degree sequence}\label{configmod}
   Fix $n\in\bbN$, and let $\cD_n$ denote
 the set of all vectors $(D(1),\dots,D(n))$ such that 
 $D(i)=\{D_c(i),\, c\in\cC\}\in \cM_L $ for all $i\in[n]$, and such that 
\begin{equation}\label{dS}
S = \sum_{i = 1}^n D(i) 
\end{equation}
is a symmetric matrix with even coefficients on the diagonal, i.e.\ 
$S=\{S_c,\, c\in\cC\}$, $S_c=S_{\bar c}$ for all  $c\in\cC$, and $S_c\in 2\dZ_+$ for all $c\in\cC_=$. Clearly, if $G\in\wcG(\cC)$ then the vector $\bD$ defined by \eqref{dcu}
yields an element of $\cD_n$ for some $n$. 
Next, for a given $\bD\in\cD_n$ we consider the set of all elements of $\wcG(\cC)$
 which have $\bD$ as their degree sequence.

\begin{definition}\label{defgd}
Fix $n\in\bbN$ and $\bD\in\cD_n$. \begin{itemize}
\item
$\wcG(\bD)$ is the set of multi-graphs $G\in\wcG(\cC)$
with $V=[n]$ such that the degree sequence of $G$ defined by \eqref{dcu} coincides with $\bD$.


\item 
$\cG(\bD,h)$ is the set of  $G\in\wcG(\bD)$ such that the colorblind graph $\bar G$ defined in \eqref{colorblind} contains no cycle of length $\ell\leq h$.
\end{itemize}
\end{definition} 

We also use the notation $\cG(\bD)$ for the set  of simple graphs in $ \wcG(\bD)$. 
This set coincides with $\cG(\bD,2)$, since loops are cycles with length $1$ and multiple edges are cycles with length $2$. 
The main goal of this section is to provide asymptotic formulas for the cardinality of $\cG(\bD)$ and more generally of $\cG(\bD,h)$, for any $h\in\dN$. To this end we introduce a natural extension of the 
usual configuration model from \cite{Bollobas}, see also \cite{MR1782847}.

Fix a multi-graph $G\in\wcG(\bD)$.
For a fixed $c\in\cC_=$, let $G_c$ denote the subgraph of $G$ obtained by removing all edges but the ones with color $c$. 
 If $c\in\cC_<$ instead, then define $G_c$ as the subgraph of $G$ obtained by removing all edges but the ones with color $c$ or $\bar c$.  
 Thus, every $G\in\wcG(\bD)$ is the result of the superposition of the multi-graphs $G_c$, $c\in\cC_\leq$. We may then analyze each color separately. 
 
\subsubsection{Configuration model for $c\in\cC_=$} When $c\in\cC_=$, every pair $u,v$ satisfies $\o_c(u,v)=\o_c(v,u)$, so $G_c$ is actually a multi-graph
 with undirected edges, and we may use the usual construction \cite[Section 2.4]{Bollobas}.
We provide the details for completeness.
  The degrees of $G_c$ are fixed by the sequence $D_c(1),\dots,D_c(n)$. 
 Let $W_c=\cup_{i=1}^nW_c(i)$, be a fixed set of $S_c=\sum_{i=1}^nD_c(i)$ points, with the subsets $W_c(i)$ 
satisfying $|W_c(i)|=D_c(i)$. 
Recall that $S_c$ is even by assumption. Let $\Sigma_c$ be the set of all perfect matchings of
  the complete graph over the points of $W_c$, i.e.\ the set of all partitions of $W_c$ into disjoint edges. Then,
 $$|\Sigma_c|=(S_c-1)!!=(S_c-1)(S_c-3)\cdots 1 = \frac{S_c !}{ (S_c/2) ! 2^{S_c/2}}.$$
Elements of $\Sigma_c$ are  called configurations. For any configuration $\si_c\in \Sigma_c$, call $\G(\si_c)$ the multi-graph on $[n]$ with undirected edges obtained
 by including an edge $\{i,j\}$ iff $\si_c$ has a pair with one element in $W_c(i)$ and the other in $W_c(j)$.
 Notice that $\G(\si_c)$ has the same degree sequence $D_c(1),\dots,D_c(n)$ of $G_c$. 
 Moreover, any multi-graph with that degree sequence equals $\G(\si_c)$ for some $\si_c\in\Sigma_c$. 
 
 \begin{lemma}\label{le:match}
 Fix $c\in\cC_=$.
Let $H$ be a multi-graph on $[n]$ with undirected edges and with degree sequence $D_c(1),\dots,D_c(n)$.
The number of $\si_c\in\Sigma_c$ such that $\G(\si_c)=H$ is given by
\begin{equation}\label{match1}
n_c(H)=\frac{\prod_{i=1}^n D_c(i)!}{\prod_{i=1}^n\left(\o_c(i,i)/2\right)!2^{\left(\o_c(i,i)/2\right)}\prod_{i<j}\o_c(i,j)!}
\end{equation}
where $\o_c(i,j)$ is the number of edges between nodes $\{i,j\}$ in $H$, while $\o_c(i,i)/2$ is the number of loops at node $i$ in $H$. 
\end{lemma}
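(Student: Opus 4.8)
\textbf{Proof plan for Lemma \ref{le:match}.}

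The plan is to count configurations $\si_c \in \Sigma_c$ producing a fixed multi-graph $H$ by a direct combinatorial argument. First I would fix the multi-graph $H$ with the prescribed degree sequence $D_c(1),\dots,D_c(n)$, and recall that a configuration $\si_c$ assigns $\G(\si_c)=H$ precisely when, for every unordered pair $\{i,j\}$ with $i\ne j$, exactly $\o_c(i,j)$ of the matched pairs in $\si_c$ have one endpoint in $W_c(i)$ and the other in $W_c(j)$, and for every $i$, exactly $\o_c(i,i)/2$ of the matched pairs have both endpoints in $W_c(i)$. The count then factors: choose which points of each $W_c(i)$ go to which neighbor, and then choose the matching among the chosen points.

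The key steps, in order, are: (1) For each vertex $i$, partition the $D_c(i)$ points of $W_c(i)$ into labeled blocks of sizes $(\o_c(i,j))_{j\ne i}$ (points destined to be matched to vertex $j$) together with one block of size $\o_c(i,i)$ (points destined to be matched internally); the number of ways to do this is the multinomial $D_c(i)! / \big(\o_c(i,i)!\prod_{j\ne i}\o_c(i,j)!\big)$. (2) For each unordered pair $\{i,j\}$ with $i<j$, we must biject the $\o_c(i,j)$ chosen points on the $i$-side with the $\o_c(i,j)$ chosen points on the $j$-side; there are $\o_c(i,j)!$ such bijections. (3) For each vertex $i$, the $\o_c(i,i)$ internally-destined points of $W_c(i)$ must be perfectly matched among themselves, which can be done in $(\o_c(i,i)-1)!! = \o_c(i,i)!/\big((\o_c(i,i)/2)!\,2^{\o_c(i,i)/2}\big)$ ways. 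Multiplying over all vertices and pairs and simplifying the factors $\o_c(i,i)!$ against each other from steps (1) and (3) yields
\begin{equation*}
n_c(H)=\frac{\prod_{i=1}^n D_c(i)!}{\prod_{i=1}^n\left(\o_c(i,i)/2\right)!\,2^{\left(\o_c(i,i)/2\right)}\prod_{i<j}\o_c(i,j)!},
\end{equation*}
which is the claimed formula. Finally I would note that this product ranges only over the $W_c(i)$'s appearing, and that every $\si_c$ with $\G(\si_c)=H$ arises exactly once in this enumeration, since the partition data in step (1) and the bijection/matching data in steps (2)-(3) are recovered uniquely from $\si_c$.

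The main obstacle is purely bookkeeping: being careful that loops contribute a factor of $2$ to the degree of their vertex (each loop uses two points of $W_c(i)$), so that the internal block has even size $\o_c(i,i)$ and the degree identity $D_c(i)=\o_c(i,i)+\sum_{j\ne i}\o_c(i,j)$ holds, and that loop matchings are counted by the double factorial rather than a plain factorial. Everything else is a standard multinomial-times-matching computation; there is no real analytic difficulty, only the need to ensure the double-counting of unordered pairs $\{i,j\}$ and the self-pairing of loops is handled consistently with the conventions fixed just before the lemma.
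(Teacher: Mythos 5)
Your proposal is correct and follows essentially the same route as the paper's proof: a multinomial choice at each vertex of which half-edges are destined for which neighbor (or for internal loop-pairing), times $\o_c(i,j)!$ bijections for each cross pair and $(\o_c(i,i)-1)!!$ internal matchings, then simplification to \eqref{match1}. No substantive differences.
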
 
\begin{proof}
We need to count the number of matchings $\si_c\in\Si_c$ such that for every $i<j$ one has 
$\o_c(i,j)$
edges between $W_c(i)$ and $W_c(j)$, and such that for all $i$ one has $\frac12\o_c(i,i)$ edges 
within $W_c(i)$. 
Fix $i<j$. Once we choose the $\o_c(i,j)$ elements of $W_c(i)$ and the  $\o_c(i,j)$ elements of $W_c(j)$ to be matched
together to produce the $\o_c(i,j)$ edges, then there are
$\o_c(i,j)!$ distinct matchings that produce the same graph. Similarly,  once we fix the $\o_c(i,i)$ elements of $W_c(i)$ to be matched
together to produce the $\frac12\o_c(i,i)$ loops at $i$, then there are
$(\o_c(i,i)-1)!!$ distinct matchings that produce the same graph. On the other hand, for every node $i$ there are
$$
\binom{D_c(i)}{\o_c(i,1),\dots,\o_c(i,n)}=\frac{D_c(i)!}{\o_c(i,1)!\cdots\o_c(i,n)!}
$$
distinct ways of choosing the elements of $W_c(i)$ to be matched with $W_c(1),\dots,W_c(n)$ respectively. 
Putting all together we arrive at the following expression for the total number of configurations producing the graph $H$:
$$
\prod_{i=1}^n \binom{D_c(i)}{\o_c(i,1),\dots,\o_c(i,n)} \prod_{i< j}\o_c(i,j)!\prod_{i=1}^n(\o_c(i,i)-1)!!,
$$
which can be rewritten as \eqref{match1}.
\end{proof}

\subsubsection{Configuration model for $c\in\cC_<$} 
When $c\in\cC_<$, every pair $u,v$ satisfies $\o_c(u,v)=\o_{\bar c}(v,u)$, so 
for the multi-graph $G_c$, $D_c(i)$ represents the number of outgoing edges at node $i$, which equals the number of incoming edges at that node. Here we use a bipartite version of the previous construction. 
Let $W_c=\cup_{i=1}^nW_c(i)$, be a fixed set of $S_c=\sum_{i=1}^nD_c(i)$ points, with the subsets $W_c(i)$ 
satisfying $|W_c(i)|=D_c(i)$. Similarly, set $\bar W_c=\cup_{i=1}^n\bar W_c(i)$, with $|\bar W_c(i)|=D_{\bar c}(i)$.
Consider the set $\Sigma_c$ of all perfect matchings of the complete bipartite graph over the sets $(W_c,\bar W_c)$, i.e.\ the set of perfect matchings containing only  edges connecting an elements of $W_c$ with an element of $\bar W_c$. Since $S_c=S_{\bar c}$, one has $|W_c|=|\bar W_c|$, and $\Sigma_c$ can be identified with the set of permutations of $S_c$ objects, or the set of bijective maps $W_c\mapsto\bar W_c$, and $|\Sigma_c|=S_c !$. 
A configuration is an element $\si_c\in\Sigma_c$. For any configuration $\si_c$, 
let $\G(\si_c)$ denote the directed multi-graph on $[n]$  obtained
 by including the directed edge $(i,j)$ with color $c$ and the edge $(j,i)$ with color $\bar c$ iff $\si_c$ has a pair with one element in $W_c(i)$ and the other in $\bar W_c(j)$.
 Notice that $\G(\si_c)$ has the same degree sequence $D_c(1),\dots,D_c(n)$ of $G_c$, and 
 any multi-graph with directed edges with colors with the same degree sequence equals $\G(\si_c)$ for some $\si_c\in\Sigma_c$. 
 \begin{lemma}\label{le:bipmatch}
 Fix $c\in\cC_<$.
Let $H$ be a multi-graph on $[n]$ with directed edges with colors $(c,\bar c)$ only 
and with degree sequence 
$D_c(1),\dots,D_c(n)$.
The number of $\si_c\in\Sigma_c$ such that $\G(\si_c)=H$ is given by
\begin{equation}\label{match3}
n_c(H)=\frac{ \prod_{i=1}^n D_c(i)!
D_{\bar c}(i)!}{\prod_{i,j}\o_c(i,j)!}
\end{equation}
where $\o_c(i,j)=\o_{\bar c}(j,i)$ is the number of edges from $i$ to $j$ with color $c$ in $H$. 
\end{lemma}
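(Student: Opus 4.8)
The plan is to carry out a direct product count, parallel to the proof of Lemma~\ref{le:match} but adapted to the bipartite setting, where there are no loops and no corrections coming from unordered pairs. Fix $H$ as in the statement. Recall that a configuration $\si_c\in\Sigma_c$ is a bijection from $W_c=\cup_{i=1}^n W_c(i)$ onto $\bar W_c=\cup_{j=1}^n\bar W_c(j)$, and that $\G(\si_c)=H$ holds exactly when, for every ordered pair $(i,j)$, precisely $\o_c(i,j)$ of the pairs matched by $\si_c$ have their $W_c$-endpoint in $W_c(i)$ and their $\bar W_c$-endpoint in $\bar W_c(j)$. So I would construct every such $\si_c$ from the following data, chosen in three independent stages: (i) for each $i$, an ordered partition of $W_c(i)$ into blocks of sizes $\o_c(i,1),\dots,\o_c(i,n)$; (ii) for each $j$, an ordered partition of $\bar W_c(j)$ into blocks of sizes $\o_c(1,j),\dots,\o_c(n,j)$, which is possible because $D_{\bar c}(j)=\sum_i\o_{\bar c}(j,i)=\sum_i\o_c(i,j)$; (iii) for each ordered pair $(i,j)$, a bijection between the $j$-th block of $W_c(i)$ and the $i$-th block of $\bar W_c(j)$ (both of size $\o_c(i,j)$). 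From such a triple of data one reads off a unique configuration with $\G(\si_c)=H$, and conversely every configuration with $\G(\si_c)=H$ determines such a triple uniquely; hence the number of $\si_c$ we want equals the number of admissible triples.

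Counting the stages: stage (i) contributes $\prod_{i=1}^n\binom{D_c(i)}{\o_c(i,1),\dots,\o_c(i,n)}=\prod_{i=1}^n \frac{D_c(i)!}{\prod_j\o_c(i,j)!}$; stage (ii) contributes $\prod_{j=1}^n \frac{D_{\bar c}(j)!}{\prod_i\o_c(i,j)!}$; and stage (iii) contributes $\prod_{i,j}\o_c(i,j)!$. Multiplying these and cancelling one factor $\prod_{i,j}\o_c(i,j)!$ against the square of it produced by stages (i) and (ii) leaves exactly the right-hand side of \eqref{match3}.

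I do not expect any real obstacle here: compared with the $c\in\cC_=$ case there is no loop term and no factor of $2$, so the identity is cleaner and the argument is a pure product count. The only points to be careful about are the bijection claimed in the first paragraph (distinct data yield distinct configurations, and every configuration with $\G(\si_c)=H$ arises in this way), which is routine, and the identity $D_{\bar c}(j)=\sum_i\o_c(i,j)$ used in stage (ii), which is immediate from the definitions of the degree sequence and of $G_c$ given in Section~\ref{CM}.
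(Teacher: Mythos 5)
Your proposal is correct and follows essentially the same route as the paper's proof: choose, for each vertex, the ordered partition of its half-edges (in both $W_c(i)$ and $\bar W_c(j)$) according to the target blocks, then multiply by the $\prod_{i,j}\o_c(i,j)!$ ways of matching corresponding blocks, and simplify to \eqref{match3}. Your explicit three-stage bijection is just a slightly more detailed phrasing of the same count.
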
 
\begin{proof}
We have to count the number of bijective maps $W_c\mapsto\bar W_c$ such that 
for every $i,j\in[n]$ (including the case $i=j$), $\o_c(i,j)$ elements of $W_c(i)$ are mapped to $\bar W_c(j)$.
We begin by choosing, for every fixed node $i$, the subsets of $W_c(i)$ that are mapped into $\bar W_c(k)$, $k=1,\dots,n$, and
the subsets of $\bar W_c(i)$ that are mapped into $W_c(k)$, $k=1,\dots,n$. This  can be done in 
$$
\prod_{i=1}^n\binom{D_c(i)}{\o_c(i,1),\dots,\o_c(i,n)}\binom{D_{\bar c}(i)}{\o_{\bar c}(i,1),\dots,\o_{\bar c}(i,n)}
$$
 distinct ways. Once these subsets are chosen there remain, for every $i,j$,  $\o_c(i,j)!$
distinct bijections producing the same graph. Therefore, the total number of bijections from $W_c$ to $\bar W_c$ which preserve the numbers $\o_c(i,j)=\o_{\bar c}(j,i)$ is given by 
$$
\prod_{i=1}^n
\binom{D_c(i)}{\o_c(i,1),\dots,\o_c(i,n)}\binom{D_{\bar c}(i)}{\o_{\bar c}(i,1),\dots,\o_{\bar c}(i,n)}
\prod_{i,j}\o_c(i,j)!
$$
The latter expression can be rewritten as \eqref{match3}.
\end{proof}

 \subsubsection{Generalized configuration model}
 We now define the configuration model for a generic degree sequence $\bD\in\cD_n$ by putting together the configuration models for all the colors. 
 Let $\Sigma$ denote the cartesian product of $\Sigma_c,\,c\in\cC_\leq$, where, as defined above, $\Sigma_c$ are the sets of configurations associated to the degree sequence $D_c(1),\dots,D_c(n)$, that is $\Sigma_c$ is the set of matchings of $W_c$ if $c\in\cC_=$ and $\Sigma_c$ is the set of bijections $W_c\mapsto\bar W_c$ if $c\in\cC_<$.   
 A configuration is an element $\si=(\si_c)_{c\in\cC_\leq}$ of $\Sigma$.  
The map $\G(\cdot):\Sigma\mapsto\wcG(\bD)$ is defined by calling $\G(\si)$ the multi-graph obtained by superposition of the multi-graphs $\G(\si_c)$ defined above.  
The {\em configuration model}, denoted $\CM(\bD)$,  is the law of $\G(\si)$ when $\si\in\Sigma$ is chosen uniformly at random. 
\begin{lemma}\label{le:unifCM}
Let $\bD \in \cD_n$, $G$ with distribution $\CM (\bD)$ and $H \in\wcG(\bD)$. We have
\begin{equation}\label{confmod}
\dP \PAR{ G = H } = \frac{\prod_{c\in\cC}\prod_{i=1}^n D_c(i) ! }{b(H) \prod_{c \in \cC_<} S_c ! \prod_{c \in \cC_=} (S_c-1) !!},
\end{equation}
where $S_c = \sum_{i=1}^nD_c(i)$, and $b(H)$ is defined by
\begin{equation}\label{b(H)}
b (H)= 
\prod_{c \in \cC_{<}} \prod_{i,j}\omega_c ( i,j) ! 
\prod_{c \in \cC_=} \prod_{i=1}^n (\omega_c (i,i) /2)!  2^{ (\omega_c (i,i) / 2)}  \prod_{i < j} \omega_c (i,j) !
\end{equation}
In particular, for any $h\geq 2$,  if $\cG(\bD,h)$ is not empty, the law of $G$ conditioned on $\cG(\bD,h)$ is the uniform distribution on $\cG(\bD,h)$.
\end{lemma}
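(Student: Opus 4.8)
The plan is to reduce everything to the single-color counting Lemmas~\ref{le:match} and \ref{le:bipmatch} via the product structure of the configuration space. First I would observe that, by construction, $\G(\si)$ is the superposition of the single-color pieces $\G(\si_c)$, $c\in\cC_\leq$, so that the event $\{\G(\si)=H\}$ coincides with the event that $\G(\si_c)=H_c$ for every $c\in\cC_\leq$, where $H_c$ denotes the restriction of $H$ to color $c$ (to colors $c,\bar c$ when $c\in\cC_<$). Since $\si$ is uniform on the cartesian product $\Sigma=\prod_{c\in\cC_\leq}\Sigma_c$, the components $\si_c$ are independent and each is uniform on $\Sigma_c$, so
$$
\dP(G=H)=\prod_{c\in\cC_\leq}\frac{n_c(H_c)}{|\Sigma_c|}.
$$

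Next I would substitute the explicit values: for $c\in\cC_=$, $|\Sigma_c|=(S_c-1)!!$ and $n_c(H_c)$ is given by Lemma~\ref{le:match}; for $c\in\cC_<$, $|\Sigma_c|=S_c!$ and $n_c(H_c)$ is given by Lemma~\ref{le:bipmatch}. The only bookkeeping point is that $\cC$ splits into $\cC_=$, $\cC_<$, $\cC_>$ with $c\mapsto\bar c$ a bijection between $\cC_<$ and $\cC_>$, so that
$$
\prod_{c\in\cC}\prod_{i=1}^n D_c(i)! \;=\; \prod_{c\in\cC_=}\prod_{i=1}^n D_c(i)! \;\cdot\; \prod_{c\in\cC_<}\prod_{i=1}^n D_c(i)!\,D_{\bar c}(i)! .
$$
Multiplying the numerators of the two counting lemmas across colors produces exactly this product, the factors $|\Sigma_c|$ assemble into $\prod_{c\in\cC_<}S_c!\prod_{c\in\cC_=}(S_c-1)!!$, and the remaining factorial/power-of-two factors in the denominators assemble precisely into the quantity $b(H)$ defined in \eqref{b(H)}. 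This gives \eqref{confmod}.

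For the final assertion I would use that the right-hand side of \eqref{confmod} depends on $H$ only through $b(H)$. If $H\in\cG(\bD,h)$ with $h\geq 2$, then the colorblind graph $\bar H$ has no cycle of length $\leq 2$, hence no loop and no multiple edge; since $\sum_{c}\omega_c(i,j)=\bar\omega(i,j)$, this forces $\omega_c(i,i)=0$ for all $i$ and all $c$, and $\omega_c(i,j)\in\{0,1\}$ for all $i\neq j$ and all $c$. Plugging this into \eqref{b(H)} gives $b(H)=1$, so $\dP(G=H)$ takes one and the same (strictly positive) value for every $H\in\cG(\bD,h)$. As $\cG(\bD,h)$ is assumed nonempty, $\dP(G\in\cG(\bD,h))>0$, and dividing by this constant shows that the conditional law of $G$ given $\{G\in\cG(\bD,h)\}$ is uniform on $\cG(\bD,h)$.

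The argument is entirely routine once Lemmas~\ref{le:match} and \ref{le:bipmatch} are available; I expect the only delicate point to be the combinatorial bookkeeping of colors and conjugate colors in the numerator, together with checking that independence of $(\si_c)_{c\in\cC_\leq}$ combined with the single-color counts reassembles into exactly the claimed expression $b(H)$.
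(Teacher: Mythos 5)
Your proof is correct and follows essentially the same route as the paper: both factor the configuration space over colors (the paper computes $|\G^{-1}(H)|=\prod_{c\in\cC_\leq}n_c(H_c)$ and divides by $|\Sigma|$, which is exactly your independence/product computation), invoke Lemmas \ref{le:match} and \ref{le:bipmatch}, and conclude the uniform conditional law from $b(H)=1$ on $\cG(\bD,h)$ for $h\geq 2$. No gaps; your color/conjugate-color bookkeeping in the numerator matches the paper's expression.
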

\begin{proof}
 The cardinality of $\Sigma$ is given by $\prod_{c \in \cC_<} S_c ! \prod_{c \in \cC_=} (S_c-1) !!$. Thus, it suffices to check that $\G^ {-1} (H)$ has cardinality 
$b(H)^{-1}\prod_{ c \in \cC}\prod_i D_c(u) !$.
This follows from Lemma \ref{le:match} and Lemma \ref{le:bipmatch} by observing that 
$|\G^ {-1} (H)|=\prod_{c\in\cC_\leq} n_c(H_c)$, where $H_c$ denotes the multi-graph $H$ after all edges with color $c'\notin\{c,\bar c\}$ are removed. This proves \eqref{confmod}. If $H\in \cG(\bD,h)$, $h\geq 2$, then $\o_c(i,i)=0$ and $\o_c(i,j)\in\{0,1\}$
for all $i,j\in[n]$ and $c\in\cC$, so that $b(H)=1$. This proves the last assertion.
\end{proof}

\subsection{Probability of having no cycles of length $\ell\leq h$} \label{H1H2}
Fix $\theta\in\dN$ and call $\cM_L^{(\theta)}$ 
the set of $L\times L$ matrices with nonnegative integer entries bounded by $\theta$. Fix 
$P \in \cP (\cM_L^{(\theta)})$, a probability on $\cM_L^{(\theta)}$. 
We consider a sequence $\bD^{(n)} = ( D^{(n)}(u))_{u\in[n]}\in \cD_n$, $n \geq 1$ such that 

\begin{enumerate}[(H1)]

\item
for all $u \in [n]$,  $D^{(n)}(u)\in\cM_L^{(\theta)}$ ;

\item 
as $n \to \infty$, 
$
\frac 1 n \sum_{u = 1} ^ n \delta_{D^{(n)}(u)} \weak P.
$
\end{enumerate}
%
The main result  of this section is the following  
\begin{theorem}\label{th:Poilim}
Fix $\theta\in\dN$, $P \in \cP (\cM_L^{(\theta)})$, and a sequence $\bD^{(n)}$ satisfying (H1)-(H2). 
Take $G_n \in \wcG(\bD^{(n)} )$ with distribution $\CM ( \bD^{(n)})$. 
For every $h\in\dN$, there exists $\a_h>0$ such that 
\begin{equation}\label{nocycleprob}
\lim_{n\to\infty} \dP \PAR{ G_n \in \cG(\bD^{(n)},h ) } = \alpha_h.
\end{equation}

\end{theorem}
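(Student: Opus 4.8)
The plan is to adapt the classical moment method used for the ordinary configuration model to the present colored, directed setting. For $\ell \leq h$, let $X_\ell^{(n)}$ denote the number of cycles of length $\ell$ in the colorblind graph $\bar G_n$ (counting loops as cycles of length $1$ and multiple edges as cycles of length $2$). The event $\{G_n \in \cG(\bD^{(n)},h)\}$ is exactly $\{X_1^{(n)} = \cdots = X_h^{(n)} = 0\}$. The strategy is to show that the vector $(X_1^{(n)}, \dots, X_h^{(n)})$ converges in distribution to a vector of independent Poisson random variables $(Z_1, \dots, Z_h)$ with explicit means $\lambda_\ell = \lambda_\ell(P)$ determined by the limiting degree law $P$; once this is established, setting $\alpha_h = \prod_{\ell=1}^h e^{-\lambda_\ell} = \dP(Z_1 = \cdots = Z_h = 0) > 0$ gives \eqref{nocycleprob}.

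First I would set up the cycle count in the configuration model. A potential $\ell$-cycle is a closed sequence of half-edges in the spaces $W_c, \bar W_c$ that, if matched consecutively by the random configuration $\sigma$, produces an $\ell$-cycle in $\bar G_n$; because of the coloring constraints one has to keep careful track, at each vertex visited, of which color is used on the incoming and outgoing edge, and of the matching rules ($\sigma_c$ is an involution-type matching for $c \in \cC_=$ and a bijection $W_c \to \bar W_c$ for $c \in \cC_<$). The key combinatorial input is that, because all entries of $D^{(n)}(u)$ are bounded by $\theta$ (hypothesis (H1)), the total number of half-edges at each vertex is bounded by $L^2\theta$, hence $S_c = \Theta(n)$ for each color present, and the probability that a prescribed pair of half-edges is matched by $\sigma$ is $\asymp 1/n$. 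A rooted ordered $\ell$-cycle through distinct vertices $u_1, \dots, u_\ell$ with a prescribed color pattern then has probability $\asymp n^{-\ell}$ of being realized, while there are $\asymp n^\ell$ choices of the vertices; summing the color patterns against the empirical degree distribution and using (H2) produces a finite limit $\lambda_\ell$. Cycles using a repeated vertex, or loops/multi-edges attached in degenerate ways, are lower order and contribute $o(1)$ to the first moment. I would compute $\dE[X_\ell^{(n)}]$ this way and then, by the same bookkeeping applied to disjoint unions of short cycles, compute the joint factorial moments $\dE\big[\prod_{\ell=1}^h (X_\ell^{(n)})_{(r_\ell)}\big] \to \prod_\ell \lambda_\ell^{r_\ell}$, which by the method of moments yields joint convergence to independent Poissons. (This is the standard argument of Bollob\'as and of Janson--\L uczak--Ruci\'nski, adapted color by color; Lemma \ref{le:unifCM} guarantees that conditioning on $\cG(\bD^{(n)},h)$ indeed gives the uniform measure, which is what makes the statement useful later.)

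The main obstacle I anticipate is purely bookkeeping rather than conceptual: organizing the color and orientation data along a candidate cycle so that the factorial-moment computation cleanly factorizes. Concretely, one must verify that $\lambda_\ell$ is finite and strictly positive — finiteness follows from the uniform bound $\theta$ on degrees, but positivity requires noting that $\cG(\bD^{(n)},h)$ is nonempty for large $n$ (which is part of the hypotheses leading up to the theorem, cf.\ the Erd\H{o}s--Gallai remark), equivalently that the limiting means are genuine nonnegative reals and the limiting Poisson vector puts positive mass on $0$. A secondary technical point is handling the two types of matching (the $\cC_=$ colors via near-perfect matchings of $W_c$, the $\cC_<$ colors via bijections $W_c \to \bar W_c$) uniformly; I would treat a color $c \in \cC_<$ together with its conjugate $\bar c$ as a single ``bipartite'' block and a color $c \in \cC_=$ as a single ``symmetric'' block, so that each step of a cycle picks a block and consumes one half-edge on each side. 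With that convention in place the estimate $\dP(\text{prescribed pairing}) = (1+o(1))\,n^{-\ell}\,c(\text{pattern})$ follows from $|W_c| = S_c = dn + o(n)$ and the elementary asymptotics $(S_c - 2j)!!/(S_c)!! \sim S_c^{-j}$ and $(S_c - j)!/S_c! \sim S_c^{-j}$, and the rest is summation against the empirical measure and an appeal to (H2).
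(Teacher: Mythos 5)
Your proposal is correct and follows essentially the same route as the paper: the paper also counts copies of the finitely many colored multigraphs whose colorblind image is a cycle of length $\ell\leq h$, computes first moments of order $n^{-\exc(H)}$ (via a general subgraph-count lemma for $\CM(\bD^{(n)})$), and deduces a Poisson limit for the total short-cycle count by the factorial-moment method, giving $\alpha_h=e^{-\lambda(h)}>0$. One small correction: positivity of $\alpha_h$ needs nothing beyond finiteness of the limiting means (guaranteed by the degree bound $\theta$), so the appeal to nonemptiness of $\cG(\bD^{(n)},h)$ is superfluous.
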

The actual value of $\alpha_h$ could be in principle computed in terms of $P$ (see proof of Theorem \ref{th:Poilim}). We will however not need that. 

\begin{corollary}\label{cor:Poilim}
In the setting of Theorem \ref{th:Poilim}, writing $S_c^{(n)}=\sum_{u\in[n]}D_c^{(n)}(u)$, for all $h\geq 2$:
$$
| \cG(\bD^{(n)},h ) | \sim \alpha_h  \,\frac{ \prod_{c \in \cC_<} S^{(n)}_c ! \prod_{c \in \cC_=} (S^{(n)}_c-1) !!}{ \prod_{c \in \cC}\prod_u D^{(n)}_c(u)! }.
$$
\end{corollary}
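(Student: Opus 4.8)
The plan is to combine the exact counting formula from Lemma \ref{le:unifCM} with the asymptotic probability estimate from Theorem \ref{th:Poilim}. First I would recall that by Lemma \ref{le:unifCM}, if $G_n$ has law $\CM(\bD^{(n)})$, then for any fixed $H\in\wcG(\bD^{(n)})$ the probability $\dP(G_n=H)$ is given by \eqref{confmod}. When $h\geq 2$ and $H\in\cG(\bD^{(n)},h)$, the colorblind graph $\bar H$ has no loops and no multiple edges, so every $\omega_c(i,j)\in\{0,1\}$ and every $\omega_c(i,i)=0$; hence $b(H)=1$ by \eqref{b(H)}. Therefore each $H\in\cG(\bD^{(n)},h)$ receives exactly the same probability, namely
$$
p_n:=\frac{\prod_{c\in\cC}\prod_{i=1}^n D_c^{(n)}(i)!}{\prod_{c\in\cC_<}S_c^{(n)}!\,\prod_{c\in\cC_=}(S_c^{(n)}-1)!!}.
$$

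Next I would simply sum this over all $H\in\cG(\bD^{(n)},h)$. Since the probability is constant on that set,
$$
\dP\PAR{G_n\in\cG(\bD^{(n)},h)}=|\cG(\bD^{(n)},h)|\cdot p_n.
$$
Rearranging gives the exact identity
$$
|\cG(\bD^{(n)},h)|=\dP\PAR{G_n\in\cG(\bD^{(n)},h)}\cdot\frac{\prod_{c\in\cC_<}S_c^{(n)}!\,\prod_{c\in\cC_=}(S_c^{(n)}-1)!!}{\prod_{c\in\cC}\prod_{u}D_c^{(n)}(u)!}.
$$
Now I would invoke Theorem \ref{th:Poilim}, which asserts $\dP(G_n\in\cG(\bD^{(n)},h))\to\alpha_h$ with $\alpha_h>0$. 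Dividing both sides of the displayed identity by the combinatorial prefactor and passing to the limit yields exactly the claimed asymptotic equivalence.

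There is essentially no obstacle here: all the content is in Theorem \ref{th:Poilim} (which is assumed) and in the observation $b(H)=1$ for cycle-free colorblind graphs, which is already recorded in the last line of Lemma \ref{le:unifCM}. The only point requiring a word of care is that $\alpha_h>0$, so that the equivalence "$\sim$" is meaningful (i.e.\ we are not dividing a sequence tending to a positive constant by something that could make the statement vacuous); this is guaranteed by the positivity assertion in Theorem \ref{th:Poilim}. I would present the argument in the three short steps above: (i) uniformity of $\CM(\bD^{(n)})$ on $\cG(\bD^{(n)},h)$ via $b(H)=1$; (ii) the exact identity relating $|\cG(\bD^{(n)},h)|$, the probability, and the factorial prefactor; (iii) taking $n\to\infty$ using Theorem \ref{th:Poilim}.
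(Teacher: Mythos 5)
Your proposal is correct and is essentially the paper's own argument: the paper computes $\dP(G_n\in\cG(\bD^{(n)},h))$ by counting the fibers $|\G^{-1}(H)|=\prod_{c\in\cC}\prod_u D^{(n)}_c(u)!$ over $|\Sigma|$ configurations, which is exactly your per-graph probability $p_n$ from Lemma \ref{le:unifCM} with $b(H)=1$, and then invokes Theorem \ref{th:Poilim}. No gap.
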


\begin{proof}
By definition of $\CM(\bD^{(n)})$, one has 
$$ \dP \PAR {G_n \in \cG(\bD^{(n)},h)} = \frac1{ |\Sigma | }
\sum_{\si \in \Sigma} \IND \left( \G (\si) \in \cG(\bD^{(n)},h  ) \right).$$ 
As in Lemma \ref{le:unifCM}, for each $H \in \cG(\bD^{(n)},h) $, $ |\G^ {-1} (H)| = \prod_{c \in \cC}\prod_u D^{(n)}_c(u) !$. Hence the sum in the right hand side  above equals $ | \cG(\bD^{(n)},h ) | \prod_{c \in \cC}\prod_u D^{(n)}_c(u) !$. The conclusion follows from Theorem \ref{th:Poilim} and $|\Sigma|=\prod_{c \in \cC_<} S^{(n)}_c ! \prod_{c \in \cC_=} (S^{(n)}_c-1) !!$. 
\end{proof}

The proof of Theorem \ref{th:Poilim} will follow 
a well known strategy; see e.g. Bollob\'as \cite[proof of Theorem 2.16]{Bollobas} for a similar result. 
Our first lemma computes the number of copies of a subgraph in a graph sampled from $\CM ( {\bD}^{(n)})$. To formulate it, we need to introduce some more notation. Let $\wcG_n$ denote the set of 
$G\in\wcG(\cC)$ with vertex set $[n]$. If $G\in\wcG_n$, and $H\in\wcG(\cC)$ has vertex set $V\subset [n]$, we let $Y(H , G)$ be the number of times that $H \subset G$. When $G$ is not a simple graph, then $Y(H,G)$ may be larger than $1$.
Indeed, one has
\begin{align}
\label{yhg}
Y(H , G) = \IND(H\subset G)
\prod_{c\in\cC_<} 
\prod_{u, v}B_c^{H,G}(u,v)
\prod_{c\in\cC_=} 
\prod_{u\leq v}B_c^{H,G}(u,v)
\end{align}
where we use the notation $B_c^{H,G}(u,v)$ for the binomial coefficient
$\binom{\o_c^G(u,v)}{\o_c^H(u,v)}$, with the convention that if $u=v$ and $c\in\cC_=$, then $B_c^{H,G}(u,u)$ equals 
$\binom{(\o_c^G(u,u)/2)}{(\o_c^H(u,u)/2)}$.


Next, for $G\in\wcG_n$ and $H\in \wcG_k$, $1 \leq k \leq n$, define $X(H,G)$ as the number of distinct subgraphs of $G$ that are isomorphic to $H$. 
If $a(H)$ denotes the cardinality of the automorphism group of $H$,  
i.e.\ the number of permutations of the vertex labels which leave $H$ invariant, then
\begin{equation}
\label{eq:Xauto}
X ( H , G) =  \frac1{a(H)} \sum_\tau  Y (  \tau(H) , G ),
\end{equation}
where the sum is over all injective maps $\t$ from $[k]$ to $[n]$, and $\t(H)$ represents the multi-graph obtained by embedding $H$ in $[n]$ through $\t$.

For $H\in \wcG_k$, the $c$-degree at vertex $u$ is denoted  
$$
d_c^H(u)=\sum_v\o^H_c(u,v)
\,.
$$
The {\em excess} of $H$ is defined by
$$
\exc (H) = 
\PAR{\frac 1 2  \sum_{c \in \cC} \sum_{i =1 } ^k d^H_c (i) } - k ,
$$
Notice that $\exc(H)= |E(H)| - k$, where $E(H)$ is the total number of edges of $\bar H$ (counting $1$ for each loop) where $\bar H$ is the colorblind undirected multi-graph obtained from $H$ by \eqref{colorblind}. 
Notice that for $H$ connected, then $\exc (H) \geq -1$, and $\exc (H) = -1$ iff $\bar H$ is a tree. 
If $n\geq k$ are positive integers, we use the notation $(n)_k = n!/(n-k)!$ for the number of injective maps $[k]\mapsto[n]$, with $(n)_0 = 1$. 
\begin{lemma}\label{le:formulacount}
Let $G_n \in \wcG(\bD^{(n)} )$ with distribution $\CM ( {\bD}^{(n)})$, where $\bD^{(n)}$ satisfies assumptions (H1)-(H2). 
For any fixed $k\in\dN$, $H\in\wcG_k$, as $n \to \infty$:
$$
\dE X ( H, G_n) \sim \frac{ \prod_{i = 1} ^ k \dE\prod_{c\in\cC} (D_c)_{d_c^H(i)} }{a(H) b(H) \prod_{c\in\cC}  \PAR{ \dE  D_c}^{s_c^H/2}} \,\,n^{-\exc (H)},
$$
where $D\in\cM^{(\theta)}_L$ has distribution $P$ and $s^H_c:=\sum_{i=1}^kd_c^H(i)$. 
\end{lemma}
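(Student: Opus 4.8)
The plan is to reduce the count of copies of $H$ to a sum of per-embedding expectations, exploit the independence of the different colours in the configuration model, and then count partial matchings colour by colour. By \eqref{eq:Xauto} it suffices to evaluate $\dE Y(\tau(H),G_n)$ for a fixed injective $\tau:[k]\to[n]$ and then sum over $\tau$. By \eqref{yhg}, $Y(H',G)$ is a product of one factor per colour class $c\in\cC_\le$ (the indicator $\IND(H'\subset G)$ there is redundant, each binomial $B_c$ vanishing as soon as a multiplicity of $H'$ exceeds that of $G$), and under $\CM(\bD^{(n)})$ the configurations $\sigma_c$, $c\in\cC_\le$, are independent; hence $\dE Y(H',G_n)=\prod_{c\in\cC_\le}\dE Y_c$, where $Y_c$ is the corresponding colour factor.

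\emph{Per-colour count.} Fix $c\in\cC_\le$. The key observation is that $Y_c$ evaluated at $\Gamma(\sigma_c)$ equals the number of sub-matchings $\mu\subseteq\sigma_c$ whose induced coloured graph equals the colour-$c$ part $H_c$ of the embedded copy of $H$: choosing, for each ordered pair $(u,v)$, $\omega_c^H(u,v)$ of the $\omega_c^{\Gamma(\sigma_c)}(u,v)$ available $c$-pairs is precisely the product of binomials defining $Y_c$. Therefore $\dE Y_c=|\Sigma_c|^{-1}\sum_{\mu:\,\Gamma(\mu)=H_c}\#\{\sigma_c\supseteq\mu\}$. Here $\#\{\sigma_c\supseteq\mu\}=(S_c-M_c)!$ for $c\in\cC_<$ (with $M_c=\sum_i d_c^H(i)$ the number of colour-$c$ edges, $|\Sigma_c|=S_c!$) and $=(S_c-2E_c-1)!!$ for $c\in\cC_=$ (with $E_c=\tfrac12\sum_i d_c^H(i)$, $|\Sigma_c|=(S_c-1)!!$). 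The number of partial matchings $\mu$ with $\Gamma(\mu)=H_c$ is obtained by deciding, vertex by vertex, which half-edges of $W_c(i)$ — and of $\bar W_c(i)$ in the bipartite case — serve each neighbour, and then matching them; by a computation parallel to those in the proofs of Lemma~\ref{le:match} and Lemma~\ref{le:bipmatch} the resulting product collapses to $b_c(H)^{-1}\prod_i(D_c(i))_{d_c^H(i)}$ for $c\in\cC_=$ and to $b_c(H)^{-1}\prod_i(D_c(i))_{d_c^H(i)}(D_{\bar c}(i))_{d_{\bar c}^H(i)}$ for $c\in\cC_<$, where $b_c(H)$ is the colour-$c$ factor of $b(H)$ in \eqref{b(H)}. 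Multiplying over $c\in\cC_\le$, using $\prod_{c\in\cC_\le}b_c(H)=b(H)$ and $S_c=S_{\bar c}$, this yields
$$\dE Y(\tau(H),G_n)=\frac{1}{b(H)}\Big(\prod_{c\in\cC_<}\frac{(S_c-M_c)!}{S_c!}\Big)\Big(\prod_{c\in\cC_=}\frac{(S_c-2E_c-1)!!}{(S_c-1)!!}\Big)\prod_{u=1}^{k}\prod_{c\in\cC}\big(D^{(n)}_c(\tau(u))\big)_{d_c^H(u)}.$$

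\emph{Asymptotics.} Summing over \emph{all} maps $[k]\to[n]$ factorises: $\sum_{\tau}\prod_{u}\prod_c(D^{(n)}_c(\tau(u)))_{d_c^H(u)}=\prod_{u=1}^{k}\sum_{v=1}^{n}\prod_c(D^{(n)}_c(v))_{d_c^H(u)}$, and since $D\mapsto\prod_c(D_c)_{d_c^H(u)}$ is bounded on the finite set $\cM_L^{(\theta)}$, (H1)–(H2) give $\tfrac1n\sum_v\prod_c(D^{(n)}_c(v))_{d_c^H(u)}\to\dE\prod_c(D_c)_{d_c^H(u)}$; the non-injective maps number $O(n^{k-1})$ and contribute $O(1)$ each, hence are negligible, so the sum over injective $\tau$ is $\sim n^{k}\prod_{u=1}^{k}\dE\prod_c(D_c)_{d_c^H(u)}$. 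For the prefactor, $\tfrac{(S_c-M_c)!}{S_c!}\sim(n\dE D_c)^{-M_c}$ and $\tfrac{(S_c-2E_c-1)!!}{(S_c-1)!!}\sim(n\dE D_c)^{-E_c}$; using $M_c=s_c^H=s_{\bar c}^H$, $E_c=s_c^H/2$, $\dE D_c=\dE D_{\bar c}$ (forced by $S_c=S_{\bar c}$) and $\tfrac12\sum_{c\in\cC}s_c^H=\exc(H)+k$, this becomes $n^{-(\exc(H)+k)}\prod_{c\in\cC}(\dE D_c)^{-s_c^H/2}$. Combining with the $n^k$ from the $\tau$-sum and the $1/a(H)$ from \eqref{eq:Xauto} gives the stated asymptotic. (Throughout one assumes $\dE_P D_c>0$ for every colour $c$ with $s_c^H>0$; otherwise the right-hand side reads $+\infty$ and $\dE X(H,G_n)$ is of strictly smaller order, a degenerate case that plays no role in the applications.)

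\emph{Main obstacle.} The only genuinely delicate step is the exact bookkeeping in the per-colour count of partial matchings: one must absorb correctly the overcounting from permuting interchangeable half-edges at a vertex and, in the diagonal/loop case, the powers of $2$ and the double factorials — precisely the combinatorics already carried out in Lemmas~\ref{le:match} and~\ref{le:bipmatch}, which is why those are invoked rather than redone. Everything else is a routine combination of the weak convergence (H2) with elementary asymptotics for the configuration-count ratios.
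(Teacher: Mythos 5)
Your proposal is correct and follows essentially the same route as the paper: reduce to $\dE Y(\tau(H),G_n)$ via \eqref{eq:Xauto}, use the independence of the colours under $\CM(\bD^{(n)})$, evaluate the per-colour expectation with the matching counts behind Lemmas \ref{le:match} and \ref{le:bipmatch} (your exact formula coincides with the paper's \eqref{eq:EYHG}), and then pass to asymptotics via (H1)--(H2). The only cosmetic differences are that you count partial matchings and their extensions directly instead of summing over supergraphs $G\supset H_c$, and you sum over all maps $[k]\to[n]$ and discard the $O(n^{k-1})$ non-injective ones instead of working with sampling without replacement as in \eqref{eq:formulaCMXH}; both variants are sound.
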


\begin{proof}
From \eqref{eq:Xauto}, 
$\dE X ( H , G_n) =  a(H)^{-1} \sum_\tau  \dE Y (  \tau(H) ,  G_n )$. 
Below, we fix a map $\t$ and write $H$ instead of $\t(H)$ for simplicity.
We start by showing that 
\begin{equation}
\label{eq:EYHG}
\dE Y(  H ,  G_n ) = \frac{ \prod_{c \in \cC} \prod_{i= 1} ^k (D^{(n)}_c(i))_{d_c^H(i)}  }{ b(H)\, \prod_{c \in \cC_<} (S^{(n)}_c)_{s^H_c} \prod_{c \in \cC_{=}} \lp S^{(n)}_c  \rp_{s^H_c}},
\end{equation}
where we use the notation  $\lp n  \rp_{k} = (n-1)!!/(n-k-1)!!$.
Since $\CM(\bD^{(n)})$ is a product measure over $c\in \cC_\leq$, we may analyze one color at a time. 

Consider first the case $c\in\cC_<$. 
Set
$$Y_c(H,G)= \IND(H_c\subset G)\prod_{u, v}\binom{\o_c^G(u,v)}{\o_c^H(u,v)},$$
where $H_c$ is the graph $H$ with all edges removed except for edges of color $c$ or $\bar c$, and the condition $G\supset H_c$ indicates that $\o_c^G(u,v)\geq \o_c^H(u,v)$ for all $u,v\in[n]$.  
Then, as in Lemma \ref{le:unifCM}
$$
\dE Y_c(H,G_n) = \sum_{G:\;G\supset H_c}
\frac{\prod_{i=1}^n D_c(i) !D_{\bar c}(i)! }{S_c!\prod_{i,j}\o_c^G(i,j)!}\prod_{u, v}\binom{\o_c^G(u,v)}{\o_c^H(u,v)}
$$
where we drop the superscript $(n)$ from $D_c(i)$ and $S_c$, and the sum runs over all $G\in\wcG_n$ with $(c,\bar c)$ colors only,
with degree sequence given by $(D_c(i),D_{\bar c}(i))_{i\in[n]}$.
Therefore,
$$
\dE Y_c(H,G_n) = \frac{\prod_{i=1}^n D_c(i) !D_{\bar c}(i)! }{S_c!\prod_{i, j}\o_c^H(u,v)!}
\sum_{G:\;G\supset H_c}
\prod_{u, v}\frac1{(\o_c^G(u,v)-\o_c^H(u,v))!}
$$
On the other hand, applying \eqref{match3} to the multi-graph $G\!\smallsetminus\! H$ defined by $(\o_c^G(u,v)-\o_c^H(u,v))$,  one has 
$$
\sum_{G:\;G\supset H_c}
\prod_{u, v}\frac1{(\o_c^G(u,v)-\o_c^H(u,v))!} = \frac{(S_c - s^H_c)!}{\prod_{i}(D_c(i)-d_c^H(i))!(D_{\bar c}(i)-d_{\bar c}^H(i))!}  
$$
Thus, for $c\in\cC_<$ one has
\begin{equation}
\label{eq:EYHGc1}
\dE Y_c(  H ,  G_n ) = \frac{\prod_{i}  (D^{(n)}_c(i))_{d_c^H(i)}  (D^{(n)}_{\bar c}(i))_{d_{\bar c}^H(i)}}{ (S_c)_{s^H_c} \prod_{i, j}\o_c^H(u,v)!}. 
\end{equation}
Next, consider the case $c\in\cC_=$. Here
$$Y_c(H,G)= \IND(H_c\subset G)\prod_{u< v}\binom{\o_c^G(u,v)}{\o_c^H(u,v)}\prod_{u}\binom{(\o_c^G(u,u)/2)}{(\o_c^H(u,u)/2)},$$
where $H_c$ is the graph $H$ with all edges removed except for edges of color $c$.  
Then, 
$$
\dE Y_c(H,G_n) = \sum_{G:\;G\supset H_c}
\frac{\prod_{i} D_c(i) ! \prod_{u< v}\binom{\o_c^G(u,v)}{\o_c^H(u,v)}\prod_{u}\binom{(\o_c^G(u,u)/2)}{(\o_c^H(u,u)/2)}}{(S_c-1)!!\prod_{i<j}\o_c^G(i,j)!\prod_{i}\left(\o_c^G(i,i)/2\right)!2^{\left(\o^G_c(i,i)/2\right)}}
$$
Applying \eqref{match1} to the multi-graph $G\!\smallsetminus\! H$ and simplifying, one arrives at
\begin{equation}
\label{eq:EYHGc2}
\dE Y_c(  H ,  G_n ) = \frac{\prod_{i} (D^{(n)}_c(i))_{d_c^H(i)} }{ ((S_c))_{s^H_c} \prod_{i< j}\o_c^H(i,j)!\prod_{i}\left(\o_c^H(i,i)/2\right)!2^{\left(\o^H_c(i,i)/2\right)}}. 
\end{equation}
Finally, taking products over $c\in\cC_<$ of \eqref{eq:EYHGc1} together with products over $c\in\cC_=$ of \eqref{eq:EYHGc2}, we arrive at \eqref{eq:EYHG}.  

Summing over the injective maps $\t:[k]\mapsto[n]$, we deduce that 
\begin{equation}\label{eq:formulaCMXH} 
\dE X ( H , G_n)  =
\frac{(n)_k\,\dE  \prod_{c\in\cC}\prod_{i= 1} ^k (M_c(i))_{ d_c^H(i) }} {a(H) b(H) \, \prod_{c \in \cC_<} (S^{(n)}_c)_{s^H_c} \prod_{c \in \cC_{=}} \lp S^{(n)}_c  \rp_{s^H_c} }  , 
\end{equation}
where $(M(1), \cdots , M(k) )$ is uniformly sampled without replacement on $(\bD^{(n)}(1),\dots,\bD^{(n)}(n))$. 
From assumptions (H1)-(H2), for every fixed $k$ and $H\in\wcG_k$, as $n\to\infty$:
$$\dE  \prod_{c\in\cC}\prod_{i= 1} ^k (M_c(i))_{ d_c^H(i) }\to \prod_{i = 1} ^ k \dE \prod_{c\in\cC}(D_c)_{d^H_c(i)}
,$$
where $D\in\cM_L^{(\theta)}$ has law $P$.
Moreover, for $c \in \cC_<$ and $c \in \cC_{=}$ respectively, 
$$
 (S^{(n)}_c)_{s^H_c} \sim n^{s^H_c} (\dE D_c )^{s^H_c}  
 \quad \hbox{ and }  \quad \lp S^{(n)}_c  \rp_{s^H_c} \sim n^{s^H_c/2}  (\dE D_c )^{s^H_c/2}. 
$$
The desired conclusion now follows by  using these asymptotics in \eqref  {eq:formulaCMXH} together with $(n)_k\sim n^k$ and 
$$\sum_{c \in \cC_<} s^H_c + \frac 1 2 \sum_{c \in \cC_=} s^H_c= \frac 1 2  \sum_{c \in \cC} s^H_c  = \exc(H) + k.$$
\end{proof}

\begin{proof}[Proof of Theorem \ref{th:Poilim}]
For every $\ell\in\dN$, call $\cL_\ell$ the set of all $H\in\wcG_\ell$ such that 
the undirected graph $\bar H$ defined by \eqref{colorblind} is a cycle of length $\ell$. 
If $\ell=1$, then $\cL_\ell$ is the union over $c\in\cC$ of the 
single loop graph at vertex $\{1\}$ with color $c$, 
if $\ell=2$, then $\cL_\ell$ is the union over $c,c'\in\cC$
of the double edge graph at vertices $\{1,2\}$ with $\o_c(1,2)=\o_{\bar c}(2,1)=1,\o_{c'}(1,2)=\o_{\bar c'}(2,1)=1$, and so on.   
Let $\cL_{\leq h}=\cup_{\ell=1}^h \cL_\ell$. We define the random variable 
\begin{equation}\label{eq:Z1} 
Z = 
\sum_{H\in\cL_{\leq h}} X(H,G_n),
\end{equation}
where $G_n \in \wcG(\bD^{(n)} )$ has distribution $\CM ( {\bD}^{(n)})$.
With this notation, we need to show that under the assumptions of the theorem one has 
\begin{equation}\label{eq:Z2}
 \lim_{n\to\infty}\dP(Z=0)= \a_h,
 \end{equation} 
 for some $\a_h>0$.
 
If $H\in\cL_{\leq h}$, then $\exc(H)=0$. By Lemma \ref{le:formulacount}, for some $\lambda_H \geq 0$, as $n \to \infty$, one has
\begin{equation}\label{eq:Z3}
\dE X ( H , G_n)  \to \lambda_H,
\end{equation}
and, setting $\lambda(h)  = \sum_{H \in \cL_{\leq h}}\lambda_H$, one finds  
\begin{equation}\label{eq:Z4}
 \lim_{n\to\infty}\dE Z = \lambda(h). 
\end{equation}
We are going to prove that $Z$ converges weakly to a Poisson random variable with mean $\lambda(h)$. This will prove \eqref{eq:Z2} with $\alpha_h = e^{-\lambda(h)}$.
To this end, by the well known moment method, it is sufficient to prove that for any integer $p\geq 1$:\begin{equation}\label{eq:momentZp}
 \lim_{n\to\infty}\dE \left[(Z)_p\right]   =\lambda(h)^ p,
\end{equation}
where $(Z)_p=Z!/(Z-p)!$. 
The case $p=1$ is \eqref{eq:Z4}. Below, we establish \eqref{eq:momentZp} for all $p\geq 2$. 

For any $H\in\cL_{\leq h}$, let $\cH_H$ denote the set 
of multi-graphs $F\in\wcG(\cC)$ with vertex set $V_F \subset [n]$ which are isomorphic to $H$.
If $\cH = \cup_{H\in\cL_{\leq h}} \cH_H$, then one has 
$$
Z = \sum_{ F \in \cH} Y_F,
$$
where $Y_F:=Y(F,G_n)$ is defined by \eqref{yhg}.
The proof of \eqref{eq:momentZp} uses two elementary topological facts:
\begin{enumerate}[(i)]
\item  if $F \ne F' \in \cH$ and $F \cap F' \ne \emptyset$, i.e.\ $V_F \cap V_{F'} \ne \emptyset$, then $\exc ( F \cup F') \geq 1$,
\item  if $H \in \wcG_k$ and $H' \in \wcG_{k'}$, then $\exc (H \cup H') \geq \exc (H) + \exc (H')$ and $\exc (H \oplus H') = \exc (H) + \exc (H')$,  
\end{enumerate}
where $H \oplus H' \in \wcG_{k+k'}$ is the multigraph obtained from the disjoint union of $H$ and an isomorphic copy of $H'$ with vertex set $\{k+1, \cdots, k + k'\}$. We also use two consequences of Lemma \ref{le:formulacount}:
\begin{enumerate}[(i)]
\item[(iii)] if $H \in \wcG_k$ and  $\exc ( H) \geq 1$ then $\dE X(H,G_n) = o(1)$ ; 
\item[(iv)] if $H \in \wcG_k$ and $H' \in \wcG_{k'}$, then $\dE X(H \oplus H',G_n) \sim \dE X(H,G_n)\,\dE X(H',G_n)$.
\end{enumerate}

We start by showing that for all $q\geq 1$, there exists $c=c(q)>0$ such that
\begin{equation}\label{eq:momentZ2}
\dE \left[ Z^{q} \right]  \leq c. 
\end{equation}
Write
$$
Z^{q} = \sum_{(F_1,\cdots, F_q) \in \cH^q}  \prod_{i=1}^q Y_{F_{i}}.
$$
By assumption (H1), $Y_F \leq c_0$ for some $c_0=c_0 (\theta,h)$, and hence, for some $c_1 = c_1 ( \theta,h, q)$,  one has the crude bound
$$
Z^{q} \leq c_1 \sum_{k=1} ^q \sum_{*}  \prod_{i=1}^k Y_{F_{i}},
$$
where the sum $\sum_{*}$ is over all choices of pairwise distinct $F_1,\cdots, F_k$ in $\cH$. 
We now decompose $\sum_{*}$ into the sum $\sum_{**}$ over all choices of $k$ pairwise disjoint sets $F_i$ in $\cH$, and the sum $\sum_{***}$ over all choices of $k$ pairwise distinct $F_i$ in $\cH$ such there exists $i \ne j$ with $F_i \cap F_j \ne \emptyset$. Notice that this last summation satisfies
$$
 \sum_{***}   \prod_{i=1}^k Y_{F_{i}}\leq c_0\sum_K X ( K , G_n)
$$
 for some $c_0=c_0(\theta,h)$,  where $K$ ranges over a finite collection (with cardinality independent of $n$) of multi-graphs which by facts (i-ii) satisfy $\exc(K) \geq 1$. In particular, fact (iii) implies that $\dE \sum_{***}  \prod_{i=1}^k Y_{F_{i}}=o(1)$ as $n\to\infty$. On the other hand, 
$$
\sum_{**} \prod_{i=1} ^ k  Y_{F_i} = \sum_{(H_1,\dots,H_k)\in (\cL_{\leq h})^k} X ( H_1 \oplus  \cdots \oplus  H_k, G_n). 
$$
Fact (iv)  and \eqref{eq:Z3} then imply that 
\begin{equation}\label{sum*yf}
\dE \sum_*  \prod_{i=1} ^ k  Y_{F_i} =  \sum_{(H_1,\dots,H_k)\in (\cL_{\leq h})^k} \prod_{i=1} ^k \lambda_{H_i} + o(1)=\l(h)^k + o(1). 
\end{equation}
This ends the proof of \eqref{eq:momentZ2}.

Next, 
define $\tilde Y_F = \IND ( Y_F = 1)$ and  $\tilde Z = \sum_{ F \in \cH} \tilde Y_F$. Let $E$ be the event that for all $F \in \cH$, $Y_F = \tilde Y_F$. Note that $\tilde Z = Z$ if $E$ holds and $\IND_{E^ c}\leq \sum_K X(K,G_n)$,
where $K$ ranges over a finite collection of multi-graphs with $\exc(K) \geq 1$. From fact (iii), it follows that $\dP (E^c) = o(1)$. 

Clearly, $\tilde Z \leq Z$ and $(Z)_p \leq Z^p$. Cauchy-Schwarz' inequality yields 
$$
\ABS{\dE (Z)_p - \dE (\tilde Z)_p} \leq \dE [(Z)_p \IND_{E^ c}] \leq \sqrt{ \dE ( Z^{2p} ) \dP ( E^ c) }.   
$$
Therefore,  using \eqref{eq:momentZ2} and $\dP (E^c) = o(1)$, we see that it suffices to prove that $ \dE (\tilde Z)_p$ converges to $\lambda(h)^ p$. 
Since $\tilde Y_{F} \in \{0,1\}$, we write
$$
( \tilde Z)_p =  \sum_*  \prod_{i=1} ^ p \tilde Y_{F_i}, 
$$
where the sum $\sum_*$ is over all choices of $p$ pairwise distinct $F_i$ in $\cH$. By assumption (H1), $Y_F$ is uniformly bounded, and therefore  
$$
\sum_*  \ABS{\prod_{i=1} ^ p \tilde Y_{F_i} - \prod_{i=1} ^ p  Y_{F_i}}
$$
can be bounded by $c_1\sum_K X( K , G)$ where $K$ ranges over 
a finite collection of multi-graphs with $\exc (K) \geq 1$ and $c_1=c_1(\theta,h,p)$. Therefore from fact (iii) we get 
$$
\dE ( \tilde Z)_p = \dE \sum_*  \prod_{i=1} ^ p  Y_{F_i} + o(1).  
$$
The conclusion $\dE ( \tilde Z)_p\to \l(h)^p$, $n\to\infty$, then follows from \eqref{sum*yf}.
\end{proof}

\subsection{Unimodular Galton-Watson trees with colors}
Let $\wcG^*(\cC)$ denote the set of equivalence classes of rooted directed locally finite colored multi-graphs, i.e.\ the set of connected multi-graphs $G\in\wcG(\cC)$ with a distinguished vertex $o$ (the root) where two rooted multi-graphs are identified if they only differ by a relabeling of the vertices. 
An element of $\wcG^*(\cC)$ is called a rooted directed colored tree if the corresponding colorblind multi-graph defined via \eqref{colorblind} has no cycles. 
We now introduce a probability measure on $\wcG^*(\cC)$ supported on rooted colored directed trees. 
Let $P \in \cP ( \cM_L)$ be a probability measure on $\cM_L$, $|\cC|=L^2$, 
such that for all $c\in \cC$, 
\begin{equation}\label{edc}
\dE D_c = \dE D_{\bar c},
\end{equation}
where $D\in \cM_L$ has distribution $P$. For each $c \in \cC$ such that $\dE D_c > 0$,
define the probability measure $\wP^{c} \in \cP ( \cM_L)$ such that, for $M \in \cM_L$, 
$$
\wP^{c} (M) = \frac{ ( M_{\bar c} +1 )\,P ( M + E^ {\bar c} )  }{ \dE D_c },
$$
where $D$ has distribution $P$, and for any $c\in\cC$, $E^c$ denotes the matrix with all entries equal to $0$ except for the entry at $c$, which equals $1$. Notice that 
$\wP^c$ is indeed a probability since
$$
\sum_{M \in \cM_L}( M_{\bar c} +1 )\,P ( M + E^ {\bar c} ) = \sum_{M \in \cM_L} M_{\bar c} \,P ( M ) = \dE D_{\bar c} = \dE D_c.
$$
If $\dE D_c = 0$ then we set  $\wP^{c} (M) = \IND ( M = 0)$.

In a rooted directed colored tree $(T,o)$, for all $v \ne o$, call $a(v)$ the parent of 
$v$ in $T$. The {\em type} of a vertex $v \ne o$ in $(T,o)$ is defined as the color of the edge $(a(v),v)$. 
The probability measure $\UGW(P)\in \cP(\wcG^*(\cC))$ is the law of the multi-type Galton-Watson tree defined as follows. The root $o$ produces offspring according to the distribution $P$, i.e.\ the root has $D_c$ children  of type $c$, for all $c\in\cC$, where $D\in\cM_L$ has law $P$. Recursively, and independently, any $v\neq o$ of type $c$, produces offspring according to the distribution $\wP^ c$, i.e.\ 
$v$ has $D_{c'}$ children  of type $c'$, for all $c'\in\cC$, where $D\in\cM_L$ has law $\wP^ c$.
Notice that in the case of a single color ($L=1$ and $\cC = \{(1,1)\}$), then $P$ is a probability measure on $\dZ_+$ and $\UGW(P)$ coincides with the 
Galton-Watson tree $\UGW_1(P)$ with degree distribution $P$, cf.\ \eqref{eq:defwP}.

Following the argument of Lemma \ref{le:UGWunimod}, it could be proved that the measure $\UGW(P)$ is unimodular. However, in the next paragraph, Theorem \ref{th:convlocCM} implies that $\UGW(P)$ is sofic (and hence unimodular).

\subsection{Local weak convergence}
It is straightforward to extend the local topology introduced in Section \ref{sec:LWC} to the case of rooted directed multi-graphs with colored edges $\wcG^*(\cC)$. The only difference is that the weight function $\omega$ is now matrix-valued. 
\begin{theorem}\label{th:convlocCM}
If $G_n \in \wcG(\bD^{(n)} )$ has distribution $\CM ( {\bD}^{(n)})$, with $\bD^{(n)}$ such that  assumptions (H1)-(H2) hold, then with probability one $U(G_n) \weak \UGW(P)$. Moreover the same result holds if $G_n$ is uniformly sampled on $\cG(\bD^{(n)},h )$, for any fixed $h\geq 2$. 
\end{theorem}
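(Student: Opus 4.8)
The plan is to reduce the statement to the convergence of finitely many neighborhood statistics, prove this convergence first in expectation by exploring the random matching breadth-first, upgrade it to an almost sure statement via a bounded-difference concentration inequality, and finally transfer the conclusion to the uniform measure on $\cG(\bD^{(n)},h)$ by conditioning, using Lemma \ref{le:unifCM} and Theorem \ref{th:Poilim}.

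First I would record that, by (H1), every degree matrix has entries at most $\theta$, so each $U(G_n)$ as well as $\UGW(P)$ is supported on the set $\cX\subset\wcG^*(\cC)$ of rooted colored multi-graphs whose depth-$t$ truncation has at most $N_t:=\sum_{j=0}^{t}(L^2\theta)^j$ vertices for every $t$; by Lemma \ref{le:compactloc} (applied in $\wcG^*(\cC)$), $\cX$ is compact. On a compact metric space, weak convergence follows once $\int f\,dU(G_n)\to\int f\,d\UGW(P)$ for every $f$ in the algebra generated by the indicators $g\mapsto\IND(g_h\simeq t)$, $h\in\dN$, $t\in\wcG^*_h(\cC)$, and this algebra separates points and is dense by Stone--Weierstrass. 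Hence it suffices to show that, for each fixed $h$ and $t$,
\[
X_n(h,t):=\frac1n\sum_{v\in[n]}\IND\big((G_n,v)_h\simeq t\big)\;\longrightarrow\;\mu_{h,t}:=\UGW(P)\big(\{g:g_h\simeq t\}\big)\qquad\text{almost surely.}
\]
Since there are only countably many pairs $(h,t)$ and $\veps$ may be taken rational, it is in turn enough to establish $\sum_n\dP\big(|X_n(h,t)-\mu_{h,t}|>\veps\big)<\infty$ for each $(h,t,\veps)$, provided the $G_n$ are realized on a common probability space (for instance independently).

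The heart of the argument, and the step I expect to be the most delicate, is the first-moment computation $\dE_{\CM}X_n(h,t)=\dP\big((G_n,o_n)_h\simeq t\big)\to\mu_{h,t}$, with $o_n$ a uniform vertex. I would evaluate this by a breadth-first exploration of the matching from $o_n$ down to depth $h$: the root carries degree matrix $D^{(n)}(o_n)$, and one reveals the partner of each of its half-edges, then of every newly uncovered vertex; a half-edge of colour $c$ is matched to a uniformly chosen not-yet-revealed half-edge of colour $\bar c$. Since each explored vertex has at most $L^2\theta$ half-edges, at most $N_h$ vertices are touched, so the chance that the exploration ever matches two half-edges incident to a common or an already-visited vertex---i.e.\ creates a cycle---is $O(N_h^2/n)=o(1)$, and with probability $1-o(1)$ the explored neighborhood is a tree. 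Conditionally on the history on this event, the fresh vertex reached along a colour-$c$ half-edge has residual degree matrix (after deleting the used colour-$\bar c$ half-edge) equal to $M$ with probability
\[
\frac{(M_{\bar c}+1)\,\#\{u\in[n]:D^{(n)}(u)=M+E^{\bar c}\}}{S^{(n)}_{\bar c}}+o(1)\;\longrightarrow\;\frac{(M_{\bar c}+1)\,P(M+E^{\bar c})}{\dE D_{\bar c}}=\wP^{c}(M),
\]
where I use (H2), $S^{(n)}_{\bar c}/n\to\dE D_{\bar c}$, and the identity $\dE D_{\bar c}=\dE D_c$ (forced by $S^{(n)}_c=S^{(n)}_{\bar c}$ together with (H2)), and then this fresh vertex offers $M$ further half-edges split by colour. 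These are exactly the offspring laws defining $\UGW(P)$, so the law of $(G_n,o_n)_h$ converges to that of the depth-$h$ truncation of the $\UGW(P)$ tree, giving $\dE_{\CM}X_n(h,t)\to\mu_{h,t}$.

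For the summability I would generate $\CM(\bD^{(n)})$ one pair at a time (for each colour, the first still-unmatched half-edge picks a uniform partner among the remaining), so that $X_n(h,t)$ is a $[0,1]$-valued function of $O(n)$ successive choices; since all degrees are bounded by $L^2\theta$, a standard rerouting argument shows $X_n(h,t)$ has the bounded-difference property with increments $O(1/n)$, and the Azuma--Hoeffding inequality then gives $\dP_{\CM}\big(|X_n(h,t)-\dE_{\CM}X_n(h,t)|>\veps\big)\le 2\exp(-c\veps^2 n)$ for some $c=c(L,\theta,h)>0$, which with the previous step yields $\dP_{\CM}\big(|X_n(h,t)-\mu_{h,t}|>\veps\big)\le 2\exp(-c\veps^2 n/2)$ for large $n$; summing over $n$ and applying Borel--Cantelli proves $U(G_n)\weak\UGW(P)$ almost surely under $\CM(\bD^{(n)})$. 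Finally, for the uniform law on $\cG(\bD^{(n)},h)$ with $h\ge2$ fixed, put $A_n=\{G_n\in\cG(\bD^{(n)},h)\}$: by Lemma \ref{le:unifCM} the conditional law $\CM(\bD^{(n)})(\cdot\mid A_n)$ is uniform on $\cG(\bD^{(n)},h)$, and by Theorem \ref{th:Poilim} we have $\dP_{\CM}(A_n)\to\alpha_h>0$, so $\dP_{\CM}(A_n)\ge\alpha_h/2$ eventually; therefore, for each $(h',t,\veps)$,
\[
\dP_{\CM}\big(|X_n(h',t)-\mu_{h',t}|>\veps\ \big|\ A_n\big)\ \le\ \frac{2}{\alpha_h}\,\dP_{\CM}\big(|X_n(h',t)-\mu_{h',t}|>\veps\big),
\]
which remains summable by the previous step, and a final application of Borel--Cantelli gives $U(G_n)\weak\UGW(P)$ almost surely for the conditioned model as well.
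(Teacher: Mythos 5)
Your proposal is correct and follows essentially the same route as the paper's appendix: a breadth-first exploration of the matching to show $\dE U(G_n)\weak\UGW(P)$ (Proposition \ref{prop:convlocCM}, with the cycle-probability and offspring-law computations matching Lemmas \ref{le:cycleexpCM} and \ref{le:coupexpCM}), a switch-based Azuma--Hoeffding bound giving $2\exp(-\delta nt^2)$ concentration (Proposition \ref{th:azumaCM} and Corollary \ref{cor:azumaCM}), Borel--Cantelli, and the transfer to $\cG(\bD^{(n)},h)$ by conditioning via Lemma \ref{le:unifCM} and the uniform lower bound on $\dP(G_n\in\cG(\bD^{(n)},h))$ from Theorem \ref{th:Poilim}. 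The only point you gloss over is the degenerate case $\dE D_c=0$ for some colour (handled in Lemma \ref{le:coupexpCM}\eqref{eq:3coupCM}), which is a minor technicality.
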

 
In the case of a single color $L=1$, Theorem \ref{th:convlocCM} is folklore; see e.g.\ the monographs \cite{MR1782847,MR2656427}. 
 The proof of Theorem \ref{th:convlocCM} in the general case is given in the appendix.

 \subsection{Graphs with given tree-like neighborhood}
Here we show how the configuration model can be used to count the number of graphs 
with a given tree-like neighborhood structure.

 Fix $n$ and a graph $G=(V,E)$ with $V=[n]$. Call $\cG_n$ the set of all such graphs. For $h\in\dN$, define the $h$-neighborhood vector
 \begin{equation}\label{psiG}
 \psi_h(G)=\left([G,1]_h,\dots,[G,n]_h\right),
\end{equation}  
where $[G,u]_h$ stands for the equivalence class of the $h$-neighborhood of $G$ at vertex $u$. 
We say that $G$ is $h$-tree-like if $[G,u]_h$ is a tree for all $u\in[n]$.

  We describe now a procedure which turns the given graph $G$ into a directed colored graph $\wt G$ in $\cG(\cC)$.
    The color set $\cC$ is defined as follows. Let $\cF\subset \cG^*_{h-1}$ denote the collection of all 
 equivalence classes of the subgraphs $G(u,v)_{h-1}$, where we recall that $G(u,v)$ is the rooted graph obtained from $G$ by removing the edge $\{u,v\}$ and taking the root at $v$. For simplicity, below we will identify $G(u,v)_{h-1}$ with its equivalence class. If $L=|\cF|$ denotes the cardinality of $\cF$,  we call $\cC$ the set of $L^2$ pairs 
 $(g,g')$, with $g,g'\in\cF$; see Figure \ref{treelike1} for an example. To construct the directed colored graph, for every pair $u,v$ such that $\{u,v\}$ is an edge of $G$, we include a directed edge $(u,v)$ with color
 \begin{equation}\label{dcmg}
 (g,g')=(G(u,v)_{h-1},G(v,u)_{h-1}),
 \end{equation} 
 together with the directed edge $(v,u)$ with color
 $(g',g)=(G(v,u)_{h-1},G(u,v)_{h-1})$. This defines an element $\wt G$ of $\cG(\cC)$; see Figure \ref{gtilde}.
 As such, we can define its degree sequence $\bD=\bD(\wt G)$ as in \eqref{dcu} above.
 Notice that if $G$ is $h$-tree-like, then  the above construction yields an element of $\cG(\bD,2h+1)$ since 
 being $h$-tree-like is equivalent to having no cycles with length $1\leq \ell\leq 2h+1$; see Figure \ref{gtilde2}. 
A crucial property to be used below is that, for this particular choice of $\bD$,  {\em all} elements of $\cG(\bD,2h+1)$ have the same $h$-neighborhoods.

\begin{figure}
\begin{tikzpicture}[-,>=stealth',shorten >=1pt,auto,node distance=1.5cm,
                    thick,main node/.style={circle,draw,font=\sffamily
                    \bfseries}]

  \node[main node] (1) {1};
  \node[main node] (2) [left of=1] {2};
  \node[main node] (3) [right of=1] {3};
  \node[main node] (4) [above of=1] {4};
   \node[main node] (5) [below of=2] {5};
   \node[main node] (6) [below of=3] {6};
     \node[main node] (7) [below of=1] {7};
   \node[main node] (8) [above of=3] {8};
   \node[main node] (9) [left of=4] {9};
%
\draw [fill] (3.6,1.5) circle (0.17cm);

\draw [fill] (5,1.5) circle (0.17cm);
\draw   (4.7,0.5) circle (0.17cm);
\draw  (5.3,0.5) circle  (0.17cm);
\draw  (4.7,-0.5) circle (0.17cm);
\draw  (5.3,-0.5) circle (0.17cm);

\draw (4.74,0.65) -- (5,1.5);
\draw (5.26,0.65) -- (5,1.5);
\draw (4.7,0.34) -- (4.7,-0.36);
\draw (5.3,0.34) -- (5.3,-0.36);

\draw [fill] (6.4,1.5) circle (0.17cm);
\draw  (6.4,0.5) circle (0.17cm);
\draw  (6.4,-0.5) circle (0.17cm);

\draw (6.4,0.65) -- (6.4,1.5);
\draw (6.4,0.34) -- (6.4,-0.36);

\draw [fill] (7.8,1.5) circle (0.17cm);
\draw  (7.8,0.5) circle (0.17cm);
\draw  (8.1,-0.5) circle (0.17cm);
\draw  (7.5,-0.5) circle (0.17cm);

\draw (7.8,0.65) -- (7.8,1.5);
\draw (7.75,0.34) -- (7.54,-0.35);
\draw (7.83,0.34) -- (8.06,-0.35);

\draw [fill] (9.1,1.5) circle (0.17cm);
\draw   (8.8,0.5) circle (0.17cm);
\draw  (9.4,0.5) circle  (0.17cm);
\draw  (9.4,-0.5) circle (0.17cm);

\draw (8.84,0.65) -- (9.1,1.5);
\draw (9.36,0.65) -- (9.1,1.5);
\draw (9.4,0.34) -- (9.4,-0.36);

\node at (3.6,0.5) {$\a$};
\node at (5,-1.5) {$\b$};
\node at (6.4,-1.5) {$\c$};
\node at (7.8,-1.5) {$\d$};
\node at (9.1,-1.5) {$\h$};

  \path[every node/.style={font=\sffamily\small}]
    (1) 
        edge node {} (2)
        edge node {} (4)
    (2) 
    edge node {} (5)
edge node {} (9)
    (3) 
           edge node {} (6)
           edge node {} (8)
    (4) 
       edge node {} (8)
        (5) edge node {} (7)
        
    (6) edge node {} (7) 
    ;
\end{tikzpicture}
%
\caption{A $3$-tree-like graph $G\in\cG_{9}$ (left). When $h=3$, the associated set $\cF$ of equivalence classes is given by the $L=5$ rooted unlabeled graphs depicted on the right (the black vertex is the root). Here $G(2,9)_2=(\b,\a)$, $G(9,2)_2=(\a,\b)$; 
$G(1,2)_2=(\c,\h)$, $G(2,1)_2=(\h,\c)$;  $G(5,2)_2=(\c,\h)$, $G(2,5)_2=(\h,\c)$; $G(7,5)_2=(\c,\d)$, $G(5,7)_2=(\d,\c)$; $G(6,7)_2=G(7,6)_2=G(3,6)_2=G(6,3)_2=G(3,8)_2=G(8,3)_2=  G(4,8)_2=G(8,4)_2=(\c,\c)$;  $G(4,1)_2=(\c,\d)$, $G(1,4)_2=(\d,\c)$.
}
\label{treelike1}
\end{figure}
  
  \begin{figure}
\begin{tikzpicture}[->,>=stealth',shorten >=1pt,auto,node distance=2cm,
                    thick,main node/.style={circle,draw,font=\sffamily
                    \bfseries}]

  \node[main node] (1) {1};
  \node[main node] (2) [left of=1] {2};
  \node[main node] (3) [right of=1] {3};
  \node[main node] (4) [above of=1] {4};
   \node[main node] (5) [below of=2] {5};
   \node[main node] (6) [below of=3] {6};
     \node[main node] (7) [below of=1] {7};
   \node[main node] (8) [above of=3] {8};
   \node[main node] (9) [left of=4] {9};

  \path[every node/.style={font=\sffamily\small}]
    (1) 
        edge [bend right=20] 
        node {} (2)
        edge node [left] {$(\d,\c)$} (4)
    (2) 
    edge  node [below] {$(\h,\c)$} (1)
    edge node [left] {$(\h,\c)$} (5)
edge node [left] {$(\b,\a)$} (9)
    (3) 
           edge node {} (6)
           edge node [right] {$(\c,\c)$}(8)
    (4) edge [bend left=20] node {} (1)
       edge node [below] {$(\c,\c)$}(8)
        (5) edge node {} (7)
        edge [bend right=20] node  {} (2)
    (6) edge node {} (7) 
    edge [bend right=20] node [right] {$(\c,\c)$} (3) 
 (7) edge [bend left=20] node [above] {$(\c,\c)$}(6) 
 edge [bend right=20] node [above] {$(\c,\d)$} (5) 
(8) edge [bend right=20] node {} (4) 
 edge [bend right=20] node {} (3) 
(9) edge [bend left=20] node {}  (2) 
    ;
\end{tikzpicture}
%
\caption{The graph $\wt G\in\wcG(\cC)$ defined by \eqref{dcmg} when $G$ is the graph from Figure \ref{treelike1}.
It is understood that if the directed edge $(u,v)$ has color $(g,g')\in\cC$, then the opposite edge $(v,u)$ has color $(g',g)$.
}
\label{gtilde}
\end{figure}

 \begin{figure}
\begin{tikzpicture}[->,>=stealth',shorten >=1pt,auto,node distance=2cm,
                    thick,main node/.style={circle,draw,font=\sffamily
                    \bfseries}]

  \node[main node] (1) {1};
  \node[main node] (2) [left of=1] {2};
  \node[main node] (3) [right of=1] {3};
  \node[main node] (4) [above of=1] {4};
   \node[main node] (5) [below of=2] {5};
   \node[main node] (6) [below of=3] {6};
     \node[main node] (7) [below of=1] {7};
   \node[main node] (8) [above of=3] {8};
   \node[main node] (9) [left of=4] {9};

  \node[main node] (10) [right of=8]{9};
  \node[main node] (11) [right of=3] {2};
   \node[main node] (12) [right of=11]{1};
  \node[main node] (13) [right of=12] {3};
  \node[main node] (14) [above of=12] {4};
   \node[main node] (15) [below of=11] {5};
   \node[main node] (16) [below of=13] {6};
     \node[main node] (17) [below of=12] {7};
   \node[main node] (18) [above of=13] {8};

   \path[every node/.style={font=\sffamily\small}]
    (1) 
        edge [bend right=20] 
        node {} (2)
        edge node [left] {$(\d,\c)$} (4)
    (2) 
    edge  node [below] {$(\h,\c)$} (1)
    edge node [left] {$(\h,\c)$} (5)
edge node [left] {$(\b,\a)$} (9)
    (3) 
           edge [bend left=15] node {} (6)
           edge [bend right=40] node {}(6)
    (4) edge [bend left=20] node {} (1)
       edge node [below] {$(\c,\c)$}(8)
        (5) edge node {} (7)
        edge [bend right=20] node  {} (2)
    (6) 
    edge [bend right=40] node {} (3) 
    edge [bend left=15] node {} (3) 
 (7) 
 edge [bend right=20] node [above] {$(\c,\d)$} (5) 
 edge   node {}(8)
(8) edge [bend right=20] node {} (4) 
 edge [bend right=12] node {} (7) 
(9) edge [bend left=20] node {}  (2) 
    ;
    
    \node at (2.1,0.91) {$(\c,\c)$};
     \node at (1,-1.3) {$(\c,\c)$};

      \path[every node/.style={font=\sffamily\small}]
    (12) 
        edge [bend right=20] 
        node {} (11)
        edge [bend right=20] node {} (17)
    (11) 
    edge  node {} (12)
    edge node [left] {$(\h,\c)$} (15)
edge node [left] {$(\b,\a)$} (10)
    (13) 
           edge node {} (16)
           edge node [right] {$(\c,\c)$}(18)
    (14) 
    edge [bend left=10] node {} (15)

       edge node [below] {$(\c,\c)$}(18)
        (15) edge node {} (14)
        edge [bend right=20] node  {} (11)
    (16) edge node {} (17) 
    edge [bend right=20] node [right] {$(\c,\c)$} (13) 
 (17) edge [bend left=20] node [above] {$(\c,\c)$}(16) 
 edge  node {} (12) 
(18) edge [bend right=20] node {} (14) 
 edge [bend right=20] node {} (13) 
(10) edge [bend left=20] node {}  (11) 
    ;
     \node at (6.6,-0.8) {$(\c,\d)$};
\node at (4.8,0.6) {$(\c,\h)$};
 \node at (5.05,-1.4) {$(\c,\d)$};
\end{tikzpicture}
%
\caption{Two examples of multigraphs $\G_1,\G_2\in \wcG(\bD)$, where $\bD=\bD(\wt G)$ is the degree sequence of $\wt G$ from Figure \ref{gtilde}. Notice that $\G_1$ (left) yields a colorblind multigraph $\bar \G_1$ with a double edge at $\{3,6\}$, while $\G_2$ (right) yields a $3$-tree-like graph $\bar\G_2$, i.e.\ $\G_2\in \cG(\bD,7)$.  In particular, as guaranteed by Lemma \ref{le:treelike}, $\bar\G_2$ has the same $3$-neighborhoods of $G$. 
}
\label{gtilde2}
\end{figure}

 \begin{lemma}\label{le:treelike}
Let $h\in\dN$, let $G\in\cG_{n}$ be a fixed $h$-tree-like graph and let $\bD=\bD(\wt G)$ be the associated degree sequence as above. For any $\G\in\cG(\bD,2h+1)$, the colorblind graph $\bar\G\in\cG_{n}$ defined via \eqref{colorblind} satisfies $\psi_h(\bar\G)=\psi_h(G)$.
\end{lemma}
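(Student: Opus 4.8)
The plan is to show that, although the colours carried by $\G$ are a priori constrained only through the degree matrices, the hypothesis that $\bar\G$ has no cycle of length $\le 2h+1$ forces these colours to encode faithfully the $(h-1)$-neighbourhoods of $\bar\G$. The $h$-neighbourhood of a vertex $u$ can then be reconstructed from the degree matrix $D(u)$ alone, in exactly the same way for $\bar\G$ as for $G$, and since $\G$ and $\wt G$ share the degree sequence $\bD$ this yields $\psi_h(\bar\G)=\psi_h(G)$.

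I would first record three preliminary facts. (a) Since $G$ is $h$-tree-like it is $k$-tree-like for every $0\le k\le h$, every element of $\cF$ is a rooted tree of depth at most $h-1$, and, because $\G\in\cG(\bD,2h+1)$, the colourblind graph $\bar\G$ is likewise $h$-tree-like. (b) The standard \emph{unfolding} of a locally tree-like graph: if $H$ is $m$-tree-like ($m\le h$) then $(H,x)_m$ is the rooted tree with root $x$ and root-subtrees $\{H(x,y)_{m-1}:y\in N_H(x)\}$, and for an edge $\{x,y\}$ and $1\le m\le h$ the rooted graph $H(x,y)_m$ is the rooted tree with root $y$ and root-subtrees $\{H(y,z)_{m-1}:z\in N_H(y)\setminus\{x\}\}$; in both identities the absence of cycles of length $\le 2h+1$ is what guarantees that the edge in question cleanly separates the relevant ball. (c) Because $\G$ is a simple graph, the multiset of colours of the edges of $\G$ incident to a vertex $v$ is determined by $D(v)$, the $v$-component of $\bD=\bD(\wt G)$; and this is precisely the multiset of colours of the edges of $\wt G$ incident to $v$, namely $\{(G(v,z)_{h-1},G(z,v)_{h-1}):z\in N_G(v)\}$. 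Consequently there is a bijection $\beta_v:N_{\bar\G}(v)\to N_G(v)$ such that the colour of $(v,w)$ in $\G$ equals the colour of $(v,\beta_v(w))$ in $\wt G$ for every $w\in N_{\bar\G}(v)$.

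The core is the statement $(Q_k)$, for $0\le k\le h-1$: \emph{for every directed edge $(u,v)$ of $\G$ with colour $(g,g')$ one has $\bar\G(u,v)_k\simeq g_k$}, which I would prove by induction on $k$; the base $k=0$ is trivial since both sides are the one-vertex rooted tree. For the step $k\to k+1$ (with $k+1\le h-1$): fix an edge $(u,v)$ of colour $(g,g')$. The edge $(v,u)$ has colour $(g',g)$, so applying $\beta_v$ to it yields a vertex $w_0:=\beta_v(u)\in N_G(v)$ with $G(w_0,v)_{h-1}=g$; hence $g_{k+1}=G(w_0,v)_{k+1}$, which by (b) unfolds as the tree rooted at $v$ with root-subtrees $\{G(v,z)_k:z\in N_G(v)\setminus\{w_0\}\}$. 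On the other side, (b) unfolds $\bar\G(u,v)_{k+1}$ as the tree rooted at $v$ with root-subtrees $\{\bar\G(v,w)_k:w\in N_{\bar\G}(v)\setminus\{u\}\}$, and $(Q_k)$ applied to each edge $(v,w)$ identifies $\bar\G(v,w)_k$ with the $k$-truncation of the first coordinate of the colour of $(v,w)$. Since $\beta_v$ is colour-preserving and sends $u$ to $w_0$, these first coordinates (for $w\ne u$) form the multiset $\{G(v,z)_{h-1}:z\in N_G(v)\setminus\{w_0\}\}$, so the two subtree multisets agree after truncation at $k$, proving $\bar\G(u,v)_{k+1}\simeq g_{k+1}$.

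To finish, fix $u\in[n]$ and unfold once more: $[\bar\G,u]_h$ is the tree rooted at $u$ with root-subtrees $\{\bar\G(u,v)_{h-1}:v\in N_{\bar\G}(u)\}$, which by $(Q_{h-1})$ is the multiset of first coordinates of the colours of the edges of $\G$ at $u$; by fact (c) this multiset equals $\{G(u,z)_{h-1}:z\in N_G(u)\}$, which unfolds back to $[G,u]_h$. Hence $[\bar\G,u]_h\simeq[G,u]_h$ for all $u$, i.e.\ $\psi_h(\bar\G)=\psi_h(G)$. The step I expect to need the most care is the choice of witness for the colour $(g,g')$: the obvious choice of an edge of $G$ realizing that colour is rooted at a vertex whose degree may differ from that of $v$, so the two unfoldings would not even produce the same number of subtrees. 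Reading the witness off $\beta_v$ at $v$ itself fixes this, and it is legitimate exactly because the conjugate colour $(g',g)$ must occur among the edges at $v$ once $D(v)$ is prescribed --- this is the one point where the given-degree-sequence hypothesis enters in an essential way.
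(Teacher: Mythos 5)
Your proof is correct. It rests on the same two pillars as the paper's argument: the degree matrix $D(v)$ forces the multiset of colours of the edges at $v$ in $\G$ to coincide with that in $\wt G$, and the absence of cycles of length $\leq 2h+1$ lets one unfold every ball of radius $\leq h$ into branches indexed by the edges at the root. The organization, however, is genuinely different. The paper inducts on $h$ itself: assuming $\psi_{h-1}(\bar\G)=\psi_{h-1}(G)$, it fixes an edge of colour $(t,t')$, observes that the conjugate colour must occur at the other endpoint, and then extracts the branch identification from the two coupled identities \eqref{tt'} by truncating repeatedly until only the degree information remains — a descending cancellation argument on unlabeled trees. You instead fix $\G$ once and run a single ascending induction on the truncation depth $k$ of the per-directed-edge statement $(Q_k)$, implementing the colour-matching at each vertex through an explicit colour-preserving bijection $\beta_v:N_{\bar\G}(v)\to N_G(v)$ and comparing root-subtree multisets directly. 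What your route buys is that it dispenses with the system \eqref{tt'} and its iterated-truncation resolution, and it isolates cleanly (via the choice $w_0=\beta_v(u)$, i.e.\ reading the conjugate colour off at $v$ itself) the one place where the prescribed degree sequence is used — a point that is present but less visible in the paper's "$v$ must have an edge $(v,\wt u)$ with colour $(t',t)$" step. What the paper's route buys is brevity: by reusing the full lemma at depth $h-1$ as a black box it needs no auxiliary per-edge statement, at the cost of the slightly more delicate cancellation step and of an induction whose hypothesis is applied to the same $\bD$ but a weaker conclusion. Both arguments bottom out in the same base fact that the root degrees are determined by $\bD$.
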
 
\begin{proof}
Consider first the case $h=1$. 
If $\G\in\cG(\bD,3)$, then for any node $i\in[n]$,
the $1$-neighborhood $(\bar\G,i)_1$ at $i$
is uniquely determined by the number of edges exiting node $i$. By \eqref{colorblind}, this number equals $\sum_{c\in\cC}D_c(i)$, which is independent of $\G$. Thus, all $\G\in\cG(\bD,3)$ satisfy 
necessarily  $\psi_1(\bar\G)=\psi_1(G)$.

Next, 
we assume that any $\G\in\cG(\bD,2h+1)$ satisfies $\psi_{h-1}(\bar\G)=\psi_{h-1}(G)$, and show that 
 $\psi_{h}(\bar\G)=\psi_{h}(G)$. Since $\cG(\bD,2h+1)\subset \cG(\bD,2(h-1)+1)$, by induction over $h$ this will prove the desired result.
 
 Since there are no cycles of length $\ell\leq 2h+1$ in $G$,
 $\cF$ consists of unlabeled rooted trees of depth $h-1$. 
 For any $t\in\cF$ we write $t_k$ for the $k$-neighborhood of the root in $t$ (truncation of $t$ at depth $k$).  
 Moreover, if $t$ is a rooted tree, we write $t_{k,+}$ for the unlabeled rooted tree of depth $k+1$ obtained from $t_k$ by adding a new edge to the root and taking the other endpoint of that edge as the new root.  If $t,t'$ are finite rooted trees, we write $t\cup t'$ for the rooted tree obtained by joining $t,t'$ at the common root. 
Since there are no cycles of length $\ell\leq 2h+1$ in $\bar\G$, 
to prove $\psi_{h}(\bar\G)=\psi_{h}(G)$ it is sufficient to show that for any
edge $(u,v)$ with color $(t,t')$ in $\G$, with $t,t'\in\cF$, one has $\bar\G(u,v)_{h-1}=t'$ and $\bar\G(v,u)_{h-1}=t$. 

Let $(u,v)$ be an edge in $\G$ with color $(t,t')$.
Notice that in $\wt G$, $u$ must have an edge $(u,\wt v)$ with color $(t,t')$ going out of $u$, and $v$ must have an edge $(v,\wt u)$ with color $(t',t)$ going out of $v$.
Therefore, $[G,u]_{h-1}=t\cup t'_{h-2,+}$ and $[G,v]_{h-1}=t'\cup t_{h-2,+}$. By assumption, $(\bar\G,u)_{h-1}=[G,u]_{h-1}$ and $(\bar\G,v)_{h-1}=[G,v]_{h-1}$. Therefore, the rooted trees $T:=\bar\G(v,u)_{h-1}$ and $T':=\bar\G(u,v)_{h-1}$ must satisfy
\begin{equation}\label{tt'}
T\cup T'_{h-2,+}=t\cup t'_{h-2,+}\,,\qquad \;T'\cup T_{h-2,+}=t'\cup t_{h-2,+}.
\end{equation}
We need to show that $t=T$ and $t'=T'$. From \eqref{tt'}, one has that it is sufficient to show that $T'_{h-2}=t'_{h-2}$ and $T_{h-2}=t_{h-2}$. Truncating \eqref{tt'} at depth $h-2$ one has
$$
T_{h-2}\cup T'_{h-3,+}=t_{h-2}\cup t'_{h-3,+}\,,\qquad \;T'_{h-2}\cup T_{h-3,+}=t'_{h-2}\cup t_{h-3,+}.
$$
Thus, it is sufficient to show that $T'_{h-3}=t'_{h-3}$ and $T_{h-3}=t_{h-3}$. Iterating this reasoning, one finds that it suffices to show that $T'_{1}=t'_{1}$ and $T_{1}=t_{1}$. However, this is guaranteed by the fact that the degree of $u$ in $G$ and $\bar \G$ is the same, for any $u\in[n]$. 
\end{proof}


We turn to the problem of counting the number of graphs $G'\in\cG_{n}$ whose $h$-neighborhood distribution coincides with that of a given $h$-tree-like graph $G$.
The following is an important corollary of Lemma \ref{le:treelike}.
 \begin{corollary}\label{cor:treelike}
Fix an arbitrary $h$-tree-like graph $G\in\cG_{n}$, and define
\begin{equation}\label{nhg}
N_h(G)=\ABS{\big\{G'\in\cG_{n}:\; U(G')_h = U(G)_h\big\}}.
\end{equation}
One has 
\begin{equation}\label{nhgo}
N_h(G)=
n(\bD)|\cG(\bD,2h+1)|,
\end{equation}
where $\bD=(D(1),\dots,D(n))$ is the degree sequence associated to $G$ via \eqref{dcmg}, and $n(\bD)$ denotes the number of distinct vectors $(D(\pi_1),\dots,D(\pi_n))\in\cD_n$ as $\pi:[n]\mapsto[n]$ ranges over permutations of the labels. 
\end{corollary}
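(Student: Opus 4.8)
The plan is to exhibit an explicit bijection between $\{G'\in\cG_n:\,U(G')_h=U(G)_h\}$ and the disjoint union $\bigsqcup_{\pi}\cG(\bD^{\pi},2h+1)$, where $\bD^{\pi}=(D(\pi_1),\dots,D(\pi_n))$ and $\pi$ ranges over a set of representatives for the $n(\bD)$ distinct relabelings of $\bD=\bD(\wt G)$. Once this is done the formula follows at once: the sets $\cG(\bD^{\pi},2h+1)$ are pairwise disjoint because a colored multi-graph determines its degree sequence, and $|\cG(\bD^{\pi},2h+1)|=|\cG(\bD,2h+1)|$ for every $\pi$ since relabeling vertices is a bijection $\cG(\bD,2h+1)\to\cG(\bD^{\pi},2h+1)$; hence $N_h(G)=n(\bD)\,|\cG(\bD,2h+1)|$.

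The first step is to record the locality of the coloring. For any $h$-tree-like $G'\in\cG_n$ and any vertex $u$, the matrix $D(u;\wt{G'})$ is a fixed function $\Phi$ of the single $h$-neighborhood $[G',u]_h$: its entry at color $(g,g')$ counts the neighbors $v$ of $u$ with $G'(u,v)_{h-1}\simeq g$ and $G'(v,u)_{h-1}\simeq g'$, and in a graph with no cycle of length $\leq 2h+1$ both $G'(u,v)_{h-1}$ (the subtree of $[G',u]_h$ hanging at the child $v$) and $G'(v,u)_{h-1}$ (the truncation $[G',u]_{h-1}$ with that subtree deleted) are read off from $[G',u]_h$. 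Consequently, if $U(G')_h=U(G)_h$, then $G'$ is itself $h$-tree-like (its $h$-neighborhoods are trees), the associated set $\cF$ of rooted $(h-1)$-patterns, and hence the color set $\cC$, is the same as for $G$, and
\[
\{\,D(u;\wt{G'}):u\in[n]\,\}=\{\,\Phi([G',u]_h):u\in[n]\,\}=\{\,\Phi([G,u]_h):u\in[n]\,\}=\{\,D(u):u\in[n]\,\}
\]
as multisets, so $\bD(\wt{G'})=\bD^{\pi}$ for some permutation $\pi$. Since being $h$-tree-like is equivalent to the absence of cycles of length $\leq 2h+1$, this gives $\wt{G'}\in\cG(\bD^{\pi},2h+1)$; and the map $G'\mapsto\wt{G'}$ is injective because taking the colorblind graph via \eqref{colorblind} inverts it.

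It then remains to prove surjectivity onto the whole union. Given a relabeling $\pi$ and $\G\in\cG(\bD^{\pi},2h+1)$, I would apply Lemma \ref{le:treelike} to the relabeled graph $G\circ\pi$ (the graph on $[n]$ with $\{i,j\}$ an edge iff $\{\pi_i,\pi_j\}\in E(G)$), which is again $h$-tree-like and whose associated degree sequence is exactly $\bD^{\pi}$ because $[G\circ\pi,i]_h\simeq[G,\pi_i]_h$ and $D(\cdot\,;\wt{\,\cdot\,})=\Phi$ is local. The lemma yields that $\bar\G\in\cG_n$ is $h$-tree-like with $\psi_h(\bar\G)=\psi_h(G\circ\pi)$, hence $U(\bar\G)_h=U(G\circ\pi)_h=U(G)_h$ (passing from the neighborhood vector to its empirical measure), so $\bar\G$ lies in the set being counted. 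Finally $\wt{\bar\G}=\G$: the color of each edge $(u,v)$ of $\G$ coincides with $(\bar\G(u,v)_{h-1},\bar\G(v,u)_{h-1})$, which is precisely the identity established in the last paragraph of the proof of Lemma \ref{le:treelike}. Thus $\bar\G$ is the unique preimage of $\G$, the bijection is complete, and the formula follows.

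The routine part of all this is bookkeeping; the two substantive inputs are the locality of $\Phi$ and the color-recovery identity already contained in Lemma \ref{le:treelike}. The main point requiring care is the passage between the \emph{vector} $\psi_h$ (used in Lemma \ref{le:treelike}) and the \emph{multiset} $U(\cdot)_h$ of $h$-neighborhoods appearing in the definition of $N_h(G)$: it is exactly the ambiguity in this passage — which relabelings of $\bD$ actually arise — that is accounted for by the multiplicity factor $n(\bD)$.
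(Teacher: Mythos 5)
Your argument is correct and follows essentially the same route as the paper: the paper also obtains \eqref{nhgo} by comparing $\{G':U(G')_h=U(G)_h\}$ with $\cup_\pi\cG(\bD^\pi,2h+1)$ via the maps $\G\mapsto\bar\G$ and $G'\mapsto\wt{G'}$, invoking Lemma \ref{le:treelike} (the paper phrases it as two injections between finite sets rather than one explicit bijection, which spares it the verification that $\wt{\bar\G}=\G$). Your write-up usefully makes explicit two points the paper leaves implicit — that $D(u;\wt{G'})$ is a local function of $[G',u]_h$, so that $U(G')_h=U(G)_h$ forces $\bD(\wt{G'})$ to be a relabeling of $\bD$, and that Lemma \ref{le:treelike} must be applied to the relabeled graph $G\circ\pi$ for the surjectivity onto $\cG(\bD^\pi,2h+1)$.
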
 
\begin{proof}
For a permutation $\pi:[n]\mapsto[n]$, let 
$\bD^\pi=(D(\pi_1),\dots,D(\pi_n))$. Since the cardinality of $\cG(\bD^\pi,2h+1)$ does not depend on $\pi$, 
$n(\bD)|\cG(\bD^\pi,2h+1)|$ coincides with the cardinality of $\cup_\pi  \cG(\bD^\pi,2h+1)$. By 
Lemma \ref{le:treelike}, any two distinct elements  $\G_1,\G_2\in \cup_\pi  \cG(\bD^\pi,2h+1)$ 
yield two distinct graphs $\bar\G_1,\bar\G_2$ such that $U(\bar\G_i)_h=U(G)_h$, $i=1,2$. This proves that $N_h(G)\geq 
n(\bD)|\cG(\bD,2h+1)|$. On the other hand, any two distinct elements  $G_1,G_2\in \cG_{n}$ with
$U(G_i)_h = U(G)_h$, $i=1,2$,  yield  
two distinct elements $\wt G_1,\wt G_2\in \cup_\pi\cG(\bD^\pi,2h+1)$ with the map $G\mapsto\wt G$ defined
by \eqref{dcmg}. This proves the other direction. 
\end{proof}



\begin{lemma}\label{le:existenceG}
Fix $h\in\dN$, and $P \in \cP(\cT^*_h)$ admissible, with finite support
and $\dE_P \deg (o) = d$. Let $m = m(n)$ be a sequence such that $m/n \to d /2$ as $n \to \infty$. Then, there exist a finite set $\D \subset \cT^*_h$ and a sequence of graphs $\G_n \in \cG_{n,m}$ such that the support of $U(\G_n)_h $ is contained in $\D$ for all $n$ and  $U(\G_n)_h \weak P$ as $n\to\infty$. 
\end{lemma}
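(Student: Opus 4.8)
The plan is to realise $P$ approximately as the $h$-neighborhood distribution of a graph coming from the colored configuration model of Section \ref{CM}, and then to fix the vertex and edge counts by appending a vanishingly small, bounded-degree graph. Throughout, for a tree $t\in\cT^*_h$ and $g,g'\in\cT^*_{h-1}$ I write $E_h^t(g,g')$ for the integer \eqref{eq:defE} evaluated at the canonical representative of $t$.

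\textbf{Step 1 (rational admissible approximation).} Write $\D_0=\supp P\subset\cT^*_h$, which is finite. The set $\cQ$ of admissible probabilities supported on $\D_0$ is the intersection of the simplex $\{q\ge0:\sum_{t\in\D_0}q_t=1\}$ with the hyperplanes $\sum_tq_t\big(E_h^t(g,g')-E_h^t(g',g)\big)=0$, $g,g'\in\cT^*_{h-1}$, all of whose coefficients are integers; hence $\cQ$ is a rational polytope containing $P$, rational points are dense in it, and one can choose $Q_k\in\cQ\cap\dQ^{\D_0}$ with $Q_k\weak P$. Since $\deg_t(o)\le\theta:=\max_{t\in\D_0}\deg_t(o)$ on $\D_0$, automatically $d_k:=\dE_{Q_k}\deg(o)\to\dE_P\deg(o)=d$.

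\textbf{Step 2 (exact realisation of $Q_k$).} Let $N_k$ be a common denominator of $Q_k$, let $\cF\subset\cT^*_{h-1}$ be the finite set of all $t(o,v)_{h-1}$ and $t(v,o)_{h-1}$ with $t\in\D_0$, $v\stackrel{t}{\sim}o$, put $L=|\cF|$ and $\cC=\cF^2$. For an integer $\ell\ge1$ consider the colored degree sequence $\bD(k,\ell)$ on $n'=2\ell N_k$ vertices in which, for each $t\in\D_0$, there are $2\ell N_kQ_k(t)$ vertices of ``type $t$'', a type-$t$ vertex having $c$-degree $E_h^t(g,g')$ for $c=(g,g')\in\cC$. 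This lies in $\cD_{n'}$: the sum matrix equals $2\ell\big(N_ke_{Q_k}(g,g')\big)_{g,g'}$, which is symmetric since $Q_k$ is admissible and has all entries in $2\dZ_+$ because $N_ke_{Q_k}(g,g')=\sum_t(N_kQ_k(t))E_h^t(g,g')\in\dZ$ and of the factor $2\ell$; in particular its diagonal is even, which is exactly the parity needed for $\cG(\bD(k,\ell),2h+1)$ to be able to be non-empty. The empirical colored-degree measure is the fixed pushforward of $Q_k$ onto $\cM_L^{(\theta)}$, so (H1)-(H2) hold uniformly in $\ell$, and Theorem \ref{th:Poilim} gives $\dP\big(\CM(\bD(k,\ell))\in\cG(\bD(k,\ell),2h+1)\big)\to\alpha_{2h+1}>0$; hence for $\ell$ large this set is non-empty, and I fix $\G$ in it. Re-running the induction in the proof of Lemma \ref{le:treelike}, with the \emph{intended} types $\tau_u$ playing the role of the neighborhoods of the reference graph (base case $\deg_{\bar\G}(u)=\sum_cE_h^{\tau_u}(g,g')=\deg_{\tau_u}(o)$; inductive step: the colors of the edges at $u$ together with the absence of cycles of length $\le 2h+1$ pin down the $(h-1)$-subtrees on both sides of each such edge), I obtain $(\bar\G,u)_h=\tau_u$ for all $u$. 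Thus $\bar\G$ is an $h$-tree-like graph on $n'$ vertices with $U(\bar\G)_h=Q_k$ and $|E(\bar\G)|=\tfrac12\sum_t2\ell N_kQ_k(t)\deg_t(o)=n'd_k/2$.

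\textbf{Step 3 (fixing $n$ and $m(n)$).} Given $n$, choose $k=k(n)\to\infty$ slowly enough that $N_{k(n)}\le\log n$ and that the ``$\ell$ large'' threshold of Step 2 is met below; set $a_n:=|m(n)-nd/2|+n|d_{k(n)}-d|=o(n)$, pick $r(n)=o(n)$ with $r(n)\to\infty$ and $r(n)\ge C(a_n+N_{k(n)}+1)$ for a suitable absolute constant $C$, let $n'$ be the largest multiple of $2N_{k(n)}$ with $n'\le n-r(n)$, and let $\G'$ be the graph from Step 2 realising $Q_{k(n)}$ on $n'$ vertices, so $|E(\G')|=n'd_{k(n)}/2$. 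Using $m(n)=nd/2+o(n)$, $n-n'\in[r(n),r(n)+2N_{k(n)}]$ and $d_{k(n)}=d+o(1)$, one checks $0\le m(n)-|E(\G')|\le\binom{n-n'}{2}$ for $n$ large, so I may take $\G''$ a disjoint union of paths on the remaining $n-n'$ vertices with exactly $m(n)-|E(\G')|$ edges and set $\G_n=\G'\sqcup\G''$. Then $\G_n\in\cG_{n,m(n)}$, and $U(\G_n)_h=\tfrac{n'}{n}Q_{k(n)}+\tfrac{n-n'}{n}U(\G'')_h$ with $n'/n\to1$, $Q_{k(n)}\weak P$, and the second term of mass $\to0$, hence $U(\G_n)_h\weak P$; moreover $U(\G_n)_h$ is supported in the finite set $\D:=\D_0\cup\{\text{depth-}h\text{ balls of paths}\}$, as required.

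\textbf{Main obstacle.} The delicate point is the assertion in Step 2 that \emph{every} element of $\cG(\bD(k,\ell),2h+1)$ has the prescribed $h$-neighborhood distribution $Q_k$: we only learn that this set is non-empty through the enumeration behind Theorem \ref{th:Poilim}, so — unlike in Lemma \ref{le:treelike} — there is no reference graph at hand, and the rigidity must be proved directly by the girth-plus-colors induction. Everything else (density of rationals in the admissibility polytope, the symmetry/parity check making $\bD(k,\ell)\in\cD_{n'}$, and the arithmetic of Step 3) is routine bookkeeping once this is in place.
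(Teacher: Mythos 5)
Your architecture is sound and, at its core, close to the paper's, but the two proofs diverge in how they handle the approximation. The paper assigns intended types $g^{(n)}(v)$ directly to the $n$ vertices, accepts that the resulting colored sums $S^{(n)}_c$ need not be symmetric or even, and repairs them by decreasing $o(n)$ of the matrices $D^{(n)}(i)$; the price is that only vertices whose entire $h$-ball avoids the corrupted ones are guaranteed the right neighborhood, which still leaves $n-o(n)$ good vertices. You instead restore exactness at the level of the measure, approximating $P$ inside the rational polytope of admissible measures on $\supp P$ and realizing each rational $Q_k$ perfectly on $2\ell N_k$ vertices, then diagonalizing in $k$. Note that the obstacle you single out — invoking the \emph{proof} of Lemma \ref{le:treelike} for a degree sequence built from intended types with no reference graph available — is exactly the step the paper also takes (``The proof of Lemma \ref{le:treelike} actually shows that\dots''), and your sketch of the induction (degree count as base case; girth plus colors pinning down the two depth-$(h-1)$ subtrees across each edge) is the right argument: the only property of the reference graph used in Lemma \ref{le:treelike} is that $E_h^{\tau_u}(t,t')\geq 1$ forces $(\tau_u)_{h-1}$ to decompose as the join of the two subtrees, which is intrinsic to the type $\tau_u$. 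So your route buys exactness of $U(\G')_h=Q_{k(n)}$ at the cost of the polytope/diagonalization machinery, while the paper's buys a shorter path at the cost of the ``uncorrupted ball'' bookkeeping.

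There is one concrete error in Step 3. The leftover edge count is $m(n)-|E(\G')|=(n-n')d/2$ up to $O(a_n+N_{k(n)})$, hence of order $r(n)\,d/2$, whereas a disjoint union of paths on $n-n'$ vertices carries at most $n-n'-1\approx r(n)$ edges; for $d>2$ the padding graph you propose cannot absorb the surplus, and the bound $\leq\binom{n-n'}{2}$ you verify is irrelevant to realizability by paths. The fix is immediate: pad instead with a graph on $n-n'$ vertices of maximum degree at most $\lceil d\rceil+1$ with the prescribed number of edges (every admissible count is realizable, and the depth-$h$ balls of such graphs range over a finite set), or do as the paper does and add or remove the $o(n)$ surplus edges inside $\bar\G_n$ itself, noting that each edge change perturbs at most $2\kappa$ neighborhoods. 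A cosmetic point: the constant $C$ in your choice of $r(n)$ must depend on $d$ (non-negativity needs roughly $r(n)\geq 3a_n/d$), so it is not absolute.
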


\begin{proof}
Let $S:=\{t_1,\dots,t_r\}\subset \cT_h^*$ be the finite support of $P$. We define the vector $\bg^{(n) } = (g^{(n)} (1), \cdots, g^{(n)}(n))$ with  $g^{(n)} (i) \in S$ by setting $g^{(n)} (i) = t_{k}$ if $\sum_{\ell \leq k} P(t_\ell) > (i-1)/n$ and $\sum_{\ell \leq k - 1} P(t_\ell) \leq (i-1) /n$ with the convention that the sum over an empty set is $-\infty$. The empirical measure of $\bg^{(n)}$, say $P^{(n)}$, converges weakly to $P$.

Let $\cC$ denote the set of all pairs $c=(t,t')\in\cT_{h-1}^*\times  \cT_{h-1}^*$  associated to any element $g \in S$ as in \eqref{dcmg}. In this manner, we associate to any $g^{(n)} (i)$ an integer valued matrix $D^{(n)} (i) \in \cM_L$ where $\ABS{\cC} = L^2$. We denote by $S^{(n)}_c = \sum_{i=1} ^n  D^{(n)}_c (i)$. We finally set, for $c \in \cC_=$,  $\widetilde S^{(n)}_c = 2 \lfloor  S^{(n)}_c /2 \rfloor$ and, for $c \in \cC_{\ne}$, $\widetilde S^{(n)}_c =  S^{(n)}_c  \wedge S^{(n)}_{\bar c}$. We may fix a sequence of integer-valued matrices $\widetilde \bD^{(n)}  = ( \widetilde D^{(n)} (i))_{1 \leq i \leq n}$ such that component-wise $\widetilde D^{(n)}(i) \leq D^{(n)}(i)$ and, for all $c \in \cC$, $ \sum_{i=1}^n \widetilde D^{(n)}_c (i)  = \widetilde S^{(n)}_c $.  The properties $P^{(n)} \weak P$ and $\supp (P^{(n)}) \subset S$ imply that for all $c \in \cC$, $\widetilde S^{(n)}_c - S^{(n)}_c = o(n)$ and for all but $o(n)$ vertices $\widetilde D^{(n)}(i) =  D^{(n)}(i)$. Moreover, 
$$
\widetilde m = \frac 1 2  \sum_{c \in \cC} \widetilde S^{(n)}_c = m + o(n). 
$$ 

We consider the generalized configuration model on $\widetilde \bD^{(n)}$. Corollary \ref{cor:Poilim} implies the existence, for all $n$ large enough, of an directed colored graph $\widetilde \G_n$ with girth at least $2 h +1$ and whose colored degree sequence is precisely given by $\widetilde \bD^{(n)}$. Let $\bar \G_n$ be the associated color-blind graph. The proof of Lemma \ref{le:treelike} actually shows that if a vertex $v$ of $\widetilde \G_n$ is such that all vertices $u$ in $(\widetilde \G_n,v)_h$ satisfy $\widetilde D^{(n)}_c (u) = D^{(n)}_c (u)$ then the equivalence class of $(\bar\G_n,v)_h$ is precisely $g^{(n)} (v)$. Now, let $\theta$ be the maximal degree of vertices in $t \in S$ and set $\kappa = \sum_{\ell = 0}^{h} \theta^h$. Any vertex is in the $h$-neighborhood of at most $\kappa$ vertices. Since for all but $o(n)$ vertices $\widetilde D^{(n)}(v) =  D^{(n)}(v)$, we deduce that for all but $o(n)$ vertices, the equivalence class of $(\bar\G_n,v)_h$ is  $g^{(n)} (v)$. We thus have proved that $U(\bar \G_n)_h \weak P$. Also, by construction, the support of $U(\bar \G_n)_h$ is contained in the finite set $\D_{h,\theta}$ of unlabeled rooted trees $t \in \cT^*_h$ such that all degrees of vertices in $t$ are bounded by $\theta$.

A last modification is needed: we have $\bar \G_n\in\cG_{n,\widetilde m}$ and we need a graph $\G_n \in\cG_{n,m}$. However, since the number of vertices in $(\bar \G_n,v)_h$ is bounded by $\kappa$, adding or removing one edge in $\bar \G_n$ will change the value of  $(\bar \G_n,v)_h$ for at most $2 \kappa$ vertices. Let $\delta(n) = |\widetilde m - m| = o(n)$. Assume first that $\widetilde m < m$, then we need to add edges to $\bar \G_n$. We may add $\delta(n)$ new edges to $\bar \G_n$ such that any vertex has a most one new adjacent edge.  From what precedes, we obtain a graph $\G_n  \in\cG_{n,m}$ such that $U(\G_n)_h \weak P$. Moreover the support $U(\G_n)_h$ is contained in $\Delta_{h,\theta +1}$.   If $\widetilde m > m$, we need to remove edges. We remove an arbitrary subset of them of cardinality $\delta(n)$. We get a graph $\G_n  \in\cG_{n,m}$ such that $U(\G_n)_h \weak P$ and the support of $U(\G_n)_h$ is  contained in $\Delta_{h,\theta}$.  \end{proof}

\subsection{Proof of Corollary \ref{cor:entropy1}}
We note that the set, say $\cS$, of sofic measures supported on trees is a closed subset of $\cP_{u} ( \cT^*)$.  Let $B$ be the set of measures of the form $\rho = \UGW_h ( P)$ with $P \in \cP( \cT^*_h)$ admissible with finite support and $h \in \dN$. 
A consequence of Lemma \ref{le:existenceG} and Theorem \ref{th:convlocCM} is that $B$ is a subset of $\cS$.

Let us first check that for any $h \in \dN$ and $P \in \cP( \cT^*_h)$ admissible, $\rho = \UGW_h(P) \in \cS$. For each $n \in \dN$, consider the forest $F_n$ obtained from $(T,o)$, with law $\rho$, by removing all edges adjacent to a vertex with degree higher than $n$. We may define $\rho^{(n)}$ as the the law of $(F_n(o),o)$, the connected component of the root. It is easy to check that $\rho^{(n)}$ is a unimodular measure. We define $Q_n = \rho^{(n)}_h$, the law of its $h$-neighborhood. By construction, $\UGW_h (Q_n) \in B$ and $Q_n$ converges weakly to $P$. We deduce that $\UGW_h(Q_n) \weak \UGW_h (P)$ and $\UGW_h(P) \in \cS$. 

Moreover, if $\r\in\cP_{u} ( \cT^*)$, then $\UGW_h(\r_h)\weak \r$, as $h\to\infty$. From what precedes $\UGW_h(\r_h) \in \cS$. Therefore, $\rho \in \cS$ and $\cS = \cP_{u} ( \cT^*)$.

\section{Graph counting and Entropy}
\label{EN}

In this section we prove Theorem \ref{th:entropy1} and Theorem \ref{th:entropy2}. The strategy will be as follows. We
first establish the cases $\Sigma (\rho) = -\infty$ in Theorem \ref{th:entropy1}.  We then prove Theorem \ref{th:entropy2}, and later complete the proof of Theorem \ref{th:entropy1}.
In what follows, we fix $d>0$ and a sequence $m=m(n)$ such that $m/n\to d/2$ as $n\to\infty$.

\subsection{
Measures with $\Si(\r)=-\infty$}\label{sec:enpre}

Since unimodular measures form a closed subset of $\cP(\cG^*)$, if $\r\notin\cP_u(\cG^*)$,
then for some $\veps>0$ one has $B(\r,\veps)\subset \cP(\cG^*)\setminus \cP_u(\cG^*)$. Since $U(G_n)\in\cP_u(\cG^*)$, then  
$|\cG_{n,m}(\r,\veps)|=0$.  Therefore $\overline\Si(\r)=-\infty$ for all $\r\notin\cP_u(\cG^*)$. 

Next, we show that $\overline \Sigma (\rho) = -\infty$ whenever $\dE_{\rho} \deg(o) \ne d$.
We start with the case $\dE_\rho \deg( o ) > d$.
Let $\rho \in \cP_{u} ( \cG^*)$ and assume that $\overline \Sigma (\rho) > - \infty$. Then, by an extraction argument, there must exist a sequence of graphs $G_{n} \in \cG_{n,m}$ such that $U(G_{n}) \weak \rho$. Weak convergence then implies that $\dE_{U(G_{n})} [\deg(o)\wedge t] \to  \dE_{\rho} [\deg(o)\wedge t]$ for any $t>0$, and therefore, letting $n\to\infty$ and then $t\to\infty$:
$$
\LIMINF_{n\to\infty} \dE_{U(G_{n})} \deg(o) \geq  \dE_{\rho} \deg(o).
$$
On the other hand, by construction,
\begin{equation}\label{degd}
\dE_{U(G_n)} \deg (o)  = \frac 1 n \sum_{v= 1} ^ n \deg_{G_n} (v) = \frac {2m }{n} = d + o(1). 
\end{equation}
We thus have checked that if $\dE_\rho \deg( o ) > d$, then
$\overline \Sigma(\rho)= -\infty$. 

The case $\dE_\rho \deg ( o , G) < d$ requires a little more care. 
\begin{lemma}
If $\dE_\rho \deg( o) < d$, then $\overline \Sigma(\rho)= -\infty$.
\end{lemma}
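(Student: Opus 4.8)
The plan is to prove the contrapositive: assuming $\overline\Sigma(\rho)>-\infty$, we will show $\dE_\rho\deg(o)\geq d$ (combined with the previous case this forces equality, but here we only need the lower bound). As in the case $\dE_\rho\deg(o)>d$, the hypothesis $\overline\Sigma(\rho)>-\infty$ produces, by an extraction argument, a sequence $G_n\in\cG_{n,m}$ with $U(G_n)\weak\rho$. The difficulty is that weak convergence of $U(G_n)$ only gives $\liminf_n \dE_{U(G_n)}\deg(o)\geq \dE_\rho\deg(o)$, which is the \emph{wrong} direction here: since $\dE_{U(G_n)}\deg(o)=2m/n\to d$, this inequality is automatically consistent with $\dE_\rho\deg(o)<d$ and gives no contradiction. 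So the real content is to rule out loss of mass, i.e.\ to show that the degree sequences of $G_n$ cannot have an escaping-to-infinity part that carries a positive fraction of the total degree $2m$.

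The key idea is to argue that a graph in $\cG_{n,m}$ whose empirical neighborhood distribution is close to $\rho$ but which concentrates a positive fraction $\delta>0$ of its edge-endpoints on high-degree vertices is \emph{rare} — so rare that $\overline\Sigma(\rho)=-\infty$ after all. More precisely: fix $\veps>0$ small and suppose $\overline\Sigma(\rho,\veps)>-\infty$. Pick $t$ large enough that $\dE_\rho[\deg(o)\mathbf 1(\deg(o)\le t)] > \dE_\rho\deg(o) - \veps'$; since $U(G_n)\in B(\rho,\veps)$ for infinitely many $n$ along a subsequence realizing $\overline\Sigma(\rho,\veps)$, and $g\mapsto \deg(o)\wedge t$ is bounded and local (hence continuous on $\cG^*$), for those $n$ we have $\dE_{U(G_n)}[\deg(o)\wedge t]\leq \dE_\rho\deg(o)+o(1)$. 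Write $n_{>t}(G_n)$ for the number of vertices of degree $>t$ in $G_n$; then the number of edge-endpoints at such vertices is at least $2m - n(\dE_\rho\deg(o)+o(1))$, so if $\dE_\rho\deg(o)<d-\eta$ for some $\eta>0$ this is $\geq \eta n/2$ for large $n$. I would then bound the number of graphs in $\cG_{n,m}$ with this property: choosing which $k\le 2m/t$ vertices are the high-degree ones costs $\binom{n}{k}$, and then distributing edges is at most $\binom{n^2/2}{m}$ times a correction. The cleanest route is to use the crude bound
$$
\bigl|\{G\in\cG_{n,m}:\ \exists\, S\subset[n],\ |S|\le 2m/t,\ \deg_G(S)\ge \eta n/2\}\bigr|
\ \le\ \sum_{k\le 2m/t}\binom nk \binom{k n}{\lceil\eta n/4\rceil}\binom{n^2/2}{m}\big/ \binom{\lceil \eta n/4\rceil}{\cdot}\cdots
$$
— i.e.\ force $\eta n/4$ of the $m$ edges to have an endpoint in $S$, which costs a factor that is exponentially small in $n$ relative to $|\cG_{n,m}|$ once $t$ is large (the entropy cost of pinning a positive fraction of edges to a vanishingly small vertex set tends to $-\infty$ as $t\to\infty$, uniformly in the sequence $m$). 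Taking logarithms, subtracting $m\log n$, and using $\log|\cG_{n,m}| = m\log n + s(d) n + o(n)$ from \eqref{eq:Gnmlog}, this shows that for $t$ large enough the contribution of such graphs to $\frac1n(\log|\cG_{n,m}(\rho,\veps)|-m\log n)$ is $-\infty$; since \emph{every} graph with $U(G_n)\in B(\rho,\veps)$ and $\dE_\rho\deg(o)<d-\eta$ is eventually of this type, we get $\overline\Sigma(\rho,\veps)=-\infty$, hence $\overline\Sigma(\rho)=-\infty$, a contradiction.

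The main obstacle I anticipate is making the combinatorial counting bound clean and genuinely uniform in the sequence $m=m(n)$ (and checking that the "entropy penalty for concentrating a positive fraction of edge-endpoints on $o(n)$ vertices" really does diverge as $t\to\infty$, rather than merely being negative). An alternative, possibly slicker, route — and one more in the spirit of the rest of the paper — is to invoke the tightness Lemma \ref{le:tightLWC}: if a positive fraction of edge-endpoints escapes to infinity, then some suitable average-degree / uniform-integrability bound of the form \eqref{eq:tightLWC} fails for $G_n$, which one can leverage together with $U(G_n)\weak\rho$ and $2m/n\to d$ to derive the contradiction directly, bypassing explicit graph counting. Either way, the crux is the quantitative statement that edge-mass cannot leak onto a sub-linear set of vertices without a super-exponential entropy cost, and I would spend most of the effort there.
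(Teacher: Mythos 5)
Your plan is correct and is essentially the paper's own argument: truncate the degree at a level $t_n\to\infty$, observe that the set $A_n$ of vertices of degree $>t_n$ has size at most $\alpha_n n$ with $\alpha_n=2d/t_n\to0$ (Markov) yet carries at least $\delta n/2$ edge-endpoints where $\delta=d-\dE_\rho\deg(o)$, take a union bound over the $\binom{n}{\alpha_n n}$ choices of this set (a negligible $e^{o(n)}$ factor), and show that pinning a linear number of edge-endpoints on a sublinear vertex set has a cost whose exponential rate diverges as $t\to\infty$. The divergence you flag as the main worry does hold, and the paper settles it exactly where you hesitate: it dominates $\deg_{G_n}(S)$ stochastically by $2N$ with $N\sim\BIN(\alpha_n n^2,\,2d/n)$ and applies a Chernoff bound with parameter $x=-\frac14\log\alpha_n$, giving $\frac1n\log\dP(\deg_{G_n}(S)\ge\delta n/2)\le-\delta x+2d\alpha_n e^{4x}\to-\infty$, which is equivalent to (and cleaner than) your direct binomial-coefficient count.
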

\begin{proof}
From \eqref{eq:Gnmlog}, it is sufficient to prove that, for any sequence $\veps_n  \to 0$,
\begin{equation}\label{eq:domo}
\lim_{n\to\infty}
\frac1n \log  \dP(U(G_n)\in B( \rho , \veps_n)) 
=-\infty,
\end{equation}
where $G_n$ is a uniform random graph in $\cG_{n,m}$. Define $d' = \dE_\r \deg( o)$ and $\delta = d- d' > 0$. 
If  $U(G_n)\in B(\r,\veps_n)$ for all $n$, then for any $t>0$: 
$$\dE_{U(G_n)} [\deg (o)\IND(\deg(o)\leq t)]\to \dE_\r[ \deg( o)\IND(\deg(o)\leq t)].$$ 
Therefore, for some sequence $t_n \to \infty$, one has
$$
\frac 1 n \sum_{v \in [n]}   \deg_{G_n}(v)  \IND ( \deg_{G_n}(v)  \leq t_n ) \to d'.
$$
Define $A_n = \{ i \in [n] :  \deg_{G_n}(i) >  t_n \}$. Using \eqref{degd} one has 
$$
\frac 1 n \sum_{v \in A_n} \deg_{G_n}(v) \to \delta. 
$$
On the other hand, by Markov's inequality and \eqref{degd}, the cardinality of $A_n$ satisfies $|A_n|\leq\a_n n$, where $ \a_n= 2d/t_n$ for all $n$  large enough.
Thus $U(G_n)\in B( \rho , \veps_n)$ implies that there exists $S\subset [n]$ with  $|S|\leq \a_n n$
such that $\deg_{G_n}(S):=\sum_{v \in S} \deg_{G_n}(v)$ is larger than $\d n/2$ for all $n$ large enough.
By the union bound one has
$$
\dP(U(G_n)\in B( \rho , \veps_n))\leq {n \choose  \a_n n} \dP \PAR{
\deg_{G_n}\left([\a_n n]\right) \geq \delta n /2 },
$$
where $[\a_n n]=\{1,\dots,\a_n n\}$. 
Next, we check that 
\begin{equation}\label{deg1o}
\lim_{n\to\infty}
\frac1n\log \dP \PAR{
\deg_{G_n}\left([\a_n n]\right) \geq \delta n /2
}= - \infty.
\end{equation}
To this end, observe that $\deg_{G_n}([\a_n n])$ is stochastically dominated by $2N$, where $N$ denotes the binomial random variable $N=\BINOM{\a_n n^2}{2d/n}$. Indeed, the number of potential edges incident to the set $[\a_n n]$ is trivially bounded by $ \a_n n^2$ and each potential edge can be included 
in $G_n$ recursively, where at each step the probability of inclusion is bounded above by 
$$
\frac{m}{\binom{n}{2} - \a_n n^2}\leq \frac{2d}n,
$$
if $n$ is large enough, where we use $m/n\to d/2$ and $\a_n\to 0$. Therefore, from Chernov's bound,
for any $x> 0$, 
\begin{align*}
\dP &\PAR{
\deg_{G_n}\left([\a_n n]\right) \geq \delta n /2
}
\leq  
\dP \PAR{ 2 N \geq \delta n /2 } \leq e^{-\delta n x   }\dE[e^{4xN}] \\
&\;\; \quad= e^{-\delta n x } \PAR{ 1 +  (2d/n)  ( e^{4x} - 1)  }^{\a_n n^2} 
\leq e^{- \delta n x   +   2d \a_n n e^{4x} }.
\end{align*}
Taking e.g.\ $x=-\frac14\log {\a_n} $, one obtains \eqref{deg1o}.
Moreover, Stirling's formula implies 
$$
\frac 1 n \log {n \choose  n\a_n}  \sim - \a_n \log \a_n \to 0.
$$
This implies \eqref{eq:domo}. 
\end{proof}

We turn to the claim that $\Si(\r)=-\infty$ whenever $\r$ is not supported on trees. 
\begin{lemma}
Suppose $\r\in\cP_u(\cG^*)$ is such that $\r(\cT^*)<1$. Then there exists $\veps_0 > 0$ such that if $0 < \veps < \veps_0$, then
\begin{equation}\label{eq:kapparho}
\limsup_{n\to\infty} \; \frac {\log | \cG_{n,m} ( \rho, \veps) | }{ n \log n} < \frac d 2.  
\end{equation}
In particular, 
$\overline \Sigma ( \rho, \veps) = - \infty $, for any $0 < \veps < \veps_0$.
\end{lemma}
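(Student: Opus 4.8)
The plan is to show that every finite graph $G$ with $U(G)\in B(\rho,\veps)$ (for $\veps$ small and $n:=|V(G)|$ large) must contain a \emph{linear} number of pairwise vertex-disjoint cycles of \emph{bounded} length, and then to count such graphs: the gain comes from the fact that a family of $t=\Theta(n)$ disjoint bounded-length cycles costs, relative to $|\cG_{n,m}|$, a factor of order $1/t!$, i.e.\ $n^{-cn}$ up to $e^{O(n)}$, and this is exactly an $n\log n$-scale loss.

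For the structural step, first note that since $\rho(\cT^*)<1$ and $\{g:g\text{ is not a tree}\}=\bigcup_h\{g:g_h\text{ is not a tree}\}$ with these events increasing in $h$, there is $h\in\dN$ with $\b:=\rho(g_h\text{ is not a tree})>0$; fix it and set $k=2h+1$. A connected graph of radius $\le h$ that is not a tree contains a cycle of length $\le k$ (a fundamental cycle of a BFS spanning tree of the ball), so any vertex $w$ with $(G,w)_h$ not a tree lies within distance $h$ of such a cycle, along a path inside $(G,w)_h$. Because $g\mapsto\IND(g_h\text{ is not a tree})$ is locally constant, hence bounded and continuous on $\cG^*$, for $\veps$ small the set $W=\{v:(G,v)_h\text{ not a tree}\}$ has $|W|>\b n/2$. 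To handle possibly unbounded degrees I will use that $\rho$ lives on locally finite graphs: $\max\{\deg(u):D(o,u)\le 2h\}$ is a.s.\ finite under $\rho$, so I may fix $\Theta$ with $\rho(\max_{D(o,u)\le 2h}\deg(u)>\Theta)<\b/8$ and (this event is again locally constant) shrink $\veps$ so that $W':=\{v\in W:\deg(u)\le\Theta\text{ whenever }D(v,u)\le 2h\}$ has $|W'|>\b n/4$. Call a cycle of $G$ \emph{good} if it has length $\le k$ and all its vertices have degree $\le\Theta$; then every $w\in W'$ lies within distance $2h$ of a good cycle, along a path through vertices of degree $\le\Theta$.

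Now apply a short-cycle Erd\H{o}s--P\'osa argument: let $\cF=\{C_1,\dots,C_t\}$ be a maximal family of pairwise vertex-disjoint good cycles and $X=\bigcup_iV(C_i)$, so $|X|\le kt$. Maximality forces the good cycle attached to each $w\in W'$ to meet $X$, hence $w$ is joined to $X$ by a path of length $\le 2h$ through vertices of degree $\le\Theta$; counting such paths backwards from $X$ gives $|W'|\le|X|\,(2h+1)\,\Theta^{2h}\le kt(2h+1)\Theta^{2h}$, whence $t\ge c_1n$ with $c_1:=\b/(4k(2h+1)\Theta^{2h})>0$. I expect this structural claim --- producing $\Theta(n)$ \emph{disjoint} cycles of \emph{bounded} length, and in particular the passage to bounded degree via local finiteness of $\rho$ --- to be the main point of the proof; the unimodularity of $\rho$ is not actually needed here.

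Finally the counting. Fix $t=\lceil c_1n\rceil$ and let $\cS=\{3,\dots,k\}$. A graph in $\cG_{n,m}$ containing $t$ vertex-disjoint cycles with lengths in $\cS$ is specified by choosing those cycles and then the remaining $m-L$ edges, where $L$ is their total length; for fixed multiplicities $(m_\ell)_{\ell\in\cS}$ with $\sum_\ell m_\ell=t$ there are at most $(n)_L/\prod_\ell m_\ell!\le n^L|\cS|^t/t!$ ways to place the cycles (the $1/t!$ being the source of the gain), and at most $\binom{\binom n2-L}{m-L}\le|\cG_{n,m}|\,(C_0/n)^L$ ways to choose the rest, using $3t\le L\le kt=O(n)$ and $m/n\to d/2$. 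Multiplying, $n^L(C_0/n)^L=C_0^L=e^{O(n)}$, and summing over the $\le(t+1)^{|\cS|}$ choices of $(m_\ell)$,
\[|\cG_{n,m}(\rho,\veps)|\ \le\ \frac{e^{O(n)}}{t!}\,|\cG_{n,m}|.\]
Taking logarithms, Stirling ($\log t!=c_1n\log n+O(n)$) and \eqref{eq:Gnmlog} give $\log|\cG_{n,m}(\rho,\veps)|\le m\log n-c_1n\log n+O(n)$, so $\limsup_n\frac{\log|\cG_{n,m}(\rho,\veps)|}{n\log n}\le\frac d2-c_1<\frac d2$, which is \eqref{eq:kapparho}; and since $\frac mn-\frac d2\to0$ this also yields $\overline\Sigma(\rho,\veps)=\limsup_n\frac{\log|\cG_{n,m}(\rho,\veps)|-m\log n}{n}=-\infty$. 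The only estimate needing a little care is $\binom{\binom n2-L}{m-L}\le|\cG_{n,m}|(C_0/n)^L$, which is routine once $L=O(n)$.
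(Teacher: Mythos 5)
Your proof is correct, and its second half (counting) is essentially the paper's argument: once a graph in $\cG_{n,m}(\rho,\veps)$ is forced to contain $\Theta(n)$ pairwise disjoint cycles of bounded length, the factor $1/t!$ in the number of ways to place them produces the $n\log n$-scale deficit, exactly as in the paper (which places $\lceil\delta n\rceil$ disjoint cycles of a single length $\ell$ and compares with $|\cG_{n,m-\ell\lceil\delta n\rceil}|$ via \eqref{eq:Gnmlog}; your direct ratio bound $\binom{\binom n2-L}{m-L}\le\binom{\binom n2}{m}(C_0/n)^L$ and the sum over length multiplicities are harmless variants).

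Where you genuinely diverge is the structural step. The paper first fixes a single length $\ell$ and degree bound $k$ with $\dP_\rho\big((G,o)_t\text{ contains a }(k,\ell)\text{-cycle}\big)>0$, and then uses unimodularity (the mass-transport identity \eqref{eq:defunimod} applied to $f(G,o,v)=\IND(\dist(o,v)\le t,\ v\text{ on a }(k,\ell)\text{-cycle})$) to conclude that a positive fraction of vertices lie \emph{on} such cycles, before extracting a linear disjoint family. You instead work directly with the clopen event ``$g_h$ is not a tree'', add a second clopen truncation (degrees $\le\Theta$ in the $2h$-ball, available because $\rho$ is supported on locally finite graphs), and run the maximal-disjoint-family covering count on the set $W'$; the packing estimate $|W'|\le |X|(2h+1)\Theta^{2h}$ plays the role that the bounded cycle-degrees play in the paper. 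The net effect is that unimodularity is indeed not used, so your argument proves the statement for arbitrary $\rho\in\cP(\cG^*)$ with $\rho(\cT^*)<1$, at the modest cost of the extra $\Theta$-truncation and of carrying all cycle lengths $\le 2h+1$ rather than one fixed length. Both routes are sound; the paper's is shorter given that unimodularity is already assumed, yours is more elementary and slightly more general.
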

\begin{proof}
Once \eqref{eq:kapparho} is established, the last assertion follows from \eqref{eq:Gnmlog} and $m/n\to d/2$. 
Let us prove \eqref{eq:kapparho}. By assumption, there exist integers  $t$ and $\ell \geq 3$ such that 
$$
\dP_{\rho} \PAR{ (G,o)_t \hbox{ contains  a cycle of length $\ell$}} > 0.
$$
For integer $k \geq 2$, let us say that a cycle is a $(k, \ell)$-cycle if its length is $\ell$ and the degree of all vertices on the cycle is bounded by $k$. Since $(G,o)_t$ is $\rho$-a.s. locally finite, there exists an integer $k \geq 2$ such that 
$$
\dP_{\rho} \PAR{ (G,o)_t \hbox{ contains  a $(k,\ell)$-cycle}} > 0.
$$

Consider the function $f (G , o, v) =  \IND ( \dist_G ( o, v) \leq t\,  ; \,v \hbox{ is in a $(k,\ell)$-cycle})$. From what precedes 
$$
\dE_\rho \sum_{v \in V(G)} f ( G , o , v) > 0.  
$$
Since $\r$ is  unimodular, equation \eqref{eq:defunimod} applied to  $f$ implies that for some $\eta>0$,
$$
\dP_{\rho} \PAR{ o \hbox{ is in a $(k,\ell)$-cycle}} > 2\eta.
$$
Thus, if $G \in \cG_{n,m} ( \rho, \veps)$ and $\veps$ is small enough, 
$$
\dP_{U(G)} \PAR{ o \hbox{ is in a $(k,\ell)$-cycle}} > \eta.
$$
By definition of $U(G)$, this implies that the number of vertices in a $(k,\ell)$-cycle in $G$ is at least $\eta n$. 
Since degrees are bounded by $k$ in a $(k,\ell)$-cycle, we deduce that $G$ contains at least $\delta n$ mutually disjoint cycles of length $\ell$, for some $\delta  = \d(\ell, k )>0$.  Therefore, 
$$
\ABS{ \cG_{n,m} ( \rho, \veps) } \leq C_{n,\ell} \ABS{\cG_{n, m - \ell \lceil \delta n \rceil } },
$$
where $C_{n,\ell}$ is the number of ways to place $ \lceil \delta n\rceil $ disjoint cycles of length $\ell$ on $n$ vertices. One has
$$
C_{n,\ell} \leq \frac{ ( n)_{ \ell \lceil \delta n\rceil} }{ \lceil \delta n\rceil !} \leq \frac{ n^{ \ell \lceil \delta n\rceil}  }{ \lceil \delta n\rceil ! }.
$$
Indeed $ ( n)_{ \ell \lceil \delta n\rceil} $ counts the number of ordered choices of the $\ell$ vertices for each of $\lceil \delta n\rceil$ labeled cycles (the first $\ell$ vertices define the first cycle and so on), while division by $ \lceil \delta n\rceil !$ is used to remove cycle labels. 
By Stirling's formula,
$$
\log (C_{n,\ell}) \leq  \ell \delta n  \log n  - \delta n \log n + o ( n \log n ). 
$$
On the other hand, from \eqref{eq:Gnmlog}, we have
$$
\log  \ABS{ \cG_{n, m - \ell \lceil \delta n \rceil} }  = \Big( \frac d 2 - \ell \delta\Big)  n \log n + o ( n \log n ).
$$
So finally, 
$$
\log \ABS{ \cG_{n,m} ( \rho, \veps) } \leq  \frac d 2   n \log n  - \delta n \log n + o ( n \log n ).
$$
This proves \eqref{eq:kapparho}. 
\end{proof}

\subsection{Proof of Theorem \ref{th:entropy2} and Theorem \ref{th:entropy1}}\label{pfth:entropy2}
Notice that if $P\in\cP_h$, then $J_h(P)$ is a well defined extended real number in $[-\infty,\infty)$. The fact that $J_h(P)\leq s(d)$ follows from Proposition \ref{pro:enh} below and  from the upper bound $\overline\Sigma(\r)\leq s(d)$, cf.\ \eqref{eq:Gnmlog}. 

As before, we fix $d>0$ and an integer  sequence $m=m(n)$ such that $m/n\to d/2$ as $n\to\infty$. We start with three preliminary lemmas. 

\begin{lemma}\label{le:lscSi}
The function $\rho \mapsto \underline \Si(\rho)$ on $\cP_u(\cG_*)$ is upper semi-continuous.
\end{lemma}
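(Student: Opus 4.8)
The plan is to read the result off directly from the definition of $\underline\Si$, using only two elementary monotonicity facts: that $\veps\mapsto\underline\Si(\rho,\veps)$ is non-decreasing for each fixed $\rho$, and that $A\subseteq A'$ implies $|A|\le|A'|$, applied to the sets $\cG_{n,m}(\cdot,\cdot)$. No compactness or unimodularity will be used; indeed the argument goes through verbatim on all of $\cP(\cG^*)$, and with $\overline\Si$ in place of $\underline\Si$ as well, which is why the same conclusion holds for $\Si$ wherever it is defined.

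Concretely, I would fix $\rho\in\cP_{u}(\cG^*)$ and a sequence $\rho_k\to\rho$ in the weak topology, and aim for $\limsup_{k\to\infty}\underline\Si(\rho_k)\le\underline\Si(\rho)$. Fix $\veps>0$. Since the weak topology on $\cP(\cG^*)$ is metrized by the L\'evy distance $d$, there is $k_0=k_0(\veps)$ with $d(\rho_k,\rho)<\veps/2$ for all $k\ge k_0$; the triangle inequality then gives $B(\rho_k,\veps/2)\subseteq B(\rho,\veps)$, hence $\cG_{n,m}(\rho_k,\veps/2)\subseteq\cG_{n,m}(\rho,\veps)$ for every $n$. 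Taking cardinalities, subtracting $m\log n$, dividing by $n$, and passing to $\liminf_{n\to\infty}$ yields $\underline\Si(\rho_k,\veps/2)\le\underline\Si(\rho,\veps)$ for all $k\ge k_0$ (with the convention $\log 0=-\infty$, under which all inequalities remain valid even when the counting sets are empty).

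Finally, since $\underline\Si(\rho_k)=\lim_{\veps'\downarrow0}\underline\Si(\rho_k,\veps')\le\underline\Si(\rho_k,\veps/2)$ by monotonicity in the radius, one obtains $\underline\Si(\rho_k)\le\underline\Si(\rho,\veps)$ for all $k\ge k_0$; taking $\limsup_{k\to\infty}$ and then letting $\veps\downarrow0$ gives $\limsup_{k\to\infty}\underline\Si(\rho_k)\le\underline\Si(\rho)$, i.e.\ upper semi-continuity. There is essentially no obstacle in this argument: the only point that requires a moment's care is the bookkeeping of radii — one must compare $\underline\Si(\rho_k)$ to $\underline\Si(\rho_k,\veps/2)$, i.e.\ pass to the ball of radius $\veps/2$ around $\rho_k$, so that the inclusion of balls (and therefore the inequality between graph counts) genuinely holds.
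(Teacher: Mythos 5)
Your argument is correct and is essentially the paper's own proof: the key step in both is the inclusion $B(\rho_k,\veps/2)\subseteq B(\rho,\veps)$ for $k$ large, which gives $\underline\Si(\rho_k)\leq\underline\Si(\rho_k,\veps/2)\leq\underline\Si(\rho,\veps)$, followed by letting $k\to\infty$ and then $\veps\downarrow 0$. Your version just spells out the metric/triangle-inequality and counting details that the paper leaves implicit.
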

\begin{proof}
Consider a sequence $(\rho_k)$ converging to $\rho$. We should check that $\underline \Si (\rho) \geq \limsup \underline \Si (\rho_k)$. Observe that for any $\veps >0$, for all $k$ large enough, $B(\rho , \veps) \supset B(\rho_k , \veps/2)$. We get for $k$ large enough,
$$
\underline \Si( \rho,\veps) \geq \underline \Si( \rho_k,\veps/2) \geq \underline \Si( \rho_k).  
$$ 
Letting $k$ tend to infinity and then $\veps$ to $0$, we obtain the claim. \end{proof}

We will also need two general lemmas.

\begin{lemma}\label{le:HwPbis}
Let $P=\{p_x,\,x\in\cX\}$ be a probability measure on a discrete space $\cX$ such that $H(P) < \infty$. Let $(\ell_x)_{x \in \cX}$ be  a sequence with $\ell_x \in\dZ_+$, $x\in\cX$, such that $ \sum_{x} p_x \ell_x \log \ell_x < \infty$. Then 
$ - \sum_{x} p_x \ell_x \log  p_x < \infty$.
\end{lemma}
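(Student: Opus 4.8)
The plan is to prove Lemma \ref{le:HwPbis} by a standard entropy-comparison argument: I want to bound $-\sum_x p_x \ell_x \log p_x$ from above by dominating $-\log p_x$ by $\ell_x$ plus a term controlled by a reference probability measure, so that finiteness follows from the two hypotheses $H(P)<\infty$ and $\sum_x p_x \ell_x \log \ell_x < \infty$.

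First I would dispose of the trivial cases. Since $\ell_x \in \dZ_+$, the sum $-\sum_x p_x \ell_x \log p_x$ is over the set $\cX' = \{x : \ell_x \geq 1\}$ (terms with $\ell_x = 0$ contribute nothing), and moreover every term is nonnegative since $p_x \leq 1$. So I only need an upper bound. The key inequality is the following comparison. Consider, on $\cX'$, the sub-probability weights $q_x = p_x \ell_x / (1+L)$ where $L := \sum_x p_x \ell_x$; note $L < \infty$ because $\sum_x p_x \ell_x \log \ell_x < \infty$ forces $\ell_x < \infty$ and, on $\{\ell_x \geq 2\}$, $\ell_x \leq \ell_x \log_2 \ell_x$, while the contribution of $\{\ell_x = 1\}$ is at most $1$. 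Then $\sum_{x \in \cX'} q_x \leq 1$, so by the Gibbs inequality (nonnegativity of relative entropy, comparing the probability measure $p_x \ell_x / L$ on $\cX'$ with the sub-probability $q_x$) one gets
\[
\sum_{x \in \cX'} \frac{p_x \ell_x}{L} \log \frac{p_x \ell_x / L}{q_x} \geq 0,
\]
which, after expanding $q_x$ and rearranging, yields a bound of the form $-\sum_{x} p_x \ell_x \log p_x \leq \sum_x p_x \ell_x \log \ell_x + L \log(1+L) - L\log L + (\text{a finite constant})$, hence finite.

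A cleaner route avoiding the bookkeeping: use the elementary pointwise bound $-t \log s \leq t\log t + s/e$ for $s,t > 0$ (equivalently $-\log s \leq \log(1/(es)) + 1 \le \ldots$; the precise form I'd invoke is $xy \le x\log x - x + e^{y}$, i.e. Young's inequality for $\exp$/$\log$). Applying it with the roles arranged so that the ``bad'' factor $-\log p_x$ is split: write $-p_x \ell_x \log p_x = \ell_x \cdot \big(-p_x \log p_x\big)$ and use that $x \mapsto -x\log x$ is bounded by... — actually the slickest is: for each $x$, either $p_x \geq \ell_x^{-\ell_x}$, in which case $-\log p_x \leq \ell_x \log \ell_x$ and so $-p_x \ell_x \log p_x \leq p_x \ell_x^2 \log \ell_x$ — not quite summable. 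So I will commit to the Gibbs-inequality approach above, which is the honest one and is exactly the relative-entropy trick the authors use elsewhere (cf. Remark \ref{altrem}).

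The main obstacle is purely a matter of getting the normalizing constants right so that the comparison measure is a genuine sub-probability and the resulting inequality isolates $-\sum_x p_x\ell_x\log p_x$ cleanly; there is no conceptual difficulty, since both error terms that appear — $\sum_x p_x \ell_x \log \ell_x$ and a function of $L = \sum_x p_x \ell_x = \dE_P[\ell]$ — are finite by hypothesis (the finiteness of $L$ itself being an easy consequence of $\sum_x p_x \ell_x \log \ell_x < \infty$, using $\ell \leq 1 + \ell\log\ell$ for integers $\ell \geq 1$). I would also remark that $H(P) < \infty$ is only needed to the extent that it guarantees $\sum_x p_x \log p_x$ converges absolutely, so no cancellation issues arise when splitting sums.
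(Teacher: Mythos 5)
Your preliminary reductions are fine (discarding the $\ell_x=0$ terms, positivity, and $L:=\sum_x p_x\ell_x<\infty$ from $\ell\leq 1+\ell\log\ell$), but the step you commit to is vacuous. The comparison weights $q_x=p_x\ell_x/(1+L)$ are a constant multiple of the measure $\mu_x=p_x\ell_x/L$ you compare them with, so $\log(\mu_x/q_x)=\log\frac{1+L}{L}$ for every $x$: the Gibbs inequality you invoke collapses to the triviality $\log\frac{1+L}{L}\geq 0$, the $\log p_x$ terms cancel identically, and no amount of "expanding and rearranging" can produce a bound on $-\sum_x p_x\ell_x\log p_x$, which simply never appears. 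One can also see that the quoted output cannot be right: its right-hand side involves $P$ only through $L$ and $\sum_x p_x\ell_x\log\ell_x$, so taking $\ell_x\equiv 1$ it would assert $H(P)\leq \log 2+\mathrm{const}$ for every probability measure, which is absurd. Since for $\ell_x\equiv 1$ the conclusion of the lemma is exactly $H(P)<\infty$, the hypothesis $H(P)<\infty$ must enter any correct proof in an essential way; your closing remark that it is only needed to avoid cancellation issues is a symptom of the problem. Structurally, writing $\mu_x\propto p_x\ell_x$, the target quantity is $L\,(H(\mu)+H(\mu\,|\,P))$: the hypothesis $\sum_x p_x\ell_x\log\ell_x<\infty$ controls the relative-entropy part, but nonnegativity of a relative entropy only lower-bounds cross-entropies and gives no upper bound on $H(\mu)$, which is the actual content of the statement. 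Your fallback pointwise inequalities are abandoned in your own text, so they do not rescue the argument.

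For comparison, the paper does not argue via a Gibbs inequality at all: it treats $-\sum_x p_x\ell_x\log p_x$ as a linear functional of $(\ell_x)$ and maximizes it under the constraints $\ell_x\geq 1$ and $\sum_x p_x\ell_x\log\ell_x=c$, using a Lagrange multiplier $\lambda$. Where the constraint $\ell_x\geq1$ is not saturated, stationarity gives $-\log p_x=\lambda(1+\log\ell_x)$, and splitting off the set $\{\ell_x=1\}$, whose contribution is controlled by $H(P)$, yields the bound $H(P)+\lambda\sum_x p_x\ell_x+\lambda\sum_x p_x\ell_x\log\ell_x$; note how $H(P)$ enters irreducibly, and how the bound ties $-\log p_x$ to $\log\ell_x$ pointwise on the non-saturated set. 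If you want to keep an entropy-comparison phrasing, you would need an argument that upper-bounds $H(\mu)$ itself; neither your choice of $q$ nor any rescaling of $\mu$ can do that.
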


\bp{}
We can assume without loss of generality that $p_x \ne 0$ for all $x \in \cX$. We look for the sequence $(\ell_x)$ which maximizes  the linear function $ - \sum_{x} p_x \ell_x \log  p_x < \infty$ under the constraints $\ell_x \geq 1$ and $\sum_{x} p_x \ell_x \log \ell_x  = c$.  If the constraint $\ell_x \geq 1$ is not saturated, taking derivative, we find $0 = - p_x \log p_x - \lambda p_x - \lambda p_x \log \ell_x$ where $\lambda$ is the Lagrange mutliplier associated to the constraint $\sum_{x} p_x \ell_x \log \ell_x  = c$. We get $\ell_x = e^{-1} p_x^{-1/\lambda}$. Let $\cX_1$ be the set of $x$ such that $\ell_x =1$. We thus find
\begin{align*}
- \sum_{x} p_x \ell_x \log  p_x & =   - \sum_{x \in \cX_1} p_x \log p_x - \sum_{x \notin \cX_1}  p_x \ell_x \log  p_x \\
& \leq  H(P) -  \sum_{x \notin \cX_1}  p_x \ell_x \log e^{-\lambda} \ell_x^{-\lambda}\\
& \leq  H(P)   + \lambda \sum_x p_x \ell_x  + \lambda \sum_{x}  p_x \ell_x \log \ell_x.
\end{align*}
The conclusion follows.  \ep

\begin{lemma}\label{le:contH} 
Let $p,\kappa$ be integers and $\cA_\kappa \subset \cP( \dZ^p)$ the set of probability measures $P$ on $\dZ^p$ such that $\dE \sum_{i=1}^p |X_i |  \leq \kappa$ where $ X = (X_1, \cdots, X_p)$ has law $P$. Then the map $P \mapsto H(P)$ is continuous on $\cA_\kappa$ for the weak topology. 
\end{lemma}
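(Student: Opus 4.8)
The plan is to prove that the entropy functional is continuous on the set $\cA_\kappa$ of laws on $\dZ^p$ with uniformly bounded first absolute moment, by exhibiting $H$ as a monotone limit of continuous functionals plus a uniformly small error, and then invoking a standard "monotone + equi-small tails" argument.

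First I would reduce to a one-dimensional tail control. For $P \in \cA_\kappa$ with $X = (X_1,\dots,X_p)$ of law $P$, set $\|x\|_1 = \sum_{i=1}^p|x_i|$. The constraint $\dE\|X\|_1 \leq \kappa$ gives, by Markov's inequality, $P(\|x\|_1 \geq N) \leq \kappa/N$ uniformly over $P \in \cA_\kappa$. The number of lattice points in $\{x \in \dZ^p : \|x\|_1 < N\}$ is some finite $V_N$, and the number with $\|x\|_1 = k$ is at most $c_p k^{p-1}$ for a dimensional constant $c_p$. The key observation is the elementary bound: if $Q$ is a sub-probability measure on a finite set of cardinality $V$ with total mass $q$, then $-\sum Q(x)\log Q(x) \leq q\log V - q\log q$ (maximum at the uniform measure). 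Applying this to the restriction of $P$ to each shell $\{\|x\|_1 = k\}$, whose mass $q_k := P(\|x\|_1 = k)$ satisfies $\sum_{k\geq N} q_k \leq \kappa/N$, and using $\sum_{k \geq N} q_k \log(c_p k^{p-1}) \leq$ (a convergent-type tail that one controls via $\sum_k q_k \log k \leq \log \dE \|X\|_1^{1} \leq \dots$, or more simply by splitting $k \leq e^{\|x\|_1}$ and using $\dE\|X\|_1 \leq \kappa$), one gets that the contribution to $H(P)$ from $\{\|x\|_1 \geq N\}$, namely $H_{\geq N}(P) := -\sum_{\|x\|_1 \geq N} P(x)\log P(x)$, tends to $0$ as $N \to \infty$, \emph{uniformly} in $P \in \cA_\kappa$. (Here one uses $-t\log t \leq -t\log t'$-type convexity estimates and the bound $-q\log q \leq q\log(1/q)$ with $q = \kappa/N$ small.)

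Next I would assemble the continuity proof. Write $H(P) = H_{<N}(P) + H_{\geq N}(P)$ where $H_{<N}(P) = -\sum_{\|x\|_1 < N} P(x)\log P(x)$. For fixed $N$, $H_{<N}$ is a finite sum of terms $-P(x)\log P(x)$ over the finite set $\{\|x\|_1 < N\}$; since weak convergence on $\dZ^p$ is convergence of each atom $P(x)$, and $t \mapsto -t\log t$ is continuous on $[0,1]$, the functional $H_{<N}$ is continuous on all of $\cP(\dZ^p)$ for the weak topology. Now given $\veps > 0$, choose $N$ so that $\sup_{P \in \cA_\kappa} H_{\geq N}(P) < \veps/3$; then for any $P_n \to P$ in $\cA_\kappa$ (note $\cA_\kappa$ is closed under weak limits by Fatou, so $P \in \cA_\kappa$ and the uniform tail bound applies), pick $n$ large so that $|H_{<N}(P_n) - H_{<N}(P)| < \veps/3$, giving $|H(P_n) - H(P)| < \veps$. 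This yields sequential continuity, hence continuity since $\cP(\dZ^p)$ is metrizable.

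The main obstacle I anticipate is the uniform tail estimate $\sup_{P\in\cA_\kappa} H_{\geq N}(P) \to 0$: one must be careful that the "entropy of the tail" is bounded not just by the tail mass but involves the $\log$ of the number of available states, which grows polynomially in $k = \|x\|_1$. The clean way around this is the bound $-\sum_{\|x\|_1 = k} P(x)\log P(x) \leq q_k \log(c_p k^{p-1}/q_k)$ together with $\sum_{k\geq N} q_k \log k \leq \sum_{k \geq N} q_k \cdot k / e \leq (1/e)\dE[\|X\|_1 \IND(\|X\|_1 \geq N)]$, whose right-hand side is at most $\kappa$ and tends to $0$ as $N \to \infty$ uniformly over $\cA_\kappa$ \emph{provided} one has uniform integrability of $\|X\|_1$ — which, with only a first-moment bound, is \emph{not} automatic. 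The honest fix is to note that Lemma~\ref{le:contH} as used downstream is applied to families where $\|X\|_1$ is in fact uniformly \emph{bounded} (degrees bounded by $\theta$), so I would either (a) state and prove the uniformly-bounded-support version, in which case $V_N$ is eventually all of the (finite) support and $H_{\geq N} \equiv 0$ for $N$ large, making the argument immediate; or (b) if the genuine $\cA_\kappa$ version is needed, strengthen the hypothesis to a $(1+\delta)$-moment bound $\dE\|X\|_1^{1+\delta} \leq \kappa$, which does give uniform integrability and hence the uniform tail decay. I would go with interpretation matching the paper's usage and flag this explicitly.
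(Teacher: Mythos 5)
Your overall skeleton (split $H$ into the contribution of a finite ball and a tail, use continuity of the finite part under weak convergence, and try to make the tail uniformly small over $\cA_\kappa$) is the same as the paper's, which conditions on the shells $\{\|x\|_1=k\}$ and writes $H(P_n)=\sum_k Q_n(k)H(P_n^k)+H(Q_n)$. But your execution has a genuine gap, and your conclusion — that the lemma cannot be proved from the first-moment hypothesis alone and must be weakened or its hypothesis strengthened — is wrong. The obstacle you ran into comes from the lossy bound $\log k\leq k/e$, which indeed only yields $O(\kappa)$ for the tail. The correct move is that $x\mapsto x/\log(2x+1)$ is increasing on $[1,\infty)$, so for $k\geq\theta$ one has $\log(2k+1)\leq \frac{\log(2\theta+1)}{\theta}\,k$, whence
$$\sum_{k\geq\theta}Q(k)\,p\log(2k+1)\;\leq\;\frac{p\log(2\theta+1)}{\theta}\sum_{k\geq\theta}kQ(k)\;\leq\;\frac{p\kappa\log(2\theta+1)}{\theta}\;\longrightarrow\;0$$
as $\theta\to\infty$, uniformly over $\cA_\kappa$. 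No uniform integrability of $\|X\|_1$ is needed: the point is that the conditional entropy on a shell grows only logarithmically in $k$, and $\log k/k\to0$ converts the first-moment bound into a vanishing tail. This handles the term you call $\sum_{k\geq N}q_k\log(c_pk^{p-1})$.

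The second piece of the tail, $-\sum_{k\geq N}q_k\log q_k$ (the entropy of the radial law), is also not addressed by your sketch: your remark ``$-q\log q\leq q\log(1/q)$ with $q=\kappa/N$'' controls only a single aggregated mass, not the sum over shells, and a small total mass can in principle carry large entropy if spread thinly. The paper closes this by noting $\sum_{k\geq\theta}\sqrt{k}\,Q(k)\leq\kappa/\sqrt{\theta}$ and solving the maximum-entropy problem $\sup\{-\sum_kx_k\log x_k:\ x_k\geq0,\ \sum_kx_k\leq1,\ \sum_k\sqrt{k}\,x_k=\delta\}$ by Lagrange multipliers ($x_k=e^{-\mu-\lambda\sqrt{k}}$), showing the supremum tends to $0$ as $\delta\to0$; this gives the required uniform decay. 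Finally, your proposed fallback (a) does not match the downstream use: in the proof of Lemma \ref{upperenh} (estimate \eqref{upperenh7}) the lemma is applied to the empirical colored-degree laws of arbitrary graphs in $\cG_{n,m}$, whose degrees are not uniformly bounded — only $\frac1n\sum_i\sum_c\bar D_c(i)=2m/n$ is controlled — so the full first-moment version of the lemma is genuinely needed, and it is true as stated.
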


\bp{} A simple truncation argument shows that  
$\cA_\kappa$ is weakly closed. Let $Q_n$ (resp.\ $Q$) be  the law of $ \| X \|_1= \sum_{i=1}^p   |X_i | $ where $X$ has law $P_n$ (resp.\ $P$). If $P_n ^k$ (resp.\ $P^k$) is the conditional law of $P_n$ (resp.\ $P$) conditioned on  $\|X\|_1= k$, we have 
$$
H(P_n) = \sum_{k\geq 0} Q_n (  k ) H( P_n ^k ) + H(Q_n),
$$
and similarly for $P$. Since $P_n ^k$ is a probability measure on a finite set of size $c_k \leq    ( 2 k +1)^p$, we have for any $k$, $Q_n(k) \to Q(k)$, $H(P_n ^k) \to H(P^k)$ as $n \to \infty$. Also, $H(P^k_n) \leq \log (c_k) \leq  p \log (2  k +1) $. Since $\sum_k k Q_n(k) \leq \kappa $, 
using that $x/\log(2x+1)$ is increasing for $x\geq 1$, it follows that for $\theta \geq 1$, 
$$\sum_{k \geq \theta} Q_n(k) H( P_n ^k ) \leq \frac{  p \log ( 2  \theta+ 1)}{\theta} \sum_{k\geq \theta}   k Q_n(k)\leq  \frac{ p \kappa  \log (2  \theta+ 1)}{\theta}.$$ 
This proves the uniform integrability of $k \mapsto H(P_n^k)$ for the measures $Q_n$. 
Hence letting first $n$ and then $\theta$ tend to infinity, we get
$$
\lim_{n \to \infty} \sum_{k\geq 0} Q_n (  k ) H( P_n ^k ) = \sum_{k\geq 0} Q (  k ) H( P^k ). 
$$
It thus remains to prove that $\lim_{n \to \infty} H(Q_n) = H(Q)$. The proof is similar. First, for any $\theta$, $$\lim_{n\to \infty} - \sum_{k < \theta} Q_n (k) \log Q_n(k) = - \sum_{k < \theta} Q(k) \log Q(k).$$ 
Then, we  need to upper bound   $ - \sum_{k \geq \theta} Q_n (k) \log Q_n(k)$, 
uniformly in $n$. It can be done as follows. Observe that $\sum_{k \geq \theta} \sqrt k Q_n(k) \leq \kappa / \sqrt {\theta}$. We then compute 
$$
L ( \delta )= \sup - \sum_{k \geq 0} x_k \log x_k,
$$ 
under the linear constraints, $x_k \geq 0$, $\sum_k x_k \leq 1$ and $\sum_{ k  \geq 0} \sqrt k x_k = \delta$. Using Lagrange multipliers denoted by $\lambda$ and $\mu$, the solution of this convex optimization problem is of the form $x_k = e^{-\mu - \lambda \sqrt k}$ for $k \geq 0$ and $\sum_k x_k = 1$. It is then easy to check that as $\delta \to 0$, $\lambda \delta \to 0$ and $\mu \to 0$. It follows that $L( \delta )  = \mu + \lambda \delta \to 0$. It implies that
$- \sum_{k \geq \theta} Q_n (k) \log Q_n(k) \leq L ( \kappa / \sqrt \theta)$ goes to $0$ as $\theta \to \infty$ uniformly in $n$. Letting $n$ tend to infinity and then $\theta$, it proves that $\lim_{n \to \infty} H(Q_n) = H(Q)$. This concludes the proof of Lemma \ref{le:contH}.
\ep 

We now compute the entropy of $\UGW_h(P)$.
\begin{proposition}\label{pro:enh}
For $h\in\dN$, $P \in \cP_h$ 
and $\dE_P \deg (o) = d$: 
\begin{equation}\label{enh}
\Sigma(\UGW_h(P))=J_h(P).
\end{equation}
\end{proposition}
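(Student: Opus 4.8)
\textbf{Proof strategy for Proposition \ref{pro:enh}.}
The plan is to compute $\Sigma(\UGW_h(P))$ by counting, up to exponential order in $n$, the number of graphs $G\in\cG_{n,m}$ whose $h$-neighborhood empirical distribution $U(G)_h$ is close to $P$, and then to show this count concentrates on $h$-tree-like graphs so that Corollary \ref{cor:treelike} applies. First I would reduce to the case where $P$ has finite support: by Lemma \ref{le:existenceG} there is a sequence $\G_n\in\cG_{n,m}$ with $U(\G_n)_h\weak P$ and support contained in a fixed finite $\D\subset\cT^*_h$, and using the upper semicontinuity of $\underline\Sigma$ (Lemma \ref{le:lscSi}), the consistency relation $\UGW_h(P)=\UGW_k((\UGW_h(P))_k)$ from Proposition \ref{prop:UGW}, and a truncation/approximation argument (replace $P$ by $P^{(n)}$ supported on $\D$, cf.\ the construction in Lemma \ref{le:existenceG}), one passes from the finitely supported case to the general $P\in\cP_h$ via continuity of the functionals $H(P)$, $H(\pi_P)$ (Lemma \ref{le:contH}) and of $\sum_{(s,s')}\dE_P\log(E_h(s,s')!)$, the latter being controlled by $\dE_P[\deg_T(o)\log\deg_T(o)]<\infty$ together with Lemma \ref{le:HwPbis}.

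For $P$ of finite support, pick $\G_n$ as above, so that $U(\G_n)_h\weak P$ and $\G_n$ is $h$-tree-like for $n$ large; let $\bD^{(n)}$ be the colored degree sequence attached to $\G_n$ via \eqref{dcmg}, with color set $\cC$ indexed by pairs $(s,s')\in\D_{h-1}\times\D_{h-1}$. By Corollary \ref{cor:treelike},
$$
N_h(\G_n)=n(\bD^{(n)})\,|\cG(\bD^{(n)},2h+1)|,
$$
where $N_h(\G_n)$ counts graphs $G'\in\cG_n$ with $U(G')_h=U(\G_n)_h$. Stirling's formula gives $\frac1n\log n(\bD^{(n)})= \frac1n\log\frac{n!}{\prod_\tau (n P^{(n)}(\tau))!}+o(1)\to H(P)$, the Shannon entropy. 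Corollary \ref{cor:Poilim} yields
$$
\frac1n\log|\cG(\bD^{(n)},2h+1)| = \frac1n\log|\Sigma| - \frac1n\sum_{c\in\cC}\sum_u\log D^{(n)}_c(u)! + \frac1n\log\alpha_{2h+1}+o(1),
$$
and since the colors live on a fixed finite alphabet with uniformly bounded degrees, $\frac1n\log\alpha_{2h+1}=o(1)$ can be absorbed (it does not contribute at speed $n$ once we take $\veps\to0$); expanding $\log|\Sigma|=\sum_{c\in\cC_<}\log S^{(n)}_c!+\sum_{c\in\cC_=}\log(S^{(n)}_c-1)!!$ with $S^{(n)}_c\sim n\,d\,\pi_P(s,s')$ (recall $\pi_P(s,s')=\tfrac1d e_P(s,s')$ and $e_P(s,s')=\dE_P E_h(s,s')$), Stirling gives, after collecting all $n\log n$ terms (which cancel against $m\log n$ by $2m/n\to d=\sum_{s,s'}e_P(s,s')$), a net $n$-coefficient
$$
-s(d)-\tfrac d2 H(\pi_P)-\sum_{(s,s')}\dE_P\log(E_h(s,s')!),
$$
where the last sum arises from $\frac1n\sum_{c}\sum_u\log D^{(n)}_c(u)! \to \sum_{(s,s')}\dE_P\log(E_h(s,s')!)$ using $\frac1n\sum_u\delta_{D^{(n)}(u)}\weak$ (law of the matrix $(E_h(s,s'))_{s,s'}$ under $P$) — this is exactly the matrix-valued degree distribution pushed forward from $P$. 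Adding the $H(P)$ contribution produces precisely $J_h(P)$ of \eqref{enh0}.

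It remains to upgrade "$N_h(\G_n)$-asymptotics" to "$\Sigma(\UGW_h(P))=J_h(P)$". The lower bound $\underline\Sigma(\UGW_h(P))\geq J_h(P)$ follows because the $N_h(\G_n)$ graphs all lie in $\cG_{n,m}(\UGW_h(P),\veps)$ for $n$ large: indeed their common $h$-neighborhood distribution is $U(\G_n)_h\to P$, and by Theorem \ref{th:convlocCM} (applied to $G_n$ uniform on $\cG(\bD^{(n)},2h+1)$, whose local limit is $\UGW(\cdot)\!=\!\UGW_h(P)$ via the color-to-tree dictionary and Lemma \ref{le:treelike}) the full neighborhood distribution — not just the $h$-truncation — concentrates near $\UGW_h(P)$, so for any $\veps$ a $1-o(1)$ fraction of them lands in $B(\UGW_h(P),\veps)$. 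For the upper bound $\overline\Sigma(\UGW_h(P))\leq J_h(P)$: any $G\in\cG_{n,m}(\UGW_h(P),\veps)$ has $U(G)_h\in B(P,\veps')$, and since $P$ is concentrated on $h$-tree-like trees of bounded degree, all but $o(n)$ vertices of $G$ have tree-like $h$-neighborhoods in $\D$; removing $o(n)$ edges makes $G$ genuinely $h$-tree-like with $h$-neighborhood type in $\D$, and such graphs are counted (up to $e^{o(n)}$, coming from the edge surgery and from the at most polynomially-many nearby empirical types in $B(P,\veps')$) by $\sum N_h$ over the finitely many admissible degree sequences near $\bD^{(n)}$; letting $\veps\to0$ and using continuity of $J_h$ near $P$ (Lemmas \ref{le:contH}, \ref{le:HwPbis}) gives $\overline\Sigma(\UGW_h(P))\leq J_h(P)$.

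\textbf{Main obstacle.} The delicate point is the upper bound: controlling graphs $G$ that are \emph{not} $h$-tree-like but still have $U(G)_h\approx P$ with $P$ tree-supported. One must argue that the unimodularity of $U(G)$ forces only $o(n)$ vertices to sit on short cycles (a quantitative version of the argument in \S\ref{sec:enpre} that $\Sigma=-\infty$ off trees), then perform the edge-surgery carefully so that the $h$-neighborhood types stay in a fixed finite set and the number of distinct resulting degree sequences $\bD$ is subexponential — so that summing $N_h(\G_n)$ over them costs only $e^{o(n)}$. Handling the interplay between the $\veps$-ball in $\cP(\cG^*)$ and the finitely many nearby empirical $h$-neighborhood types, and verifying that $\alpha_{2h+1}$ and the $n(\bD)$-multiplicities do not secretly contribute at speed $n$, is where the real work lies.
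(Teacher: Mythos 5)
Your lower bound follows the paper's route almost verbatim (Lemma \ref{le:existenceG}, Corollary \ref{cor:treelike}, Corollary \ref{cor:Poilim}, Stirling, then Theorem \ref{th:convlocCM} to show the conditioned uniform graph lands in the ball), and that part is sound. The genuine gap is in your upper bound, which you yourself flag as the ``main obstacle'' and do not resolve: the edge-surgery plan cannot work as stated. For a fixed $\veps>0$, a graph $G\in\cG_{n,m}(\UGW_h(P),\veps)$ may have a positive fraction $\delta(\veps)n$ of vertices with non-tree or out-of-$\D$ $h$-neighborhoods (weak convergence only gives $o(n)$ along a sequence with $\veps\to 0$, not for fixed $\veps$), so the surgery removes order $\delta n$ edges; reconstructing $G$ from the surgered tree-like graph then costs a multiplicity of order $\binom{n^2}{\delta n}=e^{\Theta(\delta n\log n)}$, while the tree-like count only loses $\delta n\log n$ in its leading term. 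The net excess is of order $\delta n\log n$, which is not $e^{o(n)}$ and does not disappear by letting $\veps\to 0$ \emph{after} $n\to\infty$, so the bound $\overline\Sigma\leq J_h(P)$ does not follow. Moreover degrees of graphs in the ball are not uniformly bounded, so ``neighborhood types stay in a fixed finite set'' also fails without further truncation.

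The paper's upper bound (Lemma \ref{upperenh}) avoids tree-likeness and surgery entirely: every $G$ in the ball is mapped to a colored degree sequence over the finite color set $\cC(\D)$ augmented by a single fictitious color $\star$ that absorbs all atypical half-edge types, and one bounds $|A_{n,m}(P,\veps)|\leq |\cP_{n,m}(P,\veps)|\max_Q n(\bar\bD)\,|\wcG(\bar\bD)|$, where $|\wcG(\bar\bD)|$ is the count of \emph{all} colored multigraphs with that degree sequence, computed by the configuration model; the number of empirical degree types is $e^{o(n)}$ by the integer-partition lemma, and the $\star$ contributions are shown to vanish as $\veps\to 0$. No cycle control is needed because the configuration-model count already carries the correct $m\log n$ term. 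Separately, your reduction from finitely supported $P$ to general $P\in\cP_h$ is too optimistic: $H$ and $H(\pi_P)$ are not continuous on $\cP(\cT^*_h)$ (Lemma \ref{le:contH} applies only under a bounded-first-moment condition on a fixed $\dZ^p$), and the paper needs lower semicontinuity of $H$, dominated convergence for $\sum\dE_P\log(E_h!)$, a dedicated argument for $\limsup_n H(\pi_{P_n})\leq H(\pi_P)$ (equation \eqref{eq:lbif2}), plus a mixing step with a regular-tree Dirac mass to restore the mean degree $d$; these points are not mere continuity and would need to be supplied.
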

%
%
\begin{proof}

\noindent{\em Lower bound, finite support:} Consider first the lower bound $\underline \Sigma(\UGW_h(P))\geq J_h(P)$ when $P$ has finite support. By Lemma \ref{le:existenceG}, we may choose a sequence $\G_n\in\cG_{n,m}$ such that $U(\G_n)_h\weak P$, $n\to\infty$ and $U(\G_n)_h$ has support contained $\D:=\{t_1,\dots,t_r\}\subset \cT_h^*$ for all $n$. Let $N_h(\G_n)$ denote the number of graphs $G\in\cG_{n}$ such that $U(G)_h=U(\G_n)_h$. 
Clearly, all such graphs have the same number $m$ of edges.
From Corollary \ref{cor:treelike}, we know that $N_h(\G_n)=
n(\bD)|\cG(\bD,2h+1)|$, where $\bD$ is the neighborhood sequence associated to $\G_n$, i.e.\ if $c=(t,t')\in \cT_{h-1}^*\times \cT_{h-1}^*$,
then $D_c(i) $ is the number of $j\sim i$ in $\G_n$ such that $\G_n(i,j)_{h-1}=t'$ and $ \G_n(j,i)_{h-1}=t$; see \eqref{dcmg}.
Then, 
$$
n(\bD)=\binom{n}{\a_1n,\dots,\a_rn},
$$
where $\a_k=\a_k( n) $ stands for the probability of $t_k$ under $U(\G_n)_h$. 
Since $\a_k\to P(t_k)$ as $n\to\infty$, Stirling's formula yields
\begin{equation}\label{enh2}
\lim_{n\to\infty}\frac1n\log n(\bD) = -\sum_{t}P(t)\log P(t) = H(P) 
\end{equation}
On the other hand, from Corollary \ref{cor:Poilim} we have
\begin{equation}\label{enh3}
\log |\cG(\bD,2h+1)| = 
\frac12\sum_{c\in\cC} (S_c\log S_c - S_c) 
-\sum_{u\in[n]}\sum_{c\in\cC}\log D_c(u)! + o(n),
\end{equation}
where $\cC$ denotes the set of all pairs $c=(t,t')\in\cT_{h-1}^*\times  \cT_{h-1}^*$ 
associated to $\G_n$ as in \eqref{dcmg}, $S_c= S_{\bar c} = \sum_{u\in[n]}D_c(u)$, $\bar c=(t',t)$ if $c=(t,t')$. Note that the size of $\cC$ is finite and independent of $n$. 
For a given $c=(t,t')$, using the notation \eqref{eq:defE} one has 
$
S_c/n \to e_P(t,t').  
$ Also, writing $2m=\sum_{c\in\cC} S_c$, \eqref{enh3} can be rewritten as  
\begin{equation}\label{enh4}
m\log n - m +\frac12  n \sum_{(t,t')}e_P(t,t')\log e_P(t,t') - n  \sum_{(t,t')}\dE_P\log E_h(t,t')!
+ o(n).
\end{equation}
From \eqref{enh2} and \eqref{enh4}, 
\begin{equation}\label{enh5}
\lim_{n\to\infty}\frac1n\left(\log N_h(\G_n) - m\log n\right) = J_h(P).
\end{equation}

To prove the desired lower bound on $\underline \Sigma(\UGW_h(P))$, we may
restrict to graphs $G\in\cG_{n,m}$ with $U(G)_h=U(\G_n)_h$
to obtain
$$
|\cG_{n,m}(\UGW_h(P),\veps)|\geq  N_h(\G_n) \,\dP\left(U(G_n)\in B(\UGW_h(P),\veps)\right),
$$
where $G_n$ is uniformly distributed in $\cG(\bD,2h+1)$ with $\bD$ as above. 
From Theorem \ref{th:convlocCM}, for all $\veps>0$ one has
$$
\lim_{n\to\infty}\frac1n\log 
\dP\left(U(G_n)\in B(\UGW_h(P),\veps)\right) = 0.
$$
Using \eqref{enh5}, we have proved that for all $\veps>0$, $\underline 
\Sigma(\UGW_h(P),\veps)\geq J_h(P)$. Therefore,
\begin{equation}\label{enh6}
\underline \Sigma(\UGW_h(P))\geq J_h(P)\,. 
\end{equation}

\noindent{\em Lower bound, general case:} Set $\rho = \UGW_h(P)$. We can assume that $J_h(P) > - \infty$. For each $n \in \dN$, consider the forest $F_n$ obtained from $(T,o)$ with law $\rho$ by removing all edges adjacent to a vertex with degree higher than $n$. We may define $\rho^{(n)}$ as the the law of $(F_n(o),o)$, the connected component of the root. It is not hard to check that $\rho^{(n)}$ is a unimodular measure. We define $P_n = \rho^{(n)}_h$, the law of its $h$-neighborhood. By construction, $P_n$ is finitely supported, admissible, $P_n$ converges weakly to $P$ and $d_n = \dE_{Q_n} \deg_G (o) \leq d$ converges to $d$. We pick some fixed integer $D > d\vee 2$ and define $R = \delta_{t_\star}$ as the Dirac mass of the $h$-neighborhood of the $D$-regular tree. If $n$ is large enough, there exists $p_n \to 1$ such that $Q_n = p_n P_n + (1-p_n) R$ has mean root degree equal to $d$. Also, $Q_n \in \cP (\cT^*_h)$ is admissible (the set of admissible measures is convex) and has finite support. We apply Lemma \ref{le:lscSi} and the lower bound for finitely supported measures, to obtain 
$$
\underline \Si(\rho) \geq \limsup_{n\to\infty}\underline \Sigma(\UGW_h(Q_n)) \geq \limsup_{n\to\infty} J_h (Q_n). 
$$
By definition, $J_h(Q_n) = -s(d) + H(Q_n) - \frac d 2 H(\pi_{Q_n}) - \sum_{(s,s')} \dE_{Q_n} \log E_h (s,s') !$. We need to prove that $\limsup J_h(Q_n) \geq J_h(P)$. It suffices to prove that 
\begin{equation}\label{eq:lbif00}
\liminf_{n\to\infty} J_h (P_n) \geq J_h(P).
\end{equation}
 First, the lower semi-continuity of the entropy gives 
$
\liminf_{n\to\infty} H(P_n) \geq H(P)$. We now check that 
\begin{equation}\label{eq:lbif1}
\lim_{n \to \infty} \sum_{t,t'} \dE_{P_n} \log E_h (t,t') ! =  \sum_{t,t'} \dE_{P} \log E_h (t,t') !.  
\end{equation}
For ease of notation, we write $\cC = \cT^*_{h-1} \times \cT^* _{h-1}$, $c = (t,t') \in \cC$ and $E_h(c)(\tau)$ to make explicit the dependence in $\tau \in \cT^* _h$. As above, $F_n$ is the forest obtained from $(T,o)$ with law $\rho$, so that 
\begin{align*}
\sum_{t,t'} \dE_{P_n} \log E_h (t,t') ! & =  \dE_P  \,\varphi ( (F_n(o),o) ) ,
\end{align*}
where $\varphi(\tau) = \sum_c \log \PAR{ E_h (c ) (\tau) ! }$ satisfies:
\begin{align}\label{eq:upsumEh}
\varphi(\tau) &\leq \sum_c E_h( c)(\tau)  \log E_h( c)(\tau)  \nonumber\\
&\leq \sum_c E_h( c)(\tau)  \log \Big( \sum_{c'} E_h( c')(\tau)\Big)   = \deg_\tau(o) \log \deg_\tau(o). 
\end{align}
In particular, $\varphi ( (F_n(o),o) )\leq \bar\varphi(T,o):=\deg_T(o) \log \deg_T(o)$. The assumption $P \in \cP_h$ implies that  $\dE_P  \bar\varphi(T,o)< \infty$. Therefore \eqref{eq:lbif1} follows from the dominated convergence theorem. 

To conclude the proof of \eqref{eq:lbif00}, it remains to check that $\limsup H(\pi_{P_n}) \leq H(\pi_P)$, i.e.\
\begin{equation}\label{eq:lbif2}
\liminf_{n \to \infty} \sum_{c \in \cC} e_{P_n} (c)\log e_{P_n} (c) \geq  \sum_{c \in \cC} e_{P} (c) \log e_{P} (c) .  
\end{equation}
For $\theta \in \dN$, we denote by $\cF_\theta\subset \cT^*_h$ the subset of trees whose root vertex has degree bounded by $\theta+1$ and by $\cC_\theta \subset \cC$, the finite subset of pairs of trees with vertex degrees bounded by $\theta$. The assumption $P \in \cP_h$ and Lemma \ref{le:HwPbis} imply that $- \sum_{\tau} \deg_\tau (o)  P(\tau) \log   P (\tau) < \infty$. Also, the assumption $J_h(P) > -\infty$ implies that $H(\pi_P) < \infty$ and $\sum_c \ABS{ e_{P} (c) \log e_{P} (c) }< \infty $. It follows that for any $\veps >0$, there exists $\theta$ such that 
$$
\Big|  \sum_{c \notin \cC_\theta} e_{P} (c) \log e_{P} (c) \Big|\leq \veps  \quad \hbox{ and } \quad  -  \sum_{\tau \notin \cF_\theta } \deg_\tau (o)  P(\tau) \log   P (\tau)  \leq \veps. 
$$ 
By dominated convergence, for any $c\in \cC$, $e_{P_n} (c) \to  e_{P} (c)$. Since $\cC_\theta$ is finite, we find
$$
\limsup_{n \to \infty} \Big|   \sum_{c \in \cC_\theta } e_{P_n} (c)\log e_{P_n} (c)  -  \sum_{c \in \cC} e_{P} (c) \log e_{P} (c) \Big|  \leq \veps. 
$$  
Since $\veps >0$ is arbitrarily small, in order to complete \eqref{eq:lbif2}, it suffices to prove that for any $n \in \dN$, 
$$
\sum_{c \notin \cC_\theta } e_{P_n} (c)\log e_{P_n} (c) \geq - \veps. 
$$
We write 
\begin{align*}
e_{P_n} (c)\log e_{P_n} (c) & =  \sum_\tau P_n (\tau) E_h ( c) (\tau) \log \PAR{  \sum_{\tau'} P_n (\tau') E_h ( c) (\tau') } \\
& \geq   \sum_\tau P_n (\tau) E_h ( c) (\tau) \log \PAR{ P_n (\tau) E_h ( c) (\tau) } \\
& \geq   \sum_{\tau} P_n (\tau) E_h ( c) (\tau) \log  P_n (\tau). 
\end{align*}
It follows that 
\begin{align*}
\sum_{c \notin \cC_\theta } e_{P_n} (c)\log e_{P_n} (c) & \geq  \sum_{\tau} P_n (\tau) \log  P_n (\tau) \sum_{ c \notin \cC_\theta} E_h ( c) (\tau) \\
 & \geq   \sum_{\tau} \deg_\tau (o)  \IND ( \tau \notin \cF_\theta) P_n (\tau) \log   P_n (\tau) ,
\end{align*}
where we use that $\sum_{ c } E_h ( c) (\tau)=\deg_\tau (o)$, and that if $\tau \notin \cF_{\theta}$ and $c \in \cC_\theta$ then $E_h( c)(\tau) = 0$.  
Now, by construction, there exists a partition $\cup_i \cX^i _n $ of $\cT^*_h$ and $\tau^i_n \in \cX^i _n$ such that if $(T,o) \in \cX^i_n$ then $(F_n(o),o) = \tau^i _n$. Also,  $P_n (\tau^i_n) = P ( \cX^i_n) \geq P( \tau^i _n)$, and for all $\tau \in \cX^i _n $, $\deg_\tau (o) \geq \deg_{\tau^i_n} (o)$, $\IND ( \tau \notin \cF_\theta) \geq \IND ( \tau^i _n  \notin \cF_\theta)$. It follows that 
\begin{align*}
\sum_{c \notin \cC_\theta } e_{P_n} (c)\log e_{P_n} (c) & \geq  \sum_i   \sum_{\tau \in \cX^i _n} \deg_\tau (o)   \IND ( \tau \notin \cF_\theta) P(\tau) \log   P_n (\tau^i_n)  \\
& \geq   \sum_{\tau \notin \cF_\theta } \deg_\tau (o)  P(\tau) \log  P (\tau)  \geq  - \veps. 
\end{align*}
This concludes the proof of \eqref{eq:lbif2}.

\bigskip

\noindent{\em Upper bound:} The upper bound $\overline \Sigma(\UGW_h(P))\leq J_h(P)$ is a consequence of the general estimate of Lemma \ref{upperenh} below. 
\end{proof}

\smallskip
\begin{lemma}\label{upperenh}
Fix $h\in\dN$. 
If $\r\in\cP_u(\cT^*)$ is such that $\r_h\in\cP_h$, then
\begin{equation}\label{upperenh1}
\overline\Sigma(\r)\leq J_h(\r_h).
\end{equation}
\end{lemma}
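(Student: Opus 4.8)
The plan is to bound the number of graphs in $\cG_{n,m}(\r,\veps)$ from above by the number of $h$-tree-like graphs with a prescribed $h$-neighborhood profile, and then to optimize over admissible profiles near $\r_h$. Since $\r$ is supported on trees, for any $\veps$ small only graphs $G$ whose empirical neighborhood measure $U(G)_h$ is close to $\r_h$ and which are "essentially $h$-tree-like" can contribute: more precisely, the argument of Section \ref{sec:enpre} (the $(k,\ell)$-cycle counting lemma) shows that for $\veps$ small enough, at the cost of a factor $e^{o(n)}$ (or, more carefully, removing $o(n)$ edges to destroy the $o(n)$ short cycles that survive) we may restrict to genuinely $h$-tree-like graphs. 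So I would first reduce to counting $h$-tree-like $G\in\cG_{n,m}$ with $U(G)_h\in B(\r_h,\veps')$ for suitable $\veps'$.

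Next, for each such $G$, Corollary \ref{cor:treelike} gives $N_h(G)=n(\bD)\,|\cG(\bD,2h+1)|$ where $\bD$ is the colored degree sequence attached to $G$ via \eqref{dcmg}. The point is that two $h$-tree-like graphs with the same $U(\cdot)_h$ (as a measure, i.e.\ the same multiset of $h$-neighborhoods up to relabeling) yield the same value of $n(\bD)|\cG(\bD,2h+1)|$, so $\cG_{n,m}(\r,\veps)$ (restricted to $h$-tree-like graphs) is covered by a union, over the finitely many $\bD$-profiles $Q$ that are $\veps'$-close to $\r_h$ and correspond to actual graphs, of sets of size at most $n(\bD_Q)|\cG(\bD_Q,2h+1)|$. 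The number of distinct such profiles $Q$ is subexponential in $n$ (it is polynomial once degrees are truncated, and the contribution of large-degree vertices is controlled by $\r_h\in\cP_h$, exactly as in the proof of Proposition \ref{pro:enh}), so it only costs $e^{o(n)}$. Then from \eqref{enh2}--\eqref{enh5} (applied with the profile $Q$ in place of $P$) we get
\begin{equation*}
\frac1n\bigl(\log\bigl(n(\bD_Q)|\cG(\bD_Q,2h+1)|\bigr)-m\log n\bigr)\longrightarrow J_h(Q),
\end{equation*}
so up to $o(1)$ the exponent of $|\cG_{n,m}(\r,\veps)|/n^{m}$ is bounded by $\max\{J_h(Q):Q\in B(\r_h,\veps')\cap(\text{admissible profiles})\}$.

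Finally I would let $\veps\to0$ and invoke upper semi-continuity of $J_h$ (more precisely of $\overline J_h$) on the relevant set of admissible measures with controlled root degree, which follows from lower semi-continuity of Shannon entropy applied to $-H(\pi_P)$ and the $-\sum\dE_P\log(E_h(s,s')!)$ term (dominated by $\dE_P[\deg_T(o)\log\deg_T(o)]$ via \eqref{eq:upsumEh}) together with continuity/upper semi-continuity of $H(P)$, using Lemma \ref{le:contH} and the fact that $\r_h\in\cP_h$ pins the relevant mean; this gives $\lim_{\veps'\to0}\max\{J_h(Q):Q\in B(\r_h,\veps')\}\le J_h(\r_h)$, hence \eqref{upperenh1}.

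\textbf{Main obstacle.} The delicate point is the reduction to $h$-tree-like graphs with an \emph{exact} neighborhood profile: a graph in $B(\r,\veps)$ need not be $h$-tree-like, and destroying short cycles changes neighborhoods of up to a bounded number of vertices per removed edge but there could be up to $\delta(\veps)n$ such edges, so one must show $\delta(\veps)\to0$ as $\veps\to0$ (this is where unimodularity of $\r$ and $\r(\cT^*)=1$ enter, via the $(k,\ell)$-cycle argument) and that the resulting perturbation of $U(G)_h$ and of $m$ is $o(n)$-controlled, so that it still lands in $B(\r_h,\veps')$ for $\veps'$ comparable to $\veps$. Handling simultaneously the high-degree vertices (whose number of distinct local configurations is a priori unbounded) requires the same truncation-and-domination bookkeeping used in the lower-bound part of Proposition \ref{pro:enh}, and making the "$e^{o(n)}$ many profiles" count rigorous in the presence of unbounded degrees is the part that needs the most care.
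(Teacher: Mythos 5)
Your plan is genuinely different from the paper's proof, and as written it has gaps that are not just technical polish. The paper never reduces to $h$-tree-like graphs and never counts full neighborhood profiles: it fixes a finite $\Delta\subset\cT^*_h$, colors each oriented edge of an \emph{arbitrary} $G\in\cG_{n,m}$ by the pair of truncated half-neighborhoods when these lie in the finite family built from $\Delta$, lumps everything else into one extra state $\star$, bounds the count by (number of colored-degree profiles in the finite-dimensional lattice $\cM_{L+1}$) times (number of \emph{all} colored multigraphs $\wcG(\bar\bD)$, with no girth constraint), and only at the end lets $\Delta\uparrow\cT^*_h$ to pass from $J_h^\Delta$ to $J_h$. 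That lumping device is exactly what your sketch is missing, and it is what makes the three steps you need actually work when $\r_h$ has infinite support (which the hypothesis $\r_h\in\cP_h$ allows).

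Concretely: (i) your claim that the number of achievable $h$-neighborhood profiles near $\r_h$ is $e^{o(n)}$ is unsubstantiated and, for fixed $\veps$, false in general: a small but positive fraction of vertices may carry neighborhoods of unbounded size, and already the multiset of their types can range over $e^{\Theta(n)}$ possibilities; the integer-partition argument only applies after projecting neighborhoods onto finitely many colors, and the ``truncation as in the proof of Proposition \ref{pro:enh}'' you invoke is a lower-bound approximation (inner approximation by finitely supported measures plus semicontinuity of $\underline\Sigma$) that gives no mechanism for absorbing, in an upper bound, the graphs whose neighborhoods escape a finite set. (ii) The asymptotics \eqref{enh2}--\eqref{enh5} that you want to apply ``with the profile $Q$ in place of $P$'' rest on Corollary \ref{cor:Poilim}, which requires bounded colored degrees (H1) and a support fixed independently of $n$; for profiles with $n$-dependent, unbounded type sets the identification $\frac1n\log n(\bD_Q)\approx H(Q)$ and the $J_h(Q)$ bookkeeping break down (e.g.\ $\delta n$ vertices with pairwise distinct exotic types make $H(Q)$ of order $\delta\log n$), and the upper semicontinuity you invoke for $J_h$ fails for the $+H(P)$ term, which is only lower semicontinuous on $\cP(\cT^*_h)$; the paper avoids this by bounding the labeling factor through the \emph{lumped} entropy $H(\bar P)\le H(P)$, where Lemma \ref{le:contH} applies. (iii) The reduction to exactly $h$-tree-like graphs is itself uncontrolled: removing the edges meeting short cycles alters the $h$-neighborhood of every vertex within distance $h$ of an endpoint, and membership in $B(\r,\veps)$ gives no uniform bound on the sizes of those balls, so neither the drift of $U(G)_h$ nor the multiplicity of your surgery map is shown to vanish as $\veps\to0$ (and Corollary \ref{cor:treelike} needs exact tree-likeness, so you cannot relax this step). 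You correctly flag these as the obstacles, but they are the substance of the lemma rather than routine bookkeeping, so the proposal as it stands does not prove \eqref{upperenh1}.
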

\begin{proof}
\noindent{\em Finite support:} For clarity, we first assume that $P=\r_h$ has finite support. The definition of local weak topology implies that for any $h\in\dN$, any $\veps>0$, there
exists $\eta>0$ such that 
$B ( \rho , \eta ) \subset 
\{ \mu \in \cP ( \cG^ *) : \,d_{TV} ( \mu_h , \rho_h) \leq  \veps \}$, where $d_{TV}$ denotes the total variation distance. Define
$$
A_{n,m} ( P , \veps) = \BRA{ G \in \cG_{n,m} :\, d_{TV} (U(G)_h , P ) \leq \veps }.
$$
Therefore, \eqref{upperenh1} follows if we prove
\begin{equation}\label{upperenh2}
\lim_{\veps \to 0} \LIMSUP_{n\to\infty} \frac1n\left( \log \ABS{A_{n,m} (P, \veps) } -  m \log n \right) \leq  J_h (P).
\end{equation}
Let $\D\subset\cT_h^*$ be the support of $P$. Define $\cF\subset \cT^*_{h-1}$ as the
set of unlabeled rooted trees $t\in \cT^*_{h-1}$ such that either $T(o,v)_{h-1}=t$ or $ T(v,o)_{h-1}=t$
for some $T\in\D$. Set $L=|\cF|$.  Also, by adding a fictitious point $\star$ to $\cF$, define $\bar \cF = \cF\cup \{\star\}$, and call $\bar \cC$ the associated set of $(L+1)\times(L+1)$ colors $c=(t,t')$, $t,t'\in\bar\cF$.   
To any graph $G\in \cG_{n,m}$ we may associate a degree sequence $\bar \bD = (\bar D(1),\dots,\bar D(n))$, where $\bar D(i)$ is a $(L+1)\times(L+1)$ matrix for each $i$, obtained as in \eqref{dcmg} by identifying with $\star$ all neighborhoods that do not belong to $\cF$. The precise construction is defined as follows.
Fix an
edge $\{u,v\}$ of $G$: if 
$G(u,v)_{h-1} =  t'$ and $G(v,u)_{h-1} =t$, with $t,t'\in\cF$, then we say that 
the oriented pair $(u,v)$ has color $c=(t,t')\in\bar\cC$; if  either 
$G(u,v)_{h-1}$ or $G(v,u)_{h-1}$ are not in $\cF$, then we say that the   
oriented pair $(u,v)$ has color $(\star,\star)\in\bar\cC$. This defines a directed colored graph $\wt G$ with colors from the set $\bar\cC$. We call $\bar\bD$ the corresponding degree sequence, i.e.\ $\bar D_c(i)$ is the number of directed edges with color $c$ going out of vertex $i$. Note that by construction, if $(u,v)$ has color $c$, then $(v,u)$ has 
color $\bar c$, and that there is no edge with color $(t,\star)$ or $(\star, t)$ for any $t\in \cF$.  

In this way a graph $G\in \cG_{n,m}$ yields an element $\wt G$ of $\wcG(\bar\bD)$.
Let $\bar Q(G)$ denote the empirical degree law 
\begin{equation}\label{upperenh3}
\frac1n\sum_{i=1}^n\d_{\bar D(i)}\,.
\end{equation}
Thus $\bar Q(G)$ is a probability measure on the set $\cM_{L+1}$; see Eq.\ \eqref{dcu}.
Also, let $\bar P$ denote the probability measure on $\cM_{L+1}$ induced by $P$. Namely, $\bar P$ is the law of the random matrix $\bD\in \cM_{L+1}$ defined as follows: for all $c= (t,\star)$, or $c=(\star, t)$ or $c=(\star,\star)$, set $D_c=0$; and for $c=(t,t')$ with $t,t'\in\cF$, set $D_c=E_h(t',t)$, where $E_h(t',t)$ is defined by \eqref{eq:defE} if the rooted graph $(G,o)$ has law $P$. By contraction, one has $H(\bar P) \leq H(P)$ and 
$$
d_{TV} (\bar Q(G) ,\bar  P )\leq d_{TV} (U(G)_h , P ).
$$
Let $\cP_{n,m}(P,\veps)$ denote the set of probability measures $Q\in \cP(\cM_{L+1})$ of the form \eqref{upperenh3}, satisfying $\sum_{i\in[n]}\sum_{c\in\bar\cC}\bar D_c(i)=2m$, and 
such that $ d_{TV} (Q,\bar  P )\leq \veps$. 
The above discussion shows that if $G\in A_{n,m} (P, \veps)$, there must exist $Q\in \cP_{n,m}(P,\veps)$ such that $\bar Q(G)=Q$. 
Therefore, one obtains 
 \begin{equation}\label{upperenh4}
|A_{n,m} (P, \veps)| \leq |\cP_{n,m}(P,\veps)| \max_{Q\in\cP_{n,m}(P,\veps)} 
n(\bar\bD) \big|\wcG(\bar\bD)\big|
\end{equation}
where $n(\bar\bD)$ is defined as in Corollary \ref{cor:treelike}, and $\bar\bD$ is the degree vector associated to $Q$ as in \eqref{upperenh3}. 

Next, we claim that for each $\veps>0$,
\begin{equation}\label{eq:integerpartition}
\lim_{n\to\infty}\frac1n\log|\cP_{n,m}(P,\veps)| = 0.
\end{equation}
Indeed, let $p = |\bar\cC|$ and fix a vector $\ell \in \dZ_+^{p}$. An integer partition of the vector $\ell$ is an unordered sequence $\{d(1), \cdots, d(k)\}$, with  $d(i)\in\dZ_+^ p$ for all $i$, and such that  
$d(1) + \cdots + d(k) = \ell$ componentwise. By \cite[Lemma 4.2]{MR2759726}, if $\sum_{i=1}^ p \ell_i = 2 m$ then the number of integer partitions of $\ell$ is $\exp( o (m) )$. The number of vectors  $\ell \in \dZ_+^{p}$ such that $\sum_{i=1}^ p \ell_i = m$ is bounded by $(m+1)^p$. It follows that the number of unordered sequences  $\{d(1), \cdots, d(n)\}$ in $\dZ_+^p$ such that $\sum_{i=1}^n \sum_{c = 1} ^ p d_c(i) = 2 m$ is at most $\exp(o(n))$, for $m=O(n)$. Now, if $Q$ is of the form \eqref{upperenh3} we may define $d_c(i)=\bar D_c(i)$, for every $c\in\bar\cC$ and $i\in[n]$, which yields  an injective map from $\cP_{n,m}(P,\veps)$ to the unordered sequences $\{d(1), \cdots, d(n)\}$, with $d(i)\in \dZ_+^p$ such that $\sum_{i=1}^n \sum_{c \in\bar\cC} d_c(i) = 2 m$. This proves \eqref{eq:integerpartition}.
 
From \eqref{upperenh4} and \eqref{eq:integerpartition}, 
to prove \eqref{upperenh2}, it remains to show that
\begin{equation}\label{upperenh6}
\limsup_{n\to\infty}\max_{Q\in\cP_{n,m}(P,\veps)}\frac1n\,\left[ \log 
 \left(n(\bar\bD) \big|\wcG(\bar\bD)\big|
\right) -  m \log n\right]  \leq J_h (P) + \eta(\veps),
\end{equation}
 where we use the notation  $\eta(\veps)$ for an arbitrary function satisfying $\eta(\veps)
\to 0$ as $\veps\to 0$.  
Since $\bar\cC$ is finite, reasoning as in \eqref{enh2} and using Lemma \ref{le:contH}, it is easily seen that
\begin{equation}\label{upperenh7}
\limsup_{n\to\infty}\max_{Q\in\cP_{n,m}(P,\veps)} \frac1n\log 
 n(\bar\bD)
 \leq   H(\bar P) + \eta(\veps) \leq H(P) + \eta (\veps).
\end{equation}
Moreover, as in \eqref{enh3} one has
$$ \log \big|\wcG(\bar\bD)\big| = 
\frac12\sum_{c\in\bar\cC} (\bar S_c\log \bar S_c - \bar S_c) 
-\sum_{u\in[n]}\sum_{c\in\bar\cC}\log \bar D_c(u)! + o(n),
$$
where $\bar S_c = \sum_{i\in[n]}\bar D_c(i)$. 
 Observe that \begin{equation}\label{upperenh8}
 \limsup_{n\to\infty}
 \max_{Q\in\cP_{n,m}(P,\veps)}|\bar S_c/n - e_P(c)| \leq \eta(\veps).
 \end{equation} 
 Indeed, if $Q_n$ is a sequence with $Q_n\in\cP_{n,m}(P,\veps)$, then $\bar S_c/n = \dE_{Q_n}D_{c}$, where $D\in\cM_{L+1}$ has law $Q_n$. Then, for any $k\in\dN$, $\bar S_c/n \geq \dE_{Q_n}[D_{c}\wedge k]$, and since $Q_n\in\cP_{n,m}(P,\veps)$ and $D_{c}\wedge k$ is a bounded function, taking first $\veps\to 0$ and then $k\to\infty$, one has  $\bar S_c/n \geq e_P(c) - \eta(\veps)$ uniformly in $n$. Moreover, since $\sum_{c\in\bar\cC}\bar S_c/n = 2m/n = d + o (1)$, one has $\bar S_c/n = d + o(1) - \sum_{c'\neq c}\bar S_{c'}/n $. 
 Therefore, from the lower bound $\bar S_c/n \geq e_P(c) - \eta(\veps)$ and the fact that $\sum_c
 e_P(c)=d$, one finds $\bar S_c/n \leq e_P(c) +|\bar\cC|\eta(\veps) + o(1)$. This ends the proof of \eqref{upperenh8}. Moreover, with the same truncation argument as above one has that 
 $$\frac1n \sum_{u\in[n]}\log \bar D_c(u)!\geq \dE_P[\log E_h(c)!] - \eta(\veps),$$
 for all $c\in\bar\cC$. This, together with \eqref{upperenh7}-\eqref{upperenh8} and the argument in \eqref{enh4} allows us to conclude the proof of \eqref{upperenh6}. This ends the proof of \eqref{upperenh2}.

\noindent{\em General case:} We now come back to the case of arbitrary $P \in \cP_h$. For any finite set $\Delta \subset \cT^*_h$, we associate the sets $\cC=\cC(\D)$ and $\bar \cC$ as above. The above argument establishes that 
\begin{equation}\label{upperenh20}
\lim_{\veps \to 0} \LIMSUP_{n\to\infty} \frac1n\left( \log \ABS{A_{n,m} (P, \veps) } -  m \log n \right) \leq  J^\Delta_h (P),
\end{equation}
where $J^\Delta _h (P) := -s(d) + H(P) - \frac d 2 H ( \pi_{\bar P}) - \sum_{(t,t') \in \cC}\dE_ P \log E_h ( t,t') ! - \dE_P \log E_h (\star, \star) ! $ and $\pi_{\bar P} \in \cP ( \bar \cC)$ is defined as follows: for $c = (t,t') \in  \cC$, $\pi_{\bar P} (t,t') = \pi_ P ( t,t')$ and for $c = (\star , \star)$, 
$$
\pi_{\bar P} (\star, \star) = 1 - \sum_{(t,t') \in \cC} \pi_{P} (t,t') = \frac 1 d \dE_P\Big| \BRA{ v \stackrel{T}{\sim} {o} : T(o,v) \hbox{ or } T(v,o)_{h-1} \hbox{ is not in $\cC$}}\Big|.  
$$
Assume first that $J_h(P)  > - \infty$. Using \eqref{eq:upsumEh} at the second line, one has
\begin{align*}
J^\Delta _h (P)  &\leq  J_h (P) - \frac d 2 \sum_{(t,t') \notin \cC}  \pi_ P ( t,t') \log  \pi_ P ( t,t') +  \sum_{(t,t') \notin \cC}  \dE_P \log E_h (t,t') !  \\
& \leq   J_h (P) - \frac d 2 \sum_{(t,t') \notin \cC}  \pi_ P ( t,t') \log  \pi_ P ( t,t') + \dE_P \SBRA{  \IND ( T \notin \Delta)  \deg_T (o ) \log \deg_T (o)}.
\end{align*}
We may then consider a sequence $(\Delta_k)$ of finite subsets in $\cT^*_h$ such that $P  ( T \notin \Delta_k) \to 0$, and  $\sum_{c \notin \cC(\Delta_k)}  \pi_ P ( c) \log  \pi_ P ( c) \to 0$, as $k \to \infty$. Then as $k \to \infty$, the above expression converges to $J_h (P)$. This proves that \eqref{upperenh1} holds when $P \in \cP_h$ and $J_h (P) > -\infty$. 

If $P \in \cP_h$ and $J_h (P) = -\infty$ then either $H(\pi_P) = \infty$ or $\sum_{(t,t')} \dE_P \log E_h (t,t) ! = \infty$.  We use the upper bound
$$
J^\Delta _h (P) \leq -s(d) + H(P)  + \frac d 2 \sum_{(t,t') \in \cC}  \pi_ P ( t,t') \log  \pi_ P ( t,t') - \sum_{(t,t') \in \cC}\dE_ P \log E_h ( t,t') ! 
$$
We may consider a sequence $(\Delta_k)$ of finite subsets of $\cT^*_h$ such that, as $k\to\infty$, one has  $\sum_{c\in \cC(\Delta_k)}  \pi_ P ( c) \log  \pi_ P (c) \to -H(\pi_P)$ and $ \sum_{c \in \cC(\Delta_k)}\dE_ P \log E_h (c) ! \to \sum_{c}\dE_ P \log E_h ( c) ! $, and therefore $J^{\Delta_k} _h (P)\to-\infty$, $k\to\infty$. This completes the proof of   \eqref{upperenh1}. 
  \end{proof}

Next, we  extend Lemma \ref{upperenh} to the case $\rho_h \notin \cP_h$, i.e. $H(\r_h)=\infty$ or $\dE_\rho \deg_T (o) \log \deg_T(o) = \infty$. We start with the latter case. 

\begin{lemma}\label{le:phgap}
If $\rho \in \cP_u ( \cT^*)$ is such that $\dE_\rho \deg_T (o) = d$ and $\dE_\rho \deg_T (o) \log \deg_T(o) = \infty$ then 
$$
\overline \Si( \rho) = -\infty.
$$
\end{lemma}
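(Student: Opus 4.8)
The plan is to adapt the strategy already used in Section \ref{sec:enpre} for the case $\dE_\rho \deg(o) < d$: if $\overline\Si(\rho) > -\infty$ then along a subsequence there would be graphs $G_n \in \cG_{n,m}$ with $U(G_n) \weak \rho$, and we derive a contradiction by showing that such graphs are too numerous to fit inside $\cG_{n,m}$, or more precisely that the probability $\dP(U(G_n) \in B(\rho,\veps_n))$ decays super-exponentially for a uniform random $G_n \in \cG_{n,m}$. The key new feature, compared to the earlier lemmas, is that the obstruction is not a mismatch in the mean degree but the divergence of $\dE_\rho[\deg_T(o)\log\deg_T(o)]$, which forces the presence of an atypically large number of vertices of high degree arranged in a way that is entropically costly.

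Here is how I would carry it out. First, fix $K$ large. Since $\dE_\rho[\deg_T(o)\log\deg_T(o)] = \infty$ while $\dE_\rho\deg_T(o) = d < \infty$, weak convergence of $U(G_n)$ to $\rho$ gives, for any threshold $\theta$, that $\frac1n\sum_{v: \deg_{G_n}(v) > \theta} \deg_{G_n}(v)\log\deg_{G_n}(v)$ is eventually at least, say, $K$ (using that $x\mapsto x\log x$ restricted to $[\theta,\infty)$ is, up to bounded error, a lower-semicontinuous function whose $\rho$-integral over the tail is $\geq K$ for $\theta$ chosen first). Combined with the fact that $\frac1n\sum_v \deg_{G_n}(v) = 2m/n \to d$, this means $G_n$ has a subset $S_n$ of vertices, of size $o(n)$ (by Markov's inequality, since the total degree is $O(n)$), carrying total degree $\geq c_K\, n/\log\theta$ or so — the precise bookkeeping is that a linear-in-$n$ amount of "$\deg\log\deg$ mass" sits on vertices of degree $>\theta$, and I will extract from this that the number of edges incident to a sublinear vertex set is surprisingly large. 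Then I bound the number of graphs in $\cG_{n,m}$ with such a configuration: choosing the set $S$ of size $\alpha_n n$ costs $\binom{n}{\alpha_n n} = e^{o(n)}$, and the edges incident to $S$ must then be placed, which — because there are $\binom{\alpha_n n^2}{\text{many}}$-type counts versus the Stirling asymptotics $\log|\cG_{n,m}| = m\log n + s(d)n + o(n)$ from \eqref{eq:Gnmlog} — forces a deficit of order $n\log n$ or at least $\omega(n)$ in the exponent. Equivalently, as in the proof for $\dE_\rho\deg(o)<d$, I dominate $\deg_{G_n}(S_n)$ by a binomial $\BINOM{\alpha_n n^2}{2d/n}$ and apply a Chernoff bound to get $\frac1n\log\dP(\deg_{G_n}(S_n) \geq c\, n/\log\theta) \to -\infty$ after optimizing the tilt parameter; multiplying by the $e^{o(n)}$ entropy of the choice of $S$ still leaves $\frac1n\log\dP(U(G_n)\in B(\rho,\veps_n)) \to -\infty$.

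The main obstacle I anticipate is making the passage from "$\dE_\rho[\deg\log\deg] = \infty$" to a \emph{uniform-in-$n$, linear-in-$n$} lower bound on the degree mass carried by high-degree vertices of $G_n$ rigorous: weak convergence only controls $\dE_{U(G_n)}[\varphi]$ for bounded continuous $\varphi$, so I need the truncation $\varphi_\theta(d') = (d'\log d')\wedge M$ for large $M$, use $\dE_{U(G_n)}\varphi_\theta \to \dE_\rho \varphi_\theta$, then let $M \to \infty$ and $\theta \to \infty$ in the right order, exploiting that $\dE_\rho[(\deg\log\deg)\IND(\deg>\theta)] \to \infty$ as $\theta$ is \emph{decreased} is false — rather $\dE_\rho[(\deg\log\deg)\IND(\deg\le\theta)]\uparrow\infty$, so the "tail beyond $\theta$" trick must instead be: fix $R$ large, pick $\theta$ so that $\dE_\rho[(\deg\log\deg)\wedge R] \geq 2R$ is impossible — one instead picks $\theta$ with $\dE_\rho[\deg\,\IND(\deg\le\theta)] \geq d - \delta$ and notes that the remaining degree mass $\geq \delta$ sits on vertices of degree $>\theta$, whose number is $\leq d n/\theta$; this is already enough to run the Chernoff argument with $S_n = \{v: \deg_{G_n}(v) > \theta_n\}$ for a slowly growing $\theta_n$, exactly mirroring the $\dE_\rho\deg(o)<d$ lemma. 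So in fact the "$\log$" in the hypothesis is only needed to guarantee $\dE_\rho\deg(o)$ could be $=d$ while still the degree distribution has a heavy enough tail that some positive mass $\delta$ of degree escapes to infinity — but wait, that would follow already from any $\rho$ with an infinite-degree-in-the-tail phenomenon; the genuine subtlety is that here $\dE_\rho\deg(o) = d$ exactly, so the earlier lemma does not apply directly, and one must instead argue that the $\deg\log\deg$ divergence prevents the "escaping mass" from being reabsorbed in the limit. I would resolve this by running the argument at the level of the truncated degree functional and a diagonal choice of $\theta_n\to\infty$, $\veps_n\to 0$, showing that for $G_n \in B(\rho,\veps_n)$ the set $S_n = \{v:\deg_{G_n}(v)>\theta_n\}$ satisfies both $|S_n| = o(n)$ and $\deg_{G_n}(S_n) \geq \delta_n n$ with $\delta_n$ not summably small, then concluding as before.
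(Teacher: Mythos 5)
There is a genuine gap, and it sits exactly where you yourself sensed trouble. Your plan transplants the Chernoff/union-bound argument from the case $\dE_\rho \deg(o) < d$: exhibit a set $S_n$ with $|S_n| = \alpha_n n$, $\alpha_n \to 0$, carrying degree $\deg_{G_n}(S_n) \geq \delta n$, and bound the probability of this event by stochastic domination with a binomial. That argument is powered by a \emph{macroscopic} escaping degree mass $\delta = d - d' > 0$. Here, by hypothesis, $\dE_\rho \deg(o) = d = \lim 2m/n$, so the degree sequences of any approximating graphs are uniformly integrable and the degree carried by $\{v : \deg_{G_n}(v) > \theta\}$ tends to $0$ as $\theta \to \infty$, uniformly in $n$. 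What the divergence of $\dE_\rho[\deg\log\deg]$ does give (by lower semicontinuity of the nonnegative continuous function $g \mapsto \deg_g(o)\log\deg_g(o)$) is $\frac1n\sum_v \deg(v)\log\deg(v) \to \infty$; but since $\deg(v) \leq n$, this only yields $\deg_{G_n}(S_n) \gtrsim K_n n/\log n = o(n)$. Feeding $\delta_n$ and $\alpha_n$ into the Chernoff computation gives an exponential rate of order $\delta_n \log(1/\alpha_n)$, and on concrete examples — say $\rho_1(k) \asymp k^{-2}(\log k)^{-2}$, for which $\dE\deg < \infty$ but $\dE\deg\log\deg = \infty$ — one checks that $\delta_n \approx (\log\theta_n)^{-1}$ while $\log(1/\alpha_n) \approx \log\theta_n$, so the product stays bounded for every admissible choice of $\theta_n$. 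The method therefore produces at best a finite rate, never $-\infty$, and your closing "conclude as before" cannot be carried out.

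The paper's proof uses an entirely different, combinatorial mechanism: it invokes the counting upper bound \eqref{upperenh2} of Lemma \ref{upperenh} specialized to $h=1$ (where $H(\pi_P) = 0$ and the extra state $\star$ is unnecessary), which gives $\overline\Sigma(\rho) \leq -s(d) + H(\rho_1) - \dE_{\rho_1}\log(\deg_T(o)!)$; since $H(\rho_1) < \infty$ (finite mean) and $\log k! \geq k\log k - k + c'$ by Stirling, the hypothesis forces $\dE_{\rho_1}\log(\deg_T(o)!) = +\infty$. The decisive ingredient is the factor $\prod_v (\deg(v)!)^{-1}$ in the configuration-model count of graphs with prescribed degree sequence; it is this factor — invisible to any first-moment or stochastic-domination estimate on edge placement — that converts the superlinear growth of $\sum_v \deg(v)\log\deg(v)$ into an entropy of $-\infty$. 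If you want to repair your proof, the route is to condition on the empirical degree distribution, count graphs with that degree sequence via the configuration model, and observe that $\frac1n\sum_v \log(\deg_{G_n}(v)!) \to \infty$ along any sequence with $U(G_n) \weak \rho$.
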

\bp{}
We set $P = \rho_1$ which can be identified with a probability measure on $\dZ_+$. Since $P$ has finite first moment, $H(P)$ is finite. The proof of Lemma \ref{upperenh} can be simplified for $h=1$: since $\cT^* _{h-1}$ has a unique element (the isolated root), one has $H(\pi_P)=0$ and it is not necessary to consider the extra state $\star$.  The bound \eqref{upperenh2} gives 
$$
\lim_{\veps \to 0} \LIMSUP_{n\to\infty} \frac1n\left( \log \ABS{A_{n,m} (P, \veps) } -  m \log n \right) \leq  -s(d) + H(P) - \dE_ P \log \deg_T (o) ! 
$$
Now, from Stirling's approximation, for $n \geq 1$, $n ! \geq c \sqrt {n} e^{-n} n ^n$ for some constant $c >0$. We deduce that $\log n ! \geq  c' - n  + n \log n $ for some constant $c'>0$. In particular, from $\dE_P \deg_T(o) = d < \infty$ and $\dE_\rho \deg_T (o) \log \deg_T(o) = \infty$, we get that  $\dE_ P \log \deg_T (o) !  = \infty$. 
\ep

The following statement is the  extension of Lemma \ref{upperenh} to the case $\rho_h \notin \cP_h$.
\begin{proposition}\label{proupperenh}
If $\r\in\cP_u(\cT^*)$, then  for any $h\in\dN$, 
\begin{equation}\label{upperenh11}
\overline\Sigma(\r)\leq \overline J_h(\r_h),
\end{equation}
where $\overline J_h(\r_h)=J_h(\r_h)$ if $\r_h\in\cP_h$, and $\overline J_h(\r_h)=-\infty$ otherwise.
\end{proposition}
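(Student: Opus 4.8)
The plan is to bootstrap from Lemma~\ref{upperenh}, which already gives \eqref{upperenh11} when $\r_h\in\cP_h$, and from Lemma~\ref{le:phgap}, which handles the case $\dE_\r\deg_T(o)\log\deg_T(o)=\infty$ (note that if $\dE_\r\deg_T(o)\neq d$ then $\overline\Si(\r)=-\infty$ by \S\ref{sec:enpre} and there is nothing to prove). So the only remaining case is $\r_h\notin\cP_h$ with $\dE_\r\deg_T(o)=d$ and $\dE_\r\deg_T(o)\log\deg_T(o)<\infty$; by the footnote after the definition of $\cP_h$ (Lemma~\ref{le:phgap2}), this forces $H(\r_h)=\infty$ while the degree-moment condition holds. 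In this situation we must show $\overline\Si(\r)=-\infty$.

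First I would run the same construction as in the finite-support part of the proof of Lemma~\ref{upperenh}: fix a large finite set $\Delta\subset\cT^*_h$, introduce the enlarged color set $\bar\cC$ obtained from $\cF=\cF(\Delta)\subset\cT^*_{h-1}$ together with the fictitious state $\star$, and for $G\in\cG_{n,m}$ form the directed colored degree sequence $\bar\bD$ by contracting every $(h-1)$-neighborhood outside $\cF$ to $\star$. The key point is that the estimate $|A_{n,m}(P,\veps)|\le|\cP_{n,m}(P,\veps)|\,\max_Q n(\bar\bD)|\wcG(\bar\bD)|$ does not use finiteness of $H(P)$; neither does \eqref{eq:integerpartition}. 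What changes is the contribution $\tfrac1n\log n(\bar\bD)$: now $\bar Q(G)$ is a probability on the \emph{finite} set $\cM_{L+1}$ (with $L=|\cF|$), so $\tfrac1n\log n(\bar\bD)\le H(\bar Q(G))+o(1)$, and $H(\bar Q(G))$ is uniformly bounded over $\cP_{n,m}(P,\veps)$ by a constant $C(\Delta,d)$ depending only on $|\cM_{L+1}|$ and the total-edge constraint $2m$, via Lemma~\ref{le:contH}. The term $\log|\wcG(\bar\bD)|$ is handled exactly as in \eqref{enh3}--\eqref{enh4}. Putting this together yields, for each fixed finite $\Delta$,
\begin{equation*}
\lim_{\veps\to0}\LIMSUP_{n\to\infty}\frac1n\left(\log|A_{n,m}(P,\veps)|-m\log n\right)\le C(\Delta,d)-s(d)+\frac d2\sum_{c\in\cC(\Delta)}\pi_P(c)\log\pi_P(c)-\sum_{c\in\cC(\Delta)}\dE_P\log E_h(c)!,
\end{equation*}
and in particular a bound of the form $\le C(\Delta,d)-s(d)-\tfrac d2 H_\Delta(\pi_P)+\text{(finite)}$, where $H_\Delta$ is a truncated entropy. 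This is the step I expect to be the main obstacle: the a~priori bound $C(\Delta,d)$ for $n(\bar\bD)$ depends on $|\cF|$, which grows as $\Delta$ grows, so one must argue that along a suitable sequence $\Delta_k\uparrow$ this growth is outpaced. Concretely, $|\cM_{L+1}|\le(\theta+1)^{(L+1)^2}$ where $\theta$ bounds the degrees inside $\Delta_k$, giving $C(\Delta_k,d)=O(L^2\log\theta)$, whereas $H(\r_h)=\infty$ means the entropy of the contracted law $\bar Q$ associated to $\r$ itself can be driven to $+\infty$ but only logarithmically in the alphabet size, so one needs a quantitative comparison — the cleanest route is to instead directly bound $\tfrac1n\log n(\bar\bD)$ against the entropy $H(\bar Q(G))$ of the \emph{sampled} graph, which is automatically at most $\log|\cM_{L+1}|$, and then realize that it is the \emph{negative} term $-\tfrac d2 H(\pi_{\bar P})$ or rather the term $-\sum\dE_P\log E_h(c)!$ that is \emph{not} the source of divergence here; the divergence to $-\infty$ must come from the mismatch between the finite-alphabet upper bound $C(\Delta,d)$ on $n(\bar\bD)$ and the genuine exponential count.

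On reflection, the correct and simpler argument avoids this delicate balancing entirely. Since $\r\in\cP_u(\cT^*)$ with $\r_h\notin\cP_h$ but $\dE_\r\deg_T(o)\log\deg_T(o)<\infty$, we have $H(\r_h)=\infty$. Suppose, for contradiction, that $\overline\Si(\r)>-\infty$. Then, as in \S\ref{sec:enpre}, there is a sequence $G_n\in\cG_{n,m}$ with $U(G_n)\weak\r$ and $\liminf_n\tfrac1n(\log|\cG_{n,m}(\r,\veps_n)|-m\log n)>-\infty$ for some $\veps_n\to0$. For each finite $\Delta$ the above displayed bound gives
\begin{equation*}
\overline\Si(\r)\le -s(d)+C(\Delta,d)+\frac d2\sum_{c\in\cC(\Delta)}\pi_P(c)\log\pi_P(c)-\sum_{c\in\cC(\Delta)}\dE_P\log E_h(c)!,
\end{equation*}
which stays finite for each $\Delta$, so no contradiction arises this way and one genuinely needs the refined count. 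The resolution: replace the crude $n(\bar\bD)\le|\cM_{L+1}|^{\,?}$-type bound by the exact multinomial $n(\bar\bD)=\binom{n}{(\alpha_M n)_{M\in\cM_{L+1}}}$, whose exponential rate is precisely $H(\bar Q(G))$, and then observe that the relevant quantity is not the contracted entropy $H(\bar P)$ of the target but rather $H(\bar Q(G))$ for $G\in A_{n,m}(P,\veps)$, which by weak convergence and Lemma~\ref{le:contH} converges (as $n\to\infty$, then $\veps\to0$) to $H(\bar P_\Delta)\le H(P_\Delta)$, the entropy of the $\Delta$-contraction of $\r_h$. Since $H(\r_h)=\infty$, one chooses $\Delta_k$ with $H(P_{\Delta_k})\to\infty$; but $J_h^{\Delta_k}(P)=-s(d)+H(P_{\Delta_k})-\tfrac d2 H(\pi_{\bar P_{\Delta_k}})-\sum_{c}\dE_P\log E_h(c)!$ and a short computation (using that the extra $\star$-term and the entropy-of-$\pi$ term are controlled by $\dE_P[\deg_T(o)\log\deg_T(o)]<\infty$ exactly as in the general-case estimate of Lemma~\ref{upperenh}) shows that in fact $\overline\Si(\r)\le\inf_k J_h^{\Delta_k}(P)$ — and here the correct conclusion is that $\overline\Si(\r)$ is finite but $\le$ every $J_h^{\Delta_k}$, which is consistent; the divergence of $H(P_{\Delta_k})$ is \emph{matched} by divergence of $\tfrac d2 H(\pi_{\bar P_{\Delta_k}})$. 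Therefore the clean statement is: the bound $\overline\Si(\r)\le\liminf_k J_h^{\Delta_k}(P)$ always holds, and when $\r_h\notin\cP_h$ the right-hand side equals $-\infty$ — this last equality is where the hypothesis $H(\r_h)=\infty$ is used, via the subadditivity/monotonicity of the truncated functionals, together with Lemma~\ref{le:phgap} covering the complementary degenerate case. I will present the argument in this order: (1) dispose of $\dE_\r\deg\neq d$ and of the infinite-degree-moment case via earlier results; (2) assume $H(\r_h)=\infty$ and adapt the finite-support proof of Lemma~\ref{upperenh} verbatim, replacing Lemma~\ref{le:contH}'s use to bound $n(\bar\bD)$ by its exact exponential rate; (3) pass to a sequence $\Delta_k$ realizing $H(\pi_{\bar P_{\Delta_k}})\to\infty$ and $\sum_{c\in\cC(\Delta_k)}\dE_P\log E_h(c)!\to\sum_c\dE_P\log E_h(c)!$, and check $J_h^{\Delta_k}(\r_h)\to-\infty$ using the elementary inequality $x\log x - c x \le x\log x$ together with $\dE_P[\deg_T(o)\log\deg_T(o)]<\infty$; conclude $\overline\Si(\r)=-\infty$. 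The main obstacle, as flagged, is keeping track of which of the two competing unbounded terms ($H(P_{\Delta_k})$ versus $\tfrac d2 H(\pi_{\bar P_{\Delta_k}})$) dominates, and showing their difference, after subtracting the $\dE_P\log E_h(c)!$ contributions, is bounded above — which is precisely the content of the computation already performed in the general case of Lemma~\ref{upperenh}, now run with $H(P)=\infty$ so that the limit is $-\infty$ rather than $J_h(P)$.
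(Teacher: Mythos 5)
Your opening reduction is exactly the paper's: dispose of $\dE_\r\deg_G(o)\ne d$ via Section \ref{sec:enpre}, use Lemma \ref{le:phgap} when $\dE_\r\deg_T(o)\log\deg_T(o)=\infty$, and use Lemma \ref{upperenh} when $\r_h\in\cP_h$. But you then misread Lemma \ref{le:phgap2} and this creates the gap. The first statement of that lemma says precisely that if $\r\in\cP_u(\cT^*)$ and $\dE_\r\deg_T(o)\log\deg_T(o)<\infty$, then $H(\r_h)<\infty$ for every $h$; combined with the fact that $\r_h$ is automatically admissible (unimodularity plus finite mean degree), its second statement gives $\r_h\in\cP_h$. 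Hence your ``only remaining case'' ($\r_h\notin\cP_h$ together with $\dE_\r\deg_T(o)=d$ and $\dE_\r\deg_T(o)\log\deg_T(o)<\infty$) is vacuous for unimodular measures on trees — this is the entire point of Lemma \ref{le:phgap2}, and it is how the paper finishes the proof in one line. You instead conclude from the footnote that this case ``forces $H(\r_h)=\infty$'' and set out to prove $\overline\Si(\r)=-\infty$ there.

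The bulk of your proposal is then devoted to that non-existent case, and the argument you sketch for it does not close: you acknowledge yourself that the crude bound on $n(\bar\bD)$ gives no contradiction (``no contradiction arises this way''), you never resolve which of the two divergent terms ($H(P_{\Delta_k})$ versus $\tfrac d2 H(\pi_{\bar P_{\Delta_k}})$) dominates after subtracting the $\dE_P\log E_h(c)!$ contributions, and the final claim that $J_h^{\Delta_k}(\r_h)\to-\infty$ ``via subadditivity/monotonicity of the truncated functionals'' is asserted rather than proved — indeed, with $H(P)=\infty$ the truncated entropy term pushes $J_h^{\Delta_k}$ \emph{up}, so the claimed divergence to $-\infty$ would need a genuine quantitative comparison that is never supplied. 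As written the proposal therefore does not establish the proposition. The fix is short: after your case split, invoke Lemma \ref{le:phgap2} to conclude that whenever $\dE_\r\deg_T(o)=d$ and $\dE_\r\deg_T(o)\log\deg_T(o)<\infty$ one has $\r_h\in\cP_h$, so Lemma \ref{upperenh} applies, and the remaining configurations are covered by Lemma \ref{le:phgap} and Section \ref{sec:enpre}; no new counting argument is needed.
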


In view of  Lemma \ref{upperenh} and Lemma \ref{le:phgap}, Proposition \ref{proupperenh} is a consequence of the following lemma. 
\begin{lemma}\label{le:phgap2}
Let $\r\in\cP_u(\cT^*)$ be such that  $\dE_\rho \deg_T (o) \log \deg_T(o) < \infty$. Then for any $h \in \dN$, 
$
H(\rho_h) < \infty.
$ Consequently, for any $P \in \cP(\cT^*_h)$, $P \in \cP_h$ is equivalent to $P$ admissible and $\dE_P \deg_T (o) \log \deg_T(o) < \infty$.  
\end{lemma}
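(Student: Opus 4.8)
The plan is to prove the first assertion, $H(\rho_h)<\infty$ for all $h\in\dN$, by induction on $h$; the ``consequently'' part then follows at once, since for $P\in\cP(\cT^*_h)$ admissible with $\dE_P[\deg_T(o)\log\deg_T(o)]<\infty$ the measure $\rho:=\UGW_h(P)$ is unimodular (Proposition \ref{prop:UGW}), supported on trees, has $\rho_h=P$, and satisfies $\dE_\rho[\deg_T(o)\log\deg_T(o)]=\dE_P[\deg_T(o)\log\deg_T(o)]<\infty$, so the first assertion gives $H(P)=H(\rho_h)<\infty$ and hence $P\in\cP_h$ (the reverse implication being the definition of $\cP_h$). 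For the base case $h=1$, $\rho_1$ is a probability on $\dZ_+$ and $\dE_{\rho_1}[\deg\log\deg]<\infty$ forces $\dE_{\rho_1}[\deg]<\infty$ (because $k\leq k\log k$ for $k\geq 3$); comparing $\rho_1$ with the reference probability $r(k)=c\,(k+1)^{-2}$ via Gibbs' inequality gives $H(\rho_1)\leq -\log c+2\,\dE_{\rho_1}[\log(1+\deg)]\leq -\log c+2\,\dE_{\rho_1}[\deg]<\infty$.

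For the inductive step, assume the statement for $h-1$ (so $h\geq 2$). Since $\rho$ is unimodular with $\dE_\rho[\deg_T(o)]<\infty$, the marginal $\rho_{h-1}$ is admissible, and by the induction hypothesis $H(\rho_{h-1})<\infty$ and $\dE_{\rho_{h-1}}[\deg\log\deg]<\infty$; thus $\nu:=\UGW_{h-1}(\rho_{h-1})$ is a well-defined measure with $\nu_{h-1}=\rho_{h-1}$, and its $h$-marginal $\nu_h\in\cP(\cT^*_h)$ will serve as reference. By Lemma \ref{le:consistency0} (with ``$h$'' there equal to $h-1$ and ``$P$'' equal to $\rho_{h-1}$), for $\tau\in\cT^*_h$ one has $\nu_h(\tau)=\rho_{h-1}(\tau_{h-1})\,q_\tau$ with
$$
q_\tau=\prod_{a\in\cA}\binom{n_a}{(k_{a,b})_{b\in\cB_a}}\prod_{b\in\cB_a}\wP_{s^a,s^{-a}}(t^{a,b})^{k_{a,b}}=\Big[\prod_{a\in\cA}\binom{n_a}{(k_{a,b})_{b}}\Big]\prod_{v\sim o}\wP_{\sigma^v,\sigma^{-v}}(\tau^v),
$$
where for a child $v$ of the root of $\tau$ we write $\tau^v=(\tau(o,v))_{h-1}$, $\sigma^v=(\tau^v)_{h-2}$, $\sigma^{-v}=(\tau(v,o))_{h-2}$, and the second equality is the index-matching with Lemma \ref{le:consistency0}. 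Since multinomial coefficients are $\geq 1$, $-\log q_\tau\leq \sum_{v\sim o}\big(-\log\wP_{\sigma^v,\sigma^{-v}}(\tau^v)\big)$. To use this one must first check $\rho_h\ll\nu_h$: if $\rho_h(\tau)>0$ then $\rho_{h-1}(\tau_{h-1})>0$, and for $\rho$-a.e.\ tree $T$ with $(T,o)_h=\tau$ every child $v$ of the root witnesses $(T(o,v))_{h-1}=\tau^v$ and $(T(v,o))_{h-2}=\sigma^{-v}$, so $\gamma(\tau^v,\sigma^{-v})>0$, where $\gamma(\tau',s'):=\dE_\rho\,|\{v\sim o:(T(o,v))_{h-1}=\tau',\,(T(v,o))_{h-2}=s'\}|$; by the purely combinatorial identity $\gamma(\tau',s')=\big(1+|\{w\sim_{\tau'}o:\tau'(o,w)\simeq s'\}|\big)\,\rho_{h-1}(\tau'\cup s'_+)$ (cf.\ \eqref{eq:qqqqq} and \eqref{hatP}) this makes every factor $\wP_{\sigma^v,\sigma^{-v}}(\tau^v)>0$, hence $\nu_h(\tau)>0$. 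Gibbs' inequality together with the chain rule then gives
$$
H(\rho_h)\leq -\sum_\tau\rho_h(\tau)\log\nu_h(\tau)=H(\rho_{h-1})+\dE_\rho\big[-\log q_{(T,o)_h}\big]\leq H(\rho_{h-1})+\dE_\rho\sum_{v\sim o}\big(-\log\wP_{\sigma^v,\sigma^{-v}}(\tau^v)\big).
$$

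It then remains to bound the last expectation. Expanding it over $(\tau',s')\in\cT^*_{h-1}\times\cT^*_{h-2}$ and using that \eqref{hatP} and the identity for $\gamma$ give $\wP_{\tau'_{h-2},s'}(\tau')=\gamma(\tau',s')/e_{\rho_{h-1}}(\tau'_{h-2},s')$, one obtains
$$
\dE_\rho\sum_{v\sim o}\big(-\log\wP_{\sigma^v,\sigma^{-v}}(\tau^v)\big)=\sum_{\tau',s'}\gamma(\tau',s')\Big(-\log\gamma(\tau',s')+\log e_{\rho_{h-1}}(\tau'_{h-2},s')\Big).
$$
Because $e_{\rho_{h-1}}(s,s')=\sum_{\tau'':\tau''_{h-2}=s}\gamma(\tau'',s')\leq\sum_{s,s'}e_{\rho_{h-1}}(s,s')=\dE_{\rho_{h-1}}[\deg_T(o)]=d$, the $\log e_{\rho_{h-1}}$ term contributes at most $d\log d<\infty$. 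For the term $-\sum_{\tau',s'}\gamma\log\gamma$, I would substitute $\eta:=\tau'\cup s'_+\in\cT^*_{h-1}$, a bijection onto $\{(\eta,s'):m(\eta,s')\geq 1,\ \rho_{h-1}(\eta)>0\}$ with $m(\eta,s'):=|\{w\sim_\eta o:\eta(o,w)\simeq s'\}|$, under which $\gamma(\tau',s')=m(\eta,s')\,\rho_{h-1}(\eta)$ and $\sum_{s'}m(\eta,s')=\deg_\eta(o)$; since $m\geq 1$ makes $\log m\geq 0$, dropping that nonnegative contribution yields
$$
-\sum_{\tau',s'}\gamma(\tau',s')\log\gamma(\tau',s')\leq -\sum_\eta\rho_{h-1}(\eta)\log\rho_{h-1}(\eta)\sum_{s'}m(\eta,s')=-\dE_{\rho_{h-1}}\big[\deg_T(o)\log\rho_{h-1}((T,o))\big],
$$
which is finite by Lemma \ref{le:HwPbis} applied with $p_\eta=\rho_{h-1}(\eta)$ and $\ell_\eta=\deg_\eta(o)$, whose hypotheses $H(\rho_{h-1})<\infty$ and $\dE_{\rho_{h-1}}[\deg_T(o)\log\deg_T(o)]<\infty$ hold by induction and by assumption. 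Altogether $H(\rho_h)\leq H(\rho_{h-1})+d\log d-\dE_{\rho_{h-1}}[\deg_T(o)\log\rho_{h-1}((T,o))]<\infty$, closing the induction.

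The main obstacle is conceptual rather than computational: the naive approach — Gibbs' inequality against a Galton--Watson-type reference that draws the offspring numbers of all vertices independently — fails, because such a reference pays a positive amount per vertex to record ``this vertex is a leaf'', and a single high-degree vertex near the root then forces the bound to be $+\infty$ even when $H(\rho_h)$ is finite. The fix is to use $\UGW_{h-1}(\rho_{h-1})$, which agrees with $\rho$ on the $(h-1)$-marginal, so that only the genuinely new randomness of the depth-$h$ layer is charged; Lemma \ref{le:consistency0} supplies the explicit size-biased form of the corresponding conditional law, and the resulting entropy is exactly of the type controlled by Lemma \ref{le:HwPbis}. The delicate points to be handled with care are the absolute-continuity statement $\rho_h\ll\nu_h$, the identification of $\dE_\rho\sum_{v\sim o}(-\log\wP_{\sigma^v,\sigma^{-v}}(\tau^v))$ with the tractable sum over $(\tau',s')$ (via the combinatorial identity for $\gamma$), and the bookkeeping of the index shift ``$h\mapsto h-1$'' when invoking Lemma \ref{le:consistency0} and \eqref{hatP}.
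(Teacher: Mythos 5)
Your proof is correct and follows essentially the same route as the paper: bound $H(\rho_h)$ by the cross-entropy against the reference $[\UGW_{h-1}(\rho_{h-1})]_h$, drop the multinomial coefficients coming from Lemma \ref{le:consistency0}, rewrite the remaining expectation via the size-biased identity $\wP=\gamma/e$, and conclude with Lemma \ref{le:HwPbis}; the paper organizes the identical computation as $H(Q)\leq H(P)+\sum_{s,s'}e_P(s,s')H(\wP_{s,s'})$ through the non-negativity of $H(Q\,|\,Q^*)$ with $Q^*=[\UGW_h(\rho_h)]_{h+1}$. One small correction: the identity $\gamma(\tau',s')=\bigl(1+|\{w\sim_{\tau'}o:\tau'(o,w)\simeq s'\}|\bigr)\rho_{h-1}(\tau'\cup s'_+)$ is not purely combinatorial — it is exactly \eqref{eq:qq2} (equivalently \eqref{unimk}) and requires the unimodularity of $\rho$, the combinatorial fact \eqref{eq:qqqqq} applying only after the mass-transport swap — but since $\rho$ is unimodular by hypothesis this does not affect your argument.
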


\begin{proof}
The second statement follows from the first applied to $\rho = \UGW_h (P)$ with $P$ admissible.  We now prove the first statement. Since $d:=\dE_\r \deg(o)$ is finite, one has $H(\r_1)<\infty$. To prove the lemma, we proceed by induction, and show that for any $h\in\dN$, if $H(\r_{h})<\infty$ and $\dE_\rho \deg_T (o) \log \deg_T(o) < \infty$ then $H(\r_{h+1})<\infty$. 
Set $P=\r_h$, $Q=\r_{h+1}$, and $Q^*=[\UGW_h(P)]_{h+1}$. Assume that $H(P)<\infty$. We are going to prove
that $H(Q)<\infty$.
Observe that 
$$
H(Q) = 
H(P)+ \sum_{\g\in\cT^*_h} P(\g)H(Q(\cdot|\g)),
$$
where 
$Q(\cdot|\g)$ stands for the conditional distribution of the $(h+1)$-neighborhood 
given the $h$-neighborhood $\g$. 
Also,
\begin{align*}
\sum_{\g\in\cT^*_h}P(\g)H(Q(\cdot|\g)) &= -\sum_{\t\in\cT^*_{h+1}}Q(\t)\log \frac{Q(\t)}{P(\t_h)}\\&
= - \sum_{\t\in\cT^*_{h+1}}Q(\t)\log \frac{Q^*(\t)}{P(\t_h)} - H(Q|Q^*)\leq - \sum_{\t\in\cT^*_{h+1}}Q(\t)\log \frac{Q^*(\t)}{P(\t_h)} .
\end{align*}
Now recall that $\t\in\cT^*_{h+1}$ determines all the coefficients $E_{h+1}(t,t')$, $(t,t')\in\cT^*_{h}\times\cT^*_{h}$, and these can be 
partitioned according to the pairs $(s,s')\in\cT^*_{h-1}\times\cT^*_{h-1}$ such that $t_{h-1}=s$, $t'_{h-1}=s'$. 
With this notation, by definition of $Q^*$, one has, for $\t\in\cT^*_{h+1}$ such that $\t_h=\g$:
\begin{equation}\label{enah06}
\frac{Q^*(\t)}{P(\t_h)} = Q^*(\t|\g) = \prod_{(s,s')\in\cT^*_{h-1}\times\cT^*_{h-1}} \binom{E_{h}(s,s')}{\{E_{h+1}(t,t')\}} \prod_{t\in\cT^*_h} \wP_{s,s'}(t)^{k_{t,s'}(\t)},
\end{equation}
where the terms $\{E_{h+1}(t,t')\}$ in the multinomial coefficient are all such that $t_{h-1}=s$, $t'_{h-1}=s'$, 
and we write $k_{t,s'}(\t):=|\{v \stackrel{\t}{\sim }o: \, \t(o,v)_h=t, \,\t(v,o)_{h-1}=s'\}|$, with $t_{h-1}=s$.
Therefore,
$$
- \sum_{\t}Q(\t)\log \frac{Q^*(\t)}{P(\t_h)} \leq - \sum_{s,s'}\sum_{t:\,t_{h-1}=s}\sum_{\t}Q(\t)k_{t,s'}(\t)\log \wP_{s,s'}(t).
$$
Moreover, unimodularity yields
\begin{equation}\label{unimk}
\sum_{\t} Q(\t)\,k_{t,s'}(\t) =\dE_\r |\{v\sim o: \, T(o,v)_{h-1}=s', \, T(v,o)_{h}=t\}|
 =\wP_{s,s'}(t)e_P(s,s')
\end{equation}
Thus,
$$
- \sum_{\t}Q(\t)\log \frac{Q^*(\t)}{P(\t_h)} \leq - \sum_{s,s'}\sum_{t:\,t_{h-1}=s}e_P(s,s')
\wP_{s,s'}(t)\log \wP_{s,s'}(t) = \sum_{s,s'}e_P(s,s')\,H( \wP_{s,s'}).
$$
In conclusion, we have obtained that
$$
H(Q) \leq H(P)+  \sum_{s,s'}e_P(s,s')\,H( \wP_{s,s'}).
$$
The proof will be complete once we show that $H(P)<\infty$ and $\dE_\rho \deg_T (o) \log \deg_T(o) < \infty$ imply that $\sum_{s,s'}e_P(s,s')\,H( \wP_{s,s'})<\infty.$

Now, by definition, if $\g = t \cup s'_+$ and $n_{t,s'} = \big|\{ v \stackrel{\g}{\sim} o : \g(v,o) = t , \g (o,v) = s' \}\big|$, we have 
\begin{align*}
\sum_{s,s'} e_P ( s,s') H ( \wP_{s,s'}) & =   \sum_{s,s'}\sum_{ t : \,t_{h-1} = s} n_{t,s'}  P ( t \cup s'_ + ) \log \frac{ e_P (s,s') }{ n_{t,s'}  P (t \cup s'_+)} \\
& \leq  \sum_{s,s'}\sum_{ t :\, t_{h-1} = s} n_{t,s'}  P ( t \cup s'_ + ) \log \frac{ d }{  P (t \cup s'_+)} \\
& =  \sum_{\g \in \cT^*_h} P(\g) \log \PAR{ \frac{d}{P(\g) }}\sum_{s,s'} E_h (s',s) (\g),
\end{align*}
where we have used that $e_P (s,s')\leq d\, n_{t,s'}$, and that 
 $1\leq n_{t,s'} \leq E_h (s',s) (\g)$ where $\g = t \cup s'_+$ and $t_{h-1} = s$. Since $\deg_\g (o) = \sum_{s,s'}E_h (s,s') (\g)$, we find
\begin{align*}
\sum_{s,s'} e_P ( s,s') H ( \wP_{s,s'}) \leq d \log d  -  \sum_{\g \in \cT^*_h} \deg_\g (o) P(\g) \log P(\g)  .
\end{align*}
It remains to apply Lemma \ref{le:HwPbis} with $\cX = \cT^*_h$ and $\ell_x = \deg_x (o)$ together with the assumption $\dE_\rho \deg_T (o) \log \deg_T(o) < \infty$.
\end{proof}

\begin{lemma}\label{strict}
Suppose $\r\in\cP_u(\cT^*)$. 
Then $\overline J_k(\r_k)$, $k\in\dN$,  is a non-increasing sequence. Assume moreover that $\r_1$ has finite support. Then for fixed $k>h$, one has 
$\overline J_k(\rho_k) < \overline J_h ( \r_h)$ if and only if $\r_k\neq [\UGW_h(\r_h)]_k$. In particular, if $\r_1$ has finite support, then
for any $h\in\dN$, one has $\overline \Sigma(\r)< \overline J_h(\r_h)$ if and only if $\r\neq \UGW_h(\r_h)$. 
\end{lemma}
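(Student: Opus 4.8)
\emph{Plan.} The strategy is to reduce the whole statement to a one–step comparison of $\overline J_{h+1}(\rho_{h+1})$ with $\overline J_h(\rho_h)$, and then bootstrap. First I would dispose of monotonicity. Write $P=\rho_h$, $Q=\rho_{h+1}$. If $\dE_\rho[\deg_T(o)\log\deg_T(o)]=\infty$ then $\rho_k\notin\cP_k$ for every $k$ by the characterization in Lemma~\ref{le:phgap2}, so $\overline J_k(\rho_k)\equiv-\infty$ and the sequence is (trivially) non-increasing; otherwise $\rho$ has finite mean degree and $\rho_k\in\cP_k$ for all $k$, again by Lemma~\ref{le:phgap2}, and applying Proposition~\ref{pro:enh} and Lemma~\ref{upperenh} to the unimodular tree $\rho'=\UGW_{h+1}(Q)$ gives
\[
\overline J_{h+1}(Q)=J_{h+1}(Q)=\overline\Sigma\big(\UGW_{h+1}(Q)\big)\leq \overline J_h\big((\UGW_{h+1}(Q))_h\big)=\overline J_h(Q_h)=\overline J_h(P),
\]
since $(\UGW_{h+1}(Q))_h=Q_h=P$. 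Iterating in $h$ shows $k\mapsto\overline J_k(\rho_k)$ is non-increasing, so the inequality $\overline J_k(\rho_k)<\overline J_h(\rho_h)$ is simply the negation of $\overline J_k(\rho_k)=\overline J_h(\rho_h)$, and it remains to characterize the equality case.

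\emph{Key identity.} From now on assume $\rho_1$ has finite support; then, by a standard unimodularity argument, $\rho$ is a.s.\ of bounded degree, every $\rho_k$ is finitely supported, and all the $J_k(\rho_k)$ are finite real numbers. Writing $Q^*:=[\UGW_h(P)]_{h+1}$, I claim
\[
J_h(P)-J_{h+1}(Q)=H(Q\,|\,Q^*)-\tfrac d2\,H(\pi_Q\,|\,\pi_{Q^*}),\qquad d=\dE_\rho\deg_T(o).
\]
To establish it I would start from the conditional–entropy expansion used in the proof of Lemma~\ref{le:phgap2}, namely $H(Q)=H(P)-H(Q|Q^*)-\sum_\tau Q(\tau)\log Q^*(\tau\,|\,\tau_h)$, insert the explicit formula \eqref{enah06} for $Q^*(\cdot\,|\,\cdot)$, and use unimodularity in the form \eqref{unimk}, $\sum_\tau Q(\tau)k_{t,s'}(\tau)=e_P(s,s')\wP_{s,s'}(t)$, to evaluate the resulting sums; the $\log(E_h(s,s')!)$ and $\log(E_{h+1}(t,t')!)$ contributions then cancel against the corresponding terms in $J_h$ and $J_{h+1}$ (cf.\ \eqref{enh0}). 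The identity $\sum_{s,s'}e_P(s,s')H(\wP_{s,s'})+\tfrac d2 H(\pi_P)=\tfrac d2 H(\pi_{Q^*})$ follows from Remark~\ref{boh}, which says $\pi_{Q^*}(t,t')=\pi_P(s,s')\wP_{s,s'}(t)\wP_{s',s}(t')$ with $s=t_{h-1},\,s'=t'_{h-1}$, by the chain rule for entropy; and $H(\pi_{Q^*})-H(\pi_Q)=H(\pi_Q|\pi_{Q^*})$ because $\pi_Q$ and $\pi_{Q^*}$ have the same ``one–side refined'' marginals (again by \eqref{unimk}, together with the symmetry $e_Q(t,t')=e_Q(t',t)$), so that $\sum\pi_Q\log\pi_{Q^*}=\sum\pi_{Q^*}\log\pi_{Q^*}$.

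\emph{Strictness and bootstrap.} The geometric input is that for any tree $\tau\in\cT^*_{h+1}$ and any neighbor $v$ of the root, the pattern $T(v,o)_h$ is a deterministic function of $\tau_h$ and $T(o,v)_{h-1}$. Hence, conditionally on $\tau_h=\gamma$, the law $Q(\cdot|\gamma)$ is a law on multisets of ``refinements'' $T(o,v)_h\in\cT^*_h$ grouped by the children's $(h-1)$–types $s$; $Q^*(\cdot|\gamma)$ is the product law under which the $n_s(\gamma)$ children of type $s$ get i.i.d.\ refinements from a fixed law $\lambda^s_\gamma$; and $\pi_Q,\pi_{Q^*}$ are convex combinations, with the \emph{same} weights $w_{\gamma,s}=P(\gamma)n_s(\gamma)/d$, of suitable embeddings of the one–refinement marginals $\bar\lambda^s_\gamma$ of $Q(\cdot|\gamma)$, respectively of the $\lambda^s_\gamma$. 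Superadditivity of relative entropy over a product reference, the bound on the relative entropy of an exchangeable $n$–tuple against an i.i.d.\ law, and joint convexity of relative entropy then give $H(Q|Q^*)\geq A$ and $\tfrac d2 H(\pi_Q|\pi_{Q^*})\leq\tfrac12 A$, where $A:=\sum_\gamma P(\gamma)\sum_s n_s(\gamma)\,H(\bar\lambda^s_\gamma|\lambda^s_\gamma)\geq0$; hence $J_h(P)-J_{h+1}(Q)\geq\tfrac12 A\geq0$, and if it vanishes then $A=0$, so $H(\pi_Q|\pi_{Q^*})=0$ and therefore $H(Q|Q^*)=J_h(P)-J_{h+1}(Q)=0$, i.e.\ $Q=Q^*$; conversely $Q=Q^*$ makes both sides of the identity vanish. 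This gives the one–step claim $\overline J_{h+1}(\rho_{h+1})=\overline J_h(\rho_h)\iff\rho_{h+1}=[\UGW_h(\rho_h)]_{h+1}$. For $k>h$, equality of the endpoints of the non-increasing chain forces equality at every step; propagating via the consistency relation $\UGW_h(\rho_h)=\UGW_{h+1}(\rho_{h+1})$ of Proposition~\ref{prop:UGW} and induction yields $\rho_k=[\UGW_h(\rho_h)]_k$, while conversely $\rho_k=[\UGW_h(\rho_h)]_k$ gives $\overline J_k(\rho_k)=\Sigma(\UGW_k(\rho_k))=\Sigma(\UGW_h(\rho_h))=\overline J_h(\rho_h)$ by Propositions~\ref{prop:UGW} and \ref{pro:enh}. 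Finally, for the last assertion: if $\rho\neq\UGW_h(\rho_h)$ pick the least $k$ (necessarily $k>h$) with $\rho_k\neq[\UGW_h(\rho_h)]_k$, so $\overline\Sigma(\rho)\leq\overline J_k(\rho_k)<\overline J_h(\rho_h)$ by Proposition~\ref{proupperenh}; and if $\rho=\UGW_h(\rho_h)$ then $\overline\Sigma(\rho)=\Sigma(\UGW_h(\rho_h))=\overline J_h(\rho_h)$ by Proposition~\ref{pro:enh}.

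\emph{Main obstacle.} The delicate point is the equality analysis in the third step. Deriving the identity for $J_h(P)-J_{h+1}(Q)$ needs careful bookkeeping of the combinatorial pieces in \eqref{enah06} (the multinomial coefficients and the $\log E_h!$ versus $\log E_{h+1}!$ terms), and the chain of relative–entropy inequalities must be arranged so that its slack is \emph{manifestly} non-negative and vanishes only when $Q(\cdot|\gamma)=Q^*(\cdot|\gamma)$ for $P$–a.e.\ $\gamma$. The structural fact that $T(v,o)_h$ is determined by $\tau_h$ and $T(o,v)_{h-1}$ — which collapses the second coordinate of the edge–type profile and lets one realize $\pi_Q,\pi_{Q^*}$ as barycentres of the one–refinement marginals with identical weights — is exactly what makes the two estimates $H(Q|Q^*)\geq A$ and $\tfrac d2 H(\pi_Q|\pi_{Q^*})\leq\tfrac12 A$ line up.
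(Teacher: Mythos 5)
Your proposal is correct, and the crucial part — the strict inequality $\overline J_k(\rho_k)<\overline J_h(\rho_h)$ when $\rho_k\neq[\UGW_h(\rho_h)]_k$ — is proved by a genuinely different route than the paper's. The monotonicity step, the easy direction via the consistency relation of Proposition \ref{prop:UGW}, and the final assertion about $\overline\Sigma(\rho)$ coincide with the paper's argument. For strictness, however, the paper stays probabilistic: it expresses $J_k(\rho_k)-J_h(\rho_h)$ as the exponential rate of $\dP\big(U(\widehat G_n)_k=U(\Gamma_n)_k\big)$ for the generalized configuration model (via \eqref{enh5} and Corollary \ref{cor:treelike}), and then forces this rate to be negative by combining the convergence $\dE U(\widehat G_n)\weak\UGW(P)$ (Proposition \ref{prop:convlocCM}) with the Azuma-type concentration bound \eqref{eq:concunifCM}. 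You instead work one level at a time with the identity $J_h(P)-J_{h+1}(Q)=H(Q\,|\,Q^*)-\tfrac d2 H(\pi_Q\,|\,\pi_{Q^*})$ — which is precisely the paper's Remark \ref{altrem}, derived there \emph{after} the lemma, with its non-negativity deduced \emph{from} the lemma and flagged as "seemingly nontrivial" — and you prove that inequality directly (in the stronger form $J_h-J_{h+1}\ge\tfrac12 A$) by superadditivity of relative entropy over the product reference $Q^*(\cdot|\gamma)$ and joint convexity applied to the barycentric decompositions of $\pi_Q,\pi_{Q^*}$ with common weights $P(\gamma)n_s(\gamma)/d$; the structural fact that $T(v,o)_h$ is determined by $\big((T,o)_h,T(o,v)_{h-1}\big)$, which you identify correctly, is what makes the two bounds use the same quantity $A$, and the equality analysis ($A=0\Rightarrow H(\pi_Q|\pi_{Q^*})=0\Rightarrow H(Q|Q^*)=0\Rightarrow Q=Q^*$) then closes the loop, with the bootstrap over $h<j<k$ handled exactly as in the paper. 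What each approach buys: the paper's route reuses machinery it needs anyway (graph counting and concentration for the configuration model) and sidesteps the entropy inequality entirely, but yields only qualitative strictness; yours is self-contained on the measure side, gives a quantitative gap ($\ge\tfrac12\sum_\gamma P(\gamma)\sum_s n_s(\gamma)H(\bar\lambda^s_\gamma|\lambda^s_\gamma)$), and in passing supplies a direct proof (indeed a sharpening by a factor $\tfrac12$) of the inequality $\tfrac d2 H(\pi_Q|\pi_{Q^*})\le H(Q|Q^*)$ that the paper only obtains as a corollary. The bookkeeping you defer is real but routine: the identity itself is carried out in Remark \ref{altrem}, and the multiset-versus-ordered-tuple lifting preserves relative entropy because both $Q(\cdot|\gamma)$ and the product law $Q^*(\cdot|\gamma)$, once conditioned on the multiset of refinements, are uniform over orderings.
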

\begin{proof}
Fix $h\in\dN$. To prove that $\overline J_{h+1}(\r_{h+1})\leq \overline J_h(\r_h)$, we may assume that $\r_{h+1}\in\cP_{h+1}$. In this case one has also that $\r_{h}\in\cP_{h}$. 
%
%
From Proposition \ref{pro:enh}, we know that $\Sigma(\UGW_k(\r_k))=J_k(\r_k)$ for both $k=h$ and $k=h+1$.  Therefore,
using Lemma \ref{upperenh} one has
$$
J_{h+1} (\r_{h+1}) = \Sigma(\UGW_{h+1}(\r_{h+1}))\leq J_h([\UGW_{h+1}(\r_{h+1})]_h) = J_h(\r_h)\,,
$$
where we use $[\UGW_k(\r_k)]_h=\r_h$, $k\geq h$. This proves that $\overline J_k(\r_k)$ is non-increasing in $k$.

We now assume that $\r_1$ has finite support. 
Then, by unimodularity it follows that $\r_h$ has finite support for all $h\in\dN$. In particular, $\r_h\in\cP_h$ and $J_h(\r_h)>-\infty$ for all $h\in\dN$. Fix $k>h$. Suppose that $\overline J_k(\rho_k) < \overline J_h ( \r_h)$. 
One has $\Sigma(\UGW_k(\r_k))=J_k(\r_k)$ by Proposition \ref{pro:enh}. 
From the consistency property of Lemma \ref{le:consistency}, one must then have $\r_k\neq [\UGW_h(\r_h)]_k$. 

Next, suppose that $\r_1$ has finite support and that
$\r_k\neq [\UGW_h(\r_h)]_k$
and let us show that $J_k(\rho_k) < J_h ( \r_h)$. 
If $\G_n\in\cG_{n,m}$ is a sequence with $U(\G_n)_k\weak \r_k$, then also $U(\G_n)_h\weak \r_h$ and by \eqref{enh5} one has
\begin{equation}\label{enh50}
J_k(\r_k)-J_h(\r_h)=\lim_{n\to\infty}\frac1n\left(\log N_k(\G_n) - \log N_h(\G_n)\right).
\end{equation}
Using Corollary \ref{cor:treelike}, if $\widehat G_n$ denotes a random graph with uniform distribution in $\cG(\bD^{(n)},2h+1)$, $\bD^{(n)}$ being the degree vector associated to the $h$-neighborhood of $\G_n$, one also has
\begin{equation}\label{enh52}
J_k(\r_k)-J_h(\r_h)=\lim_{n\to\infty}\frac1n\log \dP\big(U(\widehat G_n)_k=U(\G_n)_k\big).
\end{equation}
Since $\rho_k \neq \g_k:=[\UGW_h(\r_h)]_k$, there exist $\veps >0$ and an event $A$ of the form $A = \{ g \in \cG^* : g_k= t \}$ for some $t\in \cT^*_k$, such that $| \rho_k(A)  -  \g_k (A) | > \veps$. 
Therefore, $U(\G_n)_k\weak \r_k$ implies that 
$$
J_k(\r_k)-J_h(\r_h) \leq \limsup_{n\to\infty} 
\frac 1 n \log \dP \PAR{ | U(\widehat G_n)_k(A)  - \g_k (A) | > \veps/2 }.
$$
By Proposition \ref{prop:convlocCM}, $\dE U(\widehat G_n) (A)$ converges to $\g_k(A)$. It follows that 
$$
J_k(\r_k)-J_h(\r_h) \leq\limsup_{n\to\infty}  \frac 1 n \log \dP \PAR{ | U(\widehat G_n)(A)  - \dE U ( \widehat G_n)  (A) | > \veps/3 }.
$$
The desired conclusion $J_k(\r_k)-J_h(\r_h)<0$ now follows from 
\eqref{eq:concunifCM} (in Appendix).

Finally, the assertion concerning $ \overline \Sigma(\r)$ follows easily from the results above.
Indeed, from Proposition \ref{pro:enh} we know that $\overline\Sigma(\r)<J_h(\r_h)$ implies that
$\r\neq \UGW_h(\r_h)$. For the opposite direction, observe that if $\r\neq \UGW_h(\r_h)$, then 
$\r_k\neq [\UGW_h(\r_h)]_k$ for some $k>h$. From Lemma \ref{upperenh} one has
$\overline\Sigma(\r)\leq J_k(\r_k)$, and the above implies $\overline\Sigma(\r)<J_h(\r_h)$.
\end{proof}

\begin{lemma}\label{limitenh}
Suppose $\r\in\cP_u(\cT^*)$.
Then 
\begin{equation}\label{limitenh1}
\Sigma(\r)=\overline J_\infty(\r):=\lim_{k\to\infty}\overline J_k(\r_k).
\end{equation}
\end{lemma}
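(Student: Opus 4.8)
The plan is to prove $\Sigma(\rho) = \overline J_\infty(\rho) := \lim_{k\to\infty}\overline J_k(\rho_k)$ by sandwiching $\Sigma(\rho)$ between the limit from above and the limit from below, using the monotonicity of $k\mapsto\overline J_k(\rho_k)$ already established in Lemma \ref{strict}, so that the limit $\overline J_\infty(\rho)$ exists in $[-\infty,s(d)]$.

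\medskip\noindent\emph{Upper bound.} By Proposition \ref{proupperenh}, for every $h\in\dN$ we have $\overline\Sigma(\rho)\le \overline J_h(\rho_h)$. Taking the infimum over $h$ (equivalently, letting $h\to\infty$, using that the sequence is non-increasing by Lemma \ref{strict}) gives $\overline\Sigma(\rho)\le \overline J_\infty(\rho)$. In particular $\underline\Sigma(\rho)\le\overline\Sigma(\rho)\le\overline J_\infty(\rho)$.

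\medskip\noindent\emph{Lower bound.} Here I would use the approximation of $\rho$ by unimodular Galton--Watson trees. For each $h$, set $\rho^{(h)} := \UGW_h(\rho_h)$, which is a well-defined unimodular measure on $\cT^*$ by Proposition \ref{prop:UGW} (using that $\rho_h$ is admissible, which holds since $\rho$ is unimodular with finite expected degree — the case $\dE_\rho\deg_G(o)=\infty$ being handled separately, as then $\overline J_h(\rho_h)=-\infty$ for all $h$ by the degree constraint hidden in admissibility and the statement is trivial, cf.\ the conventions around Theorem \ref{th:entropy1}). Whenever $\rho_h\in\cP_h$, Proposition \ref{pro:enh} gives $\Sigma(\rho^{(h)}) = J_h(\rho_h) = \overline J_h(\rho_h)$. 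Now $\rho^{(h)}\weak\rho$ as $h\to\infty$ (this is the standard fact that a unimodular tree measure is the weak limit of its own UGW approximations, used already in the proof of Corollary \ref{cor:entropy1} in Section \ref{CM}). By the upper semicontinuity of $\underline\Sigma$ established in Lemma \ref{le:lscSi},
\begin{equation*}
\underline\Sigma(\rho)\;\ge\;\limsup_{h\to\infty}\underline\Sigma(\rho^{(h)})\;=\;\limsup_{h\to\infty}\Sigma(\rho^{(h)})\;=\;\limsup_{h\to\infty}\overline J_h(\rho_h)\;=\;\overline J_\infty(\rho),
\end{equation*}
where the last equality uses that $\overline J_h(\rho_h)$ is monotone, hence convergent. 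If $\rho_h\notin\cP_h$ for all large $h$ then $\overline J_\infty(\rho)=-\infty$ and there is nothing to prove; otherwise $\rho_h\in\cP_h$ for all $h$ (admissibility and the finite-$\dE\deg\log\deg$ condition of Lemma \ref{le:phgap2} are inherited from $\rho$ once $\dE_\rho\deg\log\deg<\infty$, and if $\dE_\rho\deg\log\deg=\infty$ then $\overline\Sigma(\rho)=-\infty$ by Lemma \ref{le:phgap} and again both sides are $-\infty$).

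\medskip\noindent Combining the two bounds yields $\underline\Sigma(\rho)\ge\overline J_\infty(\rho)\ge\overline\Sigma(\rho)\ge\underline\Sigma(\rho)$, so all are equal; in particular $\Sigma(\rho)$ is well defined and equals $\overline J_\infty(\rho)$. The main obstacle I anticipate is bookkeeping the degenerate cases cleanly — ensuring that the identities $\Sigma(\rho^{(h)})=\overline J_h(\rho_h)$ and the weak convergence $\rho^{(h)}\weak\rho$ are invoked only under hypotheses where they hold, and verifying that in every excluded case both $\Sigma(\rho)$ and $\overline J_\infty(\rho)$ collapse to $-\infty$; the functional-analytic core (semicontinuity plus the explicit entropy formula plus monotonicity) is already in place from the preceding lemmas.
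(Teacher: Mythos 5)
Your proof is correct and follows essentially the same route as the paper: the upper bound is Proposition \ref{proupperenh} plus monotonicity, and the lower bound comes from approximating $\rho$ by $\UGW_h(\rho_h)$ and using $\Sigma(\UGW_h(\rho_h))=J_h(\rho_h)$ together with $\UGW_h(\rho_h)\weak\rho$. The only (harmless) difference is that you close the lower bound by citing the upper semicontinuity of $\underline\Sigma$ (Lemma \ref{le:lscSi}), whereas the paper performs the equivalent diagonal extraction over $(h_n,\veps_n)$ by hand; your handling of the degenerate cases ($\rho_h\notin\cP_h$, infinite $\dE_\rho\deg\log\deg$) is also sound, since there the upper bound alone forces both sides to $-\infty$.
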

\begin{proof}
The limit $\overline J_\infty(\r)$ is well defined by the monotonicity in Lemma \ref{strict}.
The upper bound in Proposition  \ref{proupperenh} shows that $\overline\Sigma(\r)\leq \overline J_\infty(\r)$. Thus,
all we have to prove  is
\begin{equation}\label{limitenh11}
\underline\Sigma(\r)\geq \overline J_\infty(\r).
\end{equation}
We may assume that $\r_k\in\cP_k$ for all $k\in\dN$. 
Fix $\eta >0$ and set $\rho^ h = \UGW_h ( \rho_h)$. 
By the lower bound in Proposition  \ref{pro:enh}, for any $h \in\dN$, $\veps>0$ and $n \geq n_0(\veps,h,\eta)$, 
$$
x (n , h, \veps) := \frac1n\PAR{\log \ABS{ \cG_{n,m} (\rho^h , \veps) } - m \log n } \geq \overline  J_\infty(\rho) - \eta. 
$$ 
By diagonal extraction, there exist sequences $h_n \to \infty$ and $\veps_n \to 0$ such that 
$$
\liminf_{n\to\infty}x (n , h_n, \veps_n)  \geq\overline  J_\infty (\rho) - \eta. 
$$
Since $\rho^ {h_n} \weak \rho$, for any fixed $\veps >0$ and all $n$ large enough, 
$
B ( \rho^{h_n} , \veps_n ) \subset B ( \rho , \veps ).  
$
In particular, $\ABS{ \cG_{n,m} (\rho , \veps) } \geq \ABS{ \cG_{n,m} (\rho^{h_n} , \veps_n ) }$. It follows that 
$
\underline \Si (\rho,\veps)  \geq\overline  J_\infty (\rho)- \eta.
$
The latter holding for all $\veps>0$ and $\eta >0$, we have checked that \eqref{limitenh11} holds.
\end{proof}
All the statements in  Theorem \ref{th:entropy2} are contained in Proposition \ref{pro:enh}, Proposition \ref{proupperenh}, 
Lemma \ref{strict} and Lemma \ref{limitenh}. 
Moreover, Lemma \ref{limitenh} implies that $\Sigma(\r)$ is well defined and equals $\overline J_\infty(\r)$ 
for every $\r\in\cP_u(\cT^*)$, independently of the choice of the sequence $m=m(n)$ with $m/n\to d/2$.
This completes  the proof of Theorem \ref{th:entropy1} and Theorem \ref{th:entropy2} .

\subsection{Proof of Corollary \ref{cor:entropy}}
\label{subsec:corentropy}
In the special case $h = 1$, one has $P\in\cP(\dZ_+)$, and the condition $\sum_{n=0}^\infty nP(n)=d$ implies $H(P)<\infty$. By Proposition \ref{pro:enh} one has $\Sigma(\UGW_1(P)) = J_1(P)$.
Moreover, since $|\cT^*_0|=1$, there exists a unique type $(s,s')\in\cT^*_0\times \cT^*_0$ with $e_P(s,s')= d$ and therefore $H ( \pi_P)=0$, and  
$$
\sum_{(s,s')\in\cT^*_{h-1}\times \cT^*_{h-1}}   \dE_P \log (E_h(s,s')!) = \sum_{n=0}^\infty P(n) \log (n!).$$
It follows that 
\begin{align*}
J_1(P)&=-s(d) -\sum_{n=0}^\infty P(n) \log P(n) - \sum_{n=0}^\infty P(n) \log (n!)\\
&=-s(d)  + d - d \log d - \sum_{n=0}^\infty P(n) \log \frac{P(n)n!}{d^ne^{-d}} = s(d) - H(P\,|\,\POI(d)).
\end{align*}
This ends the proof of Corollary \ref{cor:entropy}. 

\begin{remark}\label{altrem}
{\em Fix $h\in\dN$, and suppose that $\r\in\cP_u(\cT^*)$ is such that $\r_h\in\cP_h$. One can derive the following alternative expression for $J_h(\r_h)$ in terms of relative entropies:
\begin{align}\label{enrel1}
J_h(\r)=s(d) - \sum_{k=1}^h \D_k(\r)\,,
\end{align}
where $\D_1(\r)=  H(\r_1\,|\,\POI(d))$ and, for $k\geq 2$:
\begin{align}\label{enrel2}
\D_k(\r) = 
H(\r_k \,|\,\r_k^*) - \frac{d}2 \, H(\pi_{\r_k}\,|\,\pi_{\r_k^*})\geq 0,
\end{align}
where $\r_k^*:=[\UGW_{k-1}(\r_{k-1})]_k$. 
To prove \eqref{enrel1}, thanks to Corollary \ref{cor:entropy}, it suffices to prove that the increment $J_{k-1}(\r_{k-1}) - J_k(\r_k)$ equals \eqref{enrel2} for $k\geq 2$. This in turn can be checked as follows.

Fix $h\in\dN$, and write $Q=\r_{h+1}$, $Q^*=\r^*_{h+1}$, $P=\r_h$.
Simple manipulations show that 
\begin{align}\label{enh55}
&J_h(P)-J_{h+1}(Q) \\
&\quad =  -\sum_{t} P(t) H(Q(\cdot| t))+\frac{d}2\sum_{(s,s')}\pi_{P}(s,s') H(q(\cdot|s,s'))
-\sum_{(s,s')} \dE_Q \Big[\log \binom{E_h(s,s')}{\{E_{h+1}(t,t')\}}
\Big], \nonumber
\end{align}
where $t\in\cT^*_{h}$ while $(s,s')\in\cT^*_{h-1}\times \cT^*_{h-1}$, we use the multinomial coefficients introduced in \eqref{enah06}, and we define the conditional probability $q(\cdot|s,s')$ on $\cT^*_{h-1}\times \cT^*_{h-1}$ by $\pi_Q(t,t')=\pi_{P}(s,s')q(t,t'|s,s')$. 
Using \eqref{enah06} and \eqref{unimk}, one finds 
\begin{align*}
 H(Q|Q^*)=-\sum_t P(t) H(Q(\cdot| t)) - 
\sum_{(s,s')} \dE_Q \Big[\log \binom{E_h(s,s')}{\{E_{h+1}(t,t')\}}
\Big] +d \sum_{(s,s')}\pi_P(s,s')H(\wP_{s,s'}).
\end{align*}
Therefore, 
\begin{align}\label{enh500}
&J_h(P)-J_{h+1}(Q)
= H(Q|Q^*) + \frac{d}2\sum_{(s,s')}\pi_{P}(s,s') [H(q(\cdot|s,s')) - 2 H(\wP_{s,s'})].
\end{align}
Next observe that  if $q^*(\cdot|s,s'):=\wP_{s,s'}(t)\wP_{s',s}(t')$, then $\pi_{Q^*}(t,t')=\pi_{P}(s,s')q^*(\cdot|s,s')$, see Remark \ref{boh}. Moreover, using 
$$
\sum_{t'\in\cT^*_{h}:\;t'_{h-1}=s'}q(t,t'|s,s') = \wP_{s,s'}(t) = \sum_{t'\in\cT^*_{h}:\;t'_{h-1}=s'}q^*(t,t'|s,s'),
$$
one finds 
$$H(q(\cdot|s,s')) - 2 H(\wP_{s,s'}) = - H(q(\cdot|s,s')|q^*(\cdot|s,s')).$$ It follows that
 \begin{align*}
&\frac{d}2\sum_{(s,s')}\pi_{P}(s,s') H(q(\cdot|s,s')\,|\, q^*(\cdot|s,s'))=\frac12 \sum_{(t,t')} \dE_Q (E_{h+1}(t,t'))\log \frac{\dE_Q (E_{h+1}(t,t'))}{\dE_{\bar \r }(E_{h+1}(t,t'))}
\\
&\qquad\qquad\qquad\qquad\qquad=\frac{d}2
 \sum_{(t,t')} \pi_{Q}(t,t')\log \frac{\pi_{Q}(t,t')}{\pi_{Q^* }(t,t')} = \frac{d}2 H(\pi_Q\,|\,\pi_{Q^*}),
\end{align*}
where $(s,s')\in\cT^*_{h-1}\times \cT^*_{h-1}$, while $(t,t')\in\cT^*_{h}\times \cT^*_{h}$.
From \eqref{enh500} we then obtain the desired conclusion $J_h(P)-J_{h+1}(Q) = \D_{h+1}(\r)$.
Clearly, the monotonicity in Lemma \ref{strict} implies that $\D_{h+1}(\r)\geq 0$. This yields the seemingly nontrivial inequality
$\frac{d}2 H(\pi_Q\,|\,\pi_{Q^*})\leq H(Q|Q^*)$. 
}
\end{remark}

\subsection{Discontinuity of the entropy}

\label{susbec:discentropy}

The aim of this section is to prove that the $\cP(\cG^*) \to [-\infty,\infty)$ map $\rho \mapsto \Si (\rho)$ is discontinuous for the weak topology at $\rho = \UGW_1( P)$ for any finitely supported
$P \in \cP(\dZ_+)$  with $P(0) = P(1) = 0$ and  $P(2) < 1$.

Let $P_1,P_2$ be two probability measures on $\dZ_+$ with finite positive means, say $d_1$ and $d_2$. For $i = 1 , 2$, we set $p_i = d_{\bar i} / ( d_1 + d_2)$, where $\bar 1 = 2 $ , $\bar 2 = 1$. We define $\UGW(P_1,P_2)$ as the law of the rooted tree $(T, o)$ obtained as follows. We first build a rooted multi-type Galton-Watson tree $(\check T,o)$. The vertices can be of type $1$ or of type $2$. The root has type $i$ with probability $p_i$. All offspring of a vertex of type $i$ are of type $\bar i$. Conditioned on being of type $i$, the root has a number of offspring distributed according to $P_i$. Conditioned on being of type $i$, a vertex different from the root has a number of offspring distributed according to the size-biased law $\wP_i$ given by \eqref{eq:defwP}. The tree $(T,o)$ is finally  obtained from $(\check T, o)$ by removing the types.

The distribution of $(\check T, o)$ is unimodular. It implies that $\UGW(P_1,P_2)$ is also unimodular. It can be checked directly from the definition of unimodularity or by proving that it is the local weak limit of bipartite configuration models (they are especially of interest  in coding theory, see e.g.\ Montanari and M\'ezard \cite{MR2518205}).

Now, let $S \subset \dZ_+$ be a finite set and $P$ be a probability measure on $S$. Observe that $\UGW(P,P) = \UGW_1 (P)$ and that if $P_n$ is a sequence of probability measures on $S$ such that $P_n \weak P$ then $\wP_n \to \wP$ and
$$
\UGW(P,P_n) \weak \UGW_1(P).
$$
However, we have the following discontinuity result:
\begin{proposition}\label{prop:discSi}
Assume that $ S \subset \dZ_+ \backslash\{ 0 , 1\}$, $P, P_n\in\cP(S)$, and $P_n \weak P$ as $n\to\infty$. Assume further that $P(2) < 1$ and that  $P_n \ne P$ for all $n$ large enough. Then, 
$$
\LIMSUP_{n\to\infty} \, \Si ( \UGW ( P, P_n) ) <    \Si ( \UGW (P) ) . 
$$
\end{proposition}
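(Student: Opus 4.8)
The plan is to compare the entropy $\Si(\UGW(P,P_n))$ with $\Si(\UGW_1(P))$ by exploiting the formula for $\Si$ on unimodular trees together with the bipartite structure of $\UGW(P_1,P_2)$. First I would identify $\UGW(P_1,P_2)$ as a unimodular Galton--Watson tree in the sense of Section~\ref{sec:UGW}: since all offsprings of a vertex of type $i$ have type $\bar\imath$, the depth-$1$ marginal of $\UGW(P_1,P_2)$ is the mixture $p_1 P_1 + p_2 P_2$, but the relevant structure is visible only at depth $2$, where we see a root with some number $k$ of children, each of which carries its own number of grandchildren. Concretely, I would show that $\UGW(P_1,P_2) = \UGW_2(Q_n)$ where $Q_n := [\UGW(P_1,P_2)]_2 \in \cP(\cT^*_2)$ is admissible; this follows from the consistency relation (Proposition~\ref{prop:UGW}) once one checks that the tree-indexed recursive description of $\UGW(P_1,P_2)$ agrees with the $\UGW_2$ construction applied to its own $2$-neighborhood marginal. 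Granting this, Proposition~\ref{pro:enh} gives $\Si(\UGW(P_1,P_2)) = J_2(Q_n)$, and similarly $\Si(\UGW_1(P)) = J_2([\UGW_1(P)]_2)$ because $\UGW_1(P) = \UGW_2([\UGW_1(P)]_2)$ by consistency.

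Next I would invoke the strict-monotonicity part of Theorem~\ref{th:entropy2} (equivalently Lemma~\ref{strict}): since $P$ has finite support, $\UGW_1(P)$ has a depth-$1$ marginal with finite support, so for $\rho := \UGW(P,P_n)$ one has $\overline J_2(\rho_2) < \overline J_1(\rho_1)$ unless $\rho_2 = [\UGW_1(\rho_1)]_2$. The mean degree of $\rho = \UGW(P,P_n)$ is $d_n := \dE_{\rho}\deg(o)$, which I must be careful about: $d_n$ equals $2 d d_n/(d+d_n)$? — no; rather, a size-biased bookkeeping shows the root has mean degree $p_1 d_1 + p_2 d_2 = 2 d_1 d_2/(d_1+d_2)$, the harmonic-type mean of $d := \dE_P X$ and $d_n := \dE_{P_n}X$. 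Since $d_n \to d$, we have that the mean degree of $\UGW(P,P_n)$ converges to $d$, so the comparison is against $\Si(\UGW_1(P)) = s(d) - H(P\mid \POI(d))$ via Corollary~\ref{cor:entropy}, with only an $o(1)$ drift in the parameter $d$. The heart of the matter is then: (a) $\limsup_n \Si(\UGW(P,P_n)) \le \Si(\UGW_1(P))$, which follows from upper semicontinuity of $\Si$ (Lemma~\ref{le:lscSi}) together with $\UGW(P,P_n) \weak \UGW_1(P)$; and (b) the inequality is \emph{strict}, which is where the real work lies.

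For the strictness in (b), the key point is that $\UGW(P,P_n) \ne \UGW_1(P)$ for $n$ large, and moreover $\UGW(P,P_n)$ is \emph{not} of the form $\UGW_1(Q)$ for any $Q$, because a genuine bipartite two-type tree with $P_n \ne P$ has correlations between the degree of a vertex and the degree of its neighbors that a single-type Galton--Watson tree cannot reproduce. I would make this precise by computing $J_2(Q_n)$ explicitly and comparing with $J_1$ of its depth-$1$ projection. Using the alternative relative-entropy expression of Remark~\ref{altrem},
\begin{equation*}
J_2(Q_n) = s(d_n) - H((Q_n)_1 \mid \POI(d_n)) - \Delta_2,
\qquad \Delta_2 = H(Q_n \mid Q_n^*) - \tfrac{d_n}{2} H(\pi_{Q_n}\mid \pi_{Q_n^*}),
\end{equation*}
where $Q_n^* = [\UGW_1((Q_n)_1)]_2$. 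Since $\Delta_2 \ge 0$, it suffices to show $\Delta_2$ is bounded away from $0$ along the sequence, i.e.\ that the joint law $\pi_{Q_n}$ on pairs of truncated subtrees (here essentially pairs of neighboring degrees, decremented) stays a fixed positive distance from the product-type law $\pi_{Q_n^*}$ dictated by the size-biasing. Concretely, in the bipartite tree a child of the root has degree governed by $\wP_n$ (if the root has type $1$) or $\wP$ (if the root has type $2$), correlated with the root's own degree law ($P$ or $P_n$); in $\UGW_1$ of the same $1$-marginal, neighbor degrees would instead be i.i.d.\ size-biased. The condition $P(2) < 1$ guarantees $P$ (hence also $P_n$ for large $n$) is not concentrated at $2$, so $\wP \ne \delta_{\{1\}}$ and the correlation is nondegenerate; thus $H(\pi_{Q_n}\mid \pi_{Q_n^*})$ stays bounded below by some $c > 0$ independent of $n$, hence $\limsup_n J_2(Q_n) \le \Si(\UGW_1(P)) - c < \Si(\UGW_1(P))$.

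The main obstacle I anticipate is the last step — turning the qualitative statement ``the bipartite tree has correlations that $\UGW_1$ lacks'' into a quantitative, $n$-uniform lower bound on $\Delta_2$. The delicate point is that as $P_n \to P$ the two-type tree $\UGW(P,P_n)$ converges to $\UGW_1(P)$, so $Q_n \to [\UGW_1(P)]_2 = Q^*_\infty$ and also $Q_n^* \to Q^*_\infty$, i.e.\ \emph{both} $Q_n$ and $Q_n^*$ converge to the same limit and the relative entropy $H(Q_n \mid Q_n^*)$ individually tends to zero. So the bound on $\Delta_2$ cannot come from a crude continuity argument; one must show that $Q_n$ and $Q_n^*$ approach $Q^*_\infty$ along genuinely different directions, e.g.\ by a second-order (Fisher-information / $\chi^2$) expansion of $\Delta_2$ in the perturbation parameter $\epsilon_n$ measuring $\|P_n - P\|$, showing the leading term is a strictly positive quadratic form. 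Because $\Si$ is only upper semicontinuous, the cleanest route may instead be: fix any subsequence along which $\Si(\UGW(P,P_n))$ converges to its $\limsup$; if that $\limsup$ equalled $\Si(\UGW_1(P))$, then since $J_2$ is continuous on measures with uniformly bounded support (using Lemma~\ref{le:contH} and the explicit formula~\eqref{enh0}), one would need $\Delta_2(Q_n) \to 0$, which forces (by the entropy-positivity characterization of equality, Lemma~\ref{strict}) $Q_n$ to coincide with $Q_n^*$ up to a vanishing error — but $Q_n = Q_n^*$ exactly would mean $\UGW(P,P_n)$ and $\UGW_1$ of its $1$-marginal agree at depth $2$, which a direct computation of the two-step transition rules shows is impossible once $P_n \ne P$ and $\wP \ne \delta_{\{1\}}$. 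Making ``up to a vanishing error'' rigorous is precisely the quantitative continuity estimate that will require the most care.
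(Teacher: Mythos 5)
Your plan has a fatal structural gap that you half-identify in your last paragraph but do not resolve. First, the identity $\UGW(P,P_n)=\UGW_2(Q_n)$ is unjustified and in general false: in the two-type tree the offspring law of a vertex at depth $k$ is governed by the hidden type of the root through the parity of $k$, and when $P$ and $P_n$ have overlapping supports this type is not a function of any finite-depth neighborhood, so $\UGW(P,P_n)$ is not of the form $\UGW_h(\cdot)$ for any finite $h$. That alone would not be fatal, since Theorem \ref{th:entropy2} gives the upper bound $\Si(\UGW(P,P_n))\leq \overline J_2(Q_n)$ without the identity. What is fatal is the quantitative step: you need the decrement $\overline J_1((Q_n)_1)-\overline J_2(Q_n)$ to be bounded below by some $c>0$ \emph{uniformly in $n$}, but, as you yourself observe, $Q_n$ and $Q_n^*$ both converge to $[\UGW_1(P)]_2$; since all these measures are supported on trees with degrees in the fixed finite set $S$, the functionals $J_1,J_2$ are continuous there, and the decrement converges to its value at $[\UGW_1(P)]_2$, which is $0$ by consistency. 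A second-order expansion only confirms that the decrement is $O(\|P_n-P\|^2)\to 0$, and your subsequence argument is circular: the vanishing of the depth-$2$ decrement is automatic and produces no contradiction. The same happens at every fixed depth $h$. The discontinuity of $\Si$ is an interchange-of-limits phenomenon in $\Si=\lim_h\overline J_h(\rho_h)$: the total decrement $\overline J_1-\lim_h \overline J_h$ stays bounded below even though each individual increment vanishes as $n\to\infty$, so no argument carried out at a fixed finite depth can detect the gap.

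The paper's proof is of a completely different nature and works, in effect, at depth $h=h(n)\to\infty$. Lemma \ref{le:discSi} shows that any graph $G$ with $U(G)$ close to $\UGW(P_1,P_2)$, $P_1\neq P_2$, must be approximately bipartite: the empirical offspring distribution on the sphere $\partial B(v,2h)$ concentrates (using the Seneta--Heyde growth theorem for the sphere sizes and the law of large numbers) near $\wP_1$ or $\wP_2$ according to the hidden type of $v$, so the two classes can be reconstructed from deep neighborhoods up to an $o(1)$ fraction of vertices. One then counts approximately bipartite graphs directly; the bipartiteness constraint reduces the graph count by a net factor $e^{-n(\frac d2-1)\log 2}$, yielding $\limsup_n\Si(\UGW(P,P_n))\leq \Si(\UGW_1(P))-(\frac d2-1)\log 2$, and the hypotheses $P(0)=P(1)=0$, $P(2)<1$ enter exactly to guarantee $d>2$, i.e.\ that this deficit is strictly positive. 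To salvage your approach you would have to quantify how the type information survives to depth growing with $n$, which is essentially what this reconstruction-and-counting argument does.
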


The proposition is a consequence of the following upper bound on 
$
\Si (\UGW(P,Q)) 
$
\begin{lemma}\label{le:discSi}
Let $ S \subset \dZ_+ \backslash\{ 0 , 1\}$ be a finite set and  $P_1 \ne P_2$ be two probability measures on $S$. We have 
$$
 \Si ( \UGW ( P_1, P_2) )  \leq H((p_1,p_2)) +\sum_{i=1}^2p_i H(P_i) + \frac{d}{2} \log\PAR{ \frac{d}{2}}  - \frac{d}{2}   -\sum_{i=1}^2 p_i \dE_{P_i} \log (D!),
$$
where $D$ is the random variable with law $P_1$, $P_2$ respectively, and $H((p_1,p_2))=-\sum_{i=1}^2p_i\log p_i$.
\end{lemma}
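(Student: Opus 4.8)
The plan is to bound $\Sigma(\UGW(P_1,P_2))$ from above by directly counting graphs $G\in\cG_{n,m}$ whose neighborhood distribution is close to $\rho=\UGW(P_1,P_2)$, using that $\rho$ is unimodular and supported on trees. Since $P_1,P_2$ are supported on $S\subset\dZ_+\setminus\{0,1\}$, the root degree under $\rho$ is at least $2$, and a finite-support measure means we can work with the entropy formula $J_h$ from Theorem \ref{th:entropy2}; but rather than compute $J_h(\rho_h)$ directly it is cleaner to exploit the bipartite (two-type) structure. The key observation is that if $U(G_n)\weak\rho$ with $\rho=\UGW(P_1,P_2)$ and $P_1\ne P_2$, then the vertices of $G_n$ split (up to $o(n)$ errors) into two classes $V_1,V_2$ according to the type of the root in the limiting local picture, with $|V_i|/n\to p_i$, and every edge of $G_n$ joins $V_1$ to $V_2$. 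This "two-coloring" of vertices is asymptotically forced because types propagate along edges and $P_1\ne P_2$ lets one read off the type of a vertex from a bounded neighborhood with high probability.

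First I would make the coloring precise: fix $\veps>0$, and let $A_{n,m}(\rho,\veps)$ be the set of $G\in\cG_{n,m}$ with $U(G)_1\in B(\rho_1,\veps)$ together with the constraint that, after assigning each vertex $v$ a label $\ell(v)\in\{1,2,\star\}$ by a bounded-depth majority/type-detection rule, at most $\veps n$ vertices get label $\star$, the fraction with label $i$ is within $\veps$ of $p_i$, and all but $\veps n$ edges respect the bipartition. Any $G$ with $U(G)$ close to $\rho$ lies in this set for $\veps$ small (arguing as in the proof of Lemma \ref{upperenh}, using unimodularity of $\rho$ to transfer the local statement to a global counting statement). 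Then I would count: choose the labels ($\exp(nH((p_1,p_2))+o(n))$ ways by Stirling), choose the degree sequence inside each class consistent with $U(G)_1$ being close to $\rho_1$ restricted to type-$i$ roots (which contributes $\exp(\sum_i p_i H(P_i)n + o(n))$ by the same integer-partition argument as in \eqref{eq:integerpartition}), and then count bipartite graphs with given degree sequences on the two sides — here the number of such graphs is, to leading exponential order, $\frac{M!}{\prod_v d(v)!}$ where $M=\sum_{v\in V_1}d(v)=\sum_{v\in V_2}d(v)\sim \frac{d}{2}n$ is the number of edges, by the bipartite configuration model of Lemma \ref{le:bipmatch} (after subtracting $\log n!$ for the edge-labels one picks up the terms $\frac d2\log\frac d2-\frac d2$ from Stirling). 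The remaining product $\prod_v d(v)!$ gives $-\sum_i p_i\dE_{P_i}\log(D!)\,n$. Collecting and taking $m\log n$ out of $\log|A_{n,m}|$ and then $\veps\to0$ yields the bound in Lemma \ref{le:discSi}.

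Given Lemma \ref{le:discSi}, I would deduce Proposition \ref{prop:discSi} as follows. Apply Corollary \ref{cor:entropy} (via $\UGW(P,P)=\UGW_1(P)$) to get $\Sigma(\UGW(P))=s(d)-H(P\,|\,\POI(d))$, and rewrite the right-hand side of Lemma \ref{le:discSi} for $P_1=P$, $P_2=P_n$: it equals $H((p_1^{(n)},p_2^{(n)}))+p_1^{(n)}H(P)+p_2^{(n)}H(P_n)+\frac{d_n}{2}\log\frac{d_n}{2}-\frac{d_n}{2}-p_1^{(n)}\dE_P\log(D!)-p_2^{(n)}\dE_{P_n}\log(D!)$, where $d_n$ is the appropriate (harmonic-type) mean degree. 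As $n\to\infty$, $P_n\weak P$ on the finite set $S$ forces $P_n\to P$ pointwise, hence $p_i^{(n)}\to\tfrac12$, $d_n\to d$, and each term converges; the limit is $H((\tfrac12,\tfrac12))+H(P)+\frac d2\log\frac d2-\frac d2-\dE_P\log(D!)=\log 2+s(d)-H(P\,|\,\POI(d))$... — wait, I must be careful: I need the limit to be \emph{strictly less} than $\Sigma(\UGW(P))=s(d)-H(P\,|\,\POI(d))$, so I should not have the spurious $\log 2$; the point is that when $P_1=P_2=P$ exactly, the true bipartite-vs-unconstrained count loses the factor $2^{M}$ (each unoriented edge can be a $V_1\to V_2$ or $V_2\to V_1$ pairing), i.e. the sharp bound for $P_1=P_2$ carries an extra $-\frac d2\log 2 + \log 2\cdot(\text{something})$ that exactly cancels $H((\tfrac12,\tfrac12))$, whereas for $P_1\ne P_2$ the coloring is forced and no such cancellation occurs — so the genuine gap comes from comparing the \emph{forced} coloring (cost $H((p_1,p_2))$, which is maximized at $\log 2$) against the \emph{free} $\pm$ choice available only in the symmetric case. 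Thus the main obstacle, and the step requiring the most care, is the matching of constants: showing that the upper bound in Lemma \ref{le:discSi} evaluated in the limit $P_n\to P$ is strictly below $\Sigma(\UGW(P))$, which ultimately rests on the strict inequality $H(\pi_{\rho_2})<H(\pi_{\rho_2^*})$ (equivalently $\rho_2\ne\rho_2^*=[\UGW_1(P)]_2$) guaranteed by the hypotheses $P(2)<1$ and $P_n\ne P$, via the strict monotonicity of $\overline J_k$ in Lemma \ref{strict}. I would therefore actually prove Lemma \ref{le:discSi} with the sharper error tracking so that the $P_1\to P_2$ limit of its right side exceeds $\Sigma(\UGW_1(P))$ only by a quantity controlled by $H((p_1,p_2))-\log 2\to 0$ together with the relative-entropy defect, and then invoke Lemma \ref{strict} to see the defect is bounded away from $0$ along the sequence.
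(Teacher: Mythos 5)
Your proof of the lemma follows essentially the same route as the paper's: detect the two types by a bounded-depth local rule (the paper uses the empirical offspring distribution on $\partial B(v,2h)$, whose convergence to $\wP_{c(v)}$ rests on $0,1\notin S$ and a Seneta--Heyde growth estimate), then bound $|\cG_{n,m}(\rho,\veps)|$ by the product of a label multinomial, the ab-degree multinomials within each class, the bipartite configuration count $m_\circ!/\prod_v d(v)!$, and a binomial factor for the $O(\delta n)$ edges that fail to respect the bipartition. The agonizing in your second paragraph over a ``spurious $\log 2$'' is unnecessary both for the lemma and for Proposition \ref{prop:discSi}: the term $\frac d2\log\frac d2$ in the bound already sits $\frac d2\log 2$ below the $\frac d2\log d$ appearing in $\Sigma(\UGW_1(P))=H(P)-\dE_P\log(D!)+\frac d2\log d-\frac d2$, so the limit of the right-hand side as $P_n\to P$ is exactly $\Sigma(\UGW_1(P))-\PAR{\frac d2-1}\log 2$, which is strictly smaller since $P(0)=P(1)=0$ and $P(2)<1$ force $d>2$; no appeal to Lemma \ref{strict} or to any relative-entropy defect is needed.
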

The idea will be to prove that if $U(G_n) \weak \UGW(P_1,P_2)$, then $G_n$ needs to be approximately bipartite. The constraint of being bipartite will be costly in terms of entropy. 

\begin{proof}[Proof of Proposition \ref{prop:discSi}]
Using Lemma \ref{le:discSi}, with $P_1 = P_n$ and $P_2 = P$, we may upper bound  $\Si ( \UGW ( P, P_n) ) $ by 
\begin{align*}
H((p_1(n),p_2(n))) + &p_1 (n) H(P_n) + p_2(n)H(P)  + \frac{d}{2} \log\PAR{ \frac{d}{2}}  -  \frac{d}{2} \\ & \qquad  - p_1 (n)\dE_{P_n} \log (D!)  -  p_2(n) \dE_{P} \log (D!)  . 
\end{align*}
Since $P_n$ and $P$ have support in the finite set $S$, $p_1(n) \to 1/2$, $p_2(n) \to 1/2$, $H(P_n) \to H(P)$ and $\dE_{P_n} \log (D!) \to   \dE_{P} \log (D!)   $. So finally 
\begin{align*}
\LIMSUP_{n\to\infty} \, \Si ( \UGW ( P, P_n) )  & \leq  \log (2) + H(P) +  \frac{d}{2} \log\PAR{ \frac{d}{2}}  - \frac{d}{2}  -  \dE_{P} \log (D!) \\
& =  \Si( \UGW_1(P) ) -\PAR{  \frac d 2 - 1} \log (2).   
\end{align*}
Since $P(0) = P(1) = 0 $ and $P(2) < 1$, we have $d > 2$. \end{proof}

\begin{proof}[Proof of Lemma \ref{le:discSi}]
Let us start by a remark. We denote by $d_i$ and $\hat d_i$ the mean of $P_i$ and $\wP_i$. Since $\{0,1\} \notin S$, the support of $\wP_i$ is included in $\{1, \cdots, \theta\}$ for some $\theta$. It follows that $\hat d_i \geq 1$. Also, $\hat d_i = 1$ implies that $\wP_i = \delta_1$, hence $P_i = \delta_2$. Since $P_1 \ne P_2$, we have that either $\wP_1$ or $\wP_2$ is different from $\delta_1$. In particular,
$$
\alpha = \sqrt{\hat d_1 \hat d_2} > 1.
$$

Let $(T,o)$ be a rooted tree with distribution $\rho = \UGW(P_1,P_2)$ obtained from a multi-type 
rooted tree $(\check T , o)$ as above whose law is denoted by $\check \rho$. We will assign to all vertices  of $T$ a type $\{a,b\}$: type $a$ (resp. $b$) is supposed to be a good approximation for type $1$ (resp. $2$) in $\check T$. 

Let $\cA_1 \cup \cA_2$ be a partition of $\cP( \dZ_+)$ such that $\wP_i$ is in the interior of $\cA_i$ (it is possible since $\wP_1 \ne \wP_2$). Now, for $v \in V(T)$ and integer $h \geq 1$, $\partial B(v,h)$ is the set of vertices at distance $h$ from $v$ in $T$. The assumption $\{0,1\} \notin S$ implies that $\partial B(V,h)$ is not empty. Hence, we define 
$$
\mu^h_v = \frac{1}{|\partial B (v , 2h)|} \sum_{u \in \partial B (v , 2h)} \delta_{ \deg_T (u) -1 }.  
$$
Moreover $\alpha > 1$ implies that $\check \rho$-a.s. 
\begin{equation}\label{eq:logB}
\lim_{h \to \infty} \frac 1 h \log | \partial B(o,h)| = \log \alpha   > 0.
\end{equation}
Indeed, we consider a tree $T'$ whose vertex set are the vertices at even distance (in $T$) from the root. $T'$ is obtained by connecting vertices at distance $2h$  from the root to their grandchildren (the offspring of its own offspring), at distance $2(h+1)$.  Then, by construction, all vertices have the same type in $T'$. Moreover, conditioned on the root being of type $i$, $T'$ is a Galton-Watson tree where the root has offspring distribution $Q_i$, the distribution of $\sum_{k=1}^{N} N_k$, where $N$ has law $ P_i$, independent of $(N_k)_k$ an i.i.d. sequence with law $\widehat P_{2}$ if $i =1$ and $\widehat P_1$ if $i=2$, and any other vertex in $T'$ has offspring distribution $Q'_i$, the distribution of $\sum_{k=1}^{\widehat N} N_k$, where $\widehat N $ has law $\widehat P_i$, independent of $(N_k)_k$ as above. By construction, $Q'_i$ has mean $\alpha^ 2 = \hat d_1 \hat d_2$ and $T'$ has extinction probability $0$.   Then \eqref{eq:logB} is a consequence of the Seneta-Heyde Theorem \cite{MR0234530,MR0254929}.  

Also, conditioned on the root being of type $i$, all vertices $u \in B(o,2h)$ are of type $i$. It follows that, conditioned on  $|\partial B (o , 2h)|$ the vector $(\deg_T (u) -1)_{u \in \partial B (o , 2h)}$ is i.i.d. with common law $\wP_i$. Hence, the strong law of large numbers implies that, $\check  \rho$-a.s.  
$$
\mu^ h _o \weak \wP_{c(o)}.
$$ 
where $c(o)$ is the type of the root.

In the sequel, we fix $\delta >0$ and take $h$ large enough such that 
$$
\min_{i = 1, 2 } \dP_{\check \rho} ( \mu^ h _o \in \cA_{i} \,|\, c(o) = i ) \geq 1 - \delta. 
$$

Now, to a locally finite graph $G = (V,E)$, we attach to each vertex $v \in V$ the type $\o(v) = a$ (resp. $\o(v) = b$) if $\partial B (v , 2h)$ is not empty, $\deg(v) \in S$ and   $\mu^ h_v \in \cA_1$ (resp. $\mu^h_v \in \cA_2$). Otherwise, we set set $\o(v) = \bullet$.

Let $\bar a  = b$, $\bar b = a$, $\theta = \max ( s \in S)$ and $\Theta = \{0,\cdots,\theta\}$. We also attach on the vertices of $G$ a new type in the set $\cR = \{ \bullet , (a,k) , (b,k) : k \in \Theta \}$ defined, for $c \in \{a,b\}$, by $\tau(u) = (c,k)$ if 
\begin{enumerate}[(i)]
\item
$\omega(u) = c$ ;
\item 
$\sum_{v \stackrel{G}{\sim} u} \IND( \omega(v) = \bar c )  = k$.
\end{enumerate}
Otherwise,  $ \omega (u) = \bullet$ and we also set $\tau(u) = \bullet$.  In words: a vertex has $\tau$-type $(c,k)$  if its $\o$-type is $c$ and it has exactly  $k$ of its neighbors having $\o$-type $ \bar c $. We may call this scalar $k$ the $ab$-degree of the vertex.

By construction, $\dP_{\check \rho} ( \o(o) = c(o) ) \geq 1 -  \delta$. Also, using the union bound and unimodularity,
\begin{align*}
\dP_{\check \rho} ( \exists \, v \stackrel{T}{\sim} o : \omega (v) \ne c(v) ) & \leq \dE_{\check \rho} \sum_{v} \IND ( v \stackrel{T}{\sim} o ) \IND ( \o(v) \ne c(v) ) \\
 & =    \dE_{\check \rho} \sum_{v} \IND (v \stackrel{T}{\sim} o  ) \IND(    \o(o) \ne c(o) ) \\
 & \leq  \theta\, \dP_{\check \rho} \PAR{\o(o) \ne c(o)}  \leq \theta  \,\delta.
\end{align*}
We thus have proved that 
$$
\dP_{\check \rho} (\tau(o) =  (c(o),\deg (o)) ) \geq 1 - (\theta +1) \delta.   
$$
It follows that, for any $k \in S$, $c\in \{a,b\}$, 
\begin{equation}\label{eq:reconstypes}
\ABS{ \dP_{\rho} ( \omega (o) = c ) -p_i } \leq \delta \quad \hbox{ and } \quad  \ABS{ \dP_{\rho} (  \tau(o) = (c,k) ) - p_i P_i(k) } \leq (\theta+1) \delta, 
\end{equation}
where $i= 1$ if $c = a$ and $i=2$ if $c = b$. Equation \eqref{eq:reconstypes} shows that we can nearly reconstruct the types and the bipartite structure from $2h$-neighborhoods.

Also, by construction, the maps $(G,o) \to \o(o)$ and $(G,o) \to \tau(o)$ are continuous for the local topology.  Hence, there exists $\eta(\veps)>0$ with $\eta (\veps) \to 0$, $\veps\to 0$, such that $\mu \in B(\rho,\veps)$ implies that 
\begin{align*}
 \max_{c \in \{a,b,\bullet\}} \ABS{  \dP_{\mu} ( \o ( o) = c ) -    \dP_{\rho} ( \o ( o) = c ) }  \leq \eta(\veps), \quad \hbox{and}  \quad \max_{r  \in \cR } \ABS{  \dP_{\mu}\PAR {  \tau(o)  = r } -    \dP_{\rho} \PAR{   \tau(o) =  r   }  }\leq \eta(\veps).
\end{align*}
For all $\veps \leq \veps( \delta)$ small enough, $\eta (\veps) \leq \delta$.

All ingredients are now in order. Consider a sequence $m = m
(n)$ such that $m(n)/n \to d /2$ where $d = 2 p_1 d_1 = 2 p_2 d_2 = 2 d_1 d_2 / (d_1 + d_2)$. Let $G_n \in \cG_{n,m} ( \rho , \veps)$ with $\veps \leq \veps(\delta)$. For $c \in \{a,b,\bullet\}$ and $r \in \cR$, we set 
\begin{align*}
n_c = \sum_{ v = 1 }^ n \IND ( \omega(v) = c )  \quad \hbox{and} \quad N_r = \sum_{ v = 1 }^ n  \IND (  \tau(v) = r ) . 
\end{align*}
From what precedes and \eqref{eq:reconstypes}, for $c \in \{a,b\}$ and $k \in \Theta$, 
\begin{equation}\label{eq:constrintpart}
\ABS{ n_c - n p_i } \leq 2 \delta n   \quad \hbox{and} \quad  \ABS{ N_{(c,k)} - n p_i  P_i(k) } \leq 2(\theta +1) \delta n,
\end{equation}
where $i = 1$ if $c =a$ and $i = 2$ if $c = b$.  We notice also that $(n_a,n_b,n_{\bullet})$ is an integer partition of $n$ of length $3$ and $(n_{r})_{ r \in \cR}$ is an integer partition of length $|\cR| = 2(\theta +1) +1$.

We now compute an upper bound for $| \cG_{n,m} ( \rho , \veps)|$. Fix $\bn = ((n_c)_{c\in \{a,b\}},(N_r)_{r \in \cR})$. We denote by $A(\bn)$ the set of vertex-labeled graphs $G = ([n],E,\omega',\tau')$ such that for any $c \in \{a,b\}$, $r\in \cR$ and $v\in [n]$, 
\begin{enumerate}[(i)]
\item $\omega'(v) \in \{a,b,\bullet\}$ and $\tau'(v) \in \cR$ ; 
\item $\tau'(v) = (c,k)$  iif $\o'(v) = c$ and $\sum_{ u \stackrel{G}{\sim} v} \IND( \o' (u) = \bar c ) = k$ ;
\item  $n_c = \sum_v \IND ( \o' (v) = c )$ and $N_r = \sum_{ v = 1 }^ n \IND ( \tau'(v) = r )$. 
\end{enumerate}

From what precedes, 
$$
| \cG_{n,m} ( \rho , \veps)| \leq n^ 2 n^{|\cR|-1} \max_{\bn} |A(\bn)|, 
$$
where the maximum is over all pairs of integer partitions $((n_c)_{c\in \{a,b,\bullet\}},(N_r)_{r \in \cR})$ satisfying \eqref{eq:constrintpart}.

We set 
$$
 m_{\circ} =  \sum_{k \in \Theta} k N_{(a,k)} = \sum_{k \in \Theta} k  N_{(b,k)} \; \hbox{ and }\; m_{\bullet} = m - m_{\circ}.  
$$
In words, $m_{\circ}$ is the number of $ab$-edges (i.e.\ adjacent to a vertex of $\o'$-type $a$ and a vertex of $\o'$-type $b$), $m_{\bullet}$ counts all the other edges. Summing \eqref{eq:constrintpart} over $c \in \{a,b\}$, $k \in \Theta$, yields 
$$
n_{\bullet} = n -n_a - n_b \leq 4 n \delta,
$$
and 
$$
\Big| m_{\circ} - \frac{n d}{2} \Big|= \Big| \sum_{k \in \Theta} \PAR{ k N_{(a,k)} - n p_1 k  P_1 (k) }\Big|   \leq  2\theta( \theta +1)^2 \delta n = O ( \delta n ). 
$$
Since $m = m_{\bullet} + m_{\circ} = nd /2 + o(n)$. It follows that 
$$
m_{\bullet} = O ( \delta n) + o(n),
$$
where $O( \cdot) $ depends only on $\theta$. 

We find 
$$
|A(  \bn) | \leq { n \choose (n_a,n_b,n_\bullet) } { n_a \choose (N_{(a,k)})_{k \in \Theta} } { n_b \choose (N_{(b,k)})_{k \in \Theta} } \frac{ m_{\circ} !}{\prod_{k\in \Theta} ( k !)^{N_{(a,k)} + N_{(b,k)}}  } { \frac{n (n -1) }{2} \choose m_{\bullet}},
$$
where: the first term counts the number of ways to partition $[n]$ into three blocks of sizes $n_a,n_b$ and $n_\bullet$; the second and third terms subdivide each of the blocks in terms of the $ab$-degrees of the vertices; the fourth term upper bounds the number of ways to realize the $ab$-degree sequence (reasoning as in Lemma \ref{le:bipmatch}); the last term bounds the number of ways to put the remaining $m_{\bullet}$ edges.  

We set $p = (p_a,p_b,p_{\bullet})$ with $p_c = n_c / n$ and for $r = (c,k) \in \cR$,  $P_c (k) = N_{(c,k)} / n_c$. Using Stirling's approximation, we obtain 
\begin{align*}
\log |A(  \bn) |   &\leq    n H(p) + n p_a H(P_a) + n p_b H(P_b)   + m_{\circ}  \log n + m_{\circ} \log\PAR{ \frac{m_{\circ}}{n}}  - m_{\circ} \\
&   \quad - np_a \dE_{P_a} \log (D!)  -  np_b \dE_{P_b} \log (D!) + m_{\bullet} \log n - m_{\bullet} \log \PAR{ \frac{2 m_{\bullet} }{n(n-1)}   } + m_{\bullet}  + o(n),
\end{align*}
where $o(\cdot)$ depends only on $\theta$. Using our estimates in terms of $\delta$, we get 
\begin{align*}
\log |A(  \bn) | - m \log n  &\leq    n H((p_1,p_2)) + n p_1 H(P_1) + n p_2 H(P_2)  + \frac{nd}{2} \log\PAR{ \frac{d}{2}}  - \frac{nd}{2}  \\
&   \quad - np_1 \dE_{P_1} \log (D!)  - n p_2 \dE_{P_2} \log (D!)  + O ( n \delta \log \delta^{-1} ) + o(n).
\end{align*}
Letting $n \to \infty$ and then $\delta \to 0$, the lemma follows. 
\end{proof}

\section{Large deviation principles}

\label{LD}

\subsection{Proof of Theorem \ref{th:LDPCM}} \label{subsec:LDPCM}
Fix a sequence $\bd=\bd^{(n)}$ as in (C1)-(C3), set $P_n := \frac 1 n \sum_{i=1} ^ n \delta_{d(i)}$.
The measure $P_n\in \cP(\dZ_+)$
may be viewed as a measure on rooted graphs with depth $1$, i.e.\ $\cG_1^*$, by 
assigning probability zero to any $g\in\cG_1^*\setminus\cT_1^*$, and by assigning the weight $P_n(k)$ to the unlabeled star with $k$ neighbors (rooted at the center of the star). Define
$$
m = \frac12\sum_{i=1}^ n d_i (n),
$$
so that $m/n\to d/2$ as $n\to\infty$, and define the set 
$$ 
\cG_{P_n} = \{ G \in \cG_{n,m} : U(G_n)_1 = P_n \}.
$$
Each element of $\cG( \bd_n)$ is isomorphic to exactly $n(\bd)$ graphs in $\cG_{P_n}$, i.e.\ $n(\bd)|\cG( \bd)|=|\cG_{P_n}|$, where 
$n(\bd)$ denotes the number of distinct vectors $(d(\pi_1),\dots,d(\pi_n))$ as 
$\pi:[n]\mapsto[n]$ ranges over permutations of the vertex labels. Since $U(G)$ is invariant under isomorphisms, Theorem \ref{th:LDPCM} is equivalent to the same statement where $G_n$ is a random graph uniformly distributed 
in  $\cG_{P_n}$ rather than in $\cG( \bd)$. Thus, for the rest of this proof $G_n$ will denote a uniform graph in $\cG_{P_n}$.

Since $U(G_n)$ is unimodular, we may restrict to the closed subspace $\cP_u(\cG^*)$. 
Let $\cK \subset \cP_u(\cG^*)$ denote the  compact set of unimodular probability measures supported by graphs with degree bounded by $\theta$. 
Unimodularity implies that 
$\r\in\cK$ is equivalent to  $\r$ being supported by graphs such that the degree at the root is bounded by $\theta$. 
By construction, $U(G_n)\in\cK$ and $P\in\cK$.  Therefore, if $\r\in\cP_u(\cG^*)$ is such that $\r_1=P$, then $\r\in\cK$. 
From general principles, see e.g.\ \cite[Ch.\ 4]{dembo}, the theorem follows if we prove that: (i) for any $\rho \in \cK$ with $\rho_1 = P$, $\d>0$,
\begin{equation}
\label{lowerb1}
 \liminf_{n \to \infty} \frac 1 n \log \dP ( U(G_n) \in B(\rho,\delta) ) \geq  \Sigma(\rho)- \Sigma(\UGW_1(P)) ;
\end{equation}
and (ii)  
for any $\rho \in \cK$ 
\begin{equation}
\label{upperb1}
\lim_{\veps \downarrow 0}  \limsup_{n \to \infty} \frac 1 n \log \dP ( U(G_n) \in B(\rho,\veps) )  \leq \left\{ \begin{array}{ll}
\Sigma(\rho) - \Sigma(\UGW_1(P)) & \hbox{ if $\rho_1 = P$}, \\
- \infty & \hbox{ otherwise.}
\end{array}\right.
\end{equation}
To prove the lower bound \eqref{lowerb1}, write
\begin{equation}
\label{erb1}
\dP ( U(G_n) \in B(\rho,\delta) )  = \frac { |\{ G \in \cG_{n,m} : U(G_n)_1 = P_n ,\; U(G_n) \in B ( \rho, \delta)\}| } { |\{ G \in \cG_{n,m} : U(G_n)_1 = P_n  \}| }.
\end{equation}
As a consequence of \eqref{upperenh2}
$$
 \frac 1 n \PAR{ \log  |\{ G \in \cG_{n,m} : U(G_n)_1 = P_n\}  | - m \log n}\leq J_1(P) + o(1).
$$
On the other hand, the lower bound in Proposition \ref{pro:enh} proves that for fixed $\d>0$, one has 
$$
 \frac 1 n \PAR{ \log  |\{ G \in \cG_{n,m} : U(G_n)_1 = P_n ,\; U(G_n) \in B ( \rho, \delta)\}  | - m \log n}\geq J_h(\r_h) + o(1)
$$
for all $h$ large enough. From Theorem \ref{th:entropy2} one has $ \Sigma(\rho)=\lim_{h\to\infty} J_h(\r_h)$, and $\Sigma(\UGW_1(P))=J_1(P)$, and \eqref{lowerb1} follows. 

We turn to the proof of the upper bound \eqref{upperb1}. We start with the case $\rho_1 \ne P$. For $\delta > 0$, consider the closure, say $F(\delta)$,  of the probability measures $\rho\in\cK$ such that $d_{TV} (\rho_1 , P) \leq \delta$. For all $n$ large enough, $U(G_n) \in F(\delta)$, since $U(G_n)_1=P_n\weak P$. If $\rho_1 \ne P$, then 
$\rho \notin F(\delta)$ for some $\delta >0$, and 
$ \dP ( U(G_n) \in B ( \rho , \veps ) ) = 0$, 
for all $\veps $ small enough and $n$ large enough. It follows that  \eqref{upperb1} is $-\infty$ in this case.
Suppose now that $\rho_1 = P$. For the upper bound  one may drop the constraint $U(G_n)_1=P_n$ in the numerator of \eqref{erb1}. Then, using the lower bound in Proposition \ref{pro:enh} for the denominator and Theorem \ref{th:entropy2} for the numerator, one has the desired estimate. 

\begin{remark}\label{re:LDPCM}
The result of Theorem \ref{th:LDPCM} can be extended with no difficulty to the case  where $G_n$ is uniformly distributed in the set of all graphs $G$ with vertex set $[n]$ satisfying $U(G)_h=P_n$, where $P_n$ is supported on some fixed set $\D=\{t_1,\dots,t_r\}\subset \cT^*_h$ for all $n$, and such that 
$P_n\weak P$ for some admissible $P$. Theorem \ref{th:LDPCM} is the special case $h=1$.
With the same proof, for any fixed $h\in\dN$, one obtains that $U(G_n)$ satisfies the large deviation principle with speed $n$ and good rate function $I(\r)=J_h(P)-\Sigma(\r)$ if $\r_h=P$, and $I(\r)=+\infty$ otherwise.   
\end{remark}

\subsection{Proof of Theorem \ref{th:LDPER1}}

We start with a proof of exponential tightness.
Let $c \geq 1$ and let $G_n$ be a random graph sampled uniformly on $\cG_{n,m}$, where $m=m(n)$ is an arbitrary sequence satisfying 
$$
\frac { m(n)}{n} \leq \frac c 2.
$$
The random probability measure $\rho_n:= U ( G_n) $ is an element of $\cP_{u} ( \cG^*) $.

\begin{lemma}\label{le:exptight}
The sequence of random variables $\rho_n$ is exponentially tight in $\cP_{u}  ( \cG^*)$, i.e.\
for any $z\geq1$, there exists a compact set $\Pi_z\subset \cP_{u} ( \cG^*)$ such that 
$$
\limsup_{n\to\infty}\frac1n \log \dP(U(G_n)\notin\Pi_z)\leq -z.
$$ 
\end{lemma}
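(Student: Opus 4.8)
The plan is to combine the compactness criterion of Lemma~\ref{le:tightLWC} with a Chernoff-type estimate for the number of edges incident to a set of vertices. Fix $z\geq 1$. By Lemma~\ref{le:tightLWC}, for every continuous increasing $\delta:[0,1]\to\dR_+$ with $\delta(0)=0$ there is a compact set $\Pi(\delta)\subset\cP_u(\wcG^*)$ such that $U(G)\in\Pi(\delta)$ whenever the finite graph $G=(V,E)$ satisfies $\deg_G(S)\leq |V|\,\delta(|S|/|V|)$ for all $S\subset V$. Since $\cG^*$ is closed in $\wcG^*$ (simpleness is a countable intersection of clopen, finitely determined conditions) and $U(G_n)$ is always supported on $\cG^*$, the set $\Pi_z:=\Pi(\delta)\cap\cP_u(\cG^*)$ is a compact subset of $\cP_u(\cG^*)$. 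Hence the lemma will follow once I produce $\delta=\delta_z$ as above for which
\[
\limsup_{n\to\infty}\frac1n\log\dP\PAR{\exists\,S\subset[n]:\ \deg_{G_n}(S)>n\,\delta_z\PAR{|S|/n}}\leq -z .
\]

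First I would estimate, for a fixed $S$ with $|S|=k$, the probability that $\deg_{G_n}(S)$ is large. Let $N_S$ be the number of edges of $G_n$ with at least one endpoint in $S$, so that $\deg_{G_n}(S)\leq 2N_S$. Revealing the (at most $kn$) potential edges incident to $S$ one at a time, exactly as in the derivation of~\eqref{deg1o}, shows that as long as $k\leq n/4$ the conditional probability that the next revealed pair is present is at most $m/\PAR{\binom n2-kn}\leq 4c/n$ for $n$ large; thus $N_S$ is stochastically dominated by $\BINOM{kn}{4c/n}$, and Chernoff's inequality gives, for every $x>0$ and with $\a=k/n$,
\[
\dP\PAR{\deg_{G_n}(S)> n\,\delta_z(\a)}\ \leq\ \dP\PAR{N_S\geq \tfrac n2\,\delta_z(\a)}\ \leq\ \exp\PAR{-\tfrac n2\,x\,\delta_z(\a)+4c\,\a n\,e^{x}},
\]
uniformly over $|S|=k\leq n/4$ and $n$ large. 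For $k>n/4$ I would instead use the deterministic bound $\deg_{G_n}(S)\leq\deg_{G_n}([n])=2m\leq cn$.

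Next I would take a union bound over the $\binom nk\leq e^{nH(\a)}$ sets of size $k=\a n$, where $H(\a)=-\a\log\a-(1-\a)\log(1-\a)$, and choose the tilt $x=x(\a):=-\tfrac12\log\a$, so that $4c\,\a\,e^{x(\a)}=4c\sqrt\a$. This yields
\[
\frac1n\log\dP\PAR{\exists\,S,\ |S|=\a n,\ \deg_{G_n}(S)>n\delta_z(\a)}\ \leq\ H(\a)-\tfrac14(-\log\a)\,\delta_z(\a)+4c\sqrt\a .
\]
I would then \emph{define} $\delta_z$ so as to make the right-hand side equal to $-(z+2)$: on $(0,\tfrac14]$ set
\[
\delta_z(\a)=\frac{4\PAR{H(\a)+4c\sqrt\a+z+2}}{-\log\a},
\]
put $\delta_z(0)=0$, and on $[\tfrac14,1]$ let $\delta_z$ be constant equal to $\delta_z(1/4)$ (which exceeds $c$, since $4/\log 4>2$). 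Because $H(\a)+4c\sqrt\a$ stays bounded while $1/(-\log\a)\to0$ as $\a\downarrow0$, this $\delta_z$ is continuous with $\delta_z(0)=0$; replacing it by its running supremum $\a\mapsto\sup_{0\leq\b\leq\a}\delta_z(\b)$ (adding a small linear term if strict monotonicity is wanted) makes it nondecreasing without affecting the preceding displays. Summing the bound over the at most $n$ values $k\leq n/4$, and using the deterministic estimate for $k>n/4$ — where the event is impossible because $\delta_z(\a)\geq c$ there — gives $\dP\PAR{\exists\,S:\deg_{G_n}(S)>n\delta_z(|S|/n)}\leq n^2e^{-(z+2)n}\leq e^{-zn}$ for $n$ large, which is what is needed.

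The one genuinely delicate point is the choice of the tilt $x(\a)$. Near $\a=0$ the combinatorial factor $\binom nk$ contributes $H(\a)\sim\a\log(1/\a)$, which decays too slowly to be absorbed by a bounded $x$, forcing $x(\a)\to\infty$; on the other hand the exponential moment of $N_S$ contributes a term of order $\a\,e^{x(\a)}$, which must also tend to $0$, forcing $x(\a)\leq\tfrac12(-\log\a)$ up to lower order. The choice $x(\a)=-\tfrac12\log\a$ balances these two requirements and at the same time forces $\delta_z(\a)\to0$, which is precisely what the compactness criterion of Lemma~\ref{le:tightLWC} demands. Everything else (the edge-revealing domination, the Chernoff bound, the bound $\binom nk\leq e^{nH(\a)}$, and the verification that the resulting $\delta_z$ is admissible) is routine.
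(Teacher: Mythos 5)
Your proof is correct and follows essentially the same route as the paper's: reduce to the tightness criterion of Lemma~\ref{le:tightLWC}, dominate $\deg_{G_n}(S)$ stochastically by twice a binomial, apply Chernoff with the tilt $x=-\tfrac12\log\a$, and close with a union bound over $\binom{n}{\a n}\leq e^{nH(\a)}$ sets. The only difference is presentational — the paper fixes $\delta_y(x)=-2y/\log(cx)$ in advance and verifies the exponential bound, whereas you solve for $\delta_z$ a posteriori (and handle the large-$|S|$ regime and the monotonicity of $\delta_z$ a bit more explicitly).
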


\begin{proof}
For $y \geq 1 $ and $x \in (0,1)$, we define $$
\delta_y ( x) =  - \frac{ 2  y  } { \log ( c x ) },
$$
and consider the event,
$$
\cE_y (n) = \BRA{ \forall S \subset [n]: | \deg_{G_n} (S)|  \leq n \,\delta_y \PAR{ \frac { |S| } { n} }  },
$$
where $\deg_{G_n} (S)$ was defined in \eqref{eq:defdegS}. We are going to prove that there exists a constant $L> 0$ such that for any real $y \geq 1$, for any integer $n \geq 1$, 
\begin{equation}\label{eq:exptightEy}
\frac 1 n \log \dP \PAR{ \cE_y (n) ^c } \leq - y + L. 
\end{equation}
In view of Lemma \ref{le:tightLWC}, \eqref{eq:exptightEy} implies the lemma. 

To prove  \eqref{eq:exptightEy}, we may restrict ourself to subsets $S \subset [n]$ of cardinality at most $|S| \leq n \veps_0$, with $\veps_0 = \delta_y ^{-1} ( 1) = e^{ - 2 y} / c \leq e^{-2y}$. From the union bound, 
\begin{equation*}
 \dP \PAR{ \cE_y (n) ^c }  \leq   n \max_{ 0 <\veps \leq \veps_0 } \dP\PAR{  \exists S \subset [n]: |S| = \veps n , \;  \deg_{G_n} (S )   >  n \delta_y \PAR{ \veps } } .
\end{equation*}
By choosing $y$ large enough we may assume that $\veps_0>0$ is small enough. Choose 
$\veps\in(0,\veps_0]$ and $\d:=\delta_y \PAR{ \veps }$. 
We note that, 
as in the proof of \eqref{deg1o}, $\deg_{G_n} (S )$  is stochastically dominated by 
$2N$, where $N$ has distribution $\BIN ( \veps n^2 , 2d/n)$.
It follows that 
$$
\dP \PAR{ \cE_y (n) ^c }  \leq n \max_{ 0 <\veps \leq \veps_0 }\binom{n}{\veps n} \dP ( N \geq \d n/2 ).
$$
For $x>0$,
\begin{align*}
\dP \PAR{ N \geq \d n/2 } \leq e^{-\d n x   }\dE[e^{2xN}] 
= e^{-\d n x } \PAR{ 1 +  (2d/n)  ( e^{2x} - 1)  }^{\veps n^2} 
\leq e^{- \d n x   +   2d \veps n e^{2x} }.
\end{align*}
Taking $x=-\frac12\log(c\veps)$ one finds
$$
  \frac 1 n \log \dP ( N \geq n \delta/2  ) \leq \frac\delta2 \log\PAR{ c \veps } +\frac{2d}c = - y +\frac{2d}c .
$$
On the other hand, from Stirling's formula, there exists a constant $ C$ such that 
$$
{ n \choose n \veps } \leq C \sqrt n e^{ n H(\veps) } ,
$$
where $H(\veps)=-\veps\log\veps-(1-\veps)\log  (1-\veps)$. Since $\veps\leq \veps_0=e^{-2y}$, these bounds imply the desired conclusion \eqref{eq:exptightEy}. 
\end{proof}

We turn to the proof of Theorem \ref{th:LDPER1}. Fix $d>0$ and a sequence $m=m(n)$ such that $m/n\to d/2$, as $n\to\infty$. Thanks to 
Lemma \ref{le:exptight}, 
from general principles, see e.g.\ \cite[Ch.\ 4]{dembo}, it is sufficient to establish: 
 (i) for any $\rho \in \cP_u(\cG^*)$ and  
 $\d>0$,
\begin{equation}
\label{lowerber1}
 \liminf_{n \to \infty} \frac 1 n \log \dP ( U(G_n) \in B(\rho,\delta) ) \geq  \Sigma(\rho)- s(d) ;
\end{equation}
and (ii)  
for any $\rho \in \cP_u(\cG^*)$ 
\begin{equation}
\label{upperber1}
\lim_{\veps \downarrow 0}  \limsup_{n \to \infty} \frac 1 n \log \dP ( U(G_n) \in B(\rho,\veps) )  \leq 
\Sigma(\rho) - s(d).
\end{equation}
However, both the lower bound \eqref{lowerber1} and the upper bound \eqref{upperber1} follow immediately from the definition of $\Si(\r)$, Theorem \ref{th:entropy1} and \eqref{eq:Gnmlog}.
This ends the proof.

\subsection{Proof of Theorem \ref{th:LDPER2}}
Theorem \ref{th:LDPER2} is a simple consequence of Theorem \ref{th:LDPER1}. We argue as in \cite{MR2759726}. Let $G_n$ denote the random graph with distribution $\cG(n,\l/n)$, and let $M(n)$ be the total number of edges in $G_n$. 
Then $M(n)$ is the binomial random variable $\BIN(n(n-1)/2,\l/n)$. Conditioned on a given value $M(n)=m$, $G_n$ has uniform distribution over $\cG_{n,m}$.  It follows that $\cG(n,\l/n)$ is a mixture of the uniform distribution on $\cG_{n,m}$, where $m$ is sampled according to $\BIN(n(n-1)/2,\l/n)$. We use the following simple lemma, whose proof is omitted.
\begin{lemma}\label{le:ldpmn}
The sequence $2M(n)/n$ satisfies the LDP in $[0,\infty)$ with speed $n$ and good rate function
$$j(x) =  \frac12\left(\lambda  - x + x \log (x/\l)\right).$$
\end{lemma}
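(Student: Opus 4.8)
The statement is a one-dimensional large deviation principle for $2M(n)/n$, where $M(n)\sim\BIN(n(n-1)/2,\l/n)$, so the natural route is Cram\'er's theorem combined with an explicit Legendre transform computation. First I would introduce i.i.d.\ Bernoulli variables and write $M(n)$ as a sum of $\binom n2$ independent indicators, each of mean $\l/n$; the obstruction to applying Cram\'er directly is that the success probability $\l/n$ itself depends on $n$, so instead I would work with the logarithmic moment generating function directly. For $t\in\dR$ one computes
\begin{equation}\label{eq:mgfMn}
\frac1n\log\dE\bigl[e^{t\,2M(n)/n}\bigr]=\frac{\binom n2}{n}\log\Bigl(1+\frac\l n\bigl(e^{2t/n}-1\bigr)\Bigr)\xrightarrow[n\to\infty]{}\frac\l2\bigl(e^{2t}-1\bigr)=:\Lambda(t),
\end{equation}
using $\binom n2/n\to n/2$ and $e^{2t/n}-1\sim 2t/n$ together with a uniform bound on the remainder. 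This is exactly the limiting cumulant generating function of a Poisson variable of mean $\l/2$ scaled by $2$, which makes intuitive sense since $2M(n)/n$ should behave like $2\POI(\l n/2)/n$.

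With \eqref{eq:mgfMn} in hand, the G\"artner--Ellis theorem yields the LDP with speed $n$ and good rate function $\Lambda^*$, the Legendre transform of $\Lambda$. I would then carry out the elementary optimization: for $x\geq 0$,
\begin{equation}\label{eq:legendre}
\Lambda^*(x)=\sup_{t\in\dR}\bigl(tx-\tfrac\l2(e^{2t}-1)\bigr),
\end{equation}
and setting the derivative to zero gives $x=\l e^{2t}$, i.e.\ $e^{2t}=x/\l$ (valid when $x>0$), whence $t=\tfrac12\log(x/\l)$ and substituting back,
\[
\Lambda^*(x)=\tfrac12\log(x/\l)\cdot x-\tfrac\l2\bigl(x/\l-1\bigr)=\tfrac12\bigl(\l-x+x\log(x/\l)\bigr)=j(x).
\]
For $x=0$ one checks directly $\Lambda^*(0)=\sup_t(-\tfrac\l2(e^{2t}-1))=\l/2=j(0)$, and for $x<0$ the supremum is $+\infty$, consistent with $2M(n)/n\geq 0$; this shows $\Lambda^*=j$ on $[0,\infty)$ and $j$ is a good rate function since it is lower semicontinuous, nonnegative (it vanishes only at $x=\l$), and has compact level sets because $j(x)/x\to\infty$ as $x\to\infty$.

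The G\"artner--Ellis hypotheses are easy to verify here: $\Lambda$ is finite and differentiable on all of $\dR$, hence steep, so the full LDP (not just upper bound on closed sets / lower bound on open sets) follows. The only mildly delicate point — and the ``main obstacle,'' though it is minor — is justifying the interchange of limit and supremum/derivative, i.e.\ confirming the convergence in \eqref{eq:mgfMn} is good enough to apply G\"artner--Ellis verbatim; this is handled by noting the convergence is locally uniform in $t$ (the error term $\binom n2 n^{-1}\cdot O(n^{-2})$ is uniform on compacts), which is all that is needed. Alternatively one can bypass G\"artner--Ellis entirely and give a direct Chernoff-bound upper bound plus a change-of-measure lower bound, but the G\"artner--Ellis route is shortest; since the lemma is stated with its proof omitted in the paper, a brief indication along these lines suffices.
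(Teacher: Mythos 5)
The paper gives no proof of this lemma (it is stated ``whose proof is omitted''), so there is nothing to compare against; your G\"artner--Ellis route is the natural and standard one, and your Legendre transform computation correctly recovers $j(x)$, including the boundary value $j(0)=\l/2$ and the goodness of $j$.

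There is, however, a scaling error in your display \eqref{eq:mgfMn} that you must fix. For an LDP at speed $n$ for $Z_n=2M(n)/n$, G\"artner--Ellis requires the limit of $\frac1n\log\dE[e^{\,n t Z_n}]=\frac1n\log\dE[e^{2tM(n)}]$, not of $\frac1n\log\dE[e^{tZ_n}]$. The identity you wrote, $\frac1n\log\dE[e^{t\cdot 2M(n)/n}]=\frac{\binom n2}{n}\log(1+\frac\l n(e^{2t/n}-1))$, is correct as an identity, but that quantity tends to $0$ (the exponent $tZ_n$ is $O(1)$, so the expectation is $O(1)$), not to $\frac\l2(e^{2t}-1)$ as you claim. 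With the correct exponent one gets $\frac{\binom n2}{n}\log\bigl(1+\frac\l n(e^{2t}-1)\bigr)\to\frac\l2(e^{2t}-1)$, which is the $\Lambda(t)$ you then use, so the remainder of your argument (essential smoothness, the supremum computation, compact level sets) goes through unchanged. One further small point worth a sentence: $x=0$ is not in the range of $\Lambda'(t)=\l e^{2t}$, hence not an exposed point, so the G\"artner--Ellis lower bound at open sets touching $0$ should be justified by the continuity of $j$ at $0$ (any open set containing $0$ meets $(0,\infty)$, where the infimum of $j$ is the same).
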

We need to prove that $\rho_n  = U(G_n)$ satisfies a LDP on $\cP_u (\cG^*)$ with speed $n$ and good rate function 
$$
I(\rho) = j(d) - \Si (\rho) + s(d) = \inf \BRA{ j(r) - \Si_r (\rho) + s(r) : r \geq 0} ,  
$$
where $\rho \in \cP_u (\cG^*)$, $d = \dE_\rho \deg_G  (o)$ and $\Si_r (\rho)$ is the entropy of $\rho$ associated to the mean degree $r$ (which is equal to $-\infty$ if $r \ne d$ by Theorem \ref{th:entropy1}). 
A simple adaptation of the proof of Lemma \ref{le:exptight} shows that the random variable $\r_n=U(G_n)$ is exponentially tight. The conclusion follows from a general result on large deviations for mixtures; see Biggins \cite[Theorem 5(b)]{MR2081460}.
%

\subsection{Proof of Corollary \ref{cor:LDPER1} and Corollary \ref{cor:LDPER2} }
The proof 
is an application of the contraction principle, cf. \cite{dembo}.
Concerning Corollary \ref{cor:LDPER1}, by Theorem \ref{th:LDPER1} one has that $u(G_n)$ satisfies the LDP in $\cP(\dZ_+)$ with speed $n$ and good rate function $$K(P)=\inf\{s(d) - \Sigma(\r)\,,\; \r\in\cP(\cG^*):\,\r_1=P\}.$$ From Theorem \ref{th:entropy2} and Corollary \ref{cor:entropy} this expression equals $s(d)-J_1(P)=H(P\,|\,\POI(d))$. 

As for Corollary \ref{cor:LDPER2}, by Theorem \ref{th:LDPER2} $u(G_n)$ satisfies the LDP in $\cP(\dZ_+)$ with speed $n$ and good rate function  $$K(P)=\inf\{\phi(\l,d) - \Sigma(\r)\,,\; \r\in\cP(\cG^*):\,\r_1=P\},$$ where $\phi(\l,d)=\frac \l2 - \frac d 2 \log \l $. Since all  $\r\in\cP(\cG^*)$ with $\r_1=P$ have the same expected degree at the root, this equals $\phi(\l,d)-J_1(P)=\phi(\l,d)-s(d)+H(P\,|\,\POI(d))$.

\appendix 
\section{Local  convergence for generalized configuration model}
In this section we prove Theorem \ref{th:convlocCM}. 
\subsection{The exploration process}
The first step  is to prove convergence of the average measure $\dE U (G_n)$, where $G_n$ has distribution $\CM( {\bD}^{(n)} )$.

\begin{proposition}\label{prop:convlocCM}
Let $G_n \in \wcG(\bD^{(n)} )$ with distribution $\CM ( {\bD}_n)$ such that  assumptions (H1)-(H2) hold (see Section \ref{H1H2}). Then, $\dE U(G_n) \weak \UGW(P)$. 
\end{proposition}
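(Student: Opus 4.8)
The statement asserts convergence of the \emph{averaged} empirical neighborhood distribution $\dE U(G_n)$ to the colored unimodular Galton--Watson measure $\UGW(P)$. Since local weak convergence is determined by the probabilities $\dP((G_n,o)_t \simeq \tau)$ for each fixed $t$ and each finite rooted colored multi-graph $\tau$, and since $\dE U(G_n)$ puts mass $\frac1n\sum_{v\in[n]}\dP((G_n,v)_t\simeq\tau\mid G_n)$ on the event $\{(G,o)_t\simeq\tau\}$, it suffices to show that for every $t\in\dN$ and every finite rooted colored tree $\tau\in\wcG^*(\cC)$ one has
$$
\dP\big( (G_n, o_n)_t \simeq \tau \big) \longrightarrow \UGW(P)\big( g_t = \tau \big), \qquad n\to\infty,
$$
where $o_n$ is a uniformly chosen root in $[n]$, together with the complementary fact that the probability of $(G_n,o_n)_t$ containing a cycle tends to $0$ (so that the limit is supported on trees). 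The standard route here is a first-moment \emph{exploration} (breadth-first search) argument: reveal the colored half-edges incident to $o_n$, match them one at a time according to the configuration model, and track the empirical colored-degree law of the vertices discovered.

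\textbf{Key steps.} First I would set up the breadth-first exploration of $(G_n,o_n)_t$: start from a uniform vertex $o_n$; its colored degree $D^{(n)}(o_n)$ is a random element of $\cM_L^{(\theta)}$ whose law is exactly the empirical law $\frac1n\sum_u\delta_{D^{(n)}(u)}$, which by (H2) converges weakly to $P$. Then, whenever we need to match an unmatched half-edge of color $c$ attached to an already-discovered vertex, the configuration model pairs it with a uniformly chosen unmatched half-edge of color $\bar c$ among all $S_{\bar c}^{(n)}$ such half-edges; the vertex at the other end is, up to the (negligibly many) half-edges already used, a uniform sample from the $D_{\bar c}$-size-biased empirical degree law. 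By (H1)--(H2), $S_c^{(n)}/n \to \dE D_c =: s_c$ with $s_c = s_{\bar c}$ (by \eqref{edc}, which holds because $\bD^{(n)}\in\cD_n$ forces $S_c^{(n)}=S_{\bar c}^{(n)}$), and the colored degree of the newly reached vertex converges in law to $\wP^{\bar c}$ in the notation of the colored-UGW definition: indeed the probability of reaching a vertex with colored degree matrix $M$ is asymptotically $\frac{(M_{\bar c}+1)P(M+E^{\bar c})}{\dE D_c}$, matching $\wP^c$ after relabeling. Since $t$ is fixed, only a bounded (in $n$) number of matching steps occur, so the accumulated error from ``half-edges already used'' and from sampling-without-replacement corrections is $O(1/n)$ and the finite-dimensional law of the explored colored tree converges to that of $\UGW(P)$ truncated at depth $t$. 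Second, I would show the exploration almost never closes a cycle: at each of the $O(1)$ steps the probability of matching to a half-edge attached to an already-explored vertex is $O(1/n)$, because there are only $O(1)$ such half-edges among $\Theta(n)$ candidates; a union bound over the bounded number of steps gives $\dP(\text{cycle in }(G_n,o_n)_t) = O(1/n)\to0$. Combining the two, $\dP((G_n,o_n)_t\simeq\tau)\to\UGW(P)(g_t=\tau)$ for each finite colored tree $\tau$, which is exactly $\dE U(G_n)\weak\UGW(P)$.

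\textbf{Main obstacle.} The routine part is the bookkeeping of the exploration; the one genuinely delicate point is controlling the \emph{size-biasing} correctly in the colored/directed setting and verifying that the limiting offspring law of a non-root vertex of type $c$ is $\wP^{\bar c}$ (the conjugate appears because an edge entering a vertex with label $c$ at the parent side leaves color $\bar c$ on the child side). This is where the hypothesis $\dE D_c=\dE D_{\bar c}$ is used, and where one must be careful that the counts $S_c^{(n)}$ of available half-edges of each color stay of order $n$ throughout the (bounded) exploration, so that the ``uniform among remaining half-edges'' step really is asymptotically a size-biased draw from the empirical degree distribution; once this identification is made, the recursive structure of the exploration immediately yields the multi-type Galton--Watson description of $\UGW(P)$. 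A secondary, purely technical point is to phrase the convergence of finite-dimensional laws so that it upgrades to weak convergence of $\dE U(G_n)$ in $\cP(\wcG^*(\cC))$; this follows from the portmanteau-type characterization of the local topology recalled in Section~\ref{sec:LWC} (continuity of the indicator functions $g\mapsto\IND(g_t\simeq\tau)$) together with the cycle-freeness estimate ensuring no mass escapes to non-tree configurations.
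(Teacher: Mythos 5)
Your proposal follows essentially the same route as the paper's proof: a breadth-first exploration of the colored half-edges from a uniform root, identification of the limiting offspring marginals as the size-biased laws (the paper's Lemma \ref{le:coupexpCM}), and a vanishing-probability bound on matching back into the already-active half-edges so that the explored neighborhood is a tree (Lemma \ref{le:cycleexpCM}), combined for each fixed depth $t$. Two minor points only: under the paper's conventions a child reached by matching a color-$c$ half-edge at the parent has type $c$ and limiting offspring law $\wP^{c}$ (the conjugation is already built into the definition of $\wP^c$, so your formula $(M_{\bar c}+1)P(M+E^{\bar c})/\dE D_c$ is the right one, but your label $\wP^{\bar c}$ is not), and the paper treats colors with $\dE D_c=0$ separately (Lemma \ref{le:coupexpCM}(iii)), since for those the ``order-$n$ candidate half-edges'' reasoning behind both the size-biasing and the $O(1/n)$ cycle bound fails; both are routine fixes within your scheme.
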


The proof of Proposition \ref{prop:convlocCM} is based on an exploration process of the neighborhood of a vertex. 
We shall use the notation of Section \ref{CM}. For ease of notation, we will often omit the dependence on $n$ from our notation. Let $\bD = (D(1), \cdots, D(n)) \in \cD_n$, $\si \in \Si$ and $G =\Gamma (\sigma)$ the associated multigraph. To be precise,  we specify the set $W = \cup_{c \in \cC} W_c$ to be $ W_c= \{ (c , i , j):   i \in [n] , 1 \leq j \leq   D_c(i)  \}$ and $W (i) =  \{ (c , i , j ):  c \in \cC ,  1 \leq j \leq   D_c (i)  \}$ the set of half-edges of all colors starting from $i$. With a slight abuse of notation, we will sometimes write for $e = (c, i,j) \in W$, $\si (e)$ in place of $\si_c ( i,j)$.

Let $\dN^f = \cup_{k \geq 0} \dN^ k$ where $\dN^0 = \so$. We consider the total order on $\dN^f$: $\ibf < \jbf $ with $\ibf = (i_1, \cdots, i_k), \jbf = (j_1, \cdots, j_{\ell})$ if either  $k < \ell$ or $k = \ell$ and $\ibf < \jbf$ for the lexicographical order. We will define a bijective map $\phi$ from a finite set $S \subset \dN^f$  to the vertex set of $G(v)$. The value of $\phi$ is defined iteratively and if $\ibf < \jbf$ are in $S$ then the value of $\phi(\ibf)$ will be determined before the value of $\phi(\jbf)$. Moreover $\phi( S \cap \dN^k)$ will be the set of vertices at distance $k$ from $v$.

The exploration is on the set of half-edges $W$ and it is defined recursively. At integer step $t$, we partition $W$ in $3$ sets: an half-edge may belong to the {\em active} set $A(t)$,  to the {\em unexplored} set $U(t)$  or to the {\em connected} set $C(t) = W \backslash ( A(t) \cup U(t))$. At stage $t$, a vertex with an half-edge in $C(t) \cup A(t)$ will have a pre-image via $\phi$ in $\dN^f$. We start with a given $v \in [n]$, and fix the initial conditions $A(0) = W (v)$, $C(0) = \emptyset$, $U(0) = W \backslash W(v) $, and $\phi(\so) = v$. 

For integer $t \geq 0$, if $A(t) \neq \emptyset$, let $e_{t+1} = (c_t,\phi(\ibf_{t}), j_t)$ be an half-edge in $A(t)$ such that $\ibf_{t}$ is minimal for the total order on $\dN^f$. Let $I(t+1) =  ( W (v_{t+1}) \backslash  \{ \sigma (e_{t+1} ) \}) \cap U(t) $ where $v_{t+1}$ is the vertex such that $\sigma(e_{t+1}) \in W (v_{t+1})$. $I_{t+1}$ is the set of new half-edges and our partition of $W$ is updated as 
\begin{equation}
\left\{
\begin{array}{lcl}
A(t+1) & =& \!\! A(t)   \backslash  \{ e_{t+1}  , \sigma ( e_{t+1})\}     \bigcup I_{t+1}  \\
U(t+1) & = & \!\! U(t)  \backslash  \left( I(t+1)  \cup \{ \sigma ( e_{t+1})\} \right) \\
C(t+1)  & = &  \!\!C(t) \cup \{ e_{t+1},\sigma ( e_{t+1})\}.
\end{array} \right.\label{eq:explorationCM}
\end{equation}
 If $\sigma(e_{t+1}) \notin A(t)$, we also set $\phi( ( \ibf_{t} , j_t) )  = v_{t+1}$. Finally, if $A(t) = \emptyset$, then the exploration process stops.

We notice that the elements in $C(t) $ are the half-edges for which we know by step $t$ their matched half-edge. It implies that $\sigma(e_{t+1}) \in A(t) \cup U(t)$. Moreover, for any vertex $u$, we cannot have simultaneously $W (u) \cap U(t) \ne \emptyset$ and  $W (u) \cap A(t) \ne \emptyset$. With a slight abuse, we may thus write $u \in U(t)$ or $u \in A(t)$ if, respectively, $W(u) \cap U(t) \ne \emptyset$ or $W(u) \cap A(t) \ne \emptyset$. Now, if $v_{t+1} \in  U(t)$, then $I(t+1) =  W (v_{t+1}) \backslash \{ \sigma(e_{t+1}) \}$, otherwise $v_{t+1} \in  A(t)$ and $I(t+1)=  \emptyset$.  Note also that for integer $k$, the image by $\phi$ of the vertices of generation $k$ in $S$, $\phi ( S \cap \dN^k)$, are the set of vertices in $G$ at distance $k$ from $v$ (by recursion, this comes from the fact that $\ibf_t$ is minimal for the total order on $\dN^f$).  

We now define $X(0) = D(v)$ and for integer $t \geq 1$, $X_c (t+1) = | \{ (i, j ): (c,i,j) \in I_{t+1} \}|$. Hence $X(t) \in\cM_L$ gives the new colored half-edges attached to $v_{t}$. For ease of notation, we also set $$\veps_c(t+1) =  \IND \PAR{ v_{t+1}  \in A(t) , c_t = \bar c} \, , \quad \delta_c ( t+1) = \IND ( c_t = c),$$ and $$\tau =  \inf \{ t: A(t) = \emptyset \}.$$ 
Setting $A_c=A\cap W_c$, $U_c=U\cap W_c$ and $C_c=C\cap W_c$, we get
\begin{eqnarray}
& | A_c(t) | = D_c(v) + \sum_{ k = 1} ^t  (X_c(k) - \delta_c(k) - \veps_c(k))  \nonumber \\
& | U_c (t) |  = | W_c | - D_c(v) -\sum_{ k = 1} ^t  (X_c(k) + \delta_{\bar c} (k) - \veps_c(k)) \nonumber  \\ 
 & | C_c(t) |  = \sum_{k=1}^t (\delta_c (k) + \delta_{\bar c} (k)). \label{eq:defAUC}
\end{eqnarray}
Note that $| C_c(t) | = | C_{\bar c}(t) |$ and, if $c \in \cC_=$, $|C_c (t)|$ is even.

Now, as in the statement of Proposition \ref{prop:convlocCM}, consider a random multi-graph $G_n$ with distribution  $\CM(\bD^{(n)})$. For integer $t \geq 0$, we consider the filtration  
$$\cF_t = \sigma( (A(0),U(0), C(0)), \cdots, (A(t),U(t),C(t))).$$ The hitting time $\tau$ is a stopping time for this filtration.  Also, given $\cF_{t}$, if $\{ t < \tau\}$ and $c_t = c \in \cC_{\ne}$, then $\sigma(e_{t+1})$ is uniformly distributed on $ U_{\bar c} (t) \cup A_{\bar c} (t) $. It follows that for $u \in [n]$, 
\begin{align*}
\dP \PAR{ v_{t+1} = u  | \cF_t } & =  \frac{ | W_{\bar c}(u)   \cap ( U_{\bar c} (t) \cup  A_{\bar c} (t) ) | }{ |U_{\bar c} (t) | + | A_{\bar c} (t) |   } \\
& =  \frac{ \IND_{ u \in U(t)} D_{\bar c} (u) }{ |U_{\bar c} (t) | + | A_{\bar c} (t) | } +  \frac{ \IND_{ u \in A(t)}  | W_{\bar c} (u) \cap A(t)  |}{ |U_{\bar c} (t)| + | A_{\bar c} (t) |}.
\end{align*}
Similarly, given $\cF_t$, if $\{ t < \tau\}$ and $c_t = c \in \cC_{=}$, $\sigma(e_{t+1})$ is uniformly distributed on $ U_{c} (t) \cup A_{c} (t) \backslash \{ e_{t+1}\} $. We find in this case, 
\begin{align*}
\dP ( v_{t+1} = u  | \cF_t ) & =  \frac{ | W_{c}(u)   \cap ( U_{ c} (t) \cup  A_{ c} (t) \backslash \{ e_{t+1} \} ) | }{ |U_{ c} (t) | + | A_{ c} (t) | - 1    } \\
& =  \frac{ \IND_{ u \in U(t)} D_{c} (u) }{ |U_{c} (t) | + | A_{c} (t) | - 1 } +  \frac{ \IND_{ u \in A(t)} ( | W_{c} (u) \cap A_c(t)  | -  \IND_{e_{t+1} \in W_c(u)})}{ |U_{c} (t)| + | A_{c} (t) | - 1 }.
\end{align*}
In either case , for $c \in \cC$, if $\sigma(e_{t+1}) \in U(t)$, then $X(t+1) = D(v_{t+1}) - E^ {\bar c}$ otherwise, $\sigma(e_{t+1}) \in A(t)$ and $X(t+1) = 0$. We recall also that $|U_c (t)| + | A_c (t) |  = | W_c | - | C_c (t) |  = | W_{\bar c} | - | C_{\bar c} (t) | $. We get, for $ M \in \cM_L$, if $c_t=c$ then
\begin{equation}
\dP\PAR{ X(t+1) = M  | \cF_t} =  \left\{ \begin{array}{ll} 
\sum_{ u \in U(t)}  \frac{ \IND_{D(u) = M+E^ {\bar c}} (M_{\bar c} +1)}{| W_c | - |C_c(t)| -  \IND( c \in \cC_{=}) } & \; M \neq 0 , \\
\sum_{ u \in U(t)}  \frac{ \IND_{D(u) = E^{\bar c}} }{| W_c | - |C_c(t)| -  \IND( c \in \cC_{=})  } +  \frac{| A_{\bar c} (t)| - \IND ( c \in \cC_= )   }{ | W_c | - |C_c(t)| -  \IND( c \in \cC_{=})  }& \; M =  0.
\end{array} \right. \label{eq:XCM}
\end{equation}

Observe that, from \eqref{eq:defAUC} and assumption (H1), we find for any $c \in \cC$, 
\begin{equation}\label{eq:boundedA}
| A_c (t) | \leq \theta (t+1)\quad  \hbox{ and } \quad  |C_c (t) | \leq 2 t. 
\end{equation}

The next lemma computes the limiting marginals of the exploration process. 
\begin{lemma}
\label{le:coupexpCM}
Under the assumption of Proposition \ref{prop:convlocCM}, let $o$ be uniformly distributed on $[n]$, independently of $G_n$, and consider the exploration process on the rooted graph $(G_n(\so),\so)$. For any integer $t \geq 0$, as $n\to\infty$:
\begin{enumerate}[(i)]
\item \label{eq:1coupCM}
$X(0)$ converges weakly to $P$.
\item \label{eq:2coupCM}
Let $c\in \cC$ be such that $\dE D_c  > 0$. Given $\cF_t$, if $\{ t < \tau\}$ and $c_t = c$, then the conditional law of $X(t+1)$ given $\cF_t$ converges weakly to $\wP^ c$. 
\item \label{eq:3coupCM}
The probability that there exist $c\in \cC$ and an integer $1 \leq s \leq t\wedge \tau $ such that $\dE D_c =0$ and $c_s = c$ goes to $0$. 
\end{enumerate}
\end{lemma}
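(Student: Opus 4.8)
The plan is to prove the three statements separately, in each case reducing everything to the explicit conditional laws \eqref{eq:XCM} for $X(t+1)$ and to the deterministic bounds \eqref{eq:boundedA}. Throughout $t$ is a fixed integer, so every error term that is $O(t)$ or $O_\theta(t)$ is negligible against the quantities $S_c^{(n)} = |W_c|$. By assumptions (H1)--(H2) the empirical measure $P^{(n)} := \frac 1n \sum_{u=1}^n \delta_{D^{(n)}(u)}$ converges weakly to $P$ on the finite set $\cM_L^{(\theta)}$, hence $P^{(n)}(M) \to P(M)$ for every $M$ and, degrees being bounded by $\theta$, $S_c^{(n)}/n = \dE_{P^{(n)}} D_c \to \dE D_c$. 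I would also record once and for all that by step $t$ at most $t+1$ vertices have been discovered, so at most $t+1$ vertices $u$ satisfy $W(u) \not\subseteq U(t)$, and these carry together at most $\theta(t+1)$ half-edges; this controls every sum over $U(t)$ below. Part (i) is then immediate: since $o$ is uniform on $[n]$ and $X(0) = D(o)$, the law of $X(0)$ is exactly $P^{(n)} \weak P$.

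For Part (ii), first note that $e_{t+1}$ and hence its color $c_t$ are $\cF_t$-measurable, so one may condition on $\{t<\tau\}\cap\{c_t=c\}$. Plugging into \eqref{eq:XCM}: for $M\neq 0$ the numerator is $(M_{\bar c}+1)\sum_{u\in U(t)}\IND_{D(u)=M+E^{\bar c}}$, which differs from $(M_{\bar c}+1)\,nP^{(n)}(M+E^{\bar c})$ by at most $(\theta+1)(t+1)$, while the denominator is $S_c^{(n)}-|C_c(t)|-\IND(c\in\cC_=)=n\dE D_c+o(n)$ with $\dE D_c>0$; hence the conditional probability converges to $(M_{\bar c}+1)P(M+E^{\bar c})/\dE D_c=\wP^{c}(M)$. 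For $M=0$ the extra term $|A_{\bar c}(t)|-\IND(c\in\cC_=)$ is at most $\theta(t+1)$ by \eqref{eq:boundedA}, hence $o(n)$ against the denominator, and the remaining sum yields $P(E^{\bar c})/\dE D_c=\wP^{c}(0)$. All error bounds depend only on $t$ and $\theta$, so the convergence is uniform over the $\cF_t$-outcomes inside the conditioning event; since $X(t+1)$ and $\wP^{c}$ are supported on the finite set $\{M\in\cM_L:\ M\le\theta\text{ entrywise}\}$, pointwise convergence of the atoms is exactly weak convergence.

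For Part (iii), call a color $c$ \emph{bad} if $\dE D_c=0$; since $D_c$ is a nonnegative integer this forces $P(M_c>0)=0$, and $S_c^{(n)}/n\to0$, so $N_n:=\sum_{c\ \mathrm{bad}}S_c^{(n)}=o(n)$. The structural point I would use is that every half-edge ever lying in $\bigcup_s A(s)$ is a half-edge either of the root or of a vertex discovered ``through $U$'' (i.e.\ with $v_s\in U(s-1)$): indeed $A(0)=W(o)$ and by \eqref{eq:explorationCM} the only new half-edges added at step $s$ are $W(v_s)\setminus\{\sigma(e_s)\}$ when $v_s\in U(s-1)$, and none when $v_s\in A(s-1)$. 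Hence if neither the root nor any such vertex carries a bad half-edge, no bad half-edge is ever explored and no $c_s$ is bad. It thus suffices to bound the probability of the event that the root, or one of the at most $t$ vertices discovered through $U$ during the first $t\wedge\tau$ steps, has a bad half-edge. For the root, $\dP(\exists c\ \mathrm{bad}:D_c(o)>0)\le\sum_{c\ \mathrm{bad}}\dE_{P^{(n)}}D_c=N_n/n\to0$. For the running steps, let $\sigma_{\mathrm{bad}}$ be the first step at which a bad half-edge is explored or a through-$U$ vertex with a bad half-edge is discovered; on $\{\sigma_{\mathrm{bad}}>s-1\}$ the half-edge $e_s$ has a good color $c_{s-1}$, so $S^{(n)}_{\overline{c_{s-1}}}=S^{(n)}_{c_{s-1}}\ge c_0 n$ for large $n$, where $c_0=\min\{\dE D_c:\dE D_c>0\}>0$, and then \eqref{eq:XCM}, \eqref{eq:boundedA} and (H1) give $\dP(v_s\in U(s-1),\ v_s\text{ bad}\mid\cF_{s-1})\le \theta\,|\{u:D(u)\text{ has a bad half-edge}\}|/(S^{(n)}_{\overline{c_{s-1}}}-O(t))\le 2\theta N_n/(c_0 n)$, using $|\{u:D(u)\text{ has a bad half-edge}\}|\le\sum_{c\ \mathrm{bad}}|\{u:D_c(u)>0\}|\le N_n$. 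Summing over $s\le t$ and adding the root term, $\dP(\sigma_{\mathrm{bad}}\le t\wedge\tau)\le N_n/n+2t\theta N_n/(c_0 n)\to0$, which is (iii). The one genuinely delicate point is this last part: the step-$s$ bound presupposes that $c_{s-1}$ is a good color, which is exactly what is being proved, and the circularity is broken precisely by the stopping-time $\sigma_{\mathrm{bad}}$ together with the remark that every active half-edge descends from the root or from a vertex discovered through $U$.
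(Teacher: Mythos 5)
Your proof is correct and follows essentially the same route as the paper: (i) and (ii) are handled identically (reading off the atoms from \eqref{eq:XCM}, bounding the discrepancy between the sum over $U(t)$ and the full empirical sum by an $O_\theta(t)$ term, and dividing by $|W_c|+O(t)\sim n\,\dE D_c>0$), and (iii) rests on the same observation that active half-edges can only come from the root or from vertices discovered through $U(\cdot)$. The only cosmetic difference is in (iii), where the paper runs the induction by invoking part (ii) directly (the limit law $\wP^{c'}$ charges no matrix with a positive ``bad'' entry), whereas you do the equivalent explicit union bound with a stopping time; both resolve the same apparent circularity in the same way.
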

\begin{proof}
Since  $X(0)= D(\so)$, statement \eqref{eq:1coupCM} is simply a restatement of the assumption (H2). 

For statement \eqref{eq:2coupCM}, we first note that the set $\{ i \in [n]: i \notin U(t)\}$ has cardinality bounded by $1 + \theta L^2 t$. It follows by (\ref{eq:XCM}) that, if $\{ t < \tau \}$ and $c_t = c$ hold, for any $M \in \cM_L$, 
\begin{align*}
&\Big|  \dP( X (t+1) = M | \cF_t ) - \frac { M_{\bar c} +1}{| W_c | - |C_c(t)| -  \IND( c \in \cC_{=})} \sum_{i=1} ^n \IND_{D(i) = M + E^{\bar c}}  \Big| \\
&\qquad\qquad\qquad\qquad\qquad\qquad\qquad\qquad\qquad\qquad \quad\leq  \frac { ( M_{\bar c}+1 )  \left(  1 + \theta L^2 t +  | A_{ \bar c} (t)| \right) }{| W_c | - |C_c(t)| -  \IND( c \in \cC_{=})}. 
\end{align*}
Now, assumptions (H1)-(H2) imply that $| W_c | / n$ converges to $\dE D_c$, where $D$ has law $P$. Similarly, assumption (H2) implies that 
$
\frac 1 n \sum_{i=1} ^n \IND_{D(i) = M + E^{\bar c}} 
$
converges to $P(M + E^ {\bar c})$. Hence, from \eqref{eq:boundedA}, if $\dE D_c >0$, then $\dP( X (t+1) = M | \cF_t )$ converges to $\wP^ {c } (M)$. This proves statement \eqref{eq:2coupCM}.

We now turn to statement \eqref{eq:3coupCM}. We set $\cC_0 = \{ c\in \cC: D_c \equiv 0 \}$ and $A_{\cC_0} (t) = \cup_{c\in \cC_0} A_{c} (t)$. We recall that $\dE D_{\bar c} = \dE D_{c}$, hence $c\in \cC_0$ is equivalent to $\bar c \in \cC_0$. We should prove that for any integer $t \geq 0$, $\dP ( | A_{\cC_0} (t \wedge \tau ) | \geq 1 ) \to 0$. First, statement \eqref{eq:1coupCM} and the union bound implies that $\dP ( | A_{\cC_0} (0) | \geq 1 ) \leq \sum_{c \in \cC_0} \dP (X_c(0) \geq 1) \to 0$. By recursion, it is thus sufficient to prove that for any integer $t \geq 0$, $c \in \cC_0$, $c' \in \cC \backslash \cC_0$, if $\{ t < \tau \}$ and $c_t = c'$ hold, then
$$
\dP ( X_c (t+1) \geq 1 | \cF_t ) \to 0. 
$$
The latter follows from statement \eqref{eq:2coupCM} (recall that $\bar c \in \cC_0$). 
\end{proof}

We introduce a variable that counts the number of times that two elements in the active sets are matched by step $t$:
$$
E(t) = \sum_{k=1} ^t \sum_{c \in \cC} \veps_c (k).
$$
\begin{lemma} 
\label{le:cycleexpCM}
Under the assumption of Proposition \ref{prop:convlocCM}, let $o$ be uniformly distributed on $[n]$, independently of $G_n$, and consider the exploration process on the rooted graph $(G_n(\so),\so)$.  For every integer $t \geq 0$, we have
$$
\lim_{n\to\infty} \dP \left( E ( t \wedge \tau )  \neq 0  \right) = 0. 
$$
If $t \leq \tau$ and $E(t) = 0$, the subgraph of $G_n$ spanned by the vertices with all their half-edges in $C(t )$ is an directed colored tree. 
\end{lemma}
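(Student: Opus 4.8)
The plan is to establish the two assertions separately: the first is the substantive probabilistic statement, and the second then follows deterministically from $E(t\wedge\tau)=0$ together with the combinatorics of the exploration process.

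Since $\sum_{c\in\cC}\veps_c(t+1)=\IND(v_{t+1}\in A(t))$, writing $v_k$ for the vertex matched at step $k$ one has $E(t\wedge\tau)=\sum_{k=1}^{t\wedge\tau}\IND(v_k\in A(k-1))$, so it suffices to prove $\dP(E(t\wedge\tau)\neq 0)\to 0$ for each fixed $t$. Let $\cC_0=\{c\in\cC:\dE D_c=0\}$, where $D$ has law $P$, and let $B_n$ be the event that $c_s\notin\cC_0$ for every $1\le s\le t\wedge\tau$; by Lemma~\ref{le:coupexpCM}, part~\eqref{eq:3coupCM}, $\dP(B_n^c)\to 0$, so it is enough to control $\dP(E(t\wedge\tau)\neq 0\,,\,B_n)$. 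For this I would use a one-step collision estimate: on $\{k\le\tau\}$ with $c_{k-1}=c$, the half-edge $\sigma(e_k)$ is drawn uniformly on a set of size $|W_{\bar c}|-|C_{\bar c}(k-1)|-\IND(c\in\cC_{=})$, and $v_k\in A(k-1)$ forces $\sigma(e_k)\in A_{\bar c}(k-1)$; hence, using \eqref{eq:boundedA} to bound $|A_{\bar c}(k-1)|\le\theta k$ and $|C_{\bar c}(k-1)|\le 2k$, together with $|W_{\bar c}|=\sum_i D^{(n)}_{\bar c}(i)\sim n\,\dE D_{\bar c}$ and the identity $\dE D_{\bar c}=\dE D_c$ (which comes from the symmetry of $S=\sum_i D(i)$, see \eqref{dS}, and (H2)), one obtains
$$
\dP\PAR{v_k\in A(k-1)\mid\cF_{k-1}}\le\frac{\theta k}{|W_{\bar c}|-2k-1}\le\frac{C k}{n}
$$
on $\{k\le\tau\}\cap B_n^{(k-1)}$, where $B_n^{(k-1)}:=\bigcap_{s\le(k-1)\wedge\tau}\{c_s\notin\cC_0\}\in\cF_{k-1}$ and $C=C(t,\theta,P)$. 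Summing over $k\le t$ with the tower property and the inclusion $B_n\subseteq B_n^{(k-1)}$ on $\{k\le\tau\}$ gives $\dE[E(t\wedge\tau)\IND(B_n)]\le\sum_{k=1}^{t}Ck/n=O(1/n)$, whence $\dP(E(t\wedge\tau)\neq 0)\le\dP(B_n^c)+\dE[E(t\wedge\tau)\IND(B_n)]\to 0$.

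For the second assertion, assume $t\le\tau$ and $E(t)=0$. Then $v_k\notin A(k-1)$ for all $k\le t$, hence $\sigma(e_k)\in U(k-1)$ and $v_k\in U(k-1)$ is discovered for the first time at step $k$; since the half-edges of a vertex leave $U$ permanently once it is discovered, the vertices $v_1,\dots,v_t$ are pairwise distinct and distinct from the root $\so$. Let $T$ be the multigraph with vertex set $\{\so,v_1,\dots,v_t\}$ and edge set the $t$ matched pairs $\{e_k,\sigma(e_k)\}$, $1\le k\le t$; since $t\le\tau$ all $t$ steps occur and these pairs are exactly the half-edges forming $C(t)$. Each edge $k$ joins the new vertex $v_k$ to the vertex $\phi(\ibf_{k-1})$ owning $e_k\in A(k-1)$, which was discovered at an earlier step (or is $\so$); thus $T$ is connected, and having $t+1$ vertices and $t$ edges it is a tree, so its colorblind version $\bar T$ is a tree. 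Finally, if a vertex $u$ has all its half-edges in $C(t)$, then each edge of $G_n$ incident to $u$ is one of the matched pairs in $C(t)$, i.e.\ an edge of $T$; hence the subgraph of $G_n$ spanned by such vertices is a subgraph of $T$, so its colorblind version has no cycle, and by the definition of a rooted directed colored tree it is a directed colored tree.

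The main obstacle is the first assertion, and within it the one-step collision estimate: one must bound $\dP(v_k\in A(k-1)\mid\cF_{k-1})$ uniformly over the random color $c_{k-1}$, which forces one to discard the degenerate colors in $\cC_0$ via Lemma~\ref{le:coupexpCM}, part~\eqref{eq:3coupCM}, and to track the stopping time $\tau$ carefully in the conditioning. Once this bound is in place, the remainder of the first assertion is a routine summation of $O(k/n)$ terms, and the second assertion is a direct reading of the exploration dynamics.
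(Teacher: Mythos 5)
Your proposal is correct and follows essentially the same route as the paper: a first-moment/union bound over the collision events $\{v_k\in A(k-1)\}$, with the conditional collision probability bounded by $|A_{\bar c}(k-1)|/(|W_{c}|-O(k))=O(k/n)$ via \eqref{eq:boundedA} and (H1)--(H2), and the degenerate colors with $\dE D_c=0$ discarded through Lemma \ref{le:coupexpCM}\eqref{eq:3coupCM}; the deterministic tree statement is likewise read off from the exploration dynamics (the paper phrases it as each explored vertex having a unique smaller-indexed neighbor, you as a vertex/edge count on the explored subgraph, which amounts to the same thing).
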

\begin{proof}
We start with the second statement. To every vertex  $u$ with an half-edge in $C (t) \cup A(t)$, there is an element $\ibf$ in $\dN^f$ such that $\phi (\ibf) = u$. We may thus order these vertices by  the order through $\phi^{-1}$ in $\dN^f$. Every such vertex is adjacent to its parent. By construction if $ E(t) = 0$ or equivalently if for all $1 \leq s \leq t$, all $c\in \cC$, $\veps_c(s) = 0$, then every vertex with an half-edge in $C(t) \cup A(t) $ has a unique adjacent vertex with a smaller index.  It follows that there cannot be a cycle in the subgraph spanned by these vertices. 

If $  E (t\wedge \tau)  \ne 0 $, there exists an integer $1 \leq s \leq t \wedge \tau$ such that $\sigma(e_s) \in A(s-1)$. Using (\ref{eq:XCM}), it follows from the union bound and the fact that $\{ s < \tau \} \in \cF_s$, 
\begin{align*}
\dP  (  \exists 1 \leq s \leq  t \wedge \tau:  \sigma(e_s) \in A(s-1)  ) & \leq  \dE \left[  \sum_{s \geq 0} \IND_{ s < t  \wedge \tau }  \dP ( v_{s+1} \in  A(s) |  \cF_s)  \right]\\
& \leq  \dE  \sum_{s = 0} ^ {t-1} \frac{| A_{\bar c_s} (s)|}{  | W_{c_s} | - |C_{c_s}(s)| -  \IND( c_s \in \cC_{=})}.
\end{align*}
From \eqref{eq:boundedA}, for each $t \geq 0$, $|A_c (t)|  \leq \theta (t+1)$. Also, by  assumptions (H1)-(H2), $| W _c| / n$ converges  to $\dE D_c$, where $D$ has law $P$. If $\dE D_c=0$, we may appeal to  Lemma \ref{le:coupexpCM}\eqref{eq:3coupCM}. 
\end{proof}

All ingredients of the proof of Proposition \ref{prop:convlocCM} are now gathered. 
\begin{proof}[Proof of Proposition \ref{prop:convlocCM}. ] Let $o$ be uniformly distributed on $[n]$, independently of $G_n$. We set $\overline \rho_n = \dE U ( G_n)$, and $\rho = \UGW (P)$. Define $B =\{g \in \wcG^*: g_t = \gamma \}$ where $\gamma$ is the equivalence class of a finite rooted directed colored tree of depth at most $t$. It is sufficient to prove that for any integer $t \geq 1$ and any such $\gamma$, $\overline \rho_n (B) $ converges  to $\rho (B)$.

For some $m = \sum_{k = 0}^{t-1} (\theta L^ 2)^ k$, $(G_n(\so),\so)_t$ has at most $m$ vertices.  However, by Lemma \ref{le:cycleexpCM}, with high probability, $E_{m \wedge \tau} =0$ and $(G_n (\so),\so)_t$ is a rooted directed colored tree. Applying now Lemma \ref{le:coupexpCM}, we deduce that  
$$
\lim_{n\to\infty}  \left| \overline \rho_n (B)   - \rho (B )  \right| = \lim_{n\to\infty}   \left| \dP ( (G_n (\so) ,\so)_t \simeq \gamma  )    - \rho ( B )  \right|   = 0. 
$$
The conclusion follows.
 \end{proof}

\subsection{Concentration Inequalities}

We are going to state a concentration inequality for the configuration model. We use the notation of Section \ref{CM}. We fix an integer $L \geq 1$ and consider a set of colors $\cC = \{(i,j): 1 \leq i , j \leq L \}$, $\bD = (D(1),\cdots, D(n)) \in \cD_n$ and $\Si = \Si( \bD)$ be the set of configurations. We shall say that  $m \in \Si $ and $m' \in \Si$ differ by at most one switch if there exists $c \in \cC_\leq $ such that for all $c ' \ne c$, $m_{c'}  = m'_{c'}$ and a set $J \subset W_c$, with $|J| \leq 2$ if $c \in \cC_{\ne}$ or $|J| \leq 4$ if $c \in \cC_=$, and for all $x \in W_c \backslash J$, $m (x) = m'(x)$. In other words, if $c \in \cC_{\ne}$, $m'_c \circ m_c^ {-1}$ is either the identity ($|J| = 0$) or a transposition ($|J| = 2$). Similarly, for $c \in \cC_{=}$, $m'_c \circ m_c^ {-1}$ is either the identity ($|J| = 0$) or the composition of two disjoint transpositions ($|J| = 4$). 

In the special case $L=1$, the next proposition appears in Wormald \cite[Theorem 2.19]{wormald}.

\begin{proposition}\label{th:azumaCM}
Let $\bD = (D(1),\cdots, D(n)) \in \cD_n$, $\Si = \Si( \bD)$ be the set of configurations, $S = \sum_{i=1}^ n D (i)$ and  $N =  \sum_{c \in \cC} S_c$. Let $F: \Si\to \dR$ be a function such that for some $\kappa > 0$ and any $m , m' \in \Si$ which differ by at most one switch, we have 
$$
| F ( m ) - F (m') | \leq \kappa.
$$
Then, if  $\si$ is uniformly sampled from $\Si$,  for any $t > 0$, 
$$
\dP \left( \left|     F ( \si )   -  \dE F (\si)     \right|  \geq t  \right) \leq  2 \exp\left ({-  t^2 \over \kappa^2 N} \right).$$
\end{proposition}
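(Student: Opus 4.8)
The statement is an Azuma--Hoeffding bound, so the plan is to expose the uniform measure on $\Si = \prod_{c \in \cC_\leq} \Si_c$ as a product, build a Doob martingale by revealing the matching one pair at a time, and verify that each increment has bounded range. The main work is choosing the right filtration so that a single revealed pair changes the conditional expectation of $F$ by at most $O(\kappa)$.

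First I would set up the exposure. For each $c \in \cC_\leq$, the set $W_c$ (or the pair $(W_c,\bar W_c)$ when $c \in \cC_<$) carries $S_c$ half-edges on the $W_c$ side; a uniform $\si_c$ can be generated by sequentially pairing half-edges: order the half-edges of $W_c$ as $w_1, w_2, \ldots$; at step $j$, if $w_j$ is not yet matched, choose its partner uniformly among the remaining unmatched half-edges. Concatenating these procedures over $c \in \cC_\leq$ (in some fixed order) generates $\si$ uniformly, and produces a sequence of at most $\tfrac12 \sum_{c\in\cC_=} S_c + \sum_{c\in\cC_<} S_c \leq N$ ``decisions''. Let $\cF_0 \subset \cF_1 \subset \cdots \subset \cF_r$ be the filtration where $\cF_i$ records the first $i$ decisions, $r \leq N$, and set $M_i = \dE[F(\si) \mid \cF_i]$, so $M_0 = \dE F(\si)$ and $M_r = F(\si)$.

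The key step is the increment bound $|M_i - M_{i-1}| \leq \kappa$. The standard argument: $M_i - M_{i-1} = \dE[F(\si)\mid \cF_i] - \dE[F(\si) \mid \cF_{i-1}]$, and conditionally on $\cF_{i-1}$ the $i$-th decision picks a partner $p$ for some fixed half-edge $w$ uniformly among the $\ell$ currently-available half-edges. Writing $\si^{(p)}$ for the configuration obtained when the partner is $p$ and the remaining decisions are resampled by the same rule, one has $M_{i-1} = \tfrac1\ell \sum_{p} \dE[F(\si^{(p)}) \mid \cF_{i-1}]$ and $M_i = \dE[F(\si^{(p_0)}) \mid \cF_i]$ where $p_0$ is the actual partner. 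For any two partners $p, p'$ one can couple the continuations of $\si^{(p)}$ and $\si^{(p')}$ so that the two configurations differ by a single transposition (for $c \in \cC_<$) or a composition of at most two disjoint transpositions (for $c \in \cC_=$): match the tails, and wherever the coupling forces a discrepancy, it propagates through at most one extra swap because each half-edge has a unique partner. Hence $|F(\si^{(p)}) - F(\si^{(p')})| \leq \kappa$ after this coupling, so all the conditional expectations $\dE[F(\si^{(p)})\mid\cF_{i-1}]$ lie in an interval of length $\kappa$, giving $|M_i - M_{i-1}| \leq \kappa$. (A slightly cleaner route, if one prefers: apply the bounded-difference / Azuma inequality directly in the form where revealing one pairing decision changes $F$ by the stated amount; the ``differ by at most one switch'' hypothesis is exactly tailored so that resampling the rest after changing one partner lands in the allowed class.)

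Finally I would invoke Azuma's inequality for the martingale $(M_i)_{i=0}^r$ with increments bounded by $\kappa$ and length $r \leq N$: for any $t > 0$,
\[
\dP\bigl(|F(\si) - \dE F(\si)| \geq t\bigr) = \dP\bigl(|M_r - M_0| \geq t\bigr) \leq 2\exp\!\left(-\frac{t^2}{2 \kappa^2 r}\right) \leq 2\exp\!\left(-\frac{t^2}{\kappa^2 N}\right),
\]
since $2r \leq 2N$ — actually $r \leq N/2 + $ (number of $\cC_<$ colours' contributions), and in all cases $2r \leq N$ once one counts that a loop-free color in $\cC_<$ contributes $S_c$ decisions while a color in $\cC_=$ contributes $S_c/2$; being slightly generous, $2r \le N$ always holds, which yields the stated constant. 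I expect the coupling argument bounding the martingale increment to be the only genuinely delicate point: one must check carefully that a single alteration of one pairing, together with the sequential resampling, never forces more than one (resp. two) transpositions of discrepancy, which is where the definition of ``differing by at most one switch'' and the case split $\cC_< $ vs.\ $\cC_=$ enter. Everything else is bookkeeping and a direct appeal to Azuma's inequality.
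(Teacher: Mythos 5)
Your proof is correct and follows essentially the same route as the paper: a Doob martingale that reveals the configuration pair-by-pair (color by color), increments bounded by $\kappa$ via a one-switch comparison of the two conditional laws, and Azuma--Hoeffding over the $r=N/2$ exposure steps, which gives exactly the stated constant. Your coupling of the continuations is just the paper's explicit switch bijection between the sets of completions with $\si(v_k)=w$ and $\si(v_k)=w'$ phrased probabilistically, and your step count $2r\leq N$ matches the paper's $N_\ell=N/2$.
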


The proof will be given in Section \ref{propa4} below. 

\begin{corollary}\label{cor:azumaCM}
Let $\bD = (D(1),\cdots, D(n)) \in \cD_n$ such that (H1) holds (see Section \ref{H1H2}). Let $k \geq 0$, $\gamma$ be a rooted directed colored multigraph and $A = \{ g :\; g_k = \gamma_k \}$. There exists a constant $\d = \d(\theta,k,L) > 0$, such that, if  $G_n \stackrel{d}{\sim} \CM(\bD)$, $\rho_n = U(G_n)$ and $t >0$,
$$
\dP \left( \left|     \rho_n ( A)   -  \dE  \rho_n ( A)     \right|  \geq t  \right) \leq  2 \exp\left (-   \d n t^2  \right).$$
\end{corollary}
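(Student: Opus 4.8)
The plan is to express $\rho_n(A)$ as a function of the underlying configuration and then apply Proposition \ref{th:azumaCM}. Write $G = \Gamma(\sigma)$ with $\sigma$ uniform on $\Sigma = \Sigma(\bD)$. Since $U(\cdot)$ is invariant under relabellings of the vertices, the quantity
$$
F(\sigma) := \rho_n(A) = \frac 1 n \sum_{v=1}^n \IND\big( (G(v),v)_k \simeq \gamma_k \big)
$$
is a well-defined function $F:\Sigma\to[0,1]$. It therefore suffices to check that $F$ satisfies the bounded-difference hypothesis of Proposition \ref{th:azumaCM} with $\kappa = C/n$ for some constant $C = C(\theta,k,L)$, and then to optimize the resulting exponent.

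For the bounded-difference step, suppose $\sigma,\sigma'\in\Sigma$ differ by at most one switch in the sense of Proposition \ref{th:azumaCM}. By definition of a switch, $\sigma'$ is obtained from $\sigma$ by replacing at most two matched pairs of a single color by two other pairs, so the colorblind multigraphs $\overline{\Gamma(\sigma)}$ and $\overline{\Gamma(\sigma')}$ differ by the removal of at most two edges and the addition of at most two edges; in particular the set $R$ of vertices incident to a modified edge satisfies $|R|\leq 4$. A vertex $v$ can have $(\Gamma(\sigma)(v),v)_k\neq(\Gamma(\sigma')(v),v)_k$ only if some modified edge lies within graph distance $k$ of $v$ in one of the two multigraphs, hence only if $v$ lies in the union of the balls of radius $k$ around the vertices of $R$ in $\overline{\Gamma(\sigma)}$ or in $\overline{\Gamma(\sigma')}$. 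By assumption (H1) the colorblind degree of every vertex is at most $\theta L^2$, so each such ball contains at most $b_k := \sum_{j=0}^{k}(\theta L^2)^j$ vertices. Consequently at most $8 b_k$ of the indicators defining $F$ change, and
$$
|F(\sigma)-F(\sigma')|\leq \frac{8 b_k}{n} =: \kappa .
$$

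It remains to substitute into Proposition \ref{th:azumaCM}. By (H1), $N = \sum_{c\in\cC}S_c = \sum_{i=1}^n\sum_{c\in\cC}D_c(i)\leq \theta L^2 n$. Hence, for any $t>0$,
$$
\dP\big(\,|\rho_n(A)-\dE\rho_n(A)|\geq t\,\big)\leq 2\exp\!\Big(-\frac{t^2}{\kappa^2 N}\Big)\leq 2\exp\!\Big(-\frac{n\,t^2}{64\,b_k^2\,\theta L^2}\Big),
$$
which is the claim with $\delta = (64\,b_k^2\,\theta L^2)^{-1}=\delta(\theta,k,L)>0$. The argument is essentially bookkeeping once Proposition \ref{th:azumaCM} is available; the only delicate point, and the only place where assumption (H1) enters, is the bounded-difference estimate — the verification that a single switch perturbs the rooted $k$-neighborhood of only boundedly many vertices, which rests on the uniform degree bound and hence on the uniform bound $b_k$ on the size of balls of radius $k$.
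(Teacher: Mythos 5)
Your proof is correct and follows essentially the same route as the paper's: define $F(\sigma)=\rho_n(A)$ as a function of the configuration, use (H1) to bound the number of rooted $k$-neighborhoods affected by a single switch (giving $\kappa=O(1)/n$), and apply Proposition \ref{th:azumaCM} with $N\leq \theta L^2 n$. The only differences are immaterial constants in the Lipschitz bound.
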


\begin{proof}

By assumption we have for any $c \in \cC$, $i\in [n]$, $ D_c(i) \leq \theta$. We may thus assume without loss of generality that $\g$ has degrees bounded by $ \theta$. We set $$f(\si):=n \rho_n (A) = \sum_{i=1} ^n \IND( (G_n(i), i )_k \simeq \gamma_k ).$$ 

The number of vertices in $G_n$ which are at distance at most $k$ from both endpoints of any given edge is bounded by $\kappa = 2\sum_{s=0}^{k-1} (\theta L^ 2) ^{s}$. If two configurations $m,m'$ in $\Si$ differ by at most one switch then $|f(m) - f(m')| \leq 4 \kappa$. Indeed, a switch changes the status at most $4$ edges and the addition or the removal of an edge can modify for at most $\kappa$ vertices the value of $ \IND( (G_n(i), i )_k \simeq \gamma_k ) $. 
It remains to apply Proposition \ref{th:azumaCM}, with $F(\si)=f(\si)/n$ and $N=O(n)$. 
\end{proof}

\subsubsection{Proof of Theorem \ref{th:convlocCM}}

Let us start with the case $G_n \stackrel{d}{\sim} \CM(\bD^{(n)})$.  We set $\rho_n = U(G_n)$ and $\rho = \UGW(P)$. Let $k \geq 0$, $\gamma \in \wcG_*$ and $A = \{ g \in \wcG^ *: g_k = \gamma_k \}$. Corollary \ref{cor:azumaCM}, Proposition \ref{prop:convlocCM} and Borel-Cantelli's Lemma imply that with probability one, $\rho_n (A) \to \rho(A)$.  The collection of sets $A = A(k,\gamma)$, $k \geq 0$, $\gamma \in \wcG^*$, being a basis of the topology on  $\wcG^*$, this proves the first statement of Theorem \ref{th:convlocCM}. 

For the second statement, we notice that if $G_n$ is uniformly distributed on $\cG(\bD^{(n)},h)$ and $\widehat{G}_n \stackrel{d}{\sim} \CM(\bD^{(n)})$, then Lemma \ref{le:unifCM} implies for any subset $B \subset \cG(\bD^{(n)},h)$ that 
$$
\dP ( G_n \in B) \leq \dP ( \widehat G_n \in B) / \dP ( \widehat G_n \in \cG(\bD^{(n)},h)). 
$$
By Corollary \ref{cor:Poilim}, $\dP ( \widehat G_n \in \cG(\bD^{(n)},h))$ is lower bounded by some $\alpha >0$, uniformly in $n$ (depending of the sequence $\bD^{(n)}$ and $h$). Then, if $\rho_n = U(G_n)$ and $\widehat \rho_n = U(\widehat G_n)$, from what precedes, for any $t > 0$, and $A$ as above
$$
\sum_n \dP (\ABS{ \rho_n(A) - \dE\widehat \rho_n(A)}  > t  ) \leq \alpha^{-1} \sum_n \dP (\ABS{ \widehat \rho_n(A) - \dE \widehat \rho_n(A)}  > t  ) < \infty.
$$
It remains to apply again Borel-Cantelli's lemma and Proposition \ref{prop:convlocCM}. 
 \ep

\begin{remark}
The proof of Theorem \ref{th:convlocCM} actually shows that for a sequence $\bD^{(n)}$ satisfying (H1)-(H2) the following holds. Let $k \geq 0$, $\gamma$ fixed and $A = \{ g : \;g_k = \gamma_k \}$. There exists a constant $\d > 0$ (depending of the sequence $\bD^{(n)}$, $h$ and $k$), such that, if $G_n$ is uniformly distributed on $\cG(\bD^{(n)},h)$ and $\widehat{G}_n \stackrel{d}{\sim} \CM(\bD^{(n)})$, we have for any $t >0$ 
 \begin{equation}\label{eq:concunifCM}
\dP \left( \left|     \rho_n ( A)   -  \dE  \widehat \rho_n ( A)     \right|  \geq t  \right) \leq  \d^{-1} \exp\left (-   \d  n t^2  \right),
\end{equation}
 where $\rho_n = U(G_n)$ and $\widehat \rho_n = U(\widehat G_n)$.
\end{remark}

\subsubsection{Proof of Proposition \ref{th:azumaCM}} \label{propa4}
The proof is a consequence of Azuma-Hoeffding's inequality.

\paragraph{\em Case of random matchings}

For clarity, we start with the case $L=1$, i.e. $\cC = \{(1,1)\}$. Then $N = S_{(1,1)}$. We order the elements of $W = W_{(1,1)}$ in the lexicographic order. We may identify a matching of $W$ as the set of $N/2$ matched pairs. We order these $N/2$ pairs by the index of their smallest element. We then define $\cF_0$ as the trivial $\sigma$-algebra and for $1 \leq k \leq N/2$, we define $\cF_k$ as the $\sigma$-algebra generated by the first $k$ pairs of matched elements of $\sigma$.  We set $Z_k = \dE [ F ( \sigma ) | \cF_{k} ]$, so that $Z_0 = \dE F(\sigma)$, $Z_{N/2 - 1} = F(\sigma)$. By construction, $Z_k$ is a Doob martingale. 

If $A$ is a finite set, we  denote by $\bM(A)$ the set of perfect matchings on $A$. With our previous notation $\Si = \bM ( W)$. For $1 \leq k \leq N/2$, an element $\sigma$ of $\bM(W)$  can be uniquely decomposed into $(\sigma^-_{k-1}, \sigma^+_{k})$ where  $\sigma^-_{k-1}$ is the restriction of $\sigma$ to the $k-1$ smallest pairs and $\sigma^+_{k}$ is the rest. Let $W^{k-1}$ denote the subset of $W$ such that $ \sigma^-_{k-1}$ is a perfect matching on $W^{k-1}$.

If $v_{k}$ is the smallest element of $W \backslash W^{k-1}$, we set $w_k = \sigma(v_k)  \in W \backslash W^{k-1}$,  so that $W^k = W^{k-1} \cup \{ v_k , w_k \}$.  Now, for $w \in W \backslash ( W^{k-1} \cup \{ v_k \} )$,  let $\bM_w$ denote the set of matchings of $W \backslash W^{k-1}$ such that $m (v_k) = w$. Then for any $w,w' \in W \backslash ( W^{k-1} \cup \{ v_k \} )$, each $m \in \bM_w$ corresponds to a unique $m' \in \bM_{w'}$ through the switch $
\{\{v_k, w \}, \{w',  z \} \} \to \{\{v_k, w'\}, \{w, z\}\}$, where $m (w')  = z$. This gives a bijection between $\bM_w$ and $\bM_{w'}$, and we set $N_k = | \bM_w |$.  By assumption, we deduce that for any $w, w'$, 
$$
\Big| \sum_{ m \in \bM_{w} } F ( \sigma^-_k , m ) - \sum_{ m \in \bM_{w'} }F ( \sigma^-_k , m ) \Big| \leq \kappa. 
$$
Applying the above inequality to $w_k$, we deduce that  
\begin{align*}
 \Big| \frac{1 }{ N_k } \sum_{ m \in \bM_{w_k} } F ( \sigma^-_k , m ) -  \frac{ 1} { N - 2 k + 1 } \sum_{ w \in W \backslash ( W^{k-1} \cup \{ v_k \} )   }  \frac{1 }{ N_k }  \sum_{ m \in \bM_{w} }F ( \sigma^-_k , m ) \Big|  =  \left| Z_{k} - Z_{k-1} \right| \leq \kappa. 
\end{align*}
We may then apply Azuma-Hoeffding's inequality to the martingale $Z_k$. We obtain that for any $t >0$, 
$$
\dP\left( Z_n   - Z_0  \geq t\right) \leq \exp\left ({-   t^2 \over  2 \sum_{i=1}^{N/2 -1} \kappa^ 2} \right) \leq \exp\left ({-   t^2 \over  N  \kappa^ 2} \right). 
$$
This proves the proposition when $L =1$. 

\paragraph{\em Case of random bijections}

Now, let $\si$ be a uniformly drawn bijection from the set $W$ to $\bar W$ of common cardinality $N$. We order the elements of the set $W$ as $(x_1, \cdots, x_N)$. We introduce the filtration $\cF_k$ generated by $\si(x_1), \cdots, \si (x_k)$. Now let $F$ be a function on the set of bijections from $W$ to $\bar W$ such that for any $m$, $m'$ such that $m' \circ m^ {-1}$ is a transposition, we have $|F(m) - F(m')| \leq \kappa$. We set $Z_k = \dE [ F(\si) | \cF_k]$.  

With minor modifications, the above argument shows that, for $1 \leq k \leq N$, $|Z_{k-1} - Z_k| \leq \kappa$. Then, by Azuma-Hoeffding's inequality, we find for $t \geq 0$, 
\begin{equation}\label{AZH}
\dP\left( F ( \si)    - \dE F( \si)   \geq t\right) \leq \exp\left ({-   t^2 \over  2 N  \kappa^ 2} \right). 
\end{equation}

\paragraph{\em General case}

It suffices to combine the two results. We first order the elements of $\cC_{\leq}$ in an arbitrary way, say $c_1, \cdots, c_{\ell}$ with $\ell = L ( L+1) / 2$. Let $N_0 = 0$, for $1 \leq k \leq \ell$, $N_k = \sum_{i=1}^k S_{c_i}/(1 + \IND_{c_i \in \cC_=})$. We have $N_{\ell} = N /2$. We define the filtration, $(\cF_{t}), 0 \leq t \leq N_{\ell},$ built as follows. $\cF_0$ is the trivial $\si$-algebra, $\cF_{N_k}$ is the filtration generated by the independent variables $(\si_{c_i}), 1 \leq i \leq k$. Finally for $1 \leq i < N_{k+1} - N_k$, $\cF_{N_{k} +i}$ is the filtration generated by $\cF_{N_k}$ and the first $i$ matched pairs of $\si_{c_{k+1}}$. 

As above, we set $Z_k = \dE [ F(\si) | \cF_k]$, so that  $Z_0 = \dE F ( \si)$, $Z_{N_{\ell}} = F ( \si)$. By construction, using the independence of $(\si)_c, c\in \cC$, we find, for $1 \leq k \leq N_\ell$, $|Z_{k-1} - Z_k| \leq \kappa$. Hence, Azuma-Hoeffding's inequality implies that \eqref{AZH} holds for $\si$ uniform in $\Sigma$, with $N$ replaced by $N_\ell = N/2$. 
This ends the proof of Proposition \ref{th:azumaCM}. 

\section*{Acknowledgments}
We thank  Justin Salez for bringing reference \cite{BLS} to our attention and B\'alint Vir\'ag for a discussion on the discontinuity of the entropy. This work was supported by the GDRE GREFI-MEFI CNRS-INdAM. Partial support of the European Research Council
through the Advanced Grant PTRELSS 228032 and ANR-11-JS02-005-01 is also acknowledged.

\bibliographystyle{abbrv}
\bibliography{mat}

\end{document}